%
% Cambios de notacion:
% Lead -> Ld
% Beg -> \boldsymbol{\alpha}
% \tDd -> {{\mathsf{Ld}^{\Beg}_{\beta_{#2}}}(#1)}
% \hatDD -> sin hat y coinciden las j y l+1
\documentclass[12pt]{amsart}
\usepackage[english]{babel}
\usepackage{graphicx}
\usepackage{color}
\usepackage{amsmath,amssymb,hyperref,srcltx}
\usepackage[shortlabels]{enumitem}
\usepackage{xifthen}
\selectlanguage{english}

\DeclareMathOperator{\ord}{ord}

\newcommand{\cn}[1]{\ensuremath{({\mathbb C}^{#1},0)}}

\def\nice{separated}

\newboolean{pedro}
\setboolean{pedro}{true}
\newcommand{\pedro}[1]{\ifthenelse{\boolean{pedro}}{$^{\dagger}$\marginpar{$\dagger$ #1}\color{magenta}
 \setboolean{pedro}{false}}{\color{black}\setboolean{pedro}{true}}}
\newcounter{margin}

\newboolean{javier}
\setboolean{javier}{true}
\newcommand{\javier}{\ifthenelse{\boolean{javier}}{\color{red}
 \setboolean{javier}{false}}{\color{black}\setboolean{javier}{true}}} 

\newtheorem{pro}{Proposition}[section]
\newtheorem{teo}{Theorem}[section]

\newtheorem{cor}{Corollary}[section]

\newtheorem{lem}{Lemma}[section]

\theoremstyle{remark}
\newtheorem{rem}{Remark}[section]

\theoremstyle{definition}
\newtheorem{defi}{Definition}[section]
\theoremstyle{example}
\newtheorem{example}{Example}[section]

\subjclass[2010]{32S05, 32S65, 14H20, 32G13}
\keywords{plane branch, approximate roots, moduli, K\"{a}hler differentials,  invariant curves of holomorphic foliations}

\title[Constructive solution of the Moduli Prolem]{Constructive solution of Zariski's Moduli Problem for Plane Branches}

\author{Pedro Fortuny Ayuso and Javier Rib\'{o}n}
\email{fortunypedro@uniovi.es}
\email{jribon@id.uff.br}
\address{Universidad de Oviedo, Spain --- Universidade Federal Fluminense, Brazil}

\date{\today}

% definiciones de javier
%\newcommand{\fg}[1]{\ensuremath{\mathcal F}_{#1}}
%\newcommand{\fgb}[2]{\ensuremath{\mathcal F}_{#1}(#2)}
% cambiadas por mi a esto
\newcommand{\fg}[1]{{\mathsf{Br}({\mathcal E}_{#1})}}
\newcommand{\fga}[1]{{\mathsf{Br}^{\ast}({\mathcal E}_{#1})}}
\newcommand{\fgb}[2]{{{\mathsf{Br}({\mathcal E}_{#1,\geq #2})}}} 
\newcommand{\fgba}[2]{{{\mathsf{Br}^{\ast}({\mathcal E}_{#1,\geq #2})}}}

\newcommand{\touch}{accompany}
\newcommand{\touches}{accompanies}
\newcommand{\Beg}{\boldsymbol{\alpha}}
\newcommand{\DD}[3]{{\mathsf{Ld}_{\beta_{#3}}(#1,#2)}}
\newcommand{\Dd}[2]{{\mathsf{Ld}_{\beta_{#2}}(#1)}}
\newcommand{\dd}[1]{{\mathsf{Ld}(#1)}}
\newcommand{\oDD}[3]{{\overline{\mathsf{Ld}}_{\beta_{#3}}}(#1,#2)}
\newcommand{\oDd}[2]{{\overline{\mathsf{Ld}}_{\beta_{#2}}(#1)}}
\newcommand{\odd}[1]{{\overline{\mathsf{Ld}}(#1)}}
\newcommand{\tDd}[2]{{\mathsf{Ld}^{\Beg}_{\beta_{#2}}}(#1)}
\newcommand{\hatDD}[3]{{{\mathsf{Ld}}_{R= * a^{*}_{\beta_{#3}}}(#1,#2)}}
\newcommand{\low}{{\mathsf{Low}}}
\newcommand{\upper}{{\mathsf{Upp}}}

\usepackage{algorithm}
\usepackage{algpseudocode}
% \DD{\beta}{m}{j}
\relpenalty=10000
\begin{document}
\hfuzz=2pt
\maketitle

\bibliographystyle{plain}

\begin{abstract}
  In this paper we present an explicit solution of Zariski's moduli problem
  for plane branches. We compute (in an algorithmic way) the set of  K\"{a}hler
  differentials of an irreducible germ of holomorphic plane curve. We show that there is a
  basis of this set whose main elements correspond to dicritical foliations.  Indeed, we
  discuss several concepts of generation for the semimodule of values of K\"{a}hler
  differentials of the curve and provide basis of K\"{a}hler differentials, for every of
  these concepts, whose geometric properties are described.  Moreover, we give an
  algorithmic construction of the bases.
\end{abstract}

\section{Introduction} 

Consider a branch of plane curve, i.e. 
an irreducible germ of holomorphic curve $\Gamma$ in $\cn{2}$. 
We are interested in understanding the set  
\begin{equation*} 
\Lambda_{\Gamma} := \{ \nu_{\Gamma} (\omega) : \omega \in \Omega^{1} \cn{2} \}  
\end{equation*}
of contacts of germs of holomorphic $1$-forms with $\Gamma$ (cf. Definitions \ref{def:omega} and \ref{def:order}). 
This is the main ingredient of
the classification of branches of planar curves by Hefez and Hernandes 
\cite{Hefez-Hernandes-classification}. We can restrict our study to curves that are singular at the origin since
$\Lambda_{\Gamma} = {\mathbb Z}_{\geq 1}$ otherwise, and indeed 
all smooth curves are analytically conjugated. The structure of $\Lambda_{\Gamma}$ 
was studied by Delorme \cite{Delorme:modules} for the particular case where the curve $\Gamma$ has a unique 
Puiseux characteristic exponent. We focus in this paper in the general case.

Up to a holomorphic change of coordinates, 
we can assume that $\Gamma$ has a Puiseux 
parametrization of the form 
\begin{equation}
	\label{equ:param}
	\Gamma (t) = \left( t^{n}, \sum_{\beta \geq \beta_{1}} a_{\beta, \Gamma} t^{\beta} \right)  
\end{equation}
where $n \geq 2$ is the multiplicity of $\Gamma$ at the origin and $\beta_{1}$ is its first Puiseux characteristic 
exponent.

We provide a geometrical interpretation of the ``gap" between the so called semigroup of values of 
$\Gamma$ and $\Lambda_{\Gamma}$. We briefly introduce the main ingredients of our approach in the next subsections.

\subsection{The semigroup} 
There are several ways to describe the topological class of a planar branch $\Gamma$, i.e.
the set of planar branches that are topologically conjugated to $\Gamma$ by a local
homeomorphism defined in a neighborhood of the origin of $\mathbb{C}^{2}$. 
One of the most 
interesting, since it algebrizes the problem, 
is in terms of its semigroup
%${\mathcal S} (\Gamma)$. 
\begin{equation*} {\mathcal S}_{\Gamma} := \{ \nu_{\Gamma} (f) : f \in {\mathbb C}\{x,y\} \ \mathrm{and} \ f(0,0)=0 \} ,  
\end{equation*}
cf. Definition \ref{def:order}. 
Indeed, it is well known since at least Zariski \cite{Zariski-1932} that two branches are   topologically conjugated  
if and only if their semigroups coincide and also if and only if they have the same
graph of resolutions of singularities
(cf. \cite[Theorem 8.4.21]{brieskorn2012plane}). 
Hence, the topological class of $\Gamma$ is also its equisingularity class.  
%\begin{equation*} {\mathcal S} (\Gamma) := \{ \nu_{\Gamma} (f) : f \in {\mathbb C}\{x,y\} \ \mathrm{and} \ f(0,0)=0 \} .  \end{equation*}
Once the topological classification problem is settled, it is natural to consider the
analytic one. 
The (analytic) moduli of $\Gamma$ is the quotient space of its topological class
obtained by identifying two branches if they are analytically 
conjugated. It has been studied by Zariski \cite{Zariski:moduli}, 
Ebey \cite{Ebey:moduli}, Delorme  \cite{Delorme:modules} and many others
(cf. \cite{ayuso2024construction} for a more detailed discussion).

As a semigroup, ${\mathcal S}_{\Gamma}$ has a minimal set of generators
$\overline{\beta}_{0},  \overline{\beta}_{1}, \ldots, \overline{\beta}_{g}$, 
where $\overline{\beta}_{0} = \beta_{0} =n$,
$g$ is the number of Puiseux characteristic exponents of $\Gamma$, the so called genus of $\Gamma$, and 
$\overline{\beta}_{1}, \ldots, \overline{\beta}_{g}$ are directly related to the Puiseux characteristic 
exponents ${\beta}_{1}, \ldots, {\beta}_{g}$ (cf. equation \eqref{equ:rec_beta}).

\subsection{${\mathcal C}$-collections}
Since $\nu_{\Gamma} (f) + \nu_{\Gamma} (\omega) = \nu_{\Gamma} (f \omega)$, we get
\begin{equation*} \Lambda_{\Gamma} + {\mathcal S}_{\Gamma} \subset \Lambda_{\Gamma}. \end{equation*}
  
In other words, 
$\Lambda_{\Gamma}$ is a semimodule over ${\mathcal S}_{\Gamma}$.
Our first result shows that $\Lambda_{\Gamma}$ has a stronger structure 
than a semimodule if $g \geq 2$.
\begin{defi}
		%Let $\Gamma$ be a singular branch of planar curve. 
  We say that a subset $S$ of ${\mathbb Z}_{>0}$ is a
  ${\mathcal C}_{\Gamma}$-{\it collection} (or just
  ${\mathcal C}$-collection if $\Gamma$ is implicit) if
  $S + {\mathcal S}_{\Gamma} \subset S$ and moreover, for all
  $m \in S$ and $1 \leq j \leq g$ such that
  $m \leq \overline{\beta}_{j}$, we have
  \begin{equation*}
    m + \overline{\beta}_{k} - \overline{\beta}_{j}
    \in S
  \end{equation*}
  for any $j \leq k \leq g$.
\end{defi}
\begin{rem}
  Zariski \cite[pg. 25]{Zariski:moduli} already realized that $\beta_1-\beta_{0}$ is an important invariant 
  in the moduli problem. 
  Theorem \ref{teo:collection} exhibits how the analogous differences $\overline{\beta}_{k} - \overline{\beta}_{j}$ 
  are fundamental in the classification problem. 
 \end{rem}
\begin{teo}
        \label{teo:collection}
	Let $\Gamma$ be a singular branch of planar curve.  Then
        $\Lambda_{\Gamma}$ is a ${\mathcal C}$-collection.
\end{teo}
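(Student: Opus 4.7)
The semimodule property $\Lambda_\Gamma + \mathcal{S}_\Gamma \subseteq \Lambda_\Gamma$ is immediate from $\nu_\Gamma(f\omega) = \nu_\Gamma(f) + \nu_\Gamma(\omega)$, so the content of the theorem lies in the second condition of the definition of $\mathcal{C}$-collection. The plan is to reduce, by induction on $k - j$, to the \emph{consecutive} case $k = j+1$. Indeed, once the consecutive case is established at every level, the inductive step is formal: assuming $m' := m + \overline{\beta}_k - \overline{\beta}_j \in \Lambda_\Gamma$, the hypothesis $m \leq \overline{\beta}_j$ gives $m' \leq \overline{\beta}_k$, and the consecutive case applied to $(m', k, k+1)$ yields
\[
m' + \overline{\beta}_{k+1} - \overline{\beta}_k \;=\; m + \overline{\beta}_{k+1} - \overline{\beta}_j \;\in\; \Lambda_\Gamma.
\]

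It then remains to prove: for every $1 \leq j \leq g-1$ and every $m \in \Lambda_\Gamma$ with $m \leq \overline{\beta}_j$, one has $m + \overline{\beta}_{j+1} - \overline{\beta}_j \in \Lambda_\Gamma$. I would fix a $1$-form $\omega$ with $\nu_\Gamma(\omega) = m$ and work with approximate (or semi-) roots $h_1, \ldots, h_g$ of $\Gamma$, i.e.\ germs with $\nu_\Gamma(h_i) = \overline{\beta}_i$ that model successive truncations of the Puiseux expansion of $\Gamma$ at each characteristic exponent. The aim is to decompose $\omega = \omega_0 + \omega_1$ with $\nu_\Gamma(\omega_0) = m$ and $\nu_\Gamma(\omega_1) > m$, in such a way that $\omega_0$ is isolated as the piece whose pullback is genuinely detected by $h_j$ (for instance a combination of the form $g_1 \, dh_j + g_2 \, dh_{j-1}$ for controlled cofactors, or a Jacobian-type expression built from $h_{j-1}$ and $h_j$). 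The candidate $\omega'$ is then obtained from $\omega$ by replacing, inside $\omega_0$, the role of $h_j$ by $h_{j+1}$; the extra $t$-valuation $\overline{\beta}_{j+1} - \overline{\beta}_j$ appears precisely as the jump in order between consecutive approximate roots, producing a form with $\nu_\Gamma(\omega') = m + \overline{\beta}_{j+1} - \overline{\beta}_j$.

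The main obstacle will be to verify that no cancellation occurs in the leading term of $\Gamma^*\omega'$: the new coefficient must be genuinely nonzero so that the valuation of $\omega'$ is \emph{exactly} $m + \overline{\beta}_{j+1} - \overline{\beta}_j$ rather than strictly larger. I expect this non-degeneracy to follow from the strict drop in the gcd tower of $\Gamma$ (ensuring that monomials with distinct exponent vectors in the $h_i$ contribute at distinct $t$-orders), from the non-vanishing of the Puiseux coefficient $a_{\beta_{j+1}, \Gamma}$ that defines the $(j+1)$-th characteristic exponent, and from a monomial analysis of pullbacks of products of the $h_i$'s and their differentials. This step is best packaged as a separate auxiliary lemma classifying the leading $t$-exponents of $\Gamma^*\omega$ in terms of an Apéry-style basis adapted to the approximate roots, which can then be invoked both in the construction of $\omega'$ and in later parts of the paper dealing with bases of Kähler differentials.
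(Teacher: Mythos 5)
Your reduction to the consecutive case $k=j+1$ is fine and is exactly how the paper's proof (Proposition \ref{pro:L_is_c_coll}) begins; the gap is in the consecutive case itself, which you only sketch. Two concrete problems. First, the decomposition you propose is not available: if $m<\overline{\beta}_j$, a leading piece of the shape $g_1\,dh_j$ cannot carry the value $m$, since $\nu_{\Gamma}(g_1\,dh_j)\geq\nu_{\Gamma}(dh_j)=\overline{\beta}_j>m$; and no argument is given that an arbitrary $\omega$ with $\nu_{\Gamma}(\omega)=m$ admits \emph{any} distinguished expansion in which ``the role of $h_j$'' can be isolated and swapped for $h_{j+1}$. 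That existence statement is essentially the hard structural content of the theorem, not a preliminary. Second, the ``main obstacle'' you defer (non-cancellation, so that $\nu_{\Gamma}(\omega')$ is exactly $m+\overline{\beta}_{j+1}-\overline{\beta}_j$) is the heart of the matter, and the mechanism you expect to settle it is not correct: it is false that monomials with distinct exponent vectors in the approximate roots contribute at distinct $t$-orders once both $dx$- and $dy$-type forms are present (e.g.\ $\nu_{\Gamma}(y\,dx)\equiv\nu_{\Gamma}(dy)\bmod n$), and these collisions force a cascade of cancellations whose effect on the valuation must be tracked through several steps. Note also that the target values are typically \emph{not} in ${\mathcal S}_{\Gamma}$ (for $g=2$, $n=4$ one has $n+\overline{\beta}_2-\overline{\beta}_1=n+\beta_2\notin{\mathcal S}_{\Gamma}$), so no purely monomial or Ap\'ery-style bookkeeping of semigroup values can produce them; one must actually exhibit a $1$-form and control its pullback to leading order.

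For comparison, the paper avoids handling an arbitrary $\omega$ by reducing further: every $\lambda\in\Lambda_{\Gamma}$ with $\lambda\leq\overline{\beta}_j$ equals $\nu_{\Gamma}(\Omega)+dn$ for some $\Omega$ in the terminal family ${\mathcal T}_{\Gamma,j-1}$, so it suffices to treat these finitely many distinguished forms, and the algorithm itself manufactures the required witness. Moreover the actual mechanism is not a substitution $h_j\rightsquigarrow h_{j+1}$: one starts from $f_{\Gamma,j}^{\,n_j-1}\Omega$, of value $\nu_{\Gamma}(\Omega)+(n_j-1)\overline{\beta}_j$, and the successive corrections of the \textsc{Main Transformation} raise its contact order $\Beg_{\Gamma}$ from $\beta_j$ to $\beta_{j+1}$ while raising $\nu_{\Gamma}$ by exactly the same amount (Condition \ref{good8} of Definition \ref{def:good} and Remark \ref{rem:chu}); equation \eqref{equ:rec_beta} then converts $(n_j-1)\overline{\beta}_j+\beta_{j+1}-\beta_j$ into $\overline{\beta}_{j+1}-\overline{\beta}_j$. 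The non-degeneracy you hope to get from a gcd argument is precisely what the leading-variable and $c$-sequence apparatus delivers (strictly increasing $c$-sequences guarantee the post-cancellation coefficient survives, and Proposition \ref{pro:unique_companion} together with $a_{\beta_{j+1},\Gamma}\neq 0$ guarantees the coefficient at the exponent $\beta_{j+1}$ is genuinely nonzero). So your skeleton matches the paper's, but the crucial middle step is missing and, as formulated, would not go through.
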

Theorem \ref{teo:collection} is proved (as Proposition \ref{pro:L_is_c_coll}, page \pageref{pro:L_is_c_coll}) 
providing an explicit construction of
$\Lambda_{\Gamma}$.
\begin{defi}
  \label{def:cbasis}
       %Let $\Gamma$ be a singular branch of planar curve. 
  We say that ${\mathcal V}=(\lambda_1, \ldots, \lambda_m)$ is a
  ${\mathcal C}$-{\it basis of values} (of K\"{a}hler differentials) for
  $\Gamma$ if ${\mathcal V}$ is a minimal set of generators of the
  ${\mathcal C}$-collection $\Lambda_{\Gamma}$ (cf. Definition
  \ref{def:gen_collection}).  We say that
  ${\mathcal B} = (\omega_1, \ldots, \omega_m)$ is a
  ${\mathcal C}$-{\it basis} (of $1$-forms) for $\Gamma$ if
  $(\nu_{\Gamma} (\omega_1), \ldots, \nu_{\Gamma} (\omega_m))$ is a
  ${\mathcal C}$-basis of values. % for $\Gamma$.
\end{defi}
\begin{rem}
  There is a unique ${\mathcal C}$-basis of values of the
  ${\mathcal C}$-collection $\Lambda_{\Gamma}$ (Proposition
  \ref{pro:min_gen}).
\end{rem}
\begin{rem}
\label{rem:lambda_on_s_b}
$\Lambda_{\Gamma}$ depends only on the ${\mathcal C}$-basis of values
and the semigroup ${\mathcal S}_{\Gamma}$.
\end{rem}
So, in order to understand the ``gap" between ${\mathcal S}_{\Gamma}$
and $\Lambda_{\Gamma}$ from a geometrical viewpoint, it suffices to
give a geometrical interpretation to the $1$-forms of a
${\mathcal C}$-basis. This is the goal of next subsection.

\subsection{Dicritical foliations}
Assume that $\Gamma$ is fixed.  The $1$-forms in a
${\mathcal C}$-basis for $\Gamma$, except $dx$ and $dy$, are
dicritical, i.e. have infinitely many integral curves 
(in a rough sense, one could conceive $dx$ and $dy$ as being ``dicritical'', with respect to the ``divisors'' $y=0$ and $x=0$, respectively).
 We describe them in this subsection.
%Let us remark that dicritical foliations are non-generic.
\begin{defi}
\label{def:e_n}
We define $e_0 = n$, $e_j = \gcd (e_{j-1}, \beta_j)$, $n_0
=1$
% $= \gcd (n, \beta_1, \ldots, \beta_j)$
and $n_j = e_{j-1}/e_j$ for any $1 \leq j \leq g$. We define the
partial multiplicities $\nu_j = n_0 \ldots n_j$ for any
$0 \leq j \leq g$.
\end{defi}
\begin{defi}
\label{def:j_family}
Let $\omega \in \Omega^{1} \cn{2} \setminus \{0 \}$.
%be a non-vanishing $1$-form. 
Consider $1 \leq j \leq g$.  We say that a family
${\mathfrak C} = \{ \gamma_{j,s} : s \in {\mathbb C}^{*} \}$ of
invariant curves of $\omega=0$ 
\emph{fans $\Gamma$ to genus $j$}
%(with $\Gamma$)
%is a
%$(j,\Gamma)$-{\it (invariant)
%  family for} $\omega$ (or just a $j$-{\it family} if $\Gamma$ is
%implicit)
if there exists a holomorphic map
\begin{equation*} 
  \begin{array}{ccccc}
    {\mathcal P} & : & U & \to & {\mathbb C}^{2} \\
                 & & (t, s) & \mapsto & \left( t^{n},
                                        \sum_{\beta < \beta_{j}}
                                        a_{\beta, \Gamma} t^{\beta}
                                        + s t^{\beta_{j}} 
                                        + \sum_{k=1}^{\infty}
                                        a_{\beta_{j} + k e_{j}} (s)
                                        t^{\beta_{j} + k e_{j}} \right)
  \end{array}
\end{equation*}
where $U$ is a neighborhood of $\{0\} \times {\mathbb C}^{*}$ in
${\mathbb C}^{2}$ and ${\mathcal P} (\cdot, s_0)$ is a parame\-tri\-zation
(which could be non-injective) of $\gamma_{j,s_0}$ for any
$s_0 \in {\mathbb C}^{*}$.  For short, whenever we speak of a
family $\mathfrak{C}$ of invariant curves for $\omega$, we shall omit
the fact that they are invariant, and just speak of a family
$\mathfrak{C}$ of $\omega$.
\end{defi}
\begin{rem}
  Fixed $\Gamma$ and $\omega$, there is at most one
  family $\mathfrak{C}$ of $\omega$
  %$j$-family
  fanning $\Gamma$ to genus $j$.  Hence, in such
  a case, each $\gamma_{j,s}$ is a well-defined branch.  Indeed,
  ${\mathfrak C}$ is a family of curves that intersects a
  non-invariant divisor of the desingularization of $\Gamma$ (Remark
  \ref{rem:bij_trace_exp}).
\end{rem}
\begin{rem} \label{rem:select}
  All curves belonging to a family of
  $\omega$ fanning $\Gamma$ to genus $j$
  %in a $(j,\Gamma)$-family for $\omega$
  are in the same equisingularity
  class as the curves $\Gamma_{j,s}$ of parametrization
  \begin{equation}
  \label{equ:G_j_s}
    \Gamma_{j,s}(t) = \left( t^{n/e_j},
      \sum_{\beta < \beta_{j}}
      a_{\beta, \Gamma} t^{\beta/e_j}  + s t^{\beta_{j}/e_j}
    \right)
  \end{equation}
  for $s \in {\mathbb C}^{*}$. Such curves have multiplicity $n/e_{j}$
  and Puiseux characteristic exponents
  $\beta_{1}/e_{j}, \ldots, \beta_{j}/e_{j}$. In particular, for
  $j=g$, where $e_{g} =1$, all curves in a family fanning
  $\Gamma$ to genus $g$ are in the equisingularity class of
  $\Gamma$.

  Moreover, a family (of $\omega$) fanning $\Gamma$ to genus
  $j$
% the $j$-family
  ``selects" a special invariant curve $\gamma_{j, s}$ for
  $s \in {\mathbb C}^{*}$.  It is the unique invariant curve $\gamma$
  in the equisingularity class of $\Gamma_{j,1}$ such that the
  intersection number $(\gamma, \Gamma_{j,s})$ at the origin is
  maximal (Remark \ref{rem:max_inter}).
  In other words, the family has a single member
  having maximal contact with $\Gamma$.
  % In the case $s = a_{\beta_{j}, \Gamma}$, the $j$-family selects a
  % so called (invariant) {\it companion curve}
  % $\gamma_{j, a_{\beta_{j}, \Gamma}}$.
\end{rem}
 \begin{rem}
   For the case $g=1$, a family of $\omega$ fanning
   $\Gamma$ to genus $1$
   %$(j, \Gamma)$-family for $\omega$
   is called an {\it analytic semiroot} of $\Gamma$
   associated to $\omega$ in \cite{Cano-Corral-Senovilla:semiroots}.
 \end{rem}
\begin{defi}
\label{def:j_form}
We say that $\omega$ fans $\Gamma$ to genus $j$
% is a $(j,\Gamma)$-{\it (dicritical) form}
%(or just a $j$-{\it form} if $\Gamma$ is implicit)
if there exists a family of $\omega$ fanning $\Gamma$ to genus
$j$.
%$(j,\Gamma)$-family for $\omega$.
We say that $\omega$ fans $\Gamma$ if there is some
$1\leq j \leq g$ such that $\omega$ fans $\Gamma$ to order $j$. Notice
that $\omega$ is necessarily dicritical if this is the case.
%if it is a $j$-form
%for some $1 \leq j \leq g$.
\end{defi}
\subsection{Bases of fanning dicritical forms}
Now, we can describe a ${\mathcal C}$-basis of $\Gamma$ from a
geometrical viewpoint.  It consists of the trivial $1$-forms $dx$ and
$dy$ and dicritical forms fanning $\Gamma$. The following result follows as a corollary 
of Proposition \ref{pro:6-5-theorem} (page \pageref{pro:6-5-theorem}).
\begin{teo}
\label{teo:lambda_on_s_b}
Let $\Gamma$ be a singular branch of planar curve. Then there is a
${\mathcal C}$-basis
\begin{equation*}
  {\mathcal B} = (\Omega_1 = dx, \Omega_2 = dy, \Omega_3, \ldots,
  \Omega_m)
\end{equation*}
such that $\Omega_j$ fans $\Gamma$ to genus $k_j$
% is a $(k_{j},\Gamma)$-form, 
where
\begin{equation*}
  k_j = \max \{ 1 \leq k \leq g : \overline{\beta}_k < \nu_{\Gamma} (\Omega_j) \},
\end{equation*}
for any $2 < j \leq m$.
\end{teo}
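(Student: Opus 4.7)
The plan is to realize the unique $\mathcal{C}$-basis of values of $\Lambda_{\Gamma}$ (granted by the remark following Definition \ref{def:cbasis}) by an explicit list of $1$-forms of prescribed geometric type. Since the $\mathcal{C}$-basis of values is unique, the task reduces to producing, for each of its elements $\lambda_{j}$, a $1$-form $\Omega_{j}$ with $\nu_{\Gamma}(\Omega_{j}) = \lambda_{j}$ and with the fanning behavior required by the statement.

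The two smallest elements of the basis of values are forced to be $\nu_{\Gamma}(dx)$ and $\nu_{\Gamma}(dy)$, which are the two smallest elements of $\Lambda_{\Gamma}$ and are easily seen to be minimal generators of the $\mathcal{C}$-collection (neither is produced from the other by $\mathcal{S}_{\Gamma}$-translation or $\mathcal{C}$-propagation). Setting $\Omega_{1} = dx$ and $\Omega_{2} = dy$ disposes of them. For each remaining value $\lambda_{j}$, let $k = k_{j}$ be the associated genus as in the statement. I would construct $\Omega_{j}$ as the dicritical $1$-form whose integral curves form a parametric family
\[
  \mathcal{P}(t,s) = \Bigl(t^{n},\; \sum_{\beta < \beta_{k}} a_{\beta,\Gamma}\, t^{\beta} + s\, t^{\beta_{k}} + \sum_{\ell \geq 1} a_{\beta_{k}+\ell e_{k}}(s)\, t^{\beta_{k}+\ell e_{k}}\Bigr)
\]
prescribed by Definition \ref{def:j_family}. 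By construction $\Omega_{j}$ fans $\Gamma$ to genus $k$. The free coefficient functions $a_{\beta_{k}+\ell e_{k}}(s)$ act as the design variables controlling $\nu_{\Gamma}(\Omega_{j})$: the value jumps in increments of $e_{k}$ as one alters the lowest $\ell$ at which the family deviates from $\Gamma$.

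The main obstacle is the matching between the $\mathcal{C}$-collection structure of $\Lambda_{\Gamma}$ and the values attainable by these deformations. In one direction one must show that any $\lambda_{j}$ lying in $(\overline{\beta}_{k_{j}}, \overline{\beta}_{k_{j}+1}]$ (with the convention $\overline{\beta}_{g+1} = \infty$) cannot be realized by a form fanning $\Gamma$ to a strictly smaller genus; otherwise the $\mathcal{C}$-collection propagation $m + \overline{\beta}_{k} - \overline{\beta}_{j} \in \Lambda_{\Gamma}$ would force $\lambda_{j}$ to lie in the $\mathcal{C}$-propagation of a smaller generator, contradicting minimality. In the other direction one must check that the parametric construction at genus $k_{j}$ actually hits every such $\lambda_{j}$. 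Both assertions reduce to value computations on the resolution of singularities of $\Gamma$: the infinitesimal generator of the family $\mathcal{P}$, paired with $d\Gamma$, computes $\nu_{\Gamma}(\Omega_{j})$ explicitly in terms of the lowest perturbed coefficient, and this calibration is the delicate technical core of the argument. Once it is in place, $(dx, dy, \Omega_{3}, \ldots, \Omega_{m})$ realizes the unique $\mathcal{C}$-basis of values and is therefore a $\mathcal{C}$-basis of $1$-forms in the sense of Definition \ref{def:cbasis}.
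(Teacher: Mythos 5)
There is a genuine gap: your argument postpones, and then never supplies, exactly the step on which the whole theorem rests. You propose to take each minimal generator $\lambda_j$ of the $\mathcal{C}$-collection $\Lambda_{\Gamma}$ and ``construct $\Omega_j$ as the dicritical $1$-form whose integral curves form a parametric family'' as in Definition \ref{def:j_family}, with the coefficient functions $a_{\beta_k+\ell e_k}(s)$ used as design variables to tune $\nu_{\Gamma}(\Omega_j)$. But nothing guarantees that an arbitrarily prescribed family of curves of that shape is the set of invariant curves of a germ of holomorphic $1$-form at the origin, nor that the values $\nu_{\Gamma}(\omega)$ attainable by forms fanning $\Gamma$ to genus $k_j$ include the particular integer $\lambda_j$: which values are attainable depends delicately on the analytic moduli of $\Gamma$ (the coefficients $a_{\beta,\Gamma}$), not only on its equisingularity class, and the heuristic that ``the value jumps in increments of $e_k$'' is precisely the assertion that needs proof. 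You acknowledge this yourself (``this calibration is the delicate technical core of the argument'') and then do not carry it out, so the proposal is a reduction of the theorem to its hard part rather than a proof. Note also that your ``one direction'' (that $\lambda_j$ cannot be realized by a form fanning to smaller genus) is not what is needed for existence of the basis; what is needed is realization at genus $k_j$, together with holomorphy of the realizing form.

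For comparison, the paper obtains the realizing forms in the opposite direction: it first constructs a $\mathbb{C}[[x]]$-basis algorithmically (terminal and good families built from the approximate roots, Sections \ref{sec:construction_cx_bases}--\ref{sec:analysis_construction}), classifies the geometric type of every form so produced (Proposition \ref{pro:type_forms}), and proves that the forms of the relevant type fan $\Gamma$ to the correct genus as a theorem about forms with leading variable in $\widehat{\mathsf{Lead}}$ (Proposition \ref{pro:jG}), where convergence of the fanning family is established via the non-invariance of the characteristic divisor and the maximum modulus argument. Minimality then eliminates the other types: forms of type \eqref{type:induced_s} are discarded already at the level of the $\mathcal{S}$-basis (Proposition \ref{pro:gen_s}), and forms of type \eqref{type:induced_c} are discarded for the $\mathcal{C}^{w}$- and $\mathcal{C}$-bases because equation \eqref{equ:no_collection} places their values in the $\mathcal{C}^{w}$-closure of smaller values, so by Proposition \ref{pro:min_gen} they cannot be minimal generators. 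If you want to pursue your family-first strategy, you would have to prove an existence-and-value statement equivalent to Propositions \ref{pro:jG} and \ref{pro:type_forms} (holomorphy of the form attached to the family, and the exact computation of $\nu_{\Gamma}$ in terms of the first deviating coefficient), which is essentially the machinery of leading variables and $c$-sequences that the paper develops.
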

In particular, note that given the ${\mathcal C}$-basis of values
$(n, \beta_1, \lambda_2, \ldots, \lambda_m)$, $k_j$ just depends on
$\lambda_j$ and ${\mathcal S}_{\Gamma}$ for any $1 < j \leq m$.  Thus,
the knowledge of $\lambda_j$ determines what kind of
fanning dicritical form $\omega_j$ is for $j>2$.
\begin{rem}
  The interest of Theorem \ref{teo:lambda_on_s_b} is that is valid for
  any singular branch $\Gamma$ of planar curve. The result is already
  known for the case $g=1$.  Then a ${\mathcal C}$-collection is just
  a set $S \subset {\mathbb Z}_{>0}$ such that
  $S + {\mathcal S}_{\Gamma} \subset S$ and Theorem
  \ref{teo:lambda_on_s_b} is a consequence of the main theorem of
  \cite[Cano, Corral,
  Senovilla-Sanz]{Cano-Corral-Senovilla:semiroots}.
 \end{rem}
 \begin{rem}
   A theorem of Rouill\'{e} asserts that given a germ of $1$-form
   $\Omega$ in $\cn{2}$ such that the foliation $\Omega=0$ is a
   generalized curve then
   $\nu_{\Gamma} (\Omega) \in {\mathcal S}_{\Gamma}$
   \cite{Rouille:Merle}.  Let us recall that a germ of foliation in
   $\cn{2}$ is a generalized curve if it is non-dicritical and
   no saddle-node
   singularity appears in its reduction of singularities.
   So the gap between ${\mathcal S}_{\Gamma}$ and
   $\Lambda_{\Gamma}$ has to be associated to saddle-node
   singularities or dicritical behavior.  We see in this paper that only
   the latter phenomenon is responsible for the gap and moreover that
   the dicritical nature of the $1$-forms in the ${\mathcal C}$-basis
   is linked to the characteristic divisors of the desingularization
   of $\Gamma$ (cf. Remark \ref{rem:bij_trace_exp} and Proposition
   \ref{pro:jG}).
 \end{rem}
 \subsection{Step by step construction of a basis}
 \label{subsec:step_by_step}
 First, we are going to introduce the exponents associated to
 $\Gamma$, their coefficients are parameters in the equisingularity
 class of $\Gamma$.  Our method to describe $\Lambda_{\Gamma}$
 analyzes the contributions of the coefficients of the exponents of
 $\Gamma$ one by one.
 \begin{defi}
 \label{def:set_exp}
 We define the {\it set of exponents}
 \begin{equation*}
   {\mathcal E}_{\Gamma} = 
   \cup_{1 \leq j \leq g}
   \{ \beta_j + k e_j :  k \in {\mathbb Z}_{\geq 0} \}
 \end{equation*}
 % \begin{equation*} {\mathcal E}_{\Gamma} = \{ n + kn : k \in
 %   {\mathbb Z}_{\geq 0} \} \cup \cup_{1 \leq j \leq g} \{ \beta_j +
 %   k e_j : k \in {\mathbb Z}_{\geq 0} \} \end{equation*}
 of $\Gamma$. Given $\beta \in {\mathcal E}_{\Gamma}$, we denote by
 ${\mathfrak n} (\beta)$ (resp. ${\mathfrak p} (\beta)$) the next
 element (resp. the previous) element of ${\mathcal E}_{\Gamma}$.  By
 convention, we define ${\mathfrak n}(n) = \beta_{1}$.
 \end{defi}
 Next, we introduce families of curves that appear naturally along the
 process.  Indeed, we work with the family $\fgb{\Gamma}{\beta}$ when
 we ponder the contribution of $a_{\beta}$ to the construction of
 bases.
 \begin{defi}
   We define $\fg{\Gamma}$ (the branches generated by
   $\mathcal{E}_{\Gamma}$) as the set of formal curves
 \begin{equation}
 \label{equ:exp}
 \overline{\Gamma} (t) =
 \left( t^{n},
   \sum_{\beta \in {\mathcal E}_{\Gamma} }
   a_{\beta, \overline{\Gamma}} t^{\beta}
 \right)
 \in {\mathbb C}\{t\} \times {\mathbb C}[[t]]  ,
\end{equation}
where $a_{\beta, \overline{\Gamma}} \in {\mathbb C}$ for any
$\beta \in {\mathcal E}_{\Gamma}$.  We denote by $\fga{\Gamma}$
(the branches strictly generated by $\mathcal{E}_{\Gamma}$) the subset
of $\fg{\Gamma}$ of formal curves such that
$a_{\beta_{1}, \overline{\Gamma}} \ldots a_{\beta_{g},
  \overline{\Gamma}} \neq 0$.  Finally,
\begin{equation*}
  \fgb{\Gamma}{\beta} = \left\{ 
    \overline{\Gamma} \in \fg{\Gamma}  :
    a_{\beta^{\prime}, \overline{\Gamma}}  = a_{\beta^{\prime}, \Gamma}
    \ \forall \beta^{\prime} < \beta  \right\} 
\end{equation*}
and
\begin{equation*}
     \fgba{\Gamma}{\beta}  = \fgb{\Gamma}{\beta}  \cap \fga{\Gamma}
\end{equation*}
for $\beta \in {\mathcal E}_{\Gamma}$.  By convention
$\fgb{\Gamma}{n}$ consists only of the curve $x=0$ and
$\fgb{\Gamma}{\infty} = \fgba{\Gamma}{\infty} =  \{ \Gamma \}$.
 \end{defi}
 \begin{rem}
   The parametrization \eqref{equ:exp} can be written in the form
   $\gamma (t^{k})$, where
   $\gamma \in {\mathbb C}\{t\} \times {\mathbb C}[[t]]$, if
   $k:= \gcd (\{ \beta \in {\mathcal E}_{\Gamma} : a_{\beta,
     \overline{\Gamma}} \neq 0 \} \cup \{ n\}) >1$.
 \end{rem}
 \begin{rem}
   A curve in $\fg{\Gamma}$ belongs to the equisingularity class of
   $\Gamma$ if and only if it belongs to
   $\fga{\Gamma}$. Moreover, any curve equisingular to $\Gamma$
   is analytically conjugated to a curve in $\fga{\Gamma}$. So,
   in order to understand $\Lambda_{\overline{\Gamma}}$ for
   $\overline{\Gamma}$ in the equisingularity class of $\Gamma$, it
   suffices to consider $\overline{\Gamma} \in \fga{\Gamma}$.
 \end{rem}
 \begin{rem}
   Besides $\Gamma$, we work with subfamilies of $\fg{\Gamma}$. Note
   that the coefficients $a_{\beta}$, with
   $\beta \in {\mathcal E}_{\Gamma}$, are parameters in $\fg{\Gamma}$.
\end{rem}
In order to obtain a ${\mathcal C}$-basis of values and forms, we work
with weaker (and simpler) structures.  Anyway, these new concepts are
building blocks to obtain the ${\mathcal C}$-basis.
\begin{defi}
  We say that a subset $S$ of ${\mathbb Z}_{>0}$ is a
  ${\mathbb C}[[x]]$-collection 
  if $s + kn \in S$ for all $s \in S$
  and $k \in {\mathbb Z}_{\geq 0}$.
\end{defi}
\begin{defi}
\label{def:nbasis}
As in Definition \ref{def:cbasis}, we can define a
${\mathbb C}[[x]]$-basis of values 
(also called an Apery basis of values \cite{Apery:branches})
and a ${\mathbb C}[[x]]$-basis of
$1$-forms for $\Gamma$.
\end{defi}
\begin{rem}
  A ${\mathbb C}[[x]]$-basis of values contains $n$ elements, one for
  each class of congruences modulo $n$.  A ${\mathbb C}[[x]]$-basis of
  $1$-forms for $\Gamma$ has independent interest since it provides a
  free basis of the ${\mathbb C}[[x]]$-module
  $\Gamma^{*} \hat{\Omega}^{1} \cn{2}$ (cf. Definition
  \ref{def:omega}) of formal differentials of $\Gamma$ (see
  Proposition \ref{pro:unique_expression_cbasis}).
\end{rem}
Let us give the idea of our method for constructing
a ${\mathbb C}[[x]]$-basis of
$1$-forms for $\Gamma$.  We divide the construction of
the forms in $g+1$ levels, from the
$0$-level to the $g$-level. The $1$-forms we obtain along
the way, are of the following special type:
% Let us discuss the properties of
% $1$-forms $\omega$ obtained along the process. They are of a special
% type that we define next.
\begin{defi}
  \label{def:strong}
  Consider $\omega \in \Omega^{1} \cn{2}$.  We say that $\omega$
  \emph{\touches{}} $\Gamma$
  % is $\Gamma$-{\it strong}
  if there exist an exponent $\beta \in {\mathcal E}_{\Gamma} \cup \{n \}$
  and a branch
  $\Gamma_{\beta} \in \fgb{\Gamma}{\beta}$ equisingular to the
  curve of parametrization
  $\left( t^{n}, \sum_{\beta^{\prime} < \beta} a_{\beta^{\prime},
      {\Gamma}} t^{\beta^{\prime}} \right)$ and is also invariant by
  $\omega =0$.
  We say that $\omega$ \emph{\touches{} $\Gamma$
    to order $\beta^{\prime}$}
  %is
  %$(\Gamma, \beta^{\prime})$-{\it strong}
  if $\beta^{\prime}$ is the supremum in
  ${\mathcal E}_{\Gamma} \cup \{n, \infty\}$ of the values satisfying
  that property.  Then we say that $\Gamma_{\beta^{\prime}}$ is a
  $\Gamma$-{\it companion curve} of $\omega$ (or just a companion
  curve if $\Gamma$ is implicit).
\end{defi}
\begin{rem}
  Obviously, a $1$-form $\omega$ which fans $\Gamma$
  necessarily \touches{} $\Gamma$ to some order. The converse does not
  hold in general, as a non-dicritical form can \touch{} $\Gamma$ but not
  fan it.
\end{rem}
 \begin{rem}
   Let $\omega$ be a $1$-form \touch{}ing $\Gamma$ to order $\beta$.
   %$(\Gamma, \beta)$-strong $1$-form.
   The idea is that the higher the
   $\beta$, the closer $\Gamma$ is to being invariant by $\omega=0$.
 \end{rem}
 \begin{rem}
   We can think of a $\Gamma$-companion curve of $\omega$ as an
   invariant curve of $\omega=0$ that has maximal contact
   with $\Gamma$.
 \end{rem}
 % By definition, the initial and terminal family of
 We start with the family ${\mathcal T}_{\Gamma, 0} := \{ dx, dy \}$,
 both of whose elements \touch{} $\Gamma$.
 %-strong $1$-forms of the
 %$0$-level is .
 Indeed $dx$ \touches{} $\Gamma$ to order $n$, whereas $dy$
 \touches{} $\Gamma$ to order $\beta_1$. The companion curves of $dx$ and $dy$
 are $x=0$ and $y=0$ respectively.
 % The companion curves of $dx$ and $dy$ are $x=0$ and
 % $\{y=0\} \in \fgb{\Gamma}{\beta_1} \setminus
 % \fgb{\Gamma}{{\mathfrak n}(\beta_1})$
 % respectively.
 The former is a smooth curve whose tangent cone is different from that of $\Gamma$,
 while $y=0$ is a smooth curve with maximal contact with $\Gamma$. This family
 $\mathcal{T}_{\Gamma,0}$ will be called the \emph{terminal family of level} $0$,
 and is the starting point of our process. By hypothesis,
 $[n]_n\neq [\beta_1]_n$; this property of having different contacts modulo $n$ will
 be the basis of our construction: we are going to find, inductively, a set of
 $1$-forms $(\Omega_1,\ldots,\Omega_n)$ such that
 $[\nu_{\Gamma}(\Omega_i)]_n\neq [\nu_{\Gamma}(\Omega_{j})]_n$ for $i\neq j$. In
 order to find these forms, we shall need to impose specific properties on them, and
 use the approximate roots of $\Gamma$ (cf. Definition \ref{def:aprox_root}) in
 their construction.

 Let us briefly explain how to obtain iteratively the terminal families of levels
 $1,\ldots, g$.  As already indicated, the approximate roots
 $f_{ \Gamma, 1}, \ldots, f_{\Gamma, g}$ of $\Gamma$ are a key ingredient.
 %Note that $f_{\Gamma, j} = 0$ belongs to
 %$\fgb{\Gamma}{\beta_j} \setminus \fgb{\Gamma}{{\mathfrak n(\beta_j)}}$ for any
 %$1 \leq j \leq g$.
 We denote by $f_{\Gamma, g+1}$ an irreducible equation of
 $\Gamma$ and $\beta_{g+1} = \infty$. Consider the free
 ${\mathbb C}[[x]]$-module
\begin{equation}
\label{equ:module}
{\mathcal M}_{2 \nu_{j}} =  \left(
  \bigoplus_{j=0}^{\nu_{j}-1}
  \mathbb{C}[[x]] y^{j} dx\right) \,
\bigoplus\,
\left(\bigoplus_{j=0}^{\nu_{j}-1}  \mathbb{C}[[x]] y^{j} dy\right)
\end{equation}
for any $0 \leq j \leq g$ (cf. Definition \ref{def:e_n}).  Assume, inductively, that
the terminal family ${\mathcal T}_{\Gamma, l}$ of $l$-level for $0 \leq l < g$ is
known,which by definition (Definition \ref{def:level}, page \pageref{def:level})
is a free $\mathbb{C}[[x]]$-basis of ${\mathcal M}_{2 \nu_{l}}$ such that
any $1$-form in ${\mathcal T}_{\Gamma, l}$ \touches{} $\Gamma$ to order $\beta$, for
some $\beta$ with $n \leq \beta \leq \beta_{l+1}$. The initial family of level $l+1$ is defined as
\begin{equation}
\label{equ:ini_fam}
{\mathcal G}_{\Gamma, l+1}^{0} =
\{ f_{\Gamma, l+1}^{m} \omega : 0 \leq m < n_{l+1} 
\ \mathrm{and} \ \omega \in {\mathcal T}_{\Gamma, l} \}  
\end{equation}
Since $f_{\Gamma, l+1}$ is a monic polynomial in
$y$ with coefficients in ${\mathbb C}\{x\}$ of degree $\nu_{l}$
(cf. Remark \ref{rem:app_roots}), it follows that
${\mathcal G}_{\Gamma, l+1}^{0}$ is a free basis, consisting of
$2 \nu_{l+1}$ $1$-forms \touch{}ing $\Gamma$, of the the free
${\mathbb C}[[x]]$-module ${\mathcal M}_{2 \nu_{l+1}}$.  Indeed,
$f_{\Gamma, l+1}^{m} \omega$ \touches{} $\Gamma$ to order $\beta_{l+1}$
with companion curve $f_{\Gamma, l+1}=0$ for $m>0$ and
$\omega \in {\mathcal T}_{\Gamma, l}$.  Thus, all $1$-forms in
${\mathcal G}_{\Gamma, l+1}^{0}$ \touch{} $\Gamma$ to order $\beta$ for
some $n \leq \beta \leq \beta_{l+1}$.
%in particular they have at most $l$ Puiseux characteristic exponents.
Since $\deg_{y} (f_{\Gamma, l+2})= \nu_{l+1}$, we have
\begin{equation*}
  {\mathcal M}_{2 \nu_{l+1}} + (f_{\Gamma, l+2}) dx +  (f_{\Gamma, l+2}) dy
  = \hat{\Omega}^{1} \cn{2} ,
\end{equation*}  
%\hat{\Omega}^{1} \cn{2},  \end{equation*}
where $(f_{\Gamma, l+2})$ denotes an ideal in ${\mathbb C}[[x,y]]$
(cf. Definition \ref{def:omega}).  So, somehow, we are working modulo
$(f_{\Gamma, l+2})$ in the $l+1$-level.

Unfortunately, the $1$-forms in ${\mathcal G}_{\Gamma, l+1}^{0}$ do
not have pairwise incongruent contacts with $\Gamma$ modulo $n$: for
instance, for the case $l=0$, we have
$\nu_{\Gamma} (y dx) = n + \nu_{\Gamma} (dy)$.  We introduce an
algorithm that modifies the basis for ${\mathcal M}_{2 \nu_{l+1}}$,
providing a new one with better properties.  At the final stage of the
algorithm for the $l+1$-level, we obtain a basis
${\mathcal T}_{\Gamma, l+1}$ of $1$-forms of
${\mathcal M}_{2 \nu_{l+1}}$ such that
\begin{itemize}
\item all contacts with $\Gamma$ of its forms are pairwise
  incongruent modulo $n$ if $l+1 < g$ or
\item it contains a ${\mathbb C}[[x]]$-basis of $1$-forms of $\Lambda_{\Gamma}$ if
  $l+1=g$, ending the construction.
\end{itemize}
The algorithm intends to construct bases
${\mathcal G}_{\Gamma, l+1}^{1}, \ldots, {\mathcal G}_{\Gamma,
  l+1}^{s} = {\mathcal T}_{\Gamma, l+1}$ of
${\mathcal M}_{2 \nu_{l+1}}$ with $1$-forms \touch{}ing $\Gamma$ whose
companion curves have increasing contact orders with $\Gamma$.  More
precisely, given $s \geq 1$, half the $1$-forms in
${\mathcal G}_{\Gamma, l+1}^{s}$ \touch{} $\Gamma$ to order $\beta$ for
some $\beta \geq {\mathfrak n}^{s} (\beta_{l+1})$.

For $l+1 < g$, we know how many times $s_l$ we have to run the algorithm
before finding the terminal family
${\mathcal G}_{\Gamma, l+1}^{s_l}={\mathcal T}_{\Gamma, l+1}$. Indeed, (see Proposition \ref{pro:from-l-to-next}, page \pageref{pro:from-l-to-next})
we have ${\mathfrak n}^{s_l} (\beta_{l+1}) = \beta_{l+2}$.
\begin{rem}
  The $1$-forms in ${\mathcal G}_{\Gamma, l+1}^{s}$, and hence their companion
  curves, depend on $a_{\beta, \Gamma}$, with
  $\beta < {\mathfrak n}^{s}(\beta_{l+1})$, for $s \geq 0$.  In other words, we have
  ${\mathcal G}_{\Gamma, l+1}^{s} = {\mathcal G}_{\Gamma^{\prime}, l+1}^{s}$ for any
  $\Gamma^{\prime} \in \fgba{\Gamma}{{\mathfrak n}^{s}(\beta_{l+1})}$.  
  Thus, our method contemplates the contribution of each
  coefficient $a_{\beta}$, for $\beta \in {\mathcal E}_{\Gamma}$, in the
  construction of a ${\mathbb C}[[x]]$-basis for $\Gamma$. Indeed, the contribution
  of $a_{{\mathfrak n}^{s} (\beta_{l+1}), \Gamma}$, with
  ${\mathfrak n}^{s} (\beta_{l+1}) < \beta_{l+2}$, is factored in when we compute
  ${\mathcal G}_{\Gamma, l+1}^{s+1}$ from ${\mathcal G}_{\Gamma, l+1}^{s}$.  The
  contribution of coefficients in the study of the analytic moduli space has been
  considered by Casas-Alvero in the case of one characteristic exponent
  \cite{Casas:class_1_exponent}.

  The $1$-forms in ${\mathcal G}_{\Gamma, l+1}^{s}$ that \touch{} $\Gamma$ to order
  $ \beta$ for $\beta \geq {\mathfrak n}^{s}(\beta_{l+1})$ are particularly
  interesting since their companion curves depend just on $a_{\beta, \Gamma}$, with
  $\beta < {\mathfrak n}^{s}(\beta_{l+1})$ but they deviate from $\Gamma$ at
  coefficients $a_{\beta}$ with $\beta \geq {\mathfrak n}^{s}(\beta_{l+1})$,
  i.e. that have not been treated in the algorithm yet.  Such companion curves are
  indicating possible gaps or jumps in the semimodule of curves in
  $\fga{\Gamma}$ associated to future coefficients. Our construction
  provides a geometrical insight into the semimodule of $\Gamma$, as we can describe
  the companion curves of the key dicritical $1$-forms obtained along the
  way: actually, any such $1$-form fans $\Gamma$ to genus $j$ for some
  $1 \leq j \leq g$ (see, for instance, Proposition \ref{pro:type_forms},
  page \ref{pro:type_forms}).
\end{rem}
 \begin{rem}
   Given $\beta \in {\mathcal E}_{\Gamma}$, we will describe the coefficients
   $a_{\beta}$ in which the behavior of $\overline{\Gamma}$-values of K\"{a}hler
   differentials for
   $\overline{\Gamma} \in \fgba{\Gamma}{\beta}$ change.
   These are the so called singular directions associated to $\overline{\Gamma}$ and
   the exponent $\beta$.  Any curve $\overline{\Gamma} \in \fga{\Gamma}$ whose
   coefficients $a_{\beta, \overline{\Gamma}}$ avoid the singular directions
   associated to $\overline{\Gamma}$ and $\beta$ for any
   $\beta \in {\mathcal E}_{\Gamma}$, has the same semimodule as a generic element
   of $\fga{\Gamma}$ (cf. Theorem \ref{teo:ana_inv_sing_dir}, page
   \pageref{teo:ana_inv_sing_dir}).
 \end{rem}

 Our methods provide remarkable connections between $\mathcal{S}_{\Gamma}$ and
 $\Lambda_{\Gamma}$. The following one is proved in page
 \pageref{pro:gen_sg_in_sbasis}, as Proposition \ref{pro:gen_sg_in_sbasis}:
 \begin{teo}
 	\label{teo:gen_sg_in_sbasis}
 	The minimal generator set of $\Lambda_{\Gamma}$, as a semimodule over
 	the semigroup $\mathcal{S}_{\Gamma}$, contains the minimal generator set of
 	$\mathcal{S}_{\Gamma}$.
 \end{teo}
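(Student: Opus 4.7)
The plan is to show that each of $n,\overline\beta_1,\dots,\overline\beta_g$ (the minimal generators of $\mathcal{S}_\Gamma$) is also a minimal semimodule generator of $\Lambda_\Gamma$ over $\mathcal{S}_\Gamma$. First I would record that $\mathcal{S}_\Gamma\subset\Lambda_\Gamma$, since $\nu_\Gamma(df)=\nu_\Gamma(f)$ for every $f\in\mathbb{C}\{x,y\}$ vanishing at the origin; consequently every $\overline\beta_j$ already lies in $\Lambda_\Gamma$. The value $n=\nu_\Gamma(dx)$ is trivially a minimal semimodule generator, being the minimum of $\Lambda_\Gamma$.

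For each $1\le j\le g$, I would suppose for contradiction that $\overline\beta_j=\lambda+s$ with $\lambda\in\Lambda_\Gamma$ and $s\in\mathcal{S}_\Gamma$ both positive. Since $\overline\beta_j$ is the smallest element of $\mathcal{S}_\Gamma$ outside the subsemigroup $\langle n,\overline\beta_1,\dots,\overline\beta_{j-1}\rangle$, the constraint $0<s<\overline\beta_j$ forces $s$ into this subsemigroup; in particular $e_{j-1}\mid s$. Because $\overline\beta_j\equiv\beta_j\pmod{e_{j-1}}$ and $e_j=\gcd(e_{j-1},\beta_j)<e_{j-1}$, we get $e_{j-1}\nmid\overline\beta_j$ and hence $e_{j-1}\nmid\lambda$. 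If $\lambda\in\mathcal{S}_\Gamma$, the identity $\overline\beta_j=\lambda+s$ would split $\overline\beta_j$ nontrivially inside $\mathcal{S}_\Gamma$, contradicting its minimality there; so $\lambda\in\Lambda_\Gamma\setminus\mathcal{S}_\Gamma$.

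To rule out the range $\lambda<\beta_j$ I would use the truncation $\Gamma_{j-1}(t)=(t^n,\sum_{\beta<\beta_j}a_{\beta,\Gamma}t^\beta)$, which factors as $\tilde\Gamma(t^{e_{j-1}})$ for an irreducible branch $\tilde\Gamma$ of multiplicity $n/e_{j-1}$ and characteristic exponents $\beta_i/e_{j-1}$ with $i<j$. The chain rule gives $\nu_{\Gamma_{j-1}}(\omega)=e_{j-1}\,\nu_{\tilde\Gamma}(\omega)$, so every value of $\nu_{\Gamma_{j-1}}$ is a multiple of $e_{j-1}$. A straightforward expansion yields $\operatorname{ord}_t(\Gamma^{*}\omega-\Gamma_{j-1}^{*}\omega)\ge\beta_j-1$, so whenever $\nu_\Gamma(\omega)<\beta_j$ the leading terms of $\Gamma^{*}\omega$ and $\Gamma_{j-1}^{*}\omega$ agree and $\nu_\Gamma(\omega)\in e_{j-1}\mathbb{Z}$. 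Choosing $\omega$ with $\nu_\Gamma(\omega)=\lambda$ contradicts $e_{j-1}\nmid\lambda$.

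The remaining and hardest range is $\beta_j\le\lambda<\overline\beta_j$ with $\lambda\in\Lambda_\Gamma\setminus\mathcal{S}_\Gamma$; this is the main obstacle. I would address it using the $\mathcal{C}$-collection description of $\Lambda_\Gamma$ from Theorem~\ref{teo:collection} together with the algorithmic construction in \S\ref{subsec:step_by_step}. By Rouill\'e's theorem any $1$-form $\omega$ realizing such a $\lambda$ must be dicritical or contain a saddle-node, and by the analysis of fanning forms in Theorem~\ref{teo:lambda_on_s_b} we may represent $\lambda=m+\overline\beta_k-\overline\beta_l$ with $l<k\le j$, $m\in\Lambda_\Gamma$, $m\le\overline\beta_l$. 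When $k=j$, the complement satisfies $s=\overline\beta_j-\lambda=\overline\beta_l-m$, so $m+s=\overline\beta_l$; if additionally $m\in\mathcal{S}_\Gamma$, this splits $\overline\beta_l$ nontrivially in $\mathcal{S}_\Gamma$, contradicting the minimality of $\overline\beta_l$. The delicate part is to show that after iterated application of the $\mathcal{C}$-operation (and of the truncation argument to the subbranch $\tilde\Gamma$, to which strong induction on the genus applies) one can always reduce to this terminal representation with $k=j$ and $m\in\mathcal{S}_\Gamma$. This bookkeeping—essentially a finite descent along the companion-curve hierarchy of Definitions~\ref{def:j_family}--\ref{def:strong}—is where the geometric content of the paper is really used, and it yields the required contradiction.
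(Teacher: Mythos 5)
Your reductions up to the last step are sound and agree in spirit with the paper: $n$ is forced into any generating set because it is the minimum of $\Lambda_{\Gamma}$; if $\overline{\beta}_j=\lambda+s$ with $s\in\mathcal{S}_{\Gamma}$, $0<s<\overline{\beta}_j$, then $e_{j-1}\mid s$ and $e_{j-1}\nmid\lambda$, so $\lambda\notin\mathcal{S}_{\Gamma}$; and the truncation argument correctly excludes $\lambda<\beta_j$ (this is the congruence information the paper gets from Corollary \ref{cor:cong_term}). The problem is that the decisive case, $\beta_j\le\lambda<\overline{\beta}_j$ with $\lambda\in\Lambda_{\Gamma}\setminus\mathcal{S}_{\Gamma}$ and $e_{j-1}\nmid\lambda$, is not actually proved. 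The representation $\lambda=m+\overline{\beta}_k-\overline{\beta}_l$ with $l<k\le j$, $m\le\overline{\beta}_l$ and, crucially, $m\in\mathcal{S}_{\Gamma}$ is asserted, not derived: Rouill\'e's theorem and Theorem \ref{teo:lambda_on_s_b} do not yield it, and the structural description of $\mathcal{C}$-collections (Proposition \ref{pro:c_coll_gen}) only gives telescoping expressions whose initial term lies in the $\mathcal{C}$-basis of $\Lambda_{\Gamma}$, which need not lie in $\mathcal{S}_{\Gamma}$. The ``finite descent along the companion-curve hierarchy'' that you invoke to reach $k=j$ and $m\in\mathcal{S}_{\Gamma}$ is exactly the missing argument (and you would also need an induction on $l$ set up so that the splitting $m+s=\overline{\beta}_l$ contradicts something already established); as written, the proof stops where the real difficulty begins.

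For comparison, the paper does not descend inside the abstract semimodule at all: it works with the explicit $\mathbb{C}[[x]]$-basis produced by the algorithm. If $\overline{\beta}_{l+1}-\nu_{\Gamma}(\Omega)\in\mathcal{S}_{\Gamma}$ for some basis form $\Omega$, then $\Omega\in\mathcal{T}_{\Gamma,l}$, the congruence forces $\nu_{\Gamma}(\Omega)\in[\beta_{l+1}]_{e_l}$ and hence $\Beg_{\Gamma}(\Omega)=\beta_{l+1}$, and Remark \ref{rem:tracking_leading} tracks the value of $\Omega$ back through the stages to an initial form $f_{\Gamma,l}^{m}\Omega_j$ with $\Omega_j\in\mathcal{T}_{\Gamma,l-1}$, giving the identity
\begin{equation*}
\overline{\beta}_{l+1}-\nu_{\Gamma}(\Omega)=(n_l-m)\,\overline{\beta}_l-\nu_{\Gamma}(\Omega_j)-rn .
\end{equation*}
From this, the case $m=n_l-1$ contradicts the induction hypothesis that $\overline{\beta}_l$ is a minimal semimodule generator, and the case $m<n_l-1$ contradicts Remark \ref{rem:prop_beta} via a congruence modulo $e_{l-1}$. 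This value-tracking through the construction is precisely the content your ``bookkeeping'' would have to reproduce; without it (or a genuine substitute), your argument does not close.
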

 In the language of standard bases for $\Gamma$ 
 \cite{Delorme:modules} \cite{Cano-Corral-Senovilla:semiroots}, the theorem 
 states that there exists a standard basis that contains
 the $1$-forms $d f_{\Gamma,1}, \hdots, d f_{\Gamma, g}$.

 \strut
 
 Let us explain the structure of the paper.  We will describe basic
 concepts and properties that will be used throughout the paper in
 section \ref{sec:basic}.  The concept of leading variable for a
 $1$-form, along with some of its properties is introduced in section
 \ref{sec:leading}.  It is key in the constructions in the paper.
 This is already hinted at in subsection \ref{subsec:foliation}, where
 this framework is used to present foliations with interesting
 geometrical properties.  The construction of a geometrically
 significant ${\mathbb C}[[x]]$-basis for $\Gamma$ is described in
 section \ref{sec:construction_cx_bases}.  First, we describe the
 distinguished families of $1$-forms that will be used in the
 construction (subsection \ref{subsec:families}) and whose properties
 allow to organize the step-by-step construction that we present in
 subsections \ref{subsec:from_0_to_1} and \ref{subsec:from_s_to_s+1}.
 The analysis of the output of the construction is carried out in
 section \ref{sec:analysis_construction}.  In subsection
 \ref{subsec:step_by_step}, we explain how we are considering the
 contributions of the coefficients $a_{\beta}$, with
 $\beta \in {\mathcal E}_{\Gamma}$, one by one.  The geometrical
 properties of the $1$-forms of the ${\mathbb C}[[x]]$-basis of
 $1$-forms for $\Gamma$ given by the construction are described in
 subsection \ref{subsec:properties_forms_construction}.  This is a key
 ingredient in the proof of Theorem \ref{teo:lambda_on_s_b}.
 Subsection \ref{subsec:properties_semimodule} contains some initial
 consequences of our construction of a ${\mathbb C}[[x]]$-basis and in
 particular Theorem \ref{teo:collection}.  In section
 \ref{sec:generation_semimodule} we discuss several concepts of
 generator sets for $\Lambda_{\Gamma}$; we prove Theorem
 \ref{teo:lambda_on_s_b} and similar results for the other concepts
 of generation.  We also prove Theorem \ref{teo:gen_sg_in_sbasis}. 
  Finally, we profit from the step-by-step nature of
 our method to define singular directions for $\Gamma$.  They are
 infinitely near points of $\Gamma$ (cf. Definition
 \ref{def:seq_inf_pt}) that correspond to discontinuities in the
 values of K\"{a}hler differentials in the equisingularity class of
 $\Gamma$. We show that the singular directions are analytic
 invariants of $\Gamma$ (Theorem \ref{teo:ana_inv_sing_dir}).
 
 \section{Basic concepts and notation}
 \label{sec:basic}
 We introduce now the main definitions and notation required for our
 study.
 \begin{defi}
 \label{def:omega}
 We denote by $\Omega^{1} \cn{2}$ (resp. $\hat{\Omega}^{1} \cn{2}$)
 the set of germs of holomorphic (resp. formal) $1$-forms in a
 neighborhood of the origin in ${\mathbb C}^{2}$.  Their elements are
 of the form $a(x,y)dx + b(x,y)dy$ where $a, b \in {\mathbb C}\{x,y\}$
 (resp. $a, b \in {\mathbb C}[[x,y]]$).
  \end{defi}
 \begin{rem}
   Germs of holomorphic $1$-forms in $\cn{2}$ can be identified with
   derivations of the ${\mathbb C}$-algebra ${\mathbb C}\{x,y\}$.
   Formal $1$-forms can be identified with ${\mathbb C}$-derivations
   of ${\mathbb C}[[x,y]]$.
 \end{rem}  
 Let $\Gamma$ be an irreducible germ of holomorphic curve in $\cn{2}$. Given a power series $s(t)\in \mathbb{C}[[t]]$, we denote $\nu_0(s(t))=\ord_t(s(t))$.
 \begin{defi}
 \label{def:order}
 Let $\Gamma (t)$ be an irreducible Puiseux parametrization of $\Gamma$.  Let
 $f \in {\mathbb C}[[x,y]]$ and $\omega \in \hat{\Omega}^{1} \cn{2}$.  We define
 $\nu_{\Gamma} (f) = \nu_{0} (f \circ \Gamma(t))$, and
 $\nu_{\Gamma} (\omega) = \nu_{0} (h(t)) +1$ where $\Gamma^{*} \omega = h(t)
 dt$. This gives $\nu_{\Gamma}(\omega)=\ord_t(t\Gamma^{\ast}\omega)$. These
 numbers will be called the $\Gamma$-order of $f$ and of $\omega$,
 respectively.
\end{defi}
\begin{rem}
  The definition implies $\nu_{\Gamma} (f) = \nu_{\Gamma} (df)$. We prefer
  using this notation in order to avoid having to add $1$ everywhere when writing
  $\nu_{\Gamma}(\omega)$.
\end{rem}
\begin{rem}
\label{rem:order}
Let $\overline{\Gamma} (t) \in \fg{\Gamma}$ and suppose that
$\overline{\Gamma} (t) = \gamma(t^{e})$ where $\gamma (t)$ is an
irreducible parametrization and $e\in \mathbb{Z}_{>1}$.
Set $t \gamma^{*} \omega = h(t) dt$.
We have
\begin{equation*}
  t (\overline{\Gamma})^{*} (\omega) =
  t (t^{e})^{*} \gamma^{*} \omega = e h(t^{e}) dt .
\end{equation*}
So the order associated to the parametrization $\overline{\Gamma} (t)$
is $e$ times the order associated to $\gamma$.  We will consider this
last order, i.e.
$\nu_{\overline{\Gamma}} (\omega) := \nu_{\gamma} (\omega)$.
\end{rem}
 \begin{defi}
   For $n \in {\mathbb Z}_{>0}$, we denote
  ${\mathbb N}_{n} = \{ k \in {\mathbb Z} : 1 \leq k \leq n \}$.
\end{defi}
As stated in the in Introduction, we assume from now on that
$\Gamma$ is singular. What follows are the fundamental invariants of
$\Gamma$ (cf. \cite{Zariski:moduli}) we shall need.  First, we define the
approximate roots of $\Gamma$.
 \begin{defi}
 \label{def:truncation}
 Consider the Puiseux parametrization \eqref{equ:param} of $\Gamma$
 and $\beta \in {\mathcal E}_{\Gamma} \cup \{\infty\}$.  We define the
 curve $\Gamma_{< \beta}$ of parametrization
 \begin{equation*}
   \Gamma_{< \beta}(t) = \left( t^{n},
     \sum_{\beta^{\prime} < \beta} a_{\beta^{\prime}, \Gamma}
     t^{\beta^{\prime}} \right).
 \end{equation*}
 \end{defi} 
 \begin{rem}
 \label{rem:app_roots}
 Consider the Puiseux characteristic exponents $\beta_1, \hdots, \beta_g$ of
 $\Gamma$ and $\beta_{g+1}=\infty$.  An irreducible equation $f_{\Gamma, j}$ of
 $\Gamma_{< \beta_j}$ is of the form 
 \begin{equation}
 \label{equ:app_root_exp}
   f_{\Gamma, j} =
   \prod_{k=0}^{\nu_{j-1}-1}
   \left( y -  \sum_{\beta < \beta_{j}} e^{\frac{2 \pi i k \beta}{n}}
     a_{\beta, \Gamma} x^{\frac{\beta}{n}}
   \right) \in {\mathbb C}\{x\}[y] .
 \end{equation}
 Thus, the $y$-degree of $f_{\Gamma, j}$ is equal to $\nu_{j-1}$,
 which is also the multiplicity of $\Gamma_{< \beta_j}$.
 %Moreover, $\Gamma_{< \beta_j}$ has multiplicity $\nu_{j-1}$ and 
 The curve $\Gamma_{< \beta_j}$ has Puiseux characteristic exponents
 $\beta_1/e_{j-1}, \ldots, \beta_{j-1}/e_{j-1}$ if $1 < j \leq g+1$.  Notice that
 $f_{\Gamma,1}=y$ (as we are assuming that $\Gamma$ has parametrization
 \eqref{equ:param}), that $\Gamma = \Gamma_{< \beta_{g+1}}$ and
 $f_{\Gamma, g+1}=0$ is an irreducible equation of $\Gamma$. Finally, for
 $j<g+1$, we actually have $f_{\Gamma,j}\in \mathbb{C}[x,y]$.
 \end{rem}
 \begin{defi}
 \label{def:aprox_root}
 The polynomials $f_{\Gamma, 1}, \ldots, f_{\Gamma, g}$ are called,
 since \cite{abhyankar-moh}, the \emph{approximate roots of $\Gamma$}.
 \end{defi}
 \begin{defi}
   We denote $\overline{\beta}_j = \nu_{\Gamma} (f_{\Gamma, j})$ for
   $1 \leq j \leq g$.
 \end{defi}
 \begin{rem}[cf. \cite{Zariski:moduli}]
 \label{rem:prop_beta}
 The set $\{ n, \overline{\beta}_1, \ldots, \overline{\beta}_g \}$ is
 the minimal set of generators of the semigroup
 ${\mathcal S}_{\Gamma}$ and hence
 $\overline{\beta}_1, \ldots, \overline{\beta}_g$ depend just on the
 equisingularity class of $\Gamma$. We have
 \begin{itemize}
 \item $\overline{\beta}_j$ is a multiple of $e_j$ and is the minimum
   element of ${\mathcal S}_{\Gamma}$ that is not a multiple of
   $e_{j-1}$ for any $j \in {\mathbb N}_{g}$;
 \item
   $n_j = \min \{ r \in {\mathbb Z}_{>0} : r \overline{\beta}_{j} \in
   (e_{j-1}) \}$ for any $j \in {\mathbb N}_{g}$ (cf. Definition
   \ref{def:e_n});
 \item  $\overline{\beta}_1 = \beta_1$ and
\begin{equation}
\label{equ:rec_beta}
\overline{\beta}_j =
n_{j-1} \overline{\beta}_{j-1} - \beta_{j-1} + \beta_{j} 
\end{equation}
for $2 \leq j \leq g$ (cf. \cite[Chapter II, Theorem
3.9]{Zariski:moduli});
 \item the $n$ expressions 
   \begin{equation*}
     k_1 \overline{\beta}_1 + \ldots + k_g \overline{\beta}_g,
   \end{equation*}
   with $0 \leq k_j < n_j$ for any $j \in {\mathbb N}_{g}$, define
   exactly $n$ classes modulo $n$. If we replace
   $\overline{\beta}_1, \ldots, \overline{\beta}_g$ with
   ${\beta}_1, \ldots, {\beta}_g$, the analogous property also holds
   true.
 \end{itemize}
\end{rem}
In our construction of a ${\mathbb C}[[x]]$-basis, we shall make frequent modular computations. 
The following notation has proved useful.
\begin{defi}
 \label{def:cong_n}
 Consider $n \in {\mathbb Z}_{\geq 1}$. Let $m \in \mathbb{Z}$.  We
 denote by $[m]_n$ the class of $m$ in ${\mathbb Z}/ n{\mathbb
   Z}$. Given $S \subset {\mathbb Z}$, we denote
 $[S]_{n} = \{ [m]_{n} : m \in S \}$.
 \end{defi}
 \begin{rem}
 \label{rem:no_fw_hole}
 It is clear from Definition \ref{def:set_exp} that given
 $\beta \in {\mathcal E}_{\Gamma}$, we have
 $\beta + k e_{\beta} \in {\mathcal E}_{\Gamma}$ for any
 $k \in {\mathbb Z}_{\geq 1}$, where
 \begin{equation*}
   e_{\beta} =
   \gcd (\{n\} \cup
   \{ \beta' \in  {\mathcal E}_{\Gamma} : \beta' \leq \beta \}).
 \end{equation*}
 Let $\beta', \beta \in {\mathcal E}_{\Gamma}$ with $\beta' < \beta$.
 Since ${\mathfrak n}(\beta') \leq \beta$, we get
 ${\mathfrak n}(\beta') - \beta' \in (e_{\beta})$ and thus
 $\beta + ({\mathfrak n}(\beta') - \beta') \in {\mathcal E}_{\Gamma}$.
 By repeating the argument with ${\mathfrak n}^{r}(\beta')$ and
 $\beta + ({\mathfrak n}^{r}(\beta') - \beta')$, we get that
 $\beta + ({\mathfrak n}^{r}(\beta') - \beta')\in {\mathcal
   E}_{\Gamma}$ for any $r \in {\mathbb Z}_{\geq 1}$ by induction on
 $r$.
 \end{rem}
 We provide now an alternative characterization of
 ${\mathbb C}[[x]]$-bases.
\begin{defi}
  The set of $1$-forms
  $\omega \in \hat{\Omega}^{1} \cn{2}$ such that
  $\Gamma^{*} \omega \equiv 0$, i.e. the $1$-forms that leave $\Gamma$
  invariant, will be denoted ${\mathcal I}_{\Gamma}$.
\end{defi}
\begin{pro}
\label{pro:unique_expression_cbasis}
Let $(\Omega_1, \ldots, \Omega_n)$ be a ${\mathbb C}[[x]]$-basis of
$\Gamma$. Then
\begin{equation*}
  \hat{\Omega}^{1} (\mathbb{C}^2,0)
  =\mathbb{C}[[x]] \Omega_{1} \oplus \hdots \oplus
  \mathbb{C}[[x]] \Omega_{n} \oplus {\mathcal I}_{\Gamma} .
\end{equation*}
\end{pro}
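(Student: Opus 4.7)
The plan is to establish two separate claims: directness of the sum on one hand, and spanning of $\hat{\Omega}^{1}\cn{2}$ on the other. The key structural fact I would exploit throughout is that, since $(\Omega_1,\ldots,\Omega_n)$ is a $\mathbb{C}[[x]]$-basis for $\Gamma$, the integers $\lambda_i := \nu_\Gamma(\Omega_i)$ represent pairwise distinct residue classes modulo $n$ (one generator per class, by the remark after Definition \ref{def:nbasis}), and for each class the corresponding $\lambda_i$ is the minimum of $\Lambda_\Gamma$ in that class.

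For directness, I would take a hypothetical relation $\sum_{i=1}^{n} f_i(x)\Omega_i + \omega_0 = 0$ with $f_i\in\mathbb{C}[[x]]$ and $\omega_0\in\mathcal{I}_\Gamma$, and apply $\Gamma^{*}$. Since $\Gamma^{*}\omega_0=0$, writing $\Gamma^{*}\Omega_i = h_i(t)\,dt$ with $\nu_0(h_i) = \lambda_i - 1$ gives the identity $\sum_i f_i(t^n)h_i(t) = 0$ in $\mathbb{C}[[t]]$. If some $f_i$ were nonzero, the $t$-adic order of $f_i(t^n)h_i(t)$ would equal $n\,\mathrm{ord}(f_i)+\lambda_i-1$, and these orders are all distinct because the $\lambda_i$ are pairwise incongruent modulo $n$. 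Hence the minimum such order is attained by a single nonvanishing term, contradicting the cancellation. Thus all $f_i=0$ and then $\omega_0=0$.

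For spanning, I would argue by a transfinite/greedy reduction. Starting from $\omega\in\hat{\Omega}^{1}\cn{2}$, if $\omega\in\mathcal{I}_\Gamma$ I am done; otherwise $\nu_\Gamma(\omega)=\mu<\infty$. Because $\mu\in\Lambda_\Gamma$ and $(\lambda_1,\ldots,\lambda_n)$ generates $\Lambda_\Gamma$ as a $\mathbb{C}[[x]]$-collection, there is a unique pair $(i_0,k_0)$ with $k_0\geq 0$ such that $\mu = \lambda_{i_0}+nk_0$. Matching leading coefficients of $\Gamma^{*}\omega$ and of $\Gamma^{*}(x^{k_0}\Omega_{i_0})$ produces a unique $c_0\in\mathbb{C}$ so that $\omega_1 := \omega - c_0 x^{k_0}\Omega_{i_0}$ satisfies $\nu_\Gamma(\omega_1)>\mu$. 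Iterating produces sequences $\mu_0<\mu_1<\cdots$ and triples $(i_j,k_j,c_j)$, with $k_j=(\mu_j-\lambda_{i_j})/n\to\infty$ as $\mu_j\to\infty$.

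The final step is to collect these contributions: for each $i$, I would set
\begin{equation*}
f_i(x) := \sum_{j:\,i_j=i} c_j\, x^{k_j},
\end{equation*}
which is a well-defined element of $\mathbb{C}[[x]]$ since the exponents $k_j$ appearing are distinct and tend to infinity. The equality $\omega = \omega_\infty + \sum_{i} f_i(x)\Omega_i$ with $\omega_\infty\in\mathcal{I}_\Gamma$ then follows by applying $\Gamma^{*}$ and passing to the $t$-adic limit: after step $J$ we have $\Gamma^{*}\omega = \Gamma^{*}\omega_J + \sum_{j<J} c_j t^{nk_j}h_{i_j}(t)\,dt$, and both $\nu_0(\Gamma^{*}\omega_J)\geq\mu_J-1\to\infty$ and the convergence of $\sum_j c_j t^{nk_j}h_{i_j}(t)\,dt$ to $\sum_i f_i(t^n)h_i(t)\,dt$ in the $t$-adic topology force $\Gamma^{*}(\omega-\sum_i f_i\Omega_i)=0$. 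The main obstacle I anticipate is making the convergence argument airtight: one must be careful that the formal series $f_i(x)$ genuinely assemble into elements of $\mathbb{C}[[x]]$ (which rests crucially on the uniqueness of the pair $(i_j,k_j)$ at each step, itself a consequence of the $\mathbb{C}[[x]]$-basis property) and that the $t$-adic limit of $\Gamma^{*}$-pullbacks coincides with $\Gamma^{*}$ applied to the limit sum.
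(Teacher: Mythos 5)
Your proposal is correct and follows essentially the same route as the paper: a greedy order-by-order reduction (the paper packages it as iterating an operator $\mathfrak{f}$ and summing the telescoping differences, you collect the corrections directly into the series $f_i(x)$), relying on the pairwise incongruence modulo $n$ and minimality of the basis values, with directness obtained from the fact that the orders of the nonzero terms $f_i(x)\Omega_i$ are pairwise distinct so the minimum is attained exactly once. The $t$-adic convergence point you flag is handled in the paper by the same observation you make, namely that the remainders have $\Gamma$-order tending to infinity.
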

\begin{proof}
  Define the following operator ${\mathfrak f}$ in
  $\hat{\Omega}^{1} \cn{2}$: for $\omega \in \hat{\Omega}^{1} \cn{2}$, set
  ${\mathfrak f} (\omega)= \omega$ if $\nu_{\Gamma} (\omega) = \infty$; otherwise
  ($\nu_{\Gamma}(\omega)<\infty$), there exist $1 \leq k \leq n$ and
  $r \in {\mathbb Z}_{\geq 0}$ such that
  $\nu_{\Gamma} (\omega) - \nu_{\Gamma} (\Omega_{k}) = r n$. Thus, there exists a
  unique $c \in {\mathbb C}^{*}$ such that
  ${\mathfrak f} (\omega) := \omega - c x^{r} \Omega_{k}$ satisfies
  $\nu_{\Gamma} ({\mathfrak f} (\omega)) > \nu_{\Gamma} (\omega)$.
 
  Notice that
  $\omega - {\mathfrak f} (\omega) \in \sum_{j=1}^{n} {\mathbb C}[[x]]
  \Omega_j$ for any $\omega \in \hat{\Omega}^{1} \cn{2}$.  Let
  $\omega' = \sum_{j=0}^{\infty} ({\mathfrak f}^{j}(\omega) -
  {\mathfrak f}^{j+1}(\omega))$.  We have
  $\omega' \in \sum_{j=1}^{n} \mathbb{C}[[x]] \Omega_{j}$ by
  construction.  Since
  \begin{equation*}
    \omega = {\mathfrak f}^{k}(\omega) +
    \sum_{j=0}^{k-1}
    ({\mathfrak f}^{j}(\omega) - {\mathfrak f}^{j+1}(\omega))
  \end{equation*}
  for any $k \geq 1$ and
  $\lim_{k \to \infty} \nu_{\Gamma} ({\mathfrak f}^{k} (\omega)) =
  \infty$, we get $\omega - \omega' \in {\mathcal I}_{\Gamma}$.
 
  In order to show that the sum is direct, it suffices to prove that
  $\sum_{j=1}^{n} h_{j}(x) \Omega_j \in {\mathcal I}_{\Gamma}$ implies
  $h_1 \equiv \ldots \equiv h_n \equiv 0$.  Suppose it is not the
  case. Since
  $\nu_{\Gamma} (h_{j}(x) \Omega_j) \neq \nu_{\Gamma} (h_{k}(x)
  \Omega_k)$ for $j \neq k$, it follows that
  $\min_{1 \leq j \leq n} \nu_{\Gamma} (h_{j}(x) \Omega_j)$ is
  attained at exactly one $1 \leq l \leq n$ and thus
  \begin{equation*}
    \nu_{\Gamma} \left( \sum_{j=1}^{n} h_{j}(x) \Omega_j \right) =
    \nu_{\Gamma} (h_{l}(x) \Omega_l) \neq \infty.
  \end{equation*}
  This implies
  $ \sum_{j=1}^{n} h_{j}(x) \Omega_j \not \in {\mathcal I}_{\Gamma}$,
  providing a contradiction.
 \end{proof} 
\section{Leading variables} 
\label{sec:leading}
Let $\Gamma$ be a singular irreducible germ of holomorphic curve in $\cn{2}$.
The concept of $1$-forms \touch{}ing $\Gamma$ to order $\beta$ was presented
in the Introduction (cf. Definition \ref{def:strong}). They are a
fundamental ingredient in our project of describing $\Lambda_{\Gamma}$ from a
geometrical viewpoint. As explained above, if $\omega$ is such a $1$-form,
it automatically has an invariant curve, called \emph{companion curve},
whose Puiseux expansion deviates from the one of $\Gamma$ at the exponent
$\beta$. In this section we introduce a related concept for functions and
$1$-forms: having $\beta$ (or $a_{\beta}$) as leading variable.  On one hand, if a
$1$-form $\omega$ satisfies this property, then it \touches{} $\Gamma$. On the
other, keeping account of these leading variables allows us to develop an iterative
method for finding a ${\mathbb C}[[x]]$-basis, guaranteeing that all the $1$-forms
we obtain along the process \touch{} $\Gamma$.

The construction of a $\mathbb{C}[[x]]$-basis in the generic case is carried
out in \cite{ayuso2024construction}. There, the coefficients $a_{\beta}$ are
considered as parameters over $\mathbb{C}$. The present work deals with the dual
case: all the values $a_{\beta}$ are fixed in advance. This prevents transferring
the arguments in that reference to our present construction. However, we can proceed in
\emph{generic batches}, so to say, one for each Puiseux exponent. Instead of
considering all the coefficients $a_{\beta}$ as parameters, we shall, at each
\emph{level} $l+1$, work over the family $\fg{\geq \beta_{l+1}}$ (i.e. the coefficients
$a_{\beta}$ will be parameters for $\beta\geq \beta_{l+1}$). A careful study of our
procedure will show that the corresponding $1$-forms $\omega$ we construct will
satisfy that, for $\overline{\Gamma}\in \fg{\geq \beta_{l+1}}$, there is $k$ and $\beta$
such that:
\begin{equation*}
  (t\overline{\Gamma}^{\ast}\omega) = t^{k}\left[
    D_{\beta}t^{\beta} + D_{\mathfrak{n}(\beta)}t^{\mathfrak{n}(\beta)} + \cdots
  \right] dt
\end{equation*}
where each $D_{\eta}$ has a suitable expression of the form
\begin{equation*}
  D_{\eta} = a_{\beta}^ma_{\eta}\square_{<\beta} + \triangle_{<\eta}
\end{equation*}
for some polynomials $\square_{<\beta}$ and $\triangle_{<\eta}$ which depend on
$a_{\gamma}$, for $\gamma<\beta$ and $\gamma<\eta$, respectively. This property
(here stated in very rough form) is what we shall call having \emph{leading variable
  $a_{\beta}$} (Definition \ref{def:max}).

\subsection{Idea of the algorithm} 
Our method is purely algorithmic and deterministic. It is based on the fact that, in order to
find a $\mathcal{C}$-basis of $\Lambda_{\Gamma}$, it is enough to find a
$\mathbb{C}[[x]]$-basis, and this requires just finding, for each class $[a]_{n}$ with
$a\in\{0,\ldots, n-1\}$, a $1$-form $\Omega_a$ such that $\nu_{\Gamma}(\Omega_a)$ is minimal in
$[a]_n$. This is the basic idea.

There are two obvious initial candidates: $\Omega_0=dx$, and $\Omega_1=dy$. The
first one has $[\nu_{\Gamma}(\Omega_0)]_n=[0]_n$, and the other
$[\nu_{\Gamma}(\Omega_1)]_n=[\beta_1]_n$. Both classes are obviously different
because $\beta_1$ is not a multiple of $n$, and the values $\nu_{\Gamma}(\Omega_0)$
and $\nu_{\Gamma}(\Omega_1)$ are clearly minimal in their classes. These two
candidates provide what we call the \emph{terminal family of level} $0$. Notice how
we reach $\beta_1$ (the first Puiseux exponent) but cannot go further than $\beta_1$
with $1$-forms in $\mathbb{C}[[x]]\{dx, dy\}$.

Algorithm \ref{alg:construction} is a semi-formal description of our procedure. 
Starting with the family $\mathcal{T}_{\Gamma,0}=\{dx,dy\}$, 
we proceed, for each Puiseux exponent $\beta_{l+1}$, to compute another 
family $\mathcal{T}_{\Gamma,l+1}$, up to $\beta_g$. The computation of $\mathcal{T}_{\Gamma,l+1}$ 
starts with the family $\mathcal{G}^0_{\Gamma,l+1}$, which is built from $\mathcal{T}_{\Gamma,l}$ using powers of the $l+1$-th 
approximate root of $\Gamma$. Then, there is a \textsc{Main Transformation} which for each $s\geq 0$ produces a family 
$\mathcal{G}^{s+1}_{\Gamma,l+1}$ from $\mathcal{G}^s_{\Gamma,l+1}$. This transformation is repeated 
until $\mathcal{G}^{s+1}_{\Gamma,l+1}$ is \emph{terminal}. Finally, when $l+1=g$, the transformation $ \mathrm{SORT}_{n}$ 
returns the $n$ elements of $\mathcal{T}_{\Gamma,n}$ whose contacts with $\Gamma$ are minimal.
\begin{algorithm}
  \caption{Construction of a $\mathbb{C} [[x]]$-basis}
  \label{alg:construction}
  \begin{algorithmic}
    \State $l \gets 0$
    \State $\mathcal{T}_{\Gamma,l} \gets \{dx,dy\}$
    \While{$l<g$}
    \State $\mathcal{G}^0_{\Gamma,l+1}=\{\}$
    \State{\textsf{\hspace*{10pt}Next loop: build $\mathcal{G}^0_{\Gamma, l+1}$ with $\mathcal{T}_{\Gamma,l}$}}
    \State{\textsf{\hspace*{10pt}and powers of the $l+1$-th approximate root of $\Gamma$}}
    \For{$0\leq a < n_{l+1} \land 1\leq b \leq 2\nu_{l}$}
      % mistake: \land \sharp \mathcal{G}^0_{\Gamma,l+1}\leq n$}
    \State $\mathcal{G}^{0}_{\Gamma,l+1} \gets
    \mathcal{G}^{0}_{\Gamma,l+1} \cup\{\Omega^0_{2a\nu_{l}+b}=f^a_{\Gamma,l+1}\Omega_{b}\}$
    \EndFor
    \State $s \gets 0$
    \While{$\mathcal{G}^s_{\Gamma,l+1}$ is not \emph{terminal}}
    %\State{Perform \textsc{Main Transformation} on $\mathcal{G}^s_{\Gamma,l+1}$}
    \State{$\mathcal{G}^{s+1}_{\Gamma,l+1}\gets
      \textsc{Main Transformation}(\mathcal{G}^{s}_{\Gamma,l+1})$}
    \State{$s \gets s+1$}
    \EndWhile
    \State $\mathcal{T}_{\Gamma,l+1}=\mathcal{G}^s_{\Gamma,l+1}$
    \State $l\gets l+1$
    \EndWhile
    \State $\mathcal{B} \gets \mathrm{SORT}_{n} (\mathcal{T}_{\Gamma,g})$  
    \State $\mathcal{B}$ is a $\mathbb{C}[[x]]$-basis
  \end{algorithmic}
\end{algorithm}
The role of the approximate roots of $\Gamma$ is essential, as can be seen in the \textbf{for}
loop.

The \textsc{Main Transformation} on $\mathcal{G}^s_{\Gamma,l+1}$ consists of the following
operation: we will identify distinguished pairs $(a, b) \in \mathbb{N}_{2 \nu_{l+1}}^{2}$, with $a \neq b$, such that
$[\nu_{\Gamma}(\Omega^s_a)]_n=[\nu_{\Gamma}(\Omega^s_b)]_n$ 
and $\nu_{\Gamma}(\Omega^s_a) \leq \nu_{\Gamma}(\Omega^s_b)$
(cf. Definitions 
\ref{def:good1} and  \ref{def:next}).  
 The  ``transformation'' consists in keeping
$\Omega_a^s$ (and calling it $\Omega_a^{s+1}$) and setting
\begin{equation*}
  \Omega_b^{s+1} = \Omega_b^{s} - c x^{d}\Omega_a^{s},
\end{equation*}
where $dn + \nu_{\Gamma}(\Omega_a^s) = \nu_{\Gamma}(\Omega_b^s)$ and $c$ is the unique nonzero
complex number such that
$\nu_{\Gamma}(\Omega_b^{s} - c x^{d}\Omega_a^{s})>\nu_{\Gamma}(\Omega_b^s)$.
We define $\Omega_{r}^{s+1} = \Omega_{r}^{s} $ for any $r \in \mathbb{N}_{2 \nu_{l+1}}$ that is not in any of 
the distinguished pairs.

 Consider the case $l+1<g$ (the case $l+1=g$ requires a different study). The set $[\nu_{\Gamma} (\mathcal{G}^s_{l+1})]_{n}$ contains fewer than 
 $2 \nu_{l+1}$ classes modulo $n$ until some specific value of $s$ 
 (which depends on $\beta_{l+1}$ and $\beta_{l+2}$)
 for which all the classes $[\nu_{\Gamma}(\Omega^{s}_{j})]_n$ are different,
which is the definition of $\mathcal{G}^s_{l+1}$ being \emph{terminal}. When this happens, we
set $\mathcal{T}_{l+1}=\mathcal{G}^s_{l+1}$ and call $\mathcal{T}_{l+1}$ \emph{the terminal
  family of level $l+1$}.

The case $l+1 = g$ is a little different. The goal is achieved when there is a
subset $\mathcal{B}$ of $\mathcal{G}^s_{l+1}$ of $n$ elements such that
$\sharp [\nu_{\Gamma} (\mathcal{B})]_{n} = n$ (so there are no repetitions of
classes modulo $n$) and the $\Gamma$-orders of elements of $\mathcal{B}$ are minimal
in $\mathcal{G}^s_{l+1}$, or more precisely
$\max (\nu_{\Gamma} (\mathcal{B})) < \min (\nu_{\Gamma} (\mathcal{G}^s_{l+1}
\setminus \mathcal{B}))$ holds.  Then $\mathcal{T}_{g}=\mathcal{G}^s_{g}$ is the
\emph{the terminal family of level $g$} and $\mathcal{B}$ is a
$\mathbb{C}[[x]]-$\emph{basis} for $\Gamma$.

Verifying that the above elementary algorithm works requires proving the property
stated above, and a detailed analysis of the properties of
$\mathcal{G}^0_{\Gamma,l+1}$, and of $\mathcal{G}^{s+1}_{\Gamma,l+1}$ given
$\mathcal{G}^s_{\Gamma,l+1}$. This analysis, in essence, consists in a detailed
tracking of the values $\nu_{\Gamma}(\Omega^{s}_j)$ and
$\nu_{\Gamma}(\Omega^{s+1}_j)$ before and after the \textsc{Main Transformation},
and of the geometric properties of each $\Omega_j^s$ along the way. The necessary
control to carry out this program is provided by the concepts of {\it leading
  variables} and $c$-{\it sequences} that will be introduced in subsection
\ref{subsec:definition_leading}.

In subsection \ref{subsec:leading_products} we study the leading variables of the
different kinds of functions and $1$-forms we shall encounter (this is mainly for
the passage from $\mathcal{G}^0_{l+1}$ to $\mathcal{G}^1_{l+1}$, which needs a
specific study).  Leading variables are key to describe the two different types of
foliations associated with the forms $\Omega_j^s$ that will appear (subsection
\ref{subsec:foliation}).

In section \ref{sec:construction_cx_bases}, we carry out the comprehensive analysis
of the properties of the forms in $\mathcal{G}^s_{l+1}$, via their leading
variables, and how, after a finite number of iterations, we end up with a terminal
family for each $l+1\leq g$. The terminal family of level $g$ contains a
$\mathbb{C}[[x]]$-basis for $\Gamma$.

\subsection{Definition and basic properties of leading variables}
\label{subsec:definition_leading}
We now introduce the main ingredient which will allow us to keep track of
the arithmetic and geometric properties of the $1$-forms constructed along the way:
that of \emph{leading variable} of a power series (and, as a generalization, of a
function or a $1$-form), for a family $\fg{\geq \beta_j}$. We shall use the
following notation:
\begin{defi}
  Let $a_{\beta_1}, \ldots$ be parameters over $\mathbb{C}$, and
  consider a power series $g = \sum_{r=0}^{\infty} D_{r} t^{r}$ where
  $D_r \in {\mathbb C}[a_{\beta_1}, a_{{\mathfrak n}(\beta_1)}, a_{{\mathfrak
      n}^{2}(\beta_1)}, \ldots]$ for any $r \geq 0$.  We define
  \begin{equation*}
    D_{r} (\Gamma) =
    D_r (
    a_{\beta_1, \Gamma}, a_{{\mathfrak n}(\beta_1), \Gamma},
    a_{{\mathfrak n}^{2}(\beta_1), \Gamma}, \ldots)
  \end{equation*}
  and $g_{\Gamma} (t) = \sum_{r=0}^{\infty} D_{r}(\Gamma) t^{r}$.
\end{defi}
\begin{defi}
\label{def:max}
Let $1 \leq j \leq g+1$.  Using the same notation as above, we say
that $g \in {\mathbb C} [a_{\beta^{\prime}}]_{\beta^{\prime} \in {\mathcal E}_{\Gamma}, \
  \beta^{\prime} \geq \beta_j}[[t]]$ has {\it leading variable} $\beta$ (or
$a_{\beta}$) of degree $m$ for the family $\fgb{\Gamma}{\beta_j}$, where
$\beta \in {\mathcal E}_{\Gamma}$ and $m \in {\mathbb Z}_{\geq 0}$, if
\begin{enumerate}[a)]
\item\label{it:lead-C} The series $g(t)$ is of the form 
  \begin{equation*}
    g(t)
    = t^{k} \sum_{r=0}^{\infty}
    C_{{\mathfrak n}^{r} (\beta)} t^{{\mathfrak n}^{r} (\beta)}
  \end{equation*}
  where
  $C_{{\mathfrak n}^{r} (\beta)} \in {\mathbb C}[a_{\beta^{\prime}}]_{\beta^{\prime}
    \in {\mathcal E}_{\Gamma}, \ \beta_{j} \leq \beta^{\prime} \leq {\mathfrak
      n}^{r} (\beta)}$ for some $k \in {\mathbb Z}_{\geq 0}$ and any
  $r \in {\mathbb Z}_{\geq 0}$ and moreover $C_{\beta} \not \equiv 0$. This
  latter property implies that if $g$ has a leading variable, then it is
  unique.
\item\label{it:lead-R} There exist
  $R \in {\mathbb C}[a_{\beta^{\prime}}]_{\beta^{\prime} < \beta}$, a
  constant $\tilde{c} \in {\mathbb C}^{*}$ and a sequence
  $1=c_{\beta}, c_{{\mathfrak n}(\beta)}, c_{{\mathfrak
      n}^{2}(\beta)}, \ldots$ in ${\mathbb C}$ such that
\begin{equation}
	\label{equ:maximal}
        C_{{\mathfrak n}^{r} (\beta)}
        = \tilde{c} c_{{\mathfrak n}^{r}(\beta)} a_{\beta}^{m} 
        R (a_{\beta_{j}},
        \ldots, a_{{\mathfrak p}(\beta)})   a_{{\mathfrak n}^{r} (\beta)}
        + Q_{{\mathfrak n}^{r} (\beta)} ,   
\end{equation}
where
$R(a_{\beta_{j}, \Gamma}, \ldots, a_{{\mathfrak p}(\beta), \Gamma})
\neq 0$ and
$Q_{{\mathfrak n}^{r} (\beta)} \in {\mathbb C}[a_{\beta^{\prime}}]_
{\beta^{\prime}< {\mathfrak n}^{r}(\beta)}$ for any $r \geq 0$. Notice that $R$ is not necessarily a constant 
because $\beta$ may be greater than $\beta_j$.
\item\label{it:lead-m} $m=0$ if $\beta \not \in \{ \beta_{1}, \ldots \beta_j
  \}$. Note that the set contains $j$ elements, not $g$. Thus, if
  $\beta>\beta_j$,  
  % $\beta=\beta_{j+1}$,
  then $m=0$.
\item\label{it:lead-k} $[k]_{e_{\ell}} = [m \beta]_{e_{\ell}}$, where $\ell = \max
  (\{0\} \cup \{ r \geq 1 : \beta_{r} < \beta
  \})$ (cf. Definition \ref{def:cong_n}).
  % \item Denote by $L_{\Gamma}(g)$ the first
  %   $\beta^{\prime}$ in ${\mathcal
  %   E}_{\Gamma}$ such that $C_{\beta^{\prime}} (\Gamma) \neq
  %   0$ and $\Beg_{\Gamma}(g) = \infty$ if $g_{\Gamma} \equiv
  %   0$.  We require $Q_{\beta_{j}} \equiv 0$ if $L(g) = \beta_j$.
  % \item $Q_{\beta} \equiv 0$ if $\beta \in \{ \beta_{j+1}, \ldots,
  %   \beta_{g} \}$.
\end{enumerate}
\end{defi}
\begin{rem}
  Properties \ref{it:lead-R} and \ref{it:lead-k} are
  essential; they reflect, among other things, the fact that we are
  considering the family $\fg{\geq \beta_j}$, not any family of curves
  (so that the set of available exponents in $g$ is not
  arbitrary). This will become apparent when we study the geometric
  properties of $1$-forms $\omega$ with some leading variable (for
  instance, Proposition \ref{pro:unique_companion}).
\end{rem}
\begin{defi}
We call $(c_{{\mathfrak n}^{r}(\beta)})_{r \geq 0}$ the \emph{$c$-sequence of $g$}.
Denote by $\Beg_{\Gamma}(g)$ (the \emph{beginning} of $g$ on $\Gamma$) the first
$\beta^{\prime}$ in ${\mathcal E}_{\Gamma}$ such that
$C_{\beta^{\prime}} (\Gamma) \neq 0$ and $\Beg_{\Gamma}(g) = \infty$ if
$g_{\Gamma} \equiv 0$.
\end{defi}
\begin{rem}
  In the definition above, it is assumed that
  ${\mathbb C}[\emptyset] = {\mathbb C}$.  For instance
  $ R \in \mathbb{C}{[a_{\beta'}]}_{\beta^{\prime} < \beta}$ should be interpreted
  as $R \in \mathbb{C}$ if $\beta < \beta_{j}$. The term $a_{\beta}^{m}$ in Equation
  \eqref{equ:maximal} should be interpreted as $a_{\beta, \Gamma}^{m}$ if
  $\beta < \beta_{j}$.
\end{rem}
Notice that not all power series have a leading variable. However, all the
$1$-forms we compute will have a very specific leading variable.
\begin{rem}
  \label{rem:l_b}
  Suppose that $g$ has leading variable $\beta$ for the family
  $\fgb{\Gamma}{\beta_j}$.  We have
  \begin{itemize}
  \item $\Beg_{\Gamma}(g) \geq \beta$ and
  \item $\Beg_{\Gamma}(g)=\beta$ if $\beta < \beta_{j}$.
  \end{itemize}
  Thus, we get $\Beg_{\Gamma}(g) > \beta_{j}$ if $\beta > \beta_{j}$ by the first
  condition. Moreover, we obtain $\Beg_{\Gamma}(g) < \beta_{j}$ if
  $\beta < \beta_{j}$ by the second condition. Indeed, $\beta<\beta_j$ implies that 
  $C_{\beta}$ is a constant and thus $\Beg_{\Gamma}(g)=\beta$.
  It follows that
  $\Beg_{\Gamma}(g) = \beta_{j}$ implies $\beta = \beta_{j}$.
  %\pedro{}, because in that case, $c_{\beta_j}=1$,
  %$R(a_{\beta_j})\in a_{\beta_j}\mathbb{C}^{\ast}$, and
  %$Q_{\beta_j}\in \mathbb{C}$.\pedro{}
\end{rem}
\begin{defi}
  The notion of leading variable for the family
  $\fgb{\Gamma}{\beta_j}$, can be extended to
  $f \in {\mathbb C}[[x,y]]$ (resp.
  $\omega \in \hat{\Omega}^{1} \cn{2}$) by associating
  $g(t) \in \fgb{\Gamma}{\beta_j}$ to $f$ (resp. $\omega$).  Indeed,
  since ${(a_{\beta})}_{\beta \geq \beta_{j}}$ parametrizes
  $\fgb{\Gamma}{\beta_j}$, it follows that the power series
  $g(t) = f \circ \Gamma^{\prime} (t)$ (resp. $g(t)$ such that
  $t (\Gamma^{\prime})^{*} \omega = g(t) dt$), where $\Gamma^{\prime}$
  varies in $\fgb{\Gamma}{\beta_j}$, can be interpreted as an element
  of $\mathbb{C} {[a_{\beta}]}_{\beta \geq \beta_{j}} [[t]]$.  We
  define $\Beg_{\Gamma}(f) = \Beg_{\Gamma}(g)$
  (resp. $\Beg_{\Gamma}(\omega)=\Beg_{\Gamma}(g)$).
\end{defi}
% ejemplo en maxima
% G:[t^15,a[18]*t^18+a[21]*t^21+a[24]*t^24+a[25]*t^25+a[26]*t^26];
% f:y^5-x^6;
% w:taylor(ratexpand(subst([y=G[2],x=G[1]],f)));
% Aqui se ve el leader
% subst([a[18]=1,a[21]=0,a[24]=1,a[25]=1],w);
% Aqui se ve el Beg
%
% o:6*y*dx-5*x*dy;
% ratexpand(subst([y=G[2],dx=t*diff(G[1],t),x=G[1],dy=t*diff(G[2],t)],o));
% aqui se ve el leader claramente y el beg al hacer a[18]=1, a[21]=0...
\begin{example}
  Consider the curve
  \begin{equation*}
    \Gamma = \left( t^{{15}},
      t^{18} + t^{24} + t^{25} + t^{26}
    \right),
  \end{equation*}    
  which has two Puiseux exponents, $\beta_1=18$ and $\beta_2=25$. The set
  $\mathcal{E}_{\Gamma}$ is:
  \begin{equation*}
    \mathcal{E}_{\Gamma} = \left\{ 15, 18, 21, 24, 25, 26,\ldots \right\}.
  \end{equation*}
  Set $f=y^5-x^6$, and $j=1$, so that $\beta_j=18$. The
  family $\fgb{\Gamma}{18}$ gives, for $f$:
    \begin{multline*}
      f(t) = \left(a_{18}^5-1\right) t^{90}+5 a_{18}^4 a_{21} t^{93}+
      \left(5 a_{18}^4 a_{24}+10 a_{18}^3 a_{21}^2\right) t^{96}+\\
      5 a_{18}^4 a_{25} t^{97}+5 a_{18}^4 a_{26} t^{98}+\cdots
  \end{multline*}
  which can be rewritten as:
  \begin{multline*}
    f(t) = t^{72}\left( \left(a_{18}^5-1\right) t^{18}+
      5 a_{18}^4 a_{21} t^{21}+\left(5
        a_{18}^4 a_{24}+10 a_{18}^3 a_{21}^2\right) t^{24}+\right.\\
      \left.5 a_{18}^4
        a_{25} t^{25}+ 5a_{18}^{4}a_{26}t^{26}+\cdots\right).
  \end{multline*}
  Setting $k=72$, $\beta=18$, $m=4$, $R=1$, $Q_{18}=-1$, $Q_{21}=0$,
  $Q_{24}=10a_{18}^3a_{21}^2$, $Q_{25}=0$, $Q_{26}=0$, $\tilde{c}=1$, and $c_{18}=1$,
  $c_{21}=5$, $c_{24}=5$, $c_{25}=5$, $c_{26}=5$, one can see that (at least for that
  truncation), $f$ has leading variable $\beta=18$. 
  Notice that $e_{\ell}=15$ and
  $[k]_{15} = [72]_{15}=[\beta m]_{15}$. In this case, as $f$ is
  an approximate root of $\Gamma$, the fact that $f$ has leading
  variable $18$ can be verified using the multinomial formula. When restricting $f(t)$ to
  $\Gamma$, we obtain:
  \begin{equation*}
    \Gamma^{\ast}f = 5t^{96} + 5t^{97} + \hdots = 
    t^{72}\left(5t^{24} + 5t^{25} + \hdots \right),
  \end{equation*}
  so that $\Beg_{\Gamma}(f)=24>\beta=18$.

  Let now $\omega=5xdy-6ydx$, and consider the same family $\fgb{\beta}{18}$.  We obtain, abusing
  notation:
  \begin{equation*}
    t\fgb{\beta}{18}^{\ast}\omega=15a_{21}t^{36}+30a_{24}t^{39}+35a_{25}t^{40}+40a_{26}t^{41} + \cdots
  \end{equation*}
  which is
  \begin{equation*}
    t\fgb{\beta}{18}^{\ast}\omega=t^{15}\left(15a_{21}t^{21}+30a_{24}t^{24}+
      35a_{25}t^{25}+40 a_{26} t^{26} + \cdots\right).
  \end{equation*}
  In this case, $k=15$, $\beta=21$, 
  $\tilde{c}=15$, $c_{21}=1$, $c_{24}=2$,
  $c_{25}=7/3$, $c_{26}=8/3$, etc;
  $Q_{\mathfrak{n}^{r}(\beta)}=0$ for all $r\geq 0$,
  $R=1$, $m=0$. When restricting to $\Gamma$, we obtain $\Beg_{\Gamma}(\omega)=24$,
  because $a_{21,\Gamma}=0$.
\end{example}
Our algorithm for constructing a ${\mathbb C}[[x]]$-basis of $\Gamma$ provides
$1$-forms \touch{}ing $\Gamma$ with $c$-sequences which are either increasing or
strictly increasing sequences of positive rational numbers. This property will be
essential in our arguments. 
\begin{defi}
  We denote by $\DD{\beta}{m}{j}$ the set of power series $g(t)$
  (resp. power series in ${\mathbb C}[[x,y]]$, formal $1$-forms in
  $\hat{\Omega}^{1} \cn{2}$) with leading variable $\beta$ and degree
  $m$ for the family $\fgb{\Gamma}{\beta_{j}}$. We consider two
  distinguished subsets, namely
  \begin{itemize}
  \item the set $\DD{\beta,\leq}{m}{j}$ of power series
    $g(t) \in \DD{\beta}{m}{j}$ whose $c$-sequence is an increasing
    sequence in ${\mathbb Q}_{>0}$;
  \item the set $\DD{\beta,<}{m}{j}$ of power series
    $g(t) \in \DD{\beta}{m}{j}$ whose $c$-sequence is a strictly
    increasing sequence in ${\mathbb Q}_{>0}$.
  \end{itemize} 
  We define also the ``strict'' families with leading variable $\beta$,
  i.e. those whose leading variable coincide with their start on $\Gamma$:
  \begin{equation*}
    \begin{split}
      &\oDD{\beta}{m}{j} = \DD{\beta}{m}{j} \cap
        \Beg_{\Gamma}^{-1} (\beta), \\
      & \oDD{\beta,\leq}{m}{j} =
        \oDD{\beta}{m}{j} \cap \DD{\beta,\leq}{m}{j},\\
      & \oDD{\beta,<}{m}{j}
        = \oDD{\beta}{m}{j} \cap \DD{\beta,<}{m}{j}.
    \end{split}
  \end{equation*}
	%Analogously, we define $\oDD{\beta,\leq}{m}{j}$ and $\oDD{\beta,<}{m}{j}$.
  We also consider the union of the previous sets for all possible
  degreees. Hence, we denote
\begin{equation*}
  \Dd{\beta}{j} = \cup_{m \geq 0} \DD{\beta}{m}{j} \ \mathrm{and} \ 
  \oDd{\beta}{j} = \cup_{m \geq 0}\oDD{\beta}{m}{j} .
\end{equation*}
Families $\Dd{\beta,\leq}{j}$, $\Dd{\beta,<}{j}$,
$\oDd{\beta,\leq}{j}$ and $\oDd{\beta,<}{j}$ are defined analogously.

Finally, we denote by $\tDd{\beta}{j}$ the set of
$g \in \Dd{\beta}{j}$ such that $\Beg_{\Gamma}(g)$ is a Puiseux
exponent of $\Gamma$ and $Q_{\Beg_{\Gamma}(g)} (\Gamma) =0$
(cf. Definition \ref{def:max}).  We define
\begin{equation*}
  \begin{split}
    &\odd{\beta} = \cup_{j \geq 1} \oDd{\beta}{j} \\  
    &\odd{\beta,\leq} =
    \cup_{j \geq 1} \oDd{\beta,\leq}{j}\\
    &\odd{\beta,<} = \cup_{j \geq 1} \oDd{\beta,<}{j}.
  \end{split}
\end{equation*}
We define $\odd{n} = \odd{n,<} = \dd{n,<} =\{dx\}$
for completeness
in the case of $1$-forms.
\end{defi}
The next properties follow straightforwardly  from Definition
\ref{def:max}.
\begin{rem}
  Notice that $\omega \in \DD{\beta}{m}{j}$ implies
  $\Beg_{\Gamma}(\omega) \geq \beta$.
\end{rem}
\begin{rem}
  \label{rem:family_restriction}
  Let $\omega \in \oDd{\beta}{j}$. Then
  $\omega \in \oDd{\beta}{l}$ for $j \leq l$.
\end{rem}
\begin{rem}
	\label{rem:nb_puiseux}
	Consider a form $\omega \in \DD{\beta}{m}{j}$
        (resp. $\DD{\beta,\leq}{m}{j}$, $\DD{\beta,<}{m}{j}$) and $l$ such
        that $j \leq l$ and $\Beg_{\Gamma}(\omega) \leq
        \beta_{l}$. Then we have
        $\omega\in$ $\oDD{\Beg_{\Gamma}(\omega)}{m^{\prime}}{l}$
        (resp. $\oDD{\Beg_{\Gamma}(\omega),\leq}{m^{\prime}}{l}$,
        $\oDD{\Beg_{\Gamma}(\omega),<}{m^{\prime}}{l}$) where
        $m^{\prime} = m$ if $\Beg_{\Gamma}(\omega) = \beta$ and
        $m^{\prime} =0$ if $\Beg_{\Gamma}(\omega) > \beta$.
	%\footnote{This happens in the universal case when ${\mathfrak n}(\beta_{l}) = \beta_{l+1}$ 
	%	(and thus $L(\omega)=\beta_{l+1}$).}
	%\footnote{This is required in the inductive step, when going from level $j$ to $j+1$ and $L(\Omega)=\beta_{j+1}>\beta$.}
\end{rem}
\begin{rem}
  \label{rem:unique_b_j}
  Clearly, $\Dd{\beta}{j} \cap \Dd{\beta'}{j} = \emptyset$ if
  $\beta \neq \beta'$ so that the leading variable of
  $g \in \cup_{\beta \in {\mathcal E}_{\Gamma}} \Dd{\beta}{j}$ is unique.
\end{rem}  
\begin{rem}
  It is possible to have
  $\Dd{\beta}{j} \cap \Dd{\beta'}{l} \neq \emptyset$ with
  $\beta \neq \beta^{\prime}$ and $j \neq l$ (cf. Remark
  \ref{rem:nb_puiseux}).
	%Notice that $L(\Omega) \geq \beta^{\prime}$.
\end{rem}  
\begin{rem}
  \label{rem:unique_b}
  We have
  $\odd{\beta} \cap \odd{\beta^{\prime}}=
  \emptyset$ if $\beta \neq \beta^{\prime}$ by Remarks
  \ref{rem:family_restriction} and \ref{rem:unique_b_j}.
\end{rem}

One of the main relevant properties of the $1$-forms in
$\Dd{\beta,\leq}{j}$, is that in the most important cases, they
\touch{} $\Gamma$ to some order, and have a unique companion
curve, as the following result shows. 
Notice, in the proof, the importance of property \ref{it:lead-k} in Definition \ref{def:max} (leading variable).
\begin{pro}
\label{pro:unique_companion}
Fix $j \in {\mathbb N}_{g}$ and
$\beta_{j} \leq \beta \leq \beta_{j+1}$.  Consider a $1$-form
$\omega \in \DD{\beta,\leq}{m}{j}$, and  suppose that either $m=0$ or
$\Beg_{\Gamma}(\omega) > \beta_{j}$. Then
$\Beg_{\Gamma}(\omega) \leq \beta_{j+1}$, and the form $\omega$
\touches{} $\Gamma$ to order $ \Beg_{\Gamma}(\omega)$ and has a unique
formal companion curve. In particular, if $\Beg_{\Gamma}(\omega)$ is a
Puiseux exponent, then $\omega\in \tDd{\beta}{j}$.
\end{pro}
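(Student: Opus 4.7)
The plan is to construct the unique formal companion curve by an explicit recursion on the coefficients at exponents $\mathfrak{n}^{r}(\beta)$, and to deduce every assertion of the proposition from this construction. Set $\beta^{*} := \Beg_{\Gamma}(\omega)$. I will build a formal curve $\gamma \in \fgb{\Gamma}{\beta^{*}}$ invariant by $\omega = 0$: the coefficients $a_{\beta',\gamma}$ for $\beta' < \beta^{*}$ are already forced to equal $a_{\beta',\Gamma}$ by membership in $\fgb{\Gamma}{\beta^{*}}$, and for $\beta' = \mathfrak{n}^{r}(\beta) \geq \beta^{*}$ I will choose $a_{\beta',\gamma}$ successively so that the corresponding coefficient $C_{\beta'}(\gamma)$ of $t\gamma^{*}\omega$ vanishes, yielding $\gamma^{*}\omega \equiv 0$.

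The crucial observation is that each equation $C_{\beta'}(\gamma) = 0$ is linear in $a_{\beta',\gamma}$ with coefficient $\tilde{c}\, c_{\beta'}\, a_{\beta,\gamma}^{m}\, R(a_{\beta_{j},\Gamma},\ldots,a_{\mathfrak{p}(\beta),\Gamma})$, and this coefficient is nonzero under our hypotheses: $\tilde{c} \neq 0$ and $R$ evaluated at $\Gamma$ is nonzero by Definition \ref{def:max}; $c_{\beta'} > 0$ because the $c$-sequence of $\omega$ lies in $\mathbb{Q}_{>0}$ (the defining extra feature of $\DD{\beta,\leq}{m}{j}$); and $a_{\beta,\gamma}^{m} \neq 0$. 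The last factor is trivial when $m = 0$; if $m > 0$, Definition \ref{def:max} forces $\beta \in \{\beta_{1},\ldots,\beta_{j}\}$, hence $\beta = \beta_{j}$ (as $\beta \geq \beta_{j}$), and the hypothesis $\Beg_{\Gamma}(\omega) > \beta_{j}$ gives $\beta^{*} > \beta$, so $a_{\beta,\gamma} = a_{\beta_{j},\Gamma} \neq 0$ because $\beta_{j}$ is a Puiseux exponent. This proves both the existence and the uniqueness of $\gamma$.

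Next I will verify that $\gamma$ is equisingular to $\Gamma_{<\beta^{*}}$, so that $\gamma$ is a genuine companion curve. Below $\beta^{*}$ this is automatic since $\gamma$ matches $\Gamma$ there; above $\beta^{*}$, I will use the congruence $[k]_{e_{\ell}} = [m\beta]_{e_{\ell}}$ of Definition \ref{def:max} to check, exponent by exponent, that the support of $\gamma$ remains in the lattice generated by the Puiseux characteristic exponents of $\Gamma_{<\beta^{*}}$, so no new characteristic exponent is introduced. Invariance of $\gamma$ then shows that $\omega$ contains $\Gamma$ to order at least $\beta^{*}$, and the order cannot exceed $\beta^{*}$: a companion of strictly higher order would agree with $\Gamma$ at $a_{\beta^{*}}$ and force $C_{\beta^{*}}(\Gamma) = 0$, contradicting $\beta^{*} = \Beg_{\Gamma}(\omega)$. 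For the bound $\beta^{*} \leq \beta_{j+1}$ (trivial when $j = g$), I will inspect $C_{\beta_{j+1}}(\Gamma)$: its leading term $\tilde{c}\, c_{\beta_{j+1}}\, a_{\beta,\Gamma}^{m}\, R\, a_{\beta_{j+1},\Gamma}$ is nonzero because $a_{\beta_{j+1},\Gamma} \neq 0$, and the structural constraints coming from $\omega$ being a $1$-form (rather than an arbitrary series) will rule out the cancellation needed for $C_{\beta_{j+1}}(\Gamma) = 0$.

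For the final assertion, if $\beta^{*}$ is a Puiseux exponent, then the previous steps force $\beta^{*} = \beta_{j+1}$; equisingularity of $\gamma$ to $\Gamma_{<\beta_{j+1}}$ forbids $\beta_{j+1}$ from being a new Puiseux characteristic exponent of $\gamma$, so $a_{\beta_{j+1},\gamma} = 0$. Substituting into $C_{\beta_{j+1}}(\gamma) = 0$ yields $Q_{\beta_{j+1}}(\gamma) = 0$, and since $Q_{\beta_{j+1}}$ depends only on the $a_{\beta',\gamma} = a_{\beta',\Gamma}$ for $\beta' < \beta_{j+1}$, this reads $Q_{\beta^{*}}(\Gamma) = 0$, so $\omega \in \tDd{\beta}{j}$. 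The main obstacle I anticipate is exactly this Puiseux-exponent case: both the equisingularity verification for $\gamma$ near $\beta^{*}$ and the precise exclusion of $C_{\beta_{j+1}}(\Gamma) = 0$ require careful exploitation of the congruence condition and of the $1$-form nature of $\omega$.
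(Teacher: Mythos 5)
Your construction of the companion by undetermined coefficients, together with the verification that the linear coefficient $\tilde{c}\,c_{\beta'}\,a_{\beta}^{m}R$ never vanishes (including the reduction to $\beta=\beta_{j}$ and $a_{\beta_{j},\Gamma}\neq 0$ when $m>0$), coincides with the paper's argument and is fine, as is your remark that no invariant curve can exist in $\fgb{\Gamma}{\beta''}$ for $\beta''>\Beg_{\Gamma}(\omega)$. The genuine gaps are precisely the two points you defer as ``anticipated obstacles''. First, the equisingularity of $\gamma$ to $\Gamma_{<\beta^{*}}$ is not a formal consequence of the congruence $[k]_{e_{\ell}}=[m\beta]_{e_{\ell}}$: you must show that the recursively computed coefficient $a_{\beta',\gamma}$ vanishes whenever $\beta'\notin (e_{\ell'})$, i.e.\ that $Q_{\beta'}(\gamma)=0$ at those exponents. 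The paper proves this by taking a minimal bad exponent $\beta'$, noting that the truncation of $\gamma$ below $\beta'$ is then equisingular to $\Gamma_{<\overline{\beta}}$, so its pullback $t(\gamma_{<\beta'})^{*}\omega$ is a series in $t^{e_{\ell'}}$ times $dt$, while by construction its order is $k+\beta'\notin(e_{\ell'})$ once $k\in(e_{\ell'})$ has been checked; that contradiction is the whole content of the step, and it is absent from your sketch.

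Second, and more seriously, your route to $\Beg_{\Gamma}(\omega)\leq\beta_{j+1}$ is not an argument: the nonvanishing of the term $\tilde{c}\,c_{\beta_{j+1}}a_{\beta,\Gamma}^{m}R\,a_{\beta_{j+1},\Gamma}$ does not exclude $C_{\beta_{j+1}}(\Gamma)=0$, since $Q_{\beta_{j+1}}(\Gamma)$ could cancel it, and ``structural constraints coming from $\omega$ being a $1$-form'' is exactly the missing proof. Your setup also cannot easily supply it, because you build the companion inside $\fgb{\Gamma}{\beta^{*}}$ with $\beta^{*}=\Beg_{\Gamma}(\omega)$; in the case to be excluded, $\beta^{*}>\beta_{j+1}$, the congruence only controls $k$ modulo $e_{j}$ (or $e_{\ell}$), not modulo $e_{\ell'}$ with $\ell'\geq j+1$, so even your equisingularity step breaks down there, and no curve at level $\beta_{j+1}$ is produced. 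The paper's device is to run the entire construction at $\overline{\beta}=\min(\beta_{j+1},\Beg_{\Gamma}(\omega))$: if $\Beg_{\Gamma}(\omega)>\beta_{j+1}$, the invariant curve obtained in $\fgb{\Gamma}{\beta_{j+1}}$, being equisingular to $\Gamma_{<\beta_{j+1}}$, has vanishing coefficient at $\beta_{j+1}$, whence $Q_{\beta_{j+1}}(\overline{\Gamma})=0$; since $Q_{\beta_{j+1}}$ involves only coefficients below $\beta_{j+1}$, this gives $Q_{\beta_{j+1}}(\Gamma)=0$ and hence $C_{\beta_{j+1}}(\Gamma)\neq 0$, the desired contradiction. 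This $\min$-device, which also yields the last assertion, is missing from your plan. Finally, a small slip: when $\Beg_{\Gamma}(\omega)$ is a Puiseux exponent it can equal $\beta_{j}$ (possible when $m=0$), not only $\beta_{j+1}$; the paper's treatment of $\overline{\beta}\in\{\beta_{j},\beta_{j+1}\}$ covers both cases, while your argument for $\omega\in\tDd{\beta}{j}$ only addresses the second.
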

\begin{rem}
  Geometrically, the statement says (but not only this) that 
  there is a separatrix $\gamma$ of $\omega$ such that if $D$ is the last exceptional divisor in the resolution of singularities of $\Gamma$ with $P_{D,\gamma}= P_{D,\Gamma}$ (the infinitely near points of $\gamma$ and $\Gamma$ in $D$ are equal), then there is no other separatrix of $\omega$ meeting $D$ at $P_{D,\Gamma}$.
\end{rem}
\begin{proof}
  Assume $\Beg_{\Gamma}(\omega) < \infty$ or $j <g$ since otherwise
  the result is obvious.  Denote
  $\overline{\beta} = \min (\beta_{j+1}, \Beg_{\Gamma}(\omega))$.

  We start by claiming that $a_{\beta, {\Gamma}^{\prime}}^{m} \neq 0$ for any
  ${\Gamma}^{\prime} \in \fgb{\Gamma}{\overline{\beta}}$. It is obvious if $m=0$ so we can
  assume $m >0$ (notice that, as $a_{\beta}^0=1$, we set $0^0=1$ too, when specifying
  to $\Gamma^{\prime}$).  This implies that $\beta = \beta_{j}$ by Definition
  \ref{def:max}.  Since $\Beg_{\Gamma}(\omega) > \beta_{j}$ by hypothesis, we get
  $\beta_{j} < \overline{\beta}$. As for any
  ${\Gamma}^{\prime} \in \fgb{\Gamma}{\overline{\beta}}$, we have
  $a_{\beta_{j}, \Gamma^{\prime}} = a_{\beta_{j}, \Gamma}\neq 0$, the claim holds.

  We now consider the equations
  $C_{{\mathfrak n}^{r} (\overline{\beta})} = 0$ ($r \in {\mathbb Z}_{\geq 0}$)
  for the family $\fgb{\Gamma}{\overline{\beta}}$ (cf. equation
  \eqref{equ:maximal}). The claim, and the fact that the
  $c$-sequences of elements in $\DD{\beta,\leq}{m}{j}$ contain no zero
  element, imply that the coefficient of
  $a_{{\mathfrak n}^{r} (\overline{\beta})}$ in
  $C_{{\mathfrak n}^{r} (\overline{\beta})}$ never vanishes in
  $\fgb{\Gamma}{\overline{\beta}}$ for any
  $r \in {\mathbb Z}_{\geq 0}$.  Thus, the method of undetermined
  coefficients lets us compute, term by term, starting with
  the equation $C_{\overline{\beta}}=0$, a unique element
  $\overline{\Gamma}$ of $\fgb{\Gamma}{\overline{\beta}}$ that is
  invariant by $\omega=0$.

  In order to complete the proof, it suffices to show that
  $\overline{\Gamma}$ is equisingular to $\Gamma_{< \overline{\beta}}$
  (cf. Definition \ref{def:truncation}). Consider the notations in
  Definition \ref{def:max} and denote
  \begin{equation*}
    \ell^{\prime} =
    \max (\{0\} \cup  \{ r \geq 1 : \beta_{r} < \overline{\beta} \}).
  \end{equation*}
  To prove the equisingularity, we only need to show that if
  $\beta^{\prime} \not \in (e_{\ell^{\prime}})$, then
  $a_{\beta^{\prime}, \overline{\Gamma}} = 0$, as we already know that $a_{\beta_l,\overline{\Gamma}}\neq 0$ 
  for $\beta_l<\overline{\beta}$.

  Before proceeding, let us prove that $k \in (e_{\ell^{\prime}})$. If
  $m=0$, then, as $\ell \leq \ell^{\prime}$, we have, from Definition
  \ref{def:cong_n} and because $e_{\ell^{\prime}}$ divides $e_{\ell}$:
  \begin{equation*}
    [k]_{e_{\ell}} =
    [0]_{e_{\ell}} \implies [k]_{e_{\ell^{\prime}}} =
    [0]_{e_{\ell^{\prime}}} \implies k \in (e_{\ell^{\prime}}),
  \end{equation*}
  Thus, we can assume $m \neq 0$ and hence
  $\beta = \beta_{j}$, so that
  $j \leq \ell^{\prime}$, as $\beta_{j} < \overline{\beta}$. Since
  $\beta_{j} \in (e_{j}) \subset (e_{\ell^{\prime}}) $, we obtain, again by Definition~\ref{def:cong_n}:
  \begin{equation*}
    [k]_{e_{j-1}} =
    [m \beta_{j}]_{e_{j-1}} \implies [k]_{e_{\ell^{\prime}}} =
    [m \beta_{j}]_{e_{\ell^{\prime}}}  \implies [k]_{e_{\ell^{\prime}}} =
    [0]_{e_{\ell^{\prime}}}.
  \end{equation*}
  Thus, we obtain $k\in(e_{\ell^{\prime}})$.
  
  We can prove now that if $\beta^{\prime} \not \in (e_{\ell^{\prime}})$,
  then $a_{\beta^{\prime}, \overline{\Gamma}} = 0$. Assume, aiming at a
  contradiction, that there exists $\beta^{\prime} \not \in (e_{\ell^{\prime}})$
  such that $a_{\beta^{\prime}, \overline{\Gamma}} \neq 0$. Take $\beta^{\prime}$
  minimal with such property, and notice that $\beta' \geq \overline{\beta}$.  Thus,
  $\overline{\Gamma}_{<\beta^{\prime}}$ and $\Gamma_{< \overline{\beta}}$ are
  equisingular by definition of $\beta'$.  Therefore
  $t (\overline{\Gamma}_{<\beta^{\prime}})^{*} \omega$ is of the form
  $h(t^{e_{\ell^{\prime}}}) dt$ for some $h \in {\mathbb C}[[t]]$. However, by
  construction, $\nu_{0} (h(t^{e_{\ell^{\prime}}})) = k + \beta^{\prime}$, and as
  $k\in(e_{\ell^{\prime}})$, we infer that
  $k+\beta^{\prime} \not \in (e_{\ell^{\prime}})$, which is the
  desired contradiction.

  Notice that if $\overline{\beta} \in \{\beta_{j}, \beta_{j+1}\}$ then,
  as $\overline{\beta} \not \in  (e_{\ell^{\prime}})$,
  we must have
  $a_{\overline{\beta}, \overline{\Gamma}} = 0$ and thus
  $Q_{\overline{\beta}} (\overline{\Gamma}) =0$ because
  $C_{\overline{\beta}} (\overline{\Gamma})  =0$.
  
  Let us now show that $\Beg_{\Gamma}(\omega)\leq \beta_{j+1}$. Assume the contrary,
  which implies that $\overline{\beta}=\beta_{j+1}$, so that
  $Q_{\beta_{j+1}}(\Gamma)=Q_{\beta_{j+1}}(\overline{\Gamma})=0$, from which we get
  $C_{\beta_{j+1}}(\Gamma) \neq 0$, i.e. $\Beg_{\Gamma}(\omega)\leq \beta_{j+1}$,
  a contradiction.  Thus, $\Beg_{\Gamma}(\omega) \leq \beta_{j+1}$, and by
  definition, $\overline{\beta} = \Beg_{\Gamma}(\omega)$.  We conclude that
  $Q_{\Beg_{\Gamma}(\omega)} (\Gamma) =0$ if $\Beg_{\Gamma}(\omega)$ is a Puiseux
  exponent. This is exactly what makes $\omega\in\tDd{\beta}{j}$, which completes
  the proof.
\end{proof}

\subsection{Technical results on leading variables of products}
\label{subsec:leading_products}
As described in Algorithm \ref{alg:construction}, in our project of finding a
${\mathbb C}[[x]]$-basis of $1$-forms for $\Gamma$, we have $g+1$ levels, from the $0$-level to
the $g$-level. Given $0 \leq l < g$, and a terminal family $\mathcal{T}_{\Gamma,l}$,
we build $ {\mathcal G}_{\Gamma, l+1}^{0}$ as in the \textbf{for} loop of that Algorithm, using
products of the forms in $\mathcal{T}_{\Gamma,l}$ with powers of $f_{l+1}$. If we know the
leading variables of the forms in $\mathcal{T}_{\Gamma,l}$, we shall be able to control the leading
variables of these products. This is the aim of this technical subsection. Moreover, as will become apparent later, 
all these results are needed just for the setup of the \textbf{while} loop in Algorithm \ref{alg:construction}, 
but their importance in paramount.

% . The initial family is constructed by considering products of
% $1$-forms in $ {\mathcal T}_{\Gamma, l-1}$ and powers of the approximate root $f_{\Gamma,
%   l}$. The $1$-forms in $ {\mathcal T}_{\Gamma, l-1}$ have leading variables less than or equal
% to $\beta_{l}$. So, in order to determine the leading variables of the $1$-forms in
% $ {\mathcal G}_{\Gamma, l}^{0}$, we need to study $f_{\Gamma, l}$ and how products affect the
% leading variables.

\begin{lem}
\label{lem:prod}
Fix $1 \leq j \leq l \leq g$ and $a \in {\mathbb Z}_{\geq 1}$. Consider 
\begin{equation*}
  g_{r} \in \oDD{\beta_{l}, <}{0}{j} \cap \tDd{\beta_{l}}{j} \ \  \mathrm{and} \  \
  g_{a+s} \in  \cup_{\beta < \beta_{l}} \oDd{\beta}{j}
\end{equation*}  
for $r \in {\mathbb N}_{a}$ and $s \in {\mathbb N}_{b}$. Then the function
$g:= \prod_{r=1}^{a+b} g_r$ belongs to $\oDD{\beta_{l}, <}{a-1}{j} \cap \tDd{\beta_{l}}{j}$.
Moreover, the $c$-sequence of $g$ is the sum of the $c$-sequences of $g_1, \ldots, g_a$ except
its first term, which is equal to $1$.
\end{lem}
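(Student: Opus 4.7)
The plan is to expand $g = \prod_{r=1}^{a+b} g_r$ directly in $\mathbb{C}[a_{\beta'}]_{\beta' \geq \beta_j}[[t]]$ and verify all conditions of Definition~\ref{def:max} for leading variable $\beta_l$ and degree $a-1$. First, I would write each factor in its canonical form. For $r \in \mathbb{N}_a$, membership in $\oDD{\beta_l,<}{0}{j} \cap \tDd{\beta_l}{j}$ yields data $k_r$, $\tilde{c}_r \in \mathbb{C}^*$, polynomials $R_r$ with $R_r(\Gamma) \neq 0$, $Q^{(r)}_{\mathfrak{n}^i(\beta_l)}$ with $Q^{(r)}_{\beta_l}(\Gamma) = 0$, and a strictly increasing $c$-sequence $(c^{(r)}_{\mathfrak{n}^i(\beta_l)})_{i \geq 0}$ with $c^{(r)}_{\beta_l} = 1$, such that
\[
g_r(t) = t^{k_r} \sum_{i \geq 0} C^{(r)}_{\mathfrak{n}^i(\beta_l)} t^{\mathfrak{n}^i(\beta_l)}
\]
with coefficients as prescribed. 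For $s \in \mathbb{N}_b$, the factor $g_{a+s} \in \oDd{\beta^{(s)}}{j}$ admits an analogous expansion in the orbit $\{\mathfrak{n}^i(\beta^{(s)})\}_{i \geq 0}$, where $\beta^{(s)} < \beta_l$.

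Next, I would track the $t$-support of the product. The minimum $t$-power occurring in $g$ is $K + \beta_l$ with
\[
K = \sum_{r=1}^a k_r + (a-1)\beta_l + \sum_{s=1}^b \bigl(k_{a+s} + \beta^{(s)}\bigr),
\]
and every $t$-power of $g$ lies in $\{K + \mathfrak{n}^i(\beta_l) : i \geq 0\}$ thanks to the arithmetic of $\mathcal{E}_\Gamma$ summarised in Remark~\ref{rem:no_fw_hole}. The congruence $[K]_{e_\ell} \equiv [(a-1)\beta_l]_{e_\ell}$ with $\ell = \max(\{0\} \cup \{r \geq 1 : \beta_r < \beta_l\})$ follows from combining the individual congruence conditions on each $k_r$ and $k_{a+s}$. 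The leading coefficient
\[
C^{(g)}_{\beta_l} = \prod_{r=1}^a \bigl(\tilde{c}_r R_r a_{\beta_l} + Q^{(r)}_{\beta_l}\bigr) \prod_{s=1}^b C^{(a+s)}_{\beta^{(s)}}
\]
has top $a_{\beta_l}^a$-coefficient $\tilde{c}\,R$ with $\tilde{c} = \prod_r \tilde{c}_r$ and $R = \prod_r R_r \cdot \prod_s C^{(a+s)}_{\beta^{(s)}}$; nonvanishing $R(\Gamma) \neq 0$ follows from each factor being nonzero at $\Gamma$. The vanishing $Q^{(g)}_{\beta_l}(\Gamma) = 0$, and hence $g \in \tDd{\beta_l}{j}$, is immediate, since every non-top-degree term in the expansion of $C^{(g)}_{\beta_l}$ contains a factor $Q^{(r_0)}_{\beta_l}$ with $Q^{(r_0)}_{\beta_l}(\Gamma) = 0$.

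For the higher coefficients $C^{(g)}_{\mathfrak{n}^i(\beta_l)}$ with $i \geq 1$, I would identify the contribution to the distinguished monomial $a_{\beta_l}^{a-1} R\, a_{\mathfrak{n}^i(\beta_l)}$. It arises by selecting an index $r_0 \in \mathbb{N}_a$ to ``boost'' to the $\mathfrak{n}^i(\beta_l)$-term of $g_{r_0}$, contributing the factor $\tilde{c}_{r_0} c^{(r_0)}_{\mathfrak{n}^i(\beta_l)} R_{r_0} a_{\mathfrak{n}^i(\beta_l)}$, while taking the leading $\tilde{c}_r R_r a_{\beta_l}$-term from each $g_r$ with $r \neq r_0$ and the base $C^{(a+s)}_{\beta^{(s)}}$ from each $g_{a+s}$. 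Summing over $r_0 \in \mathbb{N}_a$ produces the coefficient $\tilde{c} R \sum_{r=1}^a c^{(r)}_{\mathfrak{n}^i(\beta_l)}$, yielding $c^{(g)}_{\mathfrak{n}^i(\beta_l)} = \sum_{r=1}^a c^{(r)}_{\mathfrak{n}^i(\beta_l)}$ for $i \geq 1$ and $c^{(g)}_{\beta_l} = 1$, as claimed; strict increase of this $c$-sequence is inherited from the strict increase of each individual $c^{(r)}$-sequence in $\mathbb{Q}_{>0}$. The main obstacle will be controlling the remaining combinatorial contributions — in particular, boosting some $g_{a+s_0}$ to an $\mathfrak{n}^i(\beta_l)$-power reachable in its own orbit, or mixing several cross-terms simultaneously — and showing that they are either absorbed into the $Q$-part of the product or combine to produce no net contribution to the distinguished monomial $a_{\beta_l}^{a-1} R\, a_{\mathfrak{n}^i(\beta_l)}$. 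This bookkeeping is the technically delicate step, and it is precisely where the hypothesis $g_r \in \tDd{\beta_l}{j}$ (ensuring $Q^{(r)}_{\beta_l}(\Gamma)=0$) is used to cancel the would-be spurious contributions.
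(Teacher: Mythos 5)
The decisive step of your argument is missing, and the one concrete claim you make about it is not compatible with Definition \ref{def:max}. You openly defer ``controlling the remaining combinatorial contributions'', but that is where the proof actually lives. Worse, your plan to absorb the mixed terms into the $Q$-part cannot work as stated for the coefficient $C^{(g)}_{\beta_l}$ (the case $i=0$): by Definition \ref{def:max}, $Q^{(g)}_{\beta_l}$ must lie in ${\mathbb C}[a_{\beta'}]_{\beta'<\beta_l}$, i.e.\ it may not contain $a_{\beta_l}$ at all, whereas the expansion of $\prod_{r=1}^{a}\bigl(\tilde{c}_r R_r a_{\beta_l}+Q^{(r)}_{\beta_l}\bigr)$ produces terms of intermediate degree $1,\ldots,a-1$ in $a_{\beta_l}$. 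These cannot be ``absorbed''; they must vanish identically, and knowing only $Q^{(r)}_{\beta_l}(\Gamma)=0$ (which is literally all that $\tDd{\beta_l}{j}$ records) does not kill a polynomial. The missing observation is that in the only situation where the conclusion can have degree $a-1\geq 1$, namely $\beta_l=\beta_j$ (forced by the condition $m=0$ for $\beta\notin\{\beta_1,\ldots,\beta_j\}$ in Definition \ref{def:max}), the polynomial $Q^{(r)}_{\beta_j}$ lives in ${\mathbb C}[a_{\beta'}]_{\beta'<\beta_j}={\mathbb C}$, so $Q^{(r)}_{\beta_j}(\Gamma)=0$ does mean $Q^{(r)}_{\beta_j}\equiv 0$ and each $C^{(r)}_{\beta_j}$ is the exact monomial $\tilde{c}_r R_r a_{\beta_j}$; only then do the cross-terms disappear. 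The same issue recurs for $i\geq 1$: a term built from the distinguished part of one factor and $Q^{(r)}_{\beta_l}$ of another multiplies $a_{{\mathfrak n}^{i}(\beta_l)}$ by something that is not $a_{\beta_l}^{a-1}R$, and it can go neither into the distinguished part nor into $Q^{(g)}_{{\mathfrak n}^{i}(\beta_l)}$. Finally, your assertion that boosting a factor $g_{a+s}$ (or any multi-boost) only produces variables of index strictly below ${\mathfrak n}^{i}(\beta_l)$ is the correct statement, but it needs the offset computation via Remark \ref{rem:no_fw_hole} spelled out; as written it is only gestured at. Until these cancellations and absorptions are actually established, what you have is a plan, not a proof.

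For comparison, the paper does not redo this computation at all: it invokes \cite[Lemma 3.2]{ayuso2024construction}, where the statement is proved for the family $\fgb{\Gamma}{\beta_1}$ (the case $j=1$), and deduces the general case from Remarks \ref{rem:family_restriction} and \ref{rem:nb_puiseux}. So either carry out in full the bookkeeping you postponed (including the identification $Q^{(r)}_{\beta_j}\equiv 0$ above), or take the shorter route of reducing to the known case $j=1$.
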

\begin{proof}
  In \cite[Lemma 3.2]{ayuso2024construction}, the result is proved for the family
  $\fgb{\Gamma}{\beta_1}$, i.e. for $j=1$.  The other cases are derived from Remarks
  \ref{rem:family_restriction} and \ref{rem:nb_puiseux}.
\end{proof}  
We can even determine the leading variable of the approximate roots $f_{\Gamma, l}$ and their powers.
\begin{cor}
\label{cor:root}
Consider $l \in {\mathbb N}_{g}$ and $m \in {\mathbb Z}_{\geq 1}$. Then $f_{\Gamma, l}^{m}$
belongs to $\oDD{\beta_{l},\leq }{m-1}{l} \cap \tDd{\beta_{l}}{l}$ and $1, m, m, \ldots$ is its
$c$-sequence.
\end{cor}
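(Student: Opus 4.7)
The plan is to prove the corollary in two steps: first establish the base case $m=1$ by direct computation with the factorization \eqref{equ:app_root_exp}, and then deduce the general case by a multiplicative argument modeled on Lemma \ref{lem:prod}.

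For the base case $m=1$, I would expand $f_{\Gamma,l}\circ\overline{\Gamma}(t)$ for a generic $\overline{\Gamma}\in\fgb{\Gamma}{\beta_l}$. By \eqref{equ:app_root_exp}, this pullback is a product of $\nu_{l-1}$ linear factors. Since $\overline{\Gamma}$ agrees with $\Gamma$ at all exponents below $\beta_l$, the $k=0$ factor collapses to $\sum_{\beta\geq\beta_l} a_{\beta,\overline{\Gamma}}t^{\beta}$, which starts in $t$-order $\beta_l$ with leading coefficient $a_{\beta_l,\overline{\Gamma}}$; the remaining $k\neq 0$ factors depend only on the frozen $a_{\beta,\Gamma}$ with $\beta<\beta_l$, their $t$-orders summing to $\overline{\beta}_l-\beta_l$ with leading coefficients multiplying to a nonzero constant $\tilde{c}$. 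Multiplying everything out, the coefficient of $t^{\overline{\beta}_l-\beta_l+\mathfrak{n}^r(\beta_l)}$ in $f_{\Gamma,l}\circ\overline{\Gamma}(t)$ equals $\tilde{c}\,a_{\mathfrak{n}^r(\beta_l),\overline{\Gamma}}$ plus a polynomial in the $a_{\beta',\overline{\Gamma}}$ with $\beta'<\mathfrak{n}^r(\beta_l)$. Matching against Definition \ref{def:max} with $\beta=\beta_l$, degree $0$, $R=1$, and $k=\overline{\beta}_l-\beta_l$ yields $c_{\mathfrak{n}^r(\beta_l)}=1$ for every $r\geq 0$; the congruence $[k]_{e_{l-1}}=[0]_{e_{l-1}}$ reduces to $\overline{\beta}_l-\beta_l\in(e_{l-1})$, which follows from the recursion $\overline{\beta}_l = n_{l-1}\overline{\beta}_{l-1}-\beta_{l-1}+\beta_l$ of Remark \ref{rem:prop_beta} together with $\overline{\beta}_{l-1},\beta_{l-1}\in(e_{l-1})$. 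Finally $Q_{\beta_l}\equiv 0$ because $f_{\Gamma,l}$ is linear in each $a_{\beta,\overline{\Gamma}}$ with $\beta\geq\beta_l$. Altogether, $f_{\Gamma,l}\in\oDD{\beta_l,\leq}{0}{l}\cap\tDd{\beta_l}{l}$ with $c$-sequence $(1,1,1,\ldots)$.

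For $m\geq 2$, I would invoke Lemma \ref{lem:prod} with $a=m$ copies of $g_r=f_{\Gamma,l}$ and $b=0$. Although the lemma is stated with a strictly increasing hypothesis on $c$-sequences, the Leibniz-rule computation that underlies its conclusion carries over verbatim for weakly increasing inputs: the coefficient of $a_{\mathfrak{n}^r(\beta_l)}$ in $\prod_{r=1}^{m}g_r$ is obtained by selecting one factor to contribute $a_{\mathfrak{n}^r(\beta_l)}$ and the remaining $m-1$ to contribute their common leading term $\tilde{c}\,a_{\beta_l}$, so each $c$-coefficient of the product (except the first, which is normalized to $1$) equals the sum of the individual $c$-coefficients at that index. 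Summing $m$ copies of $(1,1,1,\ldots)$ and resetting the first term produces the announced $c$-sequence $(1,m,m,\ldots)$; the degree rises from $0$ to $a-1=m-1$; and $Q_{\beta_l}\equiv 0$ persists since the leading $t^{m\overline{\beta}_l}$-coefficient of $f_{\Gamma,l}^m\circ\overline{\Gamma}$ is exactly $\tilde{c}^m a_{\beta_l,\overline{\Gamma}}^m$. The weakly-but-not-strictly-increasing nature of the resulting $c$-sequence is why the conclusion lies in $\oDD{\beta_l,\leq}{m-1}{l}$ rather than $\oDD{\beta_l,<}{m-1}{l}$.

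The main obstacle is confirming that the Leibniz-rule argument underlying Lemma \ref{lem:prod} extends from $<$ to $\leq$, so that it may be invoked with $g_r=f_{\Gamma,l}$, whose $c$-sequence is constant. A glance at the proof of \cite[Lemma 3.2]{ayuso2024construction} should confirm that strictness is used only to propagate strict monotonicity to the output; the sum-of-$c$-sequences identity itself depends only on the polynomial expansion. A secondary bookkeeping point is the congruence $[m(\overline{\beta}_l-\beta_l)]_{e_{l-1}}=[0]_{e_{l-1}}$ needed for $f_{\Gamma,l}^m$ in the fourth bullet of Definition \ref{def:max}, which is the $m$-fold multiple of the base-case congruence already established.
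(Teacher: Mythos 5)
Your argument is correct in substance, but it follows a genuinely different route from the paper. The paper's proof of Corollary \ref{cor:root} is essentially a citation: it quotes \cite[Corollary 3.1]{ayuso2024construction}, where the statement is established for the ``universal'' approximate root $f_{l}$, with polynomial coefficients in $a_{\beta_1},\ldots,a_{{\mathfrak p}(\beta_l)}$, relative to the family $\fgb{\Gamma}{\beta_{1}}$ (i.e.\ $j=1$), and then transfers it to the family $\fgb{\Gamma}{\beta_{l}}$ by specializing the coefficients below $\beta_{l}$ and invoking Remarks \ref{rem:family_restriction} and \ref{rem:nb_puiseux}. You instead reprove the statement directly in the family $\fgb{\Gamma}{\beta_{l}}$ from the factorization \eqref{equ:app_root_exp}, which is self-contained and makes explicit where $k=m\overline{\beta}_{l}-\beta_{l}$, $\tilde{c}$, $R=1$, the congruence modulo $e_{l-1}$ and the $c$-sequence $(1,m,m,\ldots)$ come from; the price is that you must redo by hand the bookkeeping that the cited corollary and Lemma \ref{lem:prod} encapsulate.

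Three points should be tightened. First, Lemma \ref{lem:prod} cannot be invoked as stated: its hypothesis $g_r\in\oDD{\beta_{l},<}{0}{l}$ fails for $g_r=f_{\Gamma,l}$, whose $c$-sequence is constant. Your direct Leibniz computation does work for $m$ identical factors (an order count shows that a monomial of $t$-order $m\overline{\beta}_{l}-\beta_{l}+{\mathfrak n}^{r}(\beta_{l})$ can involve $a_{{\mathfrak n}^{r}(\beta_{l})}$ only by taking the $a_{{\mathfrak n}^{r}(\beta_{l})}$-term from one factor and the leading term $\tilde{c}\,a_{\beta_{l}}$ from the other $m-1$), so present that computation rather than an appeal to an unproved ``$\leq$'' variant of the lemma. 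Second, the justification ``$Q_{\beta_{l}}\equiv 0$ because $f_{\Gamma,l}$ is linear in each $a_{\beta}$'' is wrong for $l\geq 2$: every conjugate factor of \eqref{equ:app_root_exp} contains the variable tail $\sum_{\beta\geq\beta_{l}}a_{\beta}t^{\beta}$, so the pullback has degree $\nu_{l-1}$ in the variables. The correct reason is again an order count: the unique monomial of minimal $t$-order $m\overline{\beta}_{l}$ is $\tilde{c}^{m}a_{\beta_{l}}^{m}$, hence $C_{\beta_{l}}=\tilde{c}^{m}a_{\beta_{l}}^{m}$, $Q_{\beta_{l}}=0$, and in particular $\Beg_{\Gamma}(f_{\Gamma,l}^{m})=\beta_{l}$ (giving the $\overline{\mathsf{Lead}}$ and $\widetilde{\mathsf{Lead}}$ memberships). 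Third, you never verify the first bullet of Definition \ref{def:max}, namely that the support of $f_{\Gamma,l}^{m}\circ\overline{\Gamma}$ is contained in $k+\{{\mathfrak n}^{r}(\beta_{l}):r\geq 0\}$; this is true because every exponent of ${\mathcal E}_{\Gamma}$ below $\beta_{l}$ is divisible by $e_{l-1}$, so each product monomial exceeds the minimal one by increments that, as in Remark \ref{rem:no_fw_hole}, keep the exponent inside ${\mathcal E}_{\Gamma}\cap[\beta_{l},\infty)$ — routine, but it is part of the definition and should be said.
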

\begin{proof}
  Consider $f_{l}$, which is the $l$-th approximate root of the curves in
  $\fgba{\Gamma}{\beta_{1}}$, so that
  $f_{l} \in {\mathbb C}[a_{\beta_1}, \hdots, a_{{\mathfrak p}(\beta_{l})}
  ]\{x,y\}$.  On one hand, we know \cite[Corollary 3.1]{ayuso2024construction} that
  $f_{l}^{m} \in \oDD{\beta_{l}, \leq}{m-1}{1} \cap \tDd{\beta_{l}}{1}$. On the
  other, $f_{l}$ restricted to $\fgba{\Gamma}{\beta_{l}}$ is
  equal to $f_{\Gamma, l}$ (cf. Equation \eqref{equ:app_root_exp}). Now Remarks
  \ref{rem:family_restriction} and \ref{rem:nb_puiseux} give
  $f_{\Gamma, l}^{m} \in \oDD{\beta_{l}, \leq}{m-1}{l} \cap
  \tDd{\beta_{l}}{l}$.
\end{proof}
Later we will see that the $1$-forms in the family $ {\mathcal G}_{\Gamma, l}^{0}$ have a
leading variable by applying the next corollaries.
\begin{cor}
\label{cor:f_dominates_omega}
Let $l \in {\mathbb N}_{g}$, $m \geq 1$ and $\omega \in \cup_{\beta < \beta_{l}} \oDd{\beta}{j}$. 
Then $f_{\Gamma, l}^{m} \omega$ belongs to $\oDD{\beta_{l},\leq }{m-1}{l} \cap \tDd{\beta_{l}}{l}$
and $1, m, \ldots, m, \ldots$ is its $c$-sequence.
\end{cor}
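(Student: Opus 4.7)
The plan is to deduce the statement from Corollary~\ref{cor:root} by a direct product-expansion argument in the spirit of Lemma~\ref{lem:prod}. First, by Remark~\ref{rem:family_restriction} I replace $j$ by $l$, so that $\omega\in\oDd{\beta}{l}$ for some $\beta<\beta_{l}$ and both factors may be analyzed in the family $\fgb{\Gamma}{\beta_{l}}$. Corollary~\ref{cor:root} gives
\[ f_{\Gamma,l}^{m}(\Gamma'(t)) \;=\; t^{k_{F}}\sum_{r\ge 0} C^{F}_{\mathfrak{n}^{r}(\beta_{l})}\, t^{\mathfrak{n}^{r}(\beta_{l})}, \]
with the $\mathsf{Lead}$-structure prescribed by Definition~\ref{def:max}, $c$-sequence $1,m,m,\ldots$ and $Q^{F}_{\beta_{l}}(\Gamma)=0$; similarly $\omega$ yields
\[ g_{\omega}(t) \;=\; t^{k_{\omega}}\sum_{s\ge 0} C^{\omega}_{\mathfrak{n}^{s}(\beta)}\, t^{\mathfrak{n}^{s}(\beta)}, \]
where the constant $c_{\omega}:=C^{\omega}_{\beta}$ is nonzero, since every $a_{\beta'}$ with $\beta'<\beta_{l}$ is frozen to its $\Gamma$-value in this family and $\omega\in\Beg_{\Gamma}^{-1}(\beta)$.

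Next, I would check that the product again displays the $\beta_{l}$-pattern of exponents. By Remark~\ref{rem:no_fw_hole}, $\mathfrak{n}^{r}(\beta_{l})+(\mathfrak{n}^{s}(\beta)-\beta)$ lies in $\mathcal{E}_{\Gamma}$, and being $\ge\beta_{l}$ it equals $\mathfrak{n}^{r'}(\beta_{l})$ for a unique $r'\ge 0$; a short $\bmod\,e_{l-1}$ computation (using that $\beta\in(e_{l-1})$ because $\beta<\beta_{l}$ lies in $\mathcal{E}_{\Gamma}$) verifies the $k$-congruence in Definition~\ref{def:max}. Collecting terms, I obtain
\[ f_{\Gamma,l}^{m}\cdot g_{\omega} \;=\; t^{k_{F}+k_{\omega}+\beta}\sum_{r'\ge 0} D_{\mathfrak{n}^{r'}(\beta_{l})}\, t^{\mathfrak{n}^{r'}(\beta_{l})}, \]
where $D_{\mathfrak{n}^{r'}(\beta_{l})}$ is the sum of $C^{F}_{\mathfrak{n}^{r}(\beta_{l})}C^{\omega}_{\mathfrak{n}^{s}(\beta)}$ over pairs $(r,s)$ with $\mathfrak{n}^{r}(\beta_{l})+\mathfrak{n}^{s}(\beta)=\beta+\mathfrak{n}^{r'}(\beta_{l})$.

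I then extract the leading-variable data of the product. At $r'=0$ only $(r,s)=(0,0)$ contributes, giving $D_{\beta_{l}}=c_{\omega}(\tilde{c}_{F}R_{F}\,a_{\beta_{l}}^{m}+Q^{F}_{\beta_{l}})$; since $Q^{F}_{\beta_{l}}(\Gamma)=0$, this yields the required form with new data $\tilde{c}=c_{\omega}\tilde{c}_{F}$, $R=R_{F}$, first $c$-value $1$ and a $Q$-term vanishing at $\Gamma$, so that $f_{\Gamma,l}^{m}\omega\in\tDd{\beta_{l}}{l}$ and $\Beg_{\Gamma}(f_{\Gamma,l}^{m}\omega)=\beta_{l}$. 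For $r'\ge 1$, the pair $(r,s)=(r',0)$ contributes exactly $c_{\omega}\tilde{c}_{F}m\,a_{\beta_{l}}^{m-1}R_{F}\,a_{\mathfrak{n}^{r'}(\beta_{l})}$, fixing the $c$-sequence value at $m$ and placing the product in $\oDD{\beta_{l},\leq}{m-1}{l}$.

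The main obstacle is verifying that the remaining pairs $(r,s)$ with $s\ge 1$ do not produce additional linear-in-$a_{\mathfrak{n}^{r'}(\beta_{l})}$ terms that would spoil the $c$-sequence. A case split on whether $r\lessgtr r'$, combined with the structural facts that $Q^{F}_{\mathfrak{n}^{r}(\beta_{l})}\in\mathbb{C}[a_{\beta'}]_{\beta'<\mathfrak{n}^{r}(\beta_{l})}$ (which excludes $a_{\mathfrak{n}^{r'}(\beta_{l})}$-dependence when $r\le r'$) and the analogous constraint on $Q^{\omega}_{\mathfrak{n}^{s}(\beta)}$, shows that any residual contribution lies in $\mathbb{C}[a_{\beta'}]_{\beta'<\mathfrak{n}^{r'}(\beta_{l})}$ and is therefore absorbed into $Q^{\mathrm{prod}}_{\mathfrak{n}^{r'}(\beta_{l})}$. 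This is essentially the same combinatorial bookkeeping underlying Lemma~\ref{lem:prod}, adapted to the weak inequality in the $c$-sequence $1,m,m,\ldots$ coming from $f_{\Gamma,l}^{m}$.
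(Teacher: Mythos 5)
Your proof is correct, and it is worth situating it against the paper: the paper gives no explicit argument for Corollary \ref{cor:f_dominates_omega} at all — it is presented as an immediate by-product of the product machinery around Corollary \ref{cor:root} and Lemma \ref{lem:prod}, whose computational core is imported from \cite{ayuso2024construction} and transported to the family $\fgb{\Gamma}{\beta_{l}}$ via Remarks \ref{rem:family_restriction} and \ref{rem:nb_puiseux}. Your direct expansion supplies exactly that omitted bookkeeping, and it does something Lemma \ref{lem:prod} cannot do literally: that lemma requires the degree-bearing factors to lie in $\oDD{\beta_{l},<}{0}{j}$, whereas $f_{\Gamma,l}$ has constant $c$-sequence $1,1,\ldots$ and only lies in $\oDD{\beta_{l},\leq}{0}{l}$, so the corollary is not a formal consequence of the stated lemma; your argument handles the weak case by combining Corollary \ref{cor:root} for $f_{\Gamma,l}^{m}$ with the observation that, on $\fgb{\Gamma}{\beta_{l}}$, the coefficient $C^{\omega}_{\beta}$ is a nonzero constant. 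The key verifications are all present: Remark \ref{rem:no_fw_hole} gives the exponent pattern; only the pair $(r,s)=(r',0)$ can contribute a term linear in $a_{\mathfrak{n}^{r'}(\beta_{l})}$, since for $s\geq 1$ both $\mathfrak{n}^{r}(\beta_{l})$ and $\mathfrak{n}^{s}(\beta)$ are strictly smaller than $\mathfrak{n}^{r'}(\beta_{l})$, so the entire cross term (not just its $Q$-part) sits in ${\mathbb C}[a_{\beta'}]_{\beta'<\mathfrak{n}^{r'}(\beta_{l})}$ and is absorbed into the new $Q$-term; and the congruence of Definition \ref{def:max} follows from $\beta, k_{\omega}\in(e_{l-1})$ together with $[k_{F}]_{e_{l-1}}=[(m-1)\beta_{l}]_{e_{l-1}}$. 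Two points you leave implicit and should state: first, $D_{\beta_{l}}(\Gamma)=c_{\omega}\tilde{c}_{F}R_{F}(\Gamma)\,a_{\beta_{l},\Gamma}^{m}\neq 0$ (using $a_{\beta_{l},\Gamma}\neq 0$) is what forces $\Beg_{\Gamma}(f_{\Gamma,l}^{m}\omega)=\beta_{l}$, hence membership in the overlined class and, since $Q_{\beta_{l}}=c_{\omega}Q^{F}_{\beta_{l}}$ vanishes at $\Gamma$, in $\tDd{\beta_{l}}{l}$; second, the congruence $k_{\omega}\in(e_{l-1})$ uses the congruence condition of Definition \ref{def:max} for $\omega$ together with the fact that the modulus $e_{\ell}$ attached to $\omega$ there is a multiple of $e_{l-1}$.
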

The next result is a corollary of Lemma \ref{lem:prod} and Proposition \ref{pro:unique_companion}.
\begin{cor}
\label{cor:f_omega}
Let $l \in {\mathbb N}_{g}$, $m \geq 0$ and $\omega \in \oDD{\beta_{l}, \leq}{0}{l}$. 
Then $f_{\Gamma, l}^{m} \omega$ belongs to $\oDD{\beta_{l},\leq }{m}{l} \cap \tDd{\beta_{l}}{l}$
and $1, c_{{\mathfrak n}(\beta_{l})} + m, c_{{\mathfrak n}^{2}(\beta_{l})} +m, \ldots$ is its $c$-sequence
where $(c_{{\mathfrak n}^{r}(\beta_{l})} )_{r \geq 0}$ is the $c$-sequence of $\omega$.
\end{cor}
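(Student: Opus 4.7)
The plan is a two-step maneuver: first elevate $\omega$ to $\tDd{\beta_l}{l}$ via Proposition \ref{pro:unique_companion}, then combine it with $f_{\Gamma, l}^m$ using the convolution analysis of Lemma \ref{lem:prod}. Since $\omega \in \oDD{\beta_l, \leq}{0}{l}$, one has $\Beg_\Gamma(\omega) = \beta_l$, which is a Puiseux characteristic exponent of $\Gamma$; Proposition \ref{pro:unique_companion}, applied with $j = l$ and degree $0$, then yields $\omega \in \tDd{\beta_l}{l}$. Corollary \ref{cor:root} supplies the analogous information for $f_{\Gamma, l}^m$, namely $f_{\Gamma, l}^m \in \oDD{\beta_l, \leq}{m-1}{l} \cap \tDd{\beta_l}{l}$ with $c$-sequence $1, m, m, \ldots$. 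The $\tDd{\beta_l}{l}$ membership means that on the family $\fgb{\Gamma}{\beta_l}$ both $Q$-terms at $\beta_l$ vanish identically, so $C^\omega_{\beta_l} = \tilde{c}_\omega R_\omega\, a_{\beta_l}$ and $C^{f^m}_{\beta_l} = \tilde{c}_f R_f\, a_{\beta_l}^m$ are pure monomials in $a_{\beta_l}$.

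Next, I would multiply the associated series $g_{f_{\Gamma, l}^m} \cdot g_\omega$ on the family $\fgb{\Gamma}{\beta_l}$. The lowest-order coefficient equals $\tilde{c}_\omega \tilde{c}_f R_\omega R_f\, a_{\beta_l}^{m+1}$, a single monomial with no $Q$-remainder. This one computation simultaneously identifies the leading variable as $\beta_l$ with degree $m$, the preservation of $\Beg_\Gamma$ at $\beta_l$, and membership in $\tDd{\beta_l}{l}$. To extract the $c$-sequence at index $p \geq 1$, I would isolate the coefficient of $a_{{\mathfrak n}^p(\beta_l)}$ in $C^{\mathrm{prod}}_p = \sum_{(r,s)} C^\omega_r C^{f^m}_s$. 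Because $Q^\omega_r$ and $Q^{f^m}_s$ involve only variables strictly below their respective indices, the variable $a_{{\mathfrak n}^p(\beta_l)}$ enters solely through the pairs $(r, s) = (p, 0)$ and $(r, s) = (0, p)$, contributing $c_{{\mathfrak n}^p(\beta_l)}$ and $m$ respectively via the two known $c$-sequences. Their sum $c_{{\mathfrak n}^p(\beta_l)} + m$ is precisely the $p$-th entry in the claimed $c$-sequence, whose weak increase follows immediately from that of $\omega$'s $c$-sequence.

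The hard part will be ensuring that no hidden contributions to $a_{{\mathfrak n}^p(\beta_l)}$ arise from pairs $(r, s)$ with both $r, s < p$ and ${\mathfrak n}^r(\beta_l) + {\mathfrak n}^s(\beta_l) = \beta_l + {\mathfrak n}^p(\beta_l)$, since the operator $\mathfrak{n}$ is nonlinear once one crosses further Puiseux exponents. This is exactly the combinatorial bookkeeping carried out in Lemma \ref{lem:prod} (via \cite[Lemma 3.2]{ayuso2024construction}); its argument is insensitive to whether the input $c$-sequences are weakly or strictly increasing, since strictness is only needed downstream to propagate the conclusion through iterated constructions, so it applies verbatim here. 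Packaging this bookkeeping with the $\tDd{\beta_l}{l}$ upgrade from Proposition \ref{pro:unique_companion} delivers the corollary.
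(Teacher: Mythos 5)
Your argument is correct and follows essentially the paper's own route: Proposition \ref{pro:unique_companion} supplies the upgrade $\omega \in \tDd{\beta_{l}}{l}$, Corollary \ref{cor:root} handles $f_{\Gamma, l}^{m}$, and the product and $c$-sequence bookkeeping is exactly the content of Lemma \ref{lem:prod} (via \cite[Lemma 3.2]{ayuso2024construction}), which is precisely the pair of references the paper gives for this corollary without further detail. Your remark that the strict-increase hypothesis in Lemma \ref{lem:prod} must be read as applying equally to weakly increasing $c$-sequences is a fair account of what the paper implicitly assumes here.
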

\subsection{Main types of foliations we will encounter}
\label{subsec:foliation}
Along the way, while carrying out the operations in Algorithm
\ref{alg:construction}, we will construct different types of $1$-forms giving rise
to foliations of two main types. One type will be in
$\DD{\beta_{j}, \leq}{n_{j}-1}{ j}$ for $j\in \mathbb{N}_g$, while the other one
will be in the following singularly specific set. In this subsection, the remarkable
geometric relation with $\Gamma$ of these two types of foliation is described.
% Let $\Gamma$ be a singular irreducible germ of holomorphic curve in $\cn{2}$.  Our main goal is
% finding a ${\mathcal C}$-basis for $\Gamma$ (cf. Definition \ref{def:cbasis}) consisting of
% $dx$, $dy$ and other forms fanning $\Gamma$ (cf. Definition \ref{def:j_form}).  All the forms
% fanning $\Gamma$ to genus $j$ in our process belong to the next set.
 \begin{defi}
  \label{def:Dj}
We define $\hatDD{{\mathfrak n}(\beta_{j}), <}{0}{j}$ 
as the set consisting of $\omega \in \DD{{\mathfrak n}(\beta_{j}), <}{0}{j}$
such that the polynomial $R$ associated to $\omega$ and the family $\fgb{\Gamma}{\beta_{j}}$
in Definition \ref{def:max} is of the form 
$c a_{\beta_{j}}^{s}$ where $c \in {\mathbb C}^{*}$ and $s \in {\mathbb Z}_{\geq 0}$.
\end{defi}

% In this section we are going to study the geometry of $1$-forms in
% $\hatDD{{\mathfrak n}(\beta_{j}), <}{0}{j}$ and $\DD{\beta_{j}, \leq}{n_{j}-1}{ j}$ for
% $j \in {\mathbb N}_{g}$.
As we are studying geometric relations between $1$-forms and the singular curve
$\Gamma$, it is natural to introduce the blow-up of points and the sequence of infinitely near
points of a curve.
\begin{defi}
\label{def:seq_inf_pt}
Let $\pi_{1}: {\mathcal X}_{1} \to {\mathbb C}^{2}$ be the blow-up of the origin
$P_{0}:=(0,0)$.  Let $\Gamma_1 =\overline{\pi_{1}^{-1} (\Gamma \setminus \{P_{0}\})}$ be the
strict transform of $\Gamma$ by $\pi_1$.  We define the divisor
$E_{1}= D_{1}= \pi_{1}^{-1}(P_0)$.  Obviously, $D_{1} \cap \Gamma_{1}$ is a singleton
$\{ P_{\Gamma,1} \}$.
%intersects $D_1$ in a single point $P_{\Gamma,1}$. 
By iterating this method, we obtain a sequence 
\begin{equation}\label{eq:desing-Gamma}
  (\mathbb{C}^2,0) \stackrel{\pi_1}{\longleftarrow} \mathcal{X}_1
  \stackrel{\pi_2}{\longleftarrow} \cdots
  \mathcal{X}_{k-1}\stackrel{\pi_k}{\longleftarrow}\mathcal{X}_k
  \stackrel{\pi_{k+1}}{\longleftarrow}\cdots
\end{equation}
where $\pi_{k}$ is the blow-up of $P_{\Gamma, k-1}$,
$D_{k} = \pi_{k}^{-1} (P_{\Gamma, k-1})$,
$E_k = \pi_{k}^{-1} (E_{k-1})$, $\Gamma_k$ is the strict transform of
$\Gamma_{k-1}$ by $\pi_k$ and $P_{\Gamma, k}$ is the unique point in
$\Gamma_{k} \cap D_k$ for any $k \in {\mathbb Z}_{\geq 1}$. The
sequence ${(P_{\Gamma,k})}_{k \geq 1}$ is called the sequence of {\it
  infinitely near points} of $\Gamma$.
\end{defi}
\begin{defi}
  We say that $P \in E_{\iota}$ is a {\it trace point} if there is a unique irreducible
  component of $E_{\iota}$ through $P$.
\end{defi}
\begin{rem}
\label{rem:bij_trace_exp}
There exists a bijective correspondence between
\begin{equation*}
  \{\iota \in {\mathbb Z}_{\geq 1} :
  P_{\Gamma, \iota} \ {\rm is \ trace \ point \ of} \ E_{\iota} \}
\end{equation*}
and the set ${\mathcal E}_{\Gamma} \cup n {\mathbb Z}_{\geq 1}$: given
$\beta \in {\mathcal E}_{\Gamma} \cup n {\mathbb Z}_{\geq 1}$, consider the family of curves
\begin{equation*}
  \Gamma_{\zeta}(t) =
  \left( t^{n},  \zeta t^{\beta}  +
    \sum_{\beta^{\prime}\in \mathcal{E}_{\Gamma}\cup n\mathbb{Z}_{\geq 1}}
    a_{\beta^{\prime}, \Gamma} t^{\beta^{\prime}}
  \right) .
\end{equation*}
for $\zeta$ in a neighborhood of $0$. Clearly, the $\iota$ associated to $\beta$ is the first
index $m \in {\mathbb Z}_{\geq 1}$ such that $\zeta \mapsto P_{\Gamma_{\zeta},m}$ depends on
$\zeta$.  Define a function $\hat{\theta}$ as $\beta = \hat{\theta} (\iota)$ and
$\iota = \hat{\theta}^{-1} (\beta)$.  For simplicity, we denote
${\mathfrak D}_j = D_{\hat{\theta}^{-1} (\beta_j)}$ for $1 \leq j \leq g$.  The divisors
${\mathfrak D}_1, \hdots, {\mathfrak D}_g$ are called {\it characteristic divisors}
(cf. \cite{Fortuny-Ribon-Canadian}).
\end{rem}
\begin{rem}
\label{rem:coef_dir}
Fix $\beta \in {\mathcal E}_{\Gamma} \cup n {\mathbb Z}_{\geq 1}$, and let
$\iota = \hat{\theta}^{-1} (\beta)$.  Consider a curve $\Gamma_{\eta}$ of parametrization
\begin{equation*}
  \Gamma_{\eta} (t)=
  \left( t^{n},  \sum_{\beta' < \beta} a_{\beta', \Gamma} t^{\beta'}  + \eta t^{\beta}  
    + \sum_{\substack{\beta' \in {\mathcal E}_{\Gamma} \cup n {\mathbb Z}_{\geq 1}\\ \beta' > \beta}}
    c_{\beta'}  t^{\beta'}\right) .
\end{equation*}
Then $\tau: \eta \mapsto P_{\Gamma_{\eta}, \iota}$ is a map that does not depend on
$c_{\beta'}$ for $\beta' > \beta$.  Let us consider the case where $c_{\beta'}=0$
for any $\beta' > \beta$.  First, if $\beta\not\in\{\beta_1,\ldots,\beta_g\}$, then
$\tau$ is a surjective map between ${\mathbb C}$ and the trace points of $D_{\iota}$
in $E_{\iota}$. Second, if $\beta=\beta_j$ with $j \in \mathbb{N}_{g}$, then $\tau$
is a surjective map between $\mathbb{C}^{\ast}$ and the trace points of $D_{\iota}$.
Moreover, $\tau (\eta) = \tau (\eta')$ if and only if $\Gamma_{\eta}$ and
$\Gamma_{\eta'}$ are parametrizations of the same curve. This equality holds if and
only if there exists $\xi \in {\mathbb C}^{*}$ such that
$\Gamma_{\eta} (\xi t) \equiv \Gamma_{\eta'} (t)$.  It is easy to see that, in the
first case ($\beta$ is not a Puiseux exponent) this implies
$\eta =\eta'$, so that $\tau$ is injective.  In the second case
($\beta=\beta_j$ is a Puiseux exponent), such a property is
equivalent to $\xi^{e_{j-1}} =1$ and $\eta' = \xi^{\beta_{j}} \eta$, which gives,
for this second case:
\begin{equation*}
  \tau(\eta) = \tau (\eta')\,\Longleftrightarrow\, (\eta' \eta^{-1})^{n_j} =1
\end{equation*}
because $\{ \xi^{\beta_j} : \xi^{e_{j-1}} =1 \}$ is the set of $n_j$-roots of unity.
\end{rem}
\subsubsection{Geometry of the $1$-forms in $\hatDD{{\mathfrak n}(\beta_{j}), <}{0}{j}$}
The $1$-forms in $\hatDD{{\mathfrak n}(\beta_{j}), <}{0}{j}$ have the following key
geometric relation with $\Gamma$, part of which is that they are always dicritical at a very specific divisor:
\begin{pro}
\label{pro:jG}
Fix $j \in {\mathbb N}_{g}$, and $\omega$ in $\hatDD{{\mathfrak n}(\beta_{j}), <}{0}{j}$. Then
$\omega$ fans $\Gamma$ to genus $j$.  Moreover, the fanning family associated to
$\omega$ and the strict transforms of its members are transverse to the
characteristic divisor ${\mathfrak D}_{j}$ of $\Gamma$.
\end{pro}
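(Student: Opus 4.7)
My plan proceeds in three stages: first, construct $\gamma_{j,s}$ as a formal $\omega$-invariant curve for each $s\in\mathbb{C}^{*}$; second, verify that its Puiseux parametrization has the shape required in Definition~\ref{def:j_family}; third, upgrade to a holomorphic family via a resolution argument that also yields the transversality to $\mathfrak{D}_{j}$.

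For the first stage, I would fix $s\in\mathbb{C}^{*}$ and seek a curve $\gamma_{j,s}\in\fgb{\Gamma}{\beta_{j}}$ with $a_{\beta_{j},\gamma_{j,s}}=s$ that is invariant by $\omega=0$. Since $\omega\in\hatDD{{\mathfrak n}(\beta_{j}),<}{0}{j}$, the restriction of $\omega$ to a curve in $\fgb{\Gamma}{\beta_{j}}$ produces a series of the form $t^{k}\sum_{r\geq 0}C_{{\mathfrak n}^{r}({\mathfrak n}(\beta_{j}))}t^{{\mathfrak n}^{r}({\mathfrak n}(\beta_{j}))}$, and by Definitions~\ref{def:max} and~\ref{def:Dj} the coefficient of $a_{{\mathfrak n}^{r}({\mathfrak n}(\beta_{j}))}$ inside each $C_{{\mathfrak n}^{r}({\mathfrak n}(\beta_{j}))}$ is a nonzero scalar multiple of $c_{{\mathfrak n}^{r}({\mathfrak n}(\beta_{j}))}\,a_{\beta_{j}}^{\sigma}$ for some fixed $\sigma\in\mathbb{Z}_{\geq 0}$. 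Specializing $a_{\beta_{j}}=s$ keeps this coefficient nonzero, because the $c$-sequence lies in $\mathbb{Q}_{>0}$ and $s\neq 0$. The method of undetermined coefficients used in the proof of Proposition~\ref{pro:unique_companion} then produces, uniquely and iteratively, a formal invariant curve $\gamma_{j,s}$ of $\omega=0$ with $a_{\beta_{j},\gamma_{j,s}}=s$.

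For the second stage, I would exploit the action of $\mu_{e_{j}}=\{\zeta\in\mathbb{C}^{*}:\zeta^{e_{j}}=1\}$ on parametrizations via $t\mapsto\zeta t$. Every $\beta\leq\beta_{j}$ in $\mathcal{E}_{\Gamma}\cup\{n\}$ is divisible by $e_{j}$ (from the divisibility chain $e_{j}\mid e_{j-1}\mid\cdots\mid e_{0}=n$), so the action preserves each coefficient $a_{\beta}$ with $\beta\leq\beta_{j}$ and rescales $a_{\beta}$ by $\zeta^{\beta}$ for $\beta>\beta_{j}$. The transformed parametrization defines the same image curve and hence is again $\omega$-invariant with the same value of $a_{\beta_{j}}$. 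By the uniqueness of stage one, $a_{\beta,\gamma_{j,s}}=\zeta^{\beta}a_{\beta,\gamma_{j,s}}$ for all $\zeta\in\mu_{e_{j}}$ and all $\beta>\beta_{j}$ in $\mathcal{E}_{\Gamma}$; this forces $a_{\beta,\gamma_{j,s}}=0$ unless $e_{j}\mid\beta$, i.e., unless $\beta=\beta_{j}+ke_{j}$ for some $k\geq 1$. Hence $\gamma_{j,s}$ admits a parametrization of the form required by Definition~\ref{def:j_family}.

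For the third stage, let $\iota=\hat{\theta}^{-1}(\beta_{j})$ and consider the pullback $\pi^{*}\omega$ along $\pi=\pi_{1}\circ\cdots\circ\pi_{\iota}$. By Remark~\ref{rem:coef_dir}, the map $s\mapsto P_{\gamma_{j,s},\iota}$ surjects, up to an $n_{j}$-to-$1$ identification, onto the trace points of $\mathfrak{D}_{j}$. A direct blow-up computation, using that the leading variable of $\omega$ is ${\mathfrak n}(\beta_{j})$ rather than $\beta_{j}$, together with the multiplicative form $R=c\,a_{\beta_{j}}^{\sigma}$ from Definition~\ref{def:Dj}, shows that at each trace point of $\mathfrak{D}_{j}$ the foliation $\pi^{*}\omega=0$ is regular and transverse to $\mathfrak{D}_{j}$. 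The local leaves of $\pi^{*}\omega$ through these trace points therefore assemble into a holomorphic one-parameter family, and pushing them down to $\cn{2}$ yields the holomorphic parametrization $\mathcal{P}(t,s)$ required by Definition~\ref{def:j_family}. The transversality of the strict transforms of the $\gamma_{j,s}$ to $\mathfrak{D}_{j}$ is immediate from this construction. The main obstacle is precisely this blow-up computation: one must keep careful track of the vanishing orders of $\omega$ along each exceptional divisor to verify regularity and non-tangency of $\pi^{*}\omega$ to $\mathfrak{D}_{j}$ at every trace point, and it is here that the defining properties of $\hatDD{{\mathfrak n}(\beta_{j}),<}{0}{j}$ --- the strictness of the $c$-sequence, which rules out cancellations lowering the order of $\pi^{*}\omega$, and the special shape of $R$, which makes the obstruction to invariance of $\mathfrak{D}_{j}$ depend non-trivially on $s$ --- play their essential role.
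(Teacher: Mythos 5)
Your stages 1 and 2 are essentially correct and run parallel to the paper's proof: solving $C_{{\mathfrak n}^{r}(\beta_{j})}=0$ term by term works because, by Definition \ref{def:Dj}, the coefficient of $a_{{\mathfrak n}^{r}(\beta_{j})}$ is a nonzero multiple of $c_{{\mathfrak n}^{r}(\beta_{j})}s^{\sigma}$ with $s\neq 0$, giving the unique formal invariant member of $\fgb{\Gamma}{\beta_{j}}$ with $a_{\beta_{j}}=s$; and your $\mu_{e_{j}}$-symmetry argument forcing $a_{\beta}=0$ for $e_{j}\nmid\beta$ is a legitimate variant of the paper's route (which instead notes $k\in(e_{j})$ and reuses the truncation argument from the proof of Proposition \ref{pro:unique_companion}).

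The genuine gap is in stage 3. You assert that a ``direct blow-up computation'' shows the strict transform of $\omega$ to be regular and transverse to ${\mathfrak D}_{j}$ at \emph{every} trace point, and you build the holomorphic family out of that. This claim is both unproven and stronger than what the hypotheses obviously yield: the existence, through each trace point, of a smooth formal invariant curve transverse to ${\mathfrak D}_{j}$ does not imply that the point is regular for the foliation (for instance $d(v^{2}-u^{2})=0$ has smooth separatrices transverse to the non-invariant line $u=0$ through a singular point), and neither the strictness of the $c$-sequence nor the shape $R=c\,a_{\beta_{j}}^{\sigma}$ visibly excludes finitely many singularities or tangency points of the strict transform along ${\mathfrak D}_{j}$. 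The paper deliberately avoids your claim: it only deduces that ${\mathfrak D}_{j}$ is \emph{non-invariant} (since the formal invariant curves are transverse at infinitely many distinct trace points), which gives convergence of $\Gamma_{\eta}$ and holomorphy of $(t,\eta)\mapsto\Gamma_{\eta}(t)$ only for $\eta$ outside a finite exceptional set $S$; the parameters in $S$ — precisely those lying over possible singular or tangency points — are then handled by applying the maximum modulus principle to the coefficient functions $a_{\beta}(\eta)$, which are holomorphic on ${\mathbb C}^{*}$, to extend the holomorphy of the family across $S$. Without either a proof of your everywhere-regularity statement or a substitute for this last analytic step, your construction only produces a family over ${\mathbb C}^{*}\setminus S$, which does not meet Definition \ref{def:j_family}, since there $U$ must be a neighborhood of all of $\{0\}\times{\mathbb C}^{*}$.
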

\begin{proof}
%Let $\omega \in \hatDD{{\mathfrak n}(\beta_{j}), <}{0}{j}$.
  We claim that, under the hypothesis, given any $\eta \in {\mathbb C}^{*}$, there exists a
  unique family
  ${(a_{\beta}(\eta))}_{\substack{\beta \in {\mathcal E}_{\Gamma}\\ \beta >  \beta_j}}$ 
  of complex numbers such that the formal curve $\Gamma_{\eta}$ of
  parametrization
  \begin{equation*}
    \Gamma_{\eta} (t)=
    \left( t^{n},  \sum_{\beta < \beta_j} a_{\beta, \Gamma} t^{\beta}  + \eta t^{\beta_{j}} +
      \sum_{\beta \in {\mathcal E}_{\Gamma}, \ \beta > \beta_j} a_{\beta} (\eta) t^{\beta}\right)
  \end{equation*}
  is invariant by $\omega =0$. By Definition \ref{def:Dj} there exists
  $s \in {\mathbb Z}_{\geq 0}$ such that the coefficient of $a_{{\mathfrak n}^{r} (\beta_{j})}$
  in $C_{{\mathfrak n}^{r} (\beta_{j})}$ is equal to $d_{r} a_{\beta_{j}}^{s}$, where
  $d_{r} \in {\mathbb C}^{*}$, for any $r \in {\mathbb Z}_{>0}$.  Thus, fixed
  $\eta \in {\mathbb C}^{*}$, the equations $C_{{\mathfrak n}^{r} (\beta_{j})} =0$, for
  $r \geq 1$, determine the sequence
  $(a_{\beta})_{\substack{\beta \in {\mathcal E}_{\Gamma}\\ \beta > \beta_j}}$ so that 
  $\Gamma_{\eta}$ is an invariant curve for $\omega$.  Moreover, for any
  $\beta > \beta_{j}$, the function $a_{\beta}: {\mathbb C}^{*} \to {\mathbb C}$ is
  holomorphic.

  Consider $k$, $m$ and $\ell$ as in Definition \ref{def:max}. Since $m=0$ and $\ell = j$ by
  hypothesis, it follows that $k \in (e_{j})$. The same reasoning as in the proof
  of  Proposition \ref{pro:unique_companion}, shows that $a_{\beta} \equiv 0$ if
  $\beta \not \in (e_j)$.
 
  Denote $\iota = \hat{\theta}^{-1} (\beta_{j})$ and $P_{\eta} = P_{\Gamma_{\eta}, \iota}$.
  Since $a_{\beta}(\eta)=0$ whenever $\beta \not \in (e_j)$, we deduce that the strict
  transform of $\Gamma_{\eta}$ by $\pi_{1} \circ \ldots \circ \pi_{\iota}$ is smooth and
  transverse to ${\mathfrak D}_{j}$ at $P_{\eta}$ for any $\eta \in {\mathbb C}^{*}$,
  so that $\mathfrak{D}_j$ is not invariant for the strict transform of $\omega$.  The
  non-invariance of ${\mathfrak D}_{j}$ implies that there exists a finite subset $S$ of
  ${\mathbb C}^{*}$ such that the power series $\Gamma_{\eta}(t)$ converges for any
  $\eta \in {\mathbb C}^{*} \setminus S$.  Moreover, $(t, \eta) \mapsto \Gamma_{\eta}(t)$ is a
  holomorphic map defined in a neighborhood of $\{ 0 \} \times ({\mathbb C}^{*} \setminus S)$
  in ${\mathbb C} \times {\mathbb C}^{*}$. The modulus maximum theorem,
  applied to the coefficients $a_{\beta}$, implies that $(t, \eta) \mapsto \Gamma_{\eta}(t)$ is
  holomorphic in a neighborhood of $\{ 0 \} \times {\mathbb C}^{*}$. Therefore,
  ${(\Gamma_{\eta})}_{\eta \in {\mathbb C}^{*}}$ is a family of invariant curves for $\omega$
  fanning $\Gamma$ to genus $j$.  The family ${(P_{\eta})}_{\eta \in {\mathbb C}^{*}}$ consists
  of the trace points of ${\mathfrak D}_j$.
\end{proof}
\begin{rem} \label{rem:unique_fan}
Consider the family ${(\Gamma_{\eta})}_{\eta \in {\mathbb C}^{*}}$ of invariant curves
of $\omega=0$ fanning
$\Gamma$ to genus $j$ and provided by Proposition \ref{pro:jG}.
The curves $\Gamma_{\eta}$ and $\Gamma_{\eta'}$ 
satisfy $P_{\Gamma_{\eta}, \iota} = P_{\Gamma_{\eta'}, \iota}$
if and only if 
$(\eta' \eta^{-1})^{n_j} =1$ by Remark \ref{rem:coef_dir}.
Hence, the invariance property implies that $\Gamma_{\eta}$ and $\Gamma_{\eta'}$ 
coincide if and only if $(\eta' \eta^{-1})^{n_j} =1$.
\end{rem}
\begin{rem} \label{rem:max_inter} All the curves in the families
  ${(\Gamma_{\eta})}_{\eta \in {\mathbb C}^{*}}$ above, and in
  ${(\Gamma_{j, \eta^{\prime}})}_{\eta^{\prime} \in {\mathbb C}^{*}}$ (cf. Equation
  \eqref{equ:G_j_s}) are equisingular. By Remark \ref{rem:unique_fan} and Noe\-ther's
  formula (cf. \cite[Theorem 3.3.1]{Casas}), given given $\xi \in \mathbb{C}^{*}$,
  the curve $\Gamma_{\xi}$ is the only one in the family fanning $\Gamma$ to
  genus $j$ for $\omega$, whose intersection multiplicity
  $(\Gamma_{\xi}, \Gamma_{j, \xi})$, at the origin, is maximal.
%
%
%The intersection multiplicity $(\Gamma_{\eta}, \Gamma_{j, \eta'})$ 
%(cf. Equation \eqref{equ:G_j_s}) at
%the origin satisfies 
%\[ (\Gamma_{\eta}, \Gamma_{j, \eta'}) = \frac{1}{\nu_{j}} 
%\sum_{k=0}^{\nu_{j}-1} \sum_{l=0}^{\nu_{j}-1} \mathrm{ord}_{t} 
%\left(
%\Gamma_{\eta} (e^{\frac{2 \pi i k e_{j}}{n}} t^{\frac{1}{e_{j}}}) - 
%\Gamma_{j, \eta'} (e^{\frac{2 \pi i l e_{j}}{n}} t)
%\right)
%\]
%by Halphen's formula (cf. \cite[Proposition 3.10 of Chapter 1]{greuel2007introduction}).
%Indeed, we obtain 
%\[  (\Gamma_{\eta}, \Gamma_{j, \eta'}) \geq  \frac{1}{\nu_{j}} 
%\sum_{k=0}^{\nu_{j}-1} \sum_{l=0}^{\nu_{j}-1} 
%\min \left( \mathrm{ord}_{t} 
%\left(
%\Gamma_{\eta} (e^{\frac{2 \pi i k e_{j}}{n}} t^{\frac{1}{e_{j}}}) - 
%\Gamma_{j, \eta'} (e^{\frac{2 \pi i l e_{j}}{n}} t)  \right) , \frac{\beta_{j}}{e_{j}}
%\right)
%\] 
%and the equality holds if and only if $(\eta' \eta^{-1})^{n_j} \neq 1$.
%Thus, given $\eta' \in \mathbb{C}^{*}$, the curve 
%$\Gamma_{\eta'}$ is unique, in the family fanning $\Gamma$ to genus $j$ for $\omega$, 
%such that $(\Gamma_{\eta}, \Gamma_{j, \eta'})$ is maximal
%by Remark \ref{rem:unique_fan}.
%
%
\end{rem}
By Proposition \ref{pro:unique_companion}, we already knew that $\omega$ has a
unique formal companion curve. Proposition \ref{pro:jG} ensures that this companion
curve is also analytic:
\begin{cor}
\label{cor:wh0}
Let $\omega \in \hatDD{{\mathfrak n}(\beta_{j}), <}{0}{j}$.  Then $\omega$
\touches{} $\Gamma$ to order $ \Beg_{\Gamma}(\omega)$ and has a unique
analytic companion curve.
\end{cor}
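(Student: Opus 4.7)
The plan is to combine Proposition \ref{pro:unique_companion}, which yields a unique \emph{formal} companion curve, with Proposition \ref{pro:jG}, which supplies an \emph{analytic} fanning family of invariant curves. The identification of the formal companion with a specific member of the fanning family upgrades it to an analytic one, giving the desired conclusion.

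First, since $\omega \in \hatDD{{\mathfrak n}(\beta_{j}), <}{0}{j} \subset \DD{{\mathfrak n}(\beta_{j}), \leq}{0}{j}$, and because ${\mathfrak n}(\beta_{j}) > \beta_{j}$ and ${\mathfrak n}(\beta_{j}) \leq \beta_{j+1}$ (as $\beta_{j+1} \in {\mathcal E}_{\Gamma}$), the hypotheses of Proposition \ref{pro:unique_companion} are satisfied with $m=0$ and $\beta = {\mathfrak n}(\beta_{j})$. That proposition then directly yields the first claim, namely that $\omega$ \touch{}s $\Gamma$ to order $\Beg_{\Gamma}(\omega)$, and provides a unique formal companion curve $\overline{\Gamma}$ that sits in $\fgb{\Gamma}{\Beg_{\Gamma}(\omega)}$ and is invariant by $\omega=0$.

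For analyticity, I would invoke Proposition \ref{pro:jG}, which guarantees an analytic family ${(\Gamma_{\eta})}_{\eta \in {\mathbb C}^{*}}$ of invariant curves of $\omega=0$ fanning $\Gamma$ to genus $j$, where the parameter $\eta$ is the coefficient at the exponent $\beta_{j}$ in the Puiseux parametrization of $\Gamma_{\eta}$. The key observation is that the formal companion $\overline{\Gamma}$ belongs to $\fgb{\Gamma}{\Beg_{\Gamma}(\omega)} \subseteq \fgb{\Gamma}{{\mathfrak n}(\beta_{j})}$, so in particular its coefficient at $\beta_{j}$ equals $a_{\beta_{j}, \Gamma}$. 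Thus both $\overline{\Gamma}$ and the analytic curve $\Gamma_{a_{\beta_{j}, \Gamma}}$ are invariant curves in $\fgb{\Gamma}{\beta_{j}}$ with the same value $a_{\beta_{j}, \Gamma}$ at $\beta_{j}$. The uniqueness established in the first paragraph of the proof of Proposition \ref{pro:jG} (for each $\eta \in {\mathbb C}^{*}$ there is a unique invariant curve $\Gamma_{\eta}$) then forces $\overline{\Gamma} = \Gamma_{a_{\beta_{j}, \Gamma}}$, which is analytic.

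The main subtlety lies in this last identification: the formal companion furnished by Proposition \ref{pro:unique_companion} is characterized by matching all coefficients of $\Gamma$ up to exponent $\Beg_{\Gamma}(\omega)$, while the fanning family from Proposition \ref{pro:jG} fixes only the coefficient at $\beta_{j}$ and then determines the rest via invariance. Making clear that these two a priori distinct specifications pick out the same formal curve reduces to the uniqueness-in-$\eta$ statement in Proposition \ref{pro:jG}, together with the fact that $a_{\beta_{j}, \Gamma} \neq 0$, so no further work should be required.
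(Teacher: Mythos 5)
Your proof is correct and follows essentially the same route the paper intends: the corollary is presented there precisely as the combination of Proposition \ref{pro:unique_companion} (which gives that $\omega$ \touch{}s $\Gamma$ to order $\Beg_{\Gamma}(\omega)$ with a unique formal companion) and Proposition \ref{pro:jG} (whose analytic fanning family contains, for $\eta = a_{\beta_{j},\Gamma}$, the unique invariant curve in $\fgb{\Gamma}{{\mathfrak n}(\beta_{j})}$, forcing it to coincide with the formal companion and hence be analytic). Your explicit identification of the two characterizations via the uniqueness-in-$\eta$ step simply spells out what the paper leaves implicit.
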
  
%\begin{proof}
%  We already knew that $\omega$ has a unique formal companion curve by
%  Proposition \ref{pro:unique_companion}.  Proposition \ref{pro:jG}
%  ensures that the companion curve is analytic.
%\end{proof}
\subsubsection{Geometry of the $1$-forms in $\DD{\beta_{j}, \leq}{n_{j}-1}{ j}$}
The following proposition gives a thorough description of the geometric relation
between the $1$-forms in $\DD{\beta_{j}, \leq}{n_{j}-1}{ j}$, which is the other
main family we shall encounter, and $\Gamma$. The condition on
$\Beg_{\Gamma}(\omega)$ will be satisfied in most of the cases, so it is not really
restrictive.
\begin{pro}
  \label{pro:wu}
  Fix $j \in {\mathbb N}_{g}$. Let $\omega \in \DD{\beta_{j}, <}{n_{j}-1}{j}$ with
  $\Beg_{\Gamma}(\omega)> \beta_{j}$.  Then $\omega$ \touches{} $\Gamma$ to order
  $ \Beg_{\Gamma}(\omega)$ and has a unique analytic companion curve
  $\overline{\Gamma}$.  Moreover, ${\mathfrak D}_j$ is invariant by the strict
  transform ${\mathfrak F}$ of the foliation $\omega =0$ by
  $\pi_{1} \circ \ldots \circ \pi_{\iota}$ where
  $\iota =\hat{\theta}^{-1}(\beta_{j})$.  All trace points of ${\mathfrak D}_j$ are
  regular points of ${\mathfrak F}$ except
  $P_{\Gamma, \iota}=P_{\overline{\Gamma}, \iota}$, which is an elementary (or
  equivalently log-canonical) singularity of ${\mathfrak F}$.
\end{pro}
\begin{proof}
  We divide the proof in several parts for the reader's convenience.
  
  \strut

  \textbf{1. The form $\omega$ accompanies $\Gamma$ to order
    $\Beg_{\Gamma}(\omega)$}. This fact, except for the analyticity of the
  companion curve $\overline{\Gamma}$, is a consequence of Proposition
  \ref{pro:unique_companion}.  Recall the notations of Definition \ref{def:max}.
  The solutions of $C_{\beta_{j}}=0$ for the family $\fgb{\Gamma}{\beta_j}$ are the
  complex numbers $\xi a_{\beta_{j}, \Gamma}$ where $\xi$ is a $n_j$-th root of
  unity, so they correspond to the same trace point $P_{\Gamma, \iota}$ of
  ${\mathfrak D}_j$ (cf. Remark \ref{rem:coef_dir}).
  
  \strut
  
  \textbf{2. The divisor $\mathfrak{D}_j$ is invariant for
    $\omega$.} Let $T$ be the set of trace points of ${\mathfrak D}_j$, and
  ${\mathfrak C}$ the set of formal curves $\Gamma' (t) \in \fgb{\Gamma}{\beta_j}$
  such that $\Gamma_{\iota}^{\prime}$ is smooth and transverse to ${\mathfrak D}_j$
  at $P_{\Gamma', \iota} \in T$.  Since $\Beg_{\Gamma}(\omega) \leq \beta_{j+1}$ by
  Proposition \ref{pro:unique_companion} (and Property \ref{it:lead-k} in
  Definition \ref{def:max}), any curve
  $\Gamma^{\prime}\in\fgb{\Gamma}{\beta_j}$ is equisingular to the companion curve
  $\overline{\Gamma}$.  Let ${\mathfrak C}_{0}\subset \mathfrak{C}$ be the subset of
  curves $\Gamma^{\prime}$ such that $P_{\Gamma', \iota} \neq P_{\Gamma, \iota}$.
  Notice that $\Beg_{\Gamma}(\omega)> \beta_{j}$ implies
  $\overline{\Gamma} \in {\mathfrak C} \setminus {\mathfrak C}_{0}$.  Finally, set
  \begin{equation*}
    {\mathfrak C}_{1} = \{ \Gamma' (t) \in {\mathfrak C} \setminus
    {\mathfrak C}_{0} : C_{\beta_j+e_j} (\Gamma') \neq 0 \}
  \end{equation*}
  %(Notice that $\beta_j+e_j$ is $\mathfrak{n}(\beta_j)$, the exponent
  %immediately following $\beta_j$). 
  Let $k$ be as in Definition
  \ref{def:max}. By construction, given $\Gamma' \in {\mathfrak C}$ we
  have
  \begin{equation}
    \label{equ:gen_pt}
    \nu_{\Gamma'} (\omega) = (k+\beta_j)/e_j \Leftrightarrow
    \Gamma' \in {\mathfrak C}_{0} 
  \end{equation}
  because $\Gamma'(t)$ is of the form $\gamma (t^{e_j})$ where $\gamma (t)$ is an
  irreducible pa\-ra\-me\-trization (cf. Remark \ref{rem:order}).
  Equation \eqref{equ:gen_pt} implies that $\mathfrak{F}$ has no invariant
  curves transverse to $\mathfrak{D}_j$ at points in
  $T \setminus \{ P_{\Gamma, \iota} \}$; thus, ${\mathfrak D}_j$ is
  ${\mathfrak F}$-invariant.
  
  \strut

  \textbf{3. $T \setminus \{ P_{\Gamma, \iota} \}$ has only regular points
    for $\mathfrak{F}$}. By an argument similar to the above, 
    given $\Gamma' \in {\mathfrak C}$ we have
  \begin{equation}
    \label{equ:sp_pt}
    \nu_{\Gamma'} (\omega)
    = 1+(k+\beta_j)/e_j \Leftrightarrow \Gamma' \in {\mathfrak C}_{1}  . 
  \end{equation}
 Take a trace point $P \in T$ and consider local
  coordinates $(x', y')$ centered at $P$. 
  As $\mathfrak{D}_j$ is $\mathfrak{F}$-invariant, there exists
  some $m_{\iota}\in \mathbb{N}$ such that
  \begin{equation*}
    (\pi_1 \circ \ldots \circ \pi_{\iota})^{*} \omega = (x')^{m_{\iota}} \omega' =
    (x')^{m_{\iota}} (a(x',y') dx' + x' b(x',y') dy')
  \end{equation*}
  where $x'=0$ is a local equation of ${\mathfrak D}_j$ and
  $a(0, y') \not \equiv 0$.  Given $\Gamma' (t) \in {\mathfrak C}$ with
  $P_{\Gamma', \iota}$ in a neighborhood of $P$, we have
  $\nu (x' \circ \Gamma_{\iota}' (t)) = e_j$ where $\Gamma_{\iota}' (t)$ is the
  irreducible parametrization of $\Gamma^{\prime}_{\iota}$ in
  coordinates $(x',y')$.  Thus, we have $\nu_{\Gamma'} (\omega) = m_{\iota}+1$ if
  and only if $a (P_{\Gamma', \iota} ) \neq 0$, and by Equation
  \eqref{equ:gen_pt} we deduce $m_{\iota}+1 = (k+\beta_j)/e_j$.  Equations
  \eqref{equ:gen_pt} and \eqref{equ:sp_pt} imply that $a (P_{\Gamma', \iota}) =0$
  if and only if $P_{\Gamma', \iota} = P_{\Gamma, \iota}$.  Therefore
  $T \setminus \{ P_{\Gamma, \iota} \}$ consists of regular points of
  ${\mathfrak F}$.

\strut
 
  \textbf{4. $P_{\Gamma,\iota}$ is an elementary singularity of
    $\mathfrak{F}$, and $\overline{\Gamma}$ is analytic.}
  %For simplicity, denote $P = P_{\Gamma, \iota}$ in what
  %follows\pedro{}.
  For any $\Gamma^{\prime}\in \mathfrak{C}_{1}$, we have 
  \begin{equation*}
    \nu_{\Gamma_{\iota}'} (\omega') =
    \nu_{\Gamma^{\prime}_{\iota}} (\omega)  - m_{\iota}
    =  1+(k+\beta_j)/e_j  - (-1 +(k+\beta_j)/e_j) =2 .
  \end{equation*}
  Since there are two $\omega^{\prime}$-invariant curves through
  $P_{\Gamma, \iota}$, namely $x'=0$ and $\overline{\Gamma}_{\iota}$, it follows
  that $\omega^{\prime} (P_{\Gamma,\iota}) =0$.  Moreover, since
  $\nu_{\Gamma_{\iota}'} (\omega') =2$ for any
  $\Gamma^{\prime} \in {\mathfrak C}_1$, we deduce that $\omega^{\prime}$ has
  non-vanishing linear part.  
There exists $\eta \in {\mathbb C}^{*}$ such that for any 
$\Gamma^{\prime} \in \fgb{\Gamma}{\mathfrak{n} (\beta_j)}$
there is a reparametrization $\phi_{\Gamma'} (s) = \eta s + h.o.t.$ satisfying
\[ \Gamma_{\iota}' \circ \phi_{\Gamma'} (s) = 
\left(s^{e_{j}}, \sum_{l \in {\mathcal E}_{\Gamma}, \ l > \beta_{j}} b_{l, \Gamma'} s^{l - \beta_{j}} \right) \]
in coordinates $(x',y')$. Moreover, there exists $\rho \in{\mathbb C}^{*}$ such that 
for any $l \in {\mathcal E}_{\Gamma}$, with  $l > \beta_{j}$, 
the map $\Gamma' \mapsto b_{l, \Gamma'} - \rho \eta^{l} a_{l, \Gamma'}$, 
defined on  $\fgb{\Gamma}{\mathfrak{n} (\beta_j)}$, 
depends just on $a_{k, \Gamma'}$ for $\beta_{j} <k <l$, or equivalently on $b_{k, \Gamma'}$ for $\beta_{j} <k <l$.
We have
\[  (\Gamma_{\iota}' \circ \phi_{\Gamma'})^{*} ( (\pi_1 \circ \ldots \circ \pi_{\iota})^{*} \omega) = (\phi_{\Gamma'})^{*} 
((\Gamma')^{*} \omega) = 
 \left(  s^{k} \sum_{r=1}^{\infty} D_{{\mathfrak n}^{r} (\beta_{j})} s^{{\mathfrak n}^{r} (\beta_{j})} \right) \frac{ds}{s},  \]
for $\Gamma' \in \fgb{\Gamma}{\mathfrak{n} (\beta_j)}$. We obtain 
\[ D_{{\mathfrak n}^{r} (\beta_{j})} = \eta^{k +{\mathfrak n}^{r}(\beta_{j})}
  \tilde{c} c_{{\mathfrak n}^{r}(\beta_{j})} a_{\beta_{j}, \Gamma}^{n_{j}-1}
  a_{{\mathfrak n}^{r} (\beta_j)} + Q_{{\mathfrak n}^{r} (\beta_j)} \] for all
$\Gamma' \in \fgb{\Gamma}{\mathfrak{n} (\beta_j)}$ and $r \in {\mathbb Z}_{\geq 1}$,
cf. Definition \ref{def:max}.  Thus, there exists $c' \in {\mathbb C}^{*}$ such that
\begin{equation}
\label{equ:lvs}
D_{{\mathfrak n}^{r} (\beta_{j})}    =
c' \tilde{c} c_{{\mathfrak n}^{r}(\beta_{j})} a_{\beta_{j}, \Gamma}^{n_{j}-1} 
b_{{\mathfrak n}^{r} (\beta_j)}  + Q_{{\mathfrak n}^{r} (\beta_j)}'  
\end{equation}
for all $\Gamma' \in \fgb{\Gamma}{\mathfrak{n} (\beta_j)}$ and
$r \in {\mathbb Z}_{\geq 1}$, where $Q_{{\mathfrak n}^{r} (\beta_j)}'$ depends on
$b_{k, \Gamma'}$ for $\beta_{j} <k <{\mathfrak n}^{r} (\beta_j)$.

Consider the linear part $(\mu_1 x + \mu_2 y) dx + \lambda x dy$ of $\omega'$.  We
claim that $\lambda \neq 0$. Otherwise, since $j^{1} \omega' \neq 0$ and $\omega'=0$
has 2 transverse smooth invariant curves, it follows that $\mu_{2} \neq 0$.  We
deduce that the coefficient of $b_{{\mathfrak n}^{r} (\beta_j)}$ in expression
\eqref{equ:lvs} is constant for $r \geq 1$, contradicting that
${(c_{{\mathfrak n}^{r}(\beta_{j})})}_{r \geq 1}$ is a strictly increasing sequence
(recall that we are assuming $\omega\in \DD{\beta_j,<}{n_j-1}{j}$).

Since the tangent space of $\overline{\Gamma}_{\iota}$ is an invariant linear
subspace of
\begin{equation}
\label{equ:lvf}
\lambda x \frac{\partial }{\partial x} -
(\mu_1 x + \mu_2 y) \frac{\partial}{\partial y}
\end{equation} 
associated to the eigenvalue $\lambda \neq 0$, it follows that
$\overline{\Gamma}_{\iota}$ is a germ of analytic curve at $P_{\Gamma, \iota}$. Thus
$\overline{\Gamma}$ is a germ of analytic curve at the origin \cite[Proposition
3.2]{LRRS:stable}.  Since the linear vector field \eqref{equ:lvf} is non-nilpotent,
$P_{\Gamma, \iota}$ is an elementary singularity of ${\mathfrak F}$.
\end{proof}
\begin{rem}
Consider the notations in the fourth point of the proof of Proposition \ref{pro:wu}.
  Since the coefficient of $s^{{\mathfrak n}^{r}(\beta_{j})- \beta_{j}+e_{j}-1} ds$ in
  \[ (\Gamma_{\iota}' \circ \phi_{\Gamma'})^{*}  [ \lambda^{-1} \mu_2 y dx + x dy] \]
  is equal to 
  \[ b_{{\mathfrak n}^{r}(\beta_{j}), \Gamma'} ({\mathfrak n}^{r}(\beta_{j})-\beta_{j} + e_{j} \lambda^{-1} \mu_2), \]
  the sequences ${({\mathfrak n}^{r}(\beta_{j})-\beta_{j} + e_{j} \lambda^{-1} \mu_2)}_{r \geq 1}$ and
  ${(c_{{\mathfrak n}^{r}(\beta_{j})})}_{r \geq 1}$ %(equation  \eqref{equ:lvs})
  are equal up to a common 
  multiplicative factor $\rho$. We deduce $ \lambda^{-1} \mu_{2} \in {\mathbb Q}$ as a consequence of 
  $\omega \in \DD{\beta_{j}, <}{n_{j}-1}{j}$. In particular, we get $\rho \in {\mathbb Q}^{+}$
  and hence the former sequence consists of 
  positive rational numbers. This implies $e_{j} +  e_{j} \lambda^{-1} \mu_2 >0$ and in particular
  $- \lambda^{-1} \mu_{2} \in {\mathbb Q}_{<1}$.
\end{rem}

\section{Construction of the ${\mathbb C}[[x]]$-basis}
\label{sec:construction_cx_bases}
At this point, we have all the tools required for the construction of a
$\mathbb{C}[[x]]$-basis, which we divide into $g+1$ levels: from what we call the
$0$-level to the $g$-level, as explained in Algorithm \ref{alg:construction}.  Each
level will consist of a number of stages. Recall that we are assuming $\Gamma$ has
the parametrization \eqref{equ:param}, so that $y=0$ is has maximal contact with it
among the set of smooth curves. The starting point is the \emph{terminal
  family} $\mathcal{T}_0$ of the $0$-level, defined as the pair:
\begin{equation*}
  {\mathcal T}_{\Gamma, 0} = ( dx, dy )
\end{equation*}
Assuming we have constructed a terminal family
${\mathcal T}_{\Gamma, l}$ of the $l$-level (which will be defined later) with
$l < g$, we construct the stage $0$ family ${\mathcal G}_{\Gamma, l+1}^{0}$ of the
$l+1$-level using Equation \eqref{equ:ini_fam}, by means of the 
$l+1$-th
approximate root of $\Gamma$. Then, as detailed in Algorithm
\ref{alg:construction}, we then construct a sequence of families
\begin{equation*}
  {\mathcal G}_{\Gamma, l+1}^{1}, \ldots, {\mathcal G}_{\Gamma, l+1}^{s}
  = {\mathcal T}_{\Gamma, l+1}
\end{equation*}
which will be \emph{good} (another term to be defined) until we find the terminal
family ${\mathcal T}_{\Gamma, l+1}$ of level $l+1$. At level $g$, we shall obtain
the final terminal family ${\mathcal T}_{\Gamma, g}$, from which we shall
extract the ${\mathbb C}[[x]]$-basis.

The properties of \emph{good} families are stable under the \textsc{Main
  Transformation} of Algorithm~\ref{alg:construction}, which is what allows us to
iterate the operation until a terminal family is found at each
level.

\subsection{Distinguished families of $1$-forms} 
\label{subsec:families}
This subsection contains the definitions of the main types of families
(terminal, good, etc.) of $1$-forms which we shall need to carry out
the inductive process.
\begin{defi}
  \label{def:nice}
  We say that a family $\mathbf{\Omega}= (\Omega_1, \hdots, \Omega_{2 \nu_l} )$ in
  ${\Omega}^{1} (\mathbb{C}^2,0)$ is a {\it\nice{} family of level}
  $0 \leq l \leq g$ (for $\Gamma$) if $\mathbf{\Omega}$ is a free basis of the
  ${\mathbb C}[[x]]$-module ${\mathcal M}_{ 2 \nu_{l}}$, and there exists a subset
  $\low\subset{\mathbb N}_{2 \nu_{l}}$ of $\min(2 \nu_{l},n)$ elements such
  that $(\nu_{\Gamma} (\Omega_j))_{j \in \low}$ defines pairwise different
  classes modulo $n$ and
\begin{equation} \label{cond4}
   \nu_{\Gamma} (\Omega_k) > \max \{ \nu_{\Gamma} (\Omega_j) : j \in \low \} 
\end{equation} 
 for any $k \in {\mathbb N}_{2 \nu_{l}} \setminus \low$.
\end{defi}
\begin{rem}
  Notice that $\low=\mathbb{N}_{2\nu_l}$ whenever $l<g$, so that condition
  \eqref{cond4} is only relevant for $l=g$.
\end{rem}
The interest of separated
families is that, at level $g$, they provide $\mathbb{C}[[x]]$-bases:
\begin{pro}
  Consider a separated family of level $g$ for $\Gamma$. Then
  $(\Omega_j)_{j \in \low}$ is a ${\mathbb C}[[x]]$-basis of $\Gamma$.
\end{pro}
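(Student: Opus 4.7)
The strategy is to identify $V:=\Gamma^{*}\mathcal{M}_{2n}\subset\mathbb{C}[[t]]\,dt$, viewed as a $\mathbb{C}[[x]]$-module via $x=t^n$, as a free $\mathbb{C}[[x]]$-module with basis $\{\Gamma^{*}\Omega_j : j\in\low\}$, and deduce the structure of $\Lambda_{\Gamma}$ from this. I first reduce to $\omega\in\mathcal{M}_{2n}$: since $f_{\Gamma,g+1}$ is monic in $y$ of degree $n$ (cf. Remark \ref{rem:app_roots}), Weierstrass division on the coefficients of any $\omega\in\hat{\Omega}^{1}\cn{2}$ writes $\omega=\omega_0+f_{\Gamma,g+1}\eta$ with $\omega_0\in\mathcal{M}_{2n}$; since $f_{\Gamma,g+1}$ vanishes on $\Gamma$, the second summand belongs to $\mathcal{I}_{\Gamma}$ and does not contribute to $\nu_{\Gamma}$. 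Hence $\Lambda_{\Gamma}$ is entirely realized within $\mathcal{M}_{2n}$.

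The main step is to show $\Gamma^{*}\Omega_k\in\sum_{j\in\low}\mathbb{C}[[x]]\,\Gamma^{*}\Omega_j$ for every $k\notin\low$. Let $j_k\in\low$ be the unique index with $[\nu_{\Gamma}(\Omega_{j_k})]_n=[\nu_{\Gamma}(\Omega_k)]_n$; by (\ref{cond4}) the number $s_k:=(\nu_{\Gamma}(\Omega_k)-\nu_{\Gamma}(\Omega_{j_k}))/n$ is a positive integer. A unique $c_k\in\mathbb{C}^{*}$ makes the leading $t^{\nu_{\Gamma}(\Omega_k)-1}$-coefficient of $\tau_k:=\Gamma^{*}\Omega_k-c_kx^{s_k}\Gamma^{*}\Omega_{j_k}$ vanish; then $\nu_{0}(\tau_k)+1>\max_{j\in\low}\nu_{\Gamma}(\Omega_j)$. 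I then iterate: for the current $\tau^{(m)}\in V$, I pick $j^{(m)}\in\low$ matching $[\nu_{0}(\tau^{(m)})+1]_n$ and subtract $c^{(m)}x^{s^{(m)}}\Gamma^{*}\Omega_{j^{(m)}}$ with $s^{(m)}:=(\nu_{0}(\tau^{(m)})+1-\nu_{\Gamma}(\Omega_{j^{(m)}}))/n$. The inequality $\nu_{0}(\tau^{(m)})+1>\max_{j\in\low}\nu_{\Gamma}(\Omega_j)$ is preserved at every step, so $s^{(m)}$ remains a positive integer throughout; since $t$-orders strictly increase, the telescoping series converges $x$-adically, and grouping by $j$ yields the desired expression for $\tau_k$.

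Combining this with the $\mathbb{C}[[x]]$-independence of $(\Gamma^{*}\Omega_j)_{j\in\low}$---immediate because in any relation $\sum_j a_j(t^n)\Gamma^{*}\Omega_j=0$ the leading $t$-orders fall in pairwise distinct classes mod $n$ and cannot cancel at the minimum---establishes that $V$ is $\mathbb{C}[[x]]$-free with basis $(\Gamma^{*}\Omega_j)_{j\in\low}$. For any $\omega\in\mathcal{M}_{2n}$, the unique expansion $\Gamma^{*}\omega=\sum_{j\in\low}a_j(t^n)\Gamma^{*}\Omega_j$ together with the same no-cancellation argument gives $\nu_{\Gamma}(\omega)=n\,\mathrm{ord}(a_{j^{*}})+\nu_{\Gamma}(\Omega_{j^{*}})\in\nu_{\Gamma}(\Omega_{j^{*}})+n\mathbb{Z}_{\geq 0}$ for a unique $j^{*}\in\low$. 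Therefore $\Lambda_{\Gamma}=\bigcup_{j\in\low}(\nu_{\Gamma}(\Omega_j)+n\mathbb{Z}_{\geq 0})$, and the distinct classes mod $n$ make this generating set minimal, which is exactly the $\mathbb{C}[[x]]$-basis property for $(\Omega_j)_{j\in\low}$. The main obstacle, handled in the key step, is that the iterative cancellation could in principle fail if some $\nu_{0}(\tau^{(m)})+1$ fell below $\nu_{\Gamma}(\Omega_{j^{(m)}})$; the separated hypothesis (\ref{cond4}) is precisely what keeps the iteration strictly above $\max_{j\in\low}\nu_{\Gamma}(\Omega_j)$ and therefore legal at every step.
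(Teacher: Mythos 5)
Your proof is correct, and it takes a genuinely different route from the paper's. Both arguments begin with the same reduction, namely that ${\mathcal M}_{2n} + (f_{\Gamma,g+1})\,\hat{\Omega}^{1}\cn{2} = \hat{\Omega}^{1}\cn{2}$ and that the $(f_{\Gamma,g+1})$-part pulls back to zero on $\Gamma$; but from there you prove the stronger structural statement that $(\Gamma^{*}\Omega_j)_{j\in\low}$ generates (freely) $\Gamma^{*}{\mathcal M}_{2n}$ over ${\mathbb C}[[x]]$, by an iterative leading-term cancellation in ${\mathbb C}[[t]]\,dt$: condition \eqref{cond4} makes the first exponent positive, the $t$-order strictly increases, and since $\low$ has exactly $n$ elements in pairwise distinct classes every residue mod $n$ is available at each step, so the process converges $x$-adically. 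From the resulting expansion and the no-cancellation-at-the-minimum argument you obtain the explicit description $\Lambda_{\Gamma}=\bigcup_{j\in\low}\left(\nu_{\Gamma}(\Omega_j)+n{\mathbb Z}_{\geq 0}\right)$, from which the ${\mathbb C}[[x]]$-basis property is immediate. The paper avoids any rewriting: it chooses $\omega$ of minimal $\Gamma$-value in its residue class, expands it in the free basis $\mathbf{\Omega}$ of ${\mathcal M}_{2n}$, and uses \eqref{cond4} together with the distinct classes to see that $\min_j \nu_{\Gamma}(a_j(x)\Omega_j)$ cannot be attained at any $k\notin\low$ and is attained at a unique $l\in\low$, forcing $\nu_{\Gamma}(\omega)=\nu_{\Gamma}(\Omega_l)$. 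Your approach is longer but buys more: it identifies $\Lambda_{\Gamma}$ exactly and reproves, for this family, a freeness statement in the spirit of Proposition \ref{pro:unique_expression_cbasis}, whereas the paper's extremal argument is shorter and purely value-theoretic. Two points you leave implicit and should state: the trivial case $\Gamma^{*}\Omega_k\equiv 0$, and the fact that the choice of $j^{(m)}$ at every stage relies on $\low$ having $n$ elements so that all classes modulo $n$ are represented.
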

\begin{proof}
  Let $\omega \in \hat{\Omega}^{1} (\mathbb{C}^2,0)$.  Since
  \begin{equation*}
    {\mathcal M}_{ 2 \nu_{g}} + (f_{\Gamma, g+1})
    \hat{\Omega}^{1} (\mathbb{C}^2,0) = \hat{\Omega}^{1} (\mathbb{C}^2,0),
  \end{equation*}
  $\omega$ must be of the form
  \begin{equation*}
    \omega =  \sum_{1 \leq j \leq 2n} a_{j}(x) \Omega_j  + f_{\Gamma, g+1} \omega',
  \end{equation*}
  where $a_1, \hdots, a_{2n} \in {\mathbb C}[[x]]$ and
  $\omega' \in \hat{\Omega}^{1} (\mathbb{C}^2,0)$.  Take $\omega$ such that
  $\nu_{\Gamma} (\omega)$ is minimal among the values
  $\nu_{\Gamma} (\hat{\omega})$ for
  $\hat{\omega} \in \hat{\Omega}^{1} (\mathbb{C}^2,0)$, and such that
  $\nu_{\Gamma} (\omega) - \nu_{\Gamma} (\hat{\omega}) \in (n)$.  It suffices to
  show that $\nu_{\Gamma} (\omega) = \nu_{\Gamma} (\Omega_l)$ for some
  $k \in \low$.

  Obviously, there exists $l \in \low$ with
  $\nu_{\Gamma} (\omega) - \nu_{\Gamma} (\Omega_l) \in (n)$.  Since
  $\nu_{\Gamma} (\omega) \leq \nu_{\Gamma} (\Omega_l)$, Equation \eqref{cond4}
  implies that $ \nu_{\Gamma} (a_{l}(x) \Omega_l)$ is the minimum of
  \begin{equation*}
    \{ \nu_{\Gamma} (a_{j}(x) \Omega_j) : 1 \leq j \leq 2n \}.
  \end{equation*}
  Since
  $\nu_{\Gamma} (\omega) = \nu_{\Gamma} (a_{l}(x) \Omega_l) \geq \nu_{\Gamma}
  (\Omega_l)$, we obtain $\nu_{\Gamma} (\omega) = \nu_{\Gamma} ( \Omega_l)$.
\end{proof} 
The definition of \emph{terminal family} includes conditions 
on $\Gamma$-orders and leading variables
which will allow us to argue inductively.
 \begin{defi}
 \label{def:level}
 We say that ${\bf \Omega} = ( \Omega_1, \ldots, \Omega_{2 \nu_{l}} )$ is a {\it
   terminal family of level} $l$ if
\begin{enumerate}[label=(\alph*)]
 \item \label{ta}  ${\bf \Omega}$ is a separated family of level $l$,
 \item \label{tb} $\nu_{\Gamma} (\Omega_{j}) \leq \overline{\beta}_{l+1}$ for any
   $j \in {\mathbb N}_{2 \nu_{l}}$, where $\overline{\beta}_{g+1}=\infty$,
 \item \label{tc}
   ${\bf \Omega} \subset \bigcup_{\beta' \in {\mathcal E}_{\Gamma}}
   \oDd{\beta',<}{l+1}$, and, specifically, we have
   \begin{equation*}
     {\bf \Omega} \subset \oDD{\beta_{l+1}, <}{0}{ l+1} \bigcup
     \cup_{\beta' \in {\mathcal E}(\Gamma), \ \beta' < \beta_{l+1}} \oDd{\beta',
       <}{l+1}
   \end{equation*}
   if $0 \leq l <g$.
 \end{enumerate}
\end{defi}
\begin{rem} \label{rem:ov_imp_t}
Notice that $\oDD{\beta_{l+1}, <}{0}{ l+1} \subset  \tDd{\beta_{l+1}}{l+1}$
by Proposition \ref{pro:unique_companion} (or its Corollary \ref{cor:f_omega}).
It is probably the most important use of such result being fundamental to 
obtain ${\mathcal T}_{\Gamma, l+1}$ from ${\mathcal T}_{\Gamma, l}$.
\end{rem}
We shall need to study the set
$\nu_{\Gamma} ({\bf \Omega}) = \{ \nu_{\Gamma}(\Omega_j): j \in {\mathbb N}_{2
  \nu_{l}} \}$ when $\mathbf{\Omega}$ is a terminal family. The following
result is more general, as it covers all the possible types of $1$-forms we shall
encounter in our construction, not just the terminal ones. Property \ref{it:lead-k}
of Definition \ref{def:max} is essential, as the reader will notice.
\begin{lem}
  \label{lem:cong_nu}
  Let $\Omega \in \hat{\Omega}^{1} \cn{2}$. Then:
  \begin{itemize}
  \item If $\Omega \in \oDd{\beta', <}{l+1}$ for some
    $\beta' \in {\mathcal E}(\Gamma)$ with $\beta' < \beta_{l+1}$, then
    \begin{equation*}
      \nu_{\Gamma} (\Omega) \in (e_{l}) .
    \end{equation*}
  \item If $\Omega \in \oDD{\beta_{l+1}, <}{0}{ l+1}$, then
    \begin{equation*}
      \nu_{\Gamma} (\Omega) \in [\beta_{l+1}]_{e_{l}} .
    \end{equation*}
  \item If
    $\Omega \in \Dd{\beta_{l+1}, <}{l+1} \cup \hatDD{{\mathfrak n}(\beta_{l+1}),
      <}{0}{l+1}$, then
    \begin{equation*}
      \nu_{\Gamma} (\Omega) \in [\Beg_{\Gamma}(\Omega)]_{e_{l+1}} .
    \end{equation*}
  \end{itemize}
\end{lem}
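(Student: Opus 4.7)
The proof is a direct unpacking of the congruence condition $[k]_{e_\ell}=[m\beta]_{e_\ell}$ built into Definition~\ref{def:max}, combined with the fact that, by construction of $\nu_\Gamma$, writing $g(t)=t(\Gamma)^{\ast}\Omega$ gives
\begin{equation*}
\nu_\Gamma(\Omega) \;=\; k + \Beg_\Gamma(\Omega),
\end{equation*}
where $k$ is the exponent attached to $\Omega$ in Definition~\ref{def:max}. The divisibility chain $e_{g}\mid e_{g-1}\mid\cdots\mid e_{0}=n$, together with $e_j\mid\beta_j$, will be used repeatedly without comment.

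For the first bullet, since $\Omega\in\oDd{\beta',<}{l+1}$ with $\beta'<\beta_{l+1}$, Remark~\ref{rem:l_b} forces the leading variable of $\Omega$ for the family $\fgb{\Gamma}{\beta_{l+1}}$ to equal $\beta'$. I split according to whether $\beta'$ is a Puiseux exponent. If $\beta'=\beta_i$ for some $i\leq l$, the associated index is $\ell=i-1$ and Definition~\ref{def:max} yields $k\equiv m\beta_i\pmod{e_{i-1}}$; since $i\leq l$ gives $e_l\mid e_{i-1}$, and $e_l\mid \beta_i$, both terms vanish modulo $e_l$, so $e_l\mid k$ and hence $e_l\mid \nu_\Gamma(\Omega)=k+\beta_i$. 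If instead $\beta'$ is not a Puiseux exponent, then $m=0$ and $\ell=\max\{r:\beta_r<\beta'\}\leq l$, so $e_l\mid e_\ell\mid k$; writing $\beta'=\beta_j+re_j$ with $j\leq l$ (which is forced by $\beta'<\beta_{l+1}$) gives $e_l\mid\beta'$, hence $e_l\mid\nu_\Gamma(\Omega)$.

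For the second bullet, the hypothesis gives $\beta=\beta_{l+1}$, $m=0$, and $\ell=l$, so the congruence in Definition~\ref{def:max} reads $[k]_{e_l}=[0]_{e_l}$, i.e.\ $e_l\mid k$. Since $\Beg_\Gamma(\Omega)=\beta_{l+1}$, one obtains $\nu_\Gamma(\Omega)=k+\beta_{l+1}\equiv\beta_{l+1}\pmod{e_l}$.

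For the third bullet, I treat the two pieces of the union separately. If $\Omega\in\Dd{\beta_{l+1},<}{l+1}$, the leading variable is $\beta=\beta_{l+1}$ and $\ell=l$, so $k\equiv m\beta_{l+1}\pmod{e_l}$; reducing modulo $e_{l+1}$ (which divides $e_l$) and using $e_{l+1}\mid\beta_{l+1}$ yields $e_{l+1}\mid k$, and therefore $\nu_\Gamma(\Omega)\equiv\Beg_\Gamma(\Omega)\pmod{e_{l+1}}$. If $\Omega\in\hatDD{{\mathfrak n}(\beta_{l+1}),<}{0}{l+1}$, then $\beta={\mathfrak n}(\beta_{l+1})>\beta_{l+1}$ forces $\ell=l+1$ and $m=0$, so $e_{l+1}\mid k$ directly, and again $\nu_\Gamma(\Omega)\equiv\Beg_\Gamma(\Omega)\pmod{e_{l+1}}$. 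The only step requiring a little care is the case-split in the first bullet, where one must recognise that $m$ can be nonzero precisely when $\beta'$ is a Puiseux exponent $\beta_i$ with $i\leq l$, and that the resulting term $m\beta_i$ still vanishes modulo $e_l$; everything else is a routine manipulation of the divisibility lattice of the $e_j$'s.
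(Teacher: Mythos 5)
Your proof is correct and follows essentially the same route as the paper's: both rest on $\nu_{\Gamma}(\Omega)=k+\Beg_{\Gamma}(\Omega)$ and on reducing the congruence $[k]_{e_{\ell}}=[m\beta]_{e_{\ell}}$ from Definition~\ref{def:max} modulo $e_{l}$ (resp.\ $e_{l+1}$) via the divisibility chain of the $e_{j}$'s. Your case split in the first bullet (whether $\beta'$ is a Puiseux exponent) is harmless but unnecessary, since $e_{l}\mid\beta'$ and $e_{l}\mid e_{\ell}$ settle both cases at once, which is exactly how the paper argues.
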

\begin{proof}
  Consider the notations in Definition \ref{def:max}.  Suppose we are in the first
  case.  We have $[k]_{e_{\ell}} = [m \beta']_{e_{\ell}}$ for some
  $m \in {\mathbb Z}_{\geq 0}$, $\beta' < \beta_{l+1}$ and $\ell \leq l$. Since
  $\beta' \in (e_l)$, $k$ belongs to $(e_l)$.  We get
  $\nu_{\Gamma} (\Omega) = k + \beta' \in (e_l)$.
  
  Assume that  $\Omega \in \oDD{\beta_{l+1}, <}{0}{ l+1}$. Thus, we obtain 
 $[k]_{e_{l}} = [0]_{e_{l}}$ by definition and hence $k \in (e_l)$. This implies 
 $[\nu_{\Gamma} (\Omega_j)]_{e_l} = [k + \beta_{l+1}]_{e_l} =  [\beta_{l+1}]_{e_{l}}$.

  Suppose $\Omega \in \Dd{\beta_{l+1}, <}{l+1}$. We have $[k]_{e_{l}} = [m \beta_{l+1}]_{e_l}$ for some 
 $m \in {\mathbb Z}_{\geq 0}$. Thus $k$ belongs to $(e_{l+1})$. 
 Since $\nu_{\Gamma} (\Omega) = k + \Beg_{\Gamma}(\Omega)$,  it follows that $\nu_{\Gamma} (\Omega) \in [\Beg_{\Gamma}(\Omega)]_{e_{l+1}}$.
  
  Suppose $\Omega \in \hatDD{{\mathfrak n}(\beta_{l+1}), <}{0}{ l+1}$. We have
  $[k]_{e_{l+1}} = [0]_{e_{l+1}}$. We conclude as in the previous case. 
  \end{proof}
  \begin{cor}
  \label{cor:cong_term}
  Let ${\bf \Omega} = ( \Omega_1, \ldots, \Omega_{2 \nu_{l}} )$ be a terminal family of level $0 \leq l < g$.
  Then $\nu_{\Gamma} ({\bf \Omega}) \subset (e_l) \cup [\beta_{l+1}]_{e_l}$. In particular, we have
 
\begin{equation*} [\nu_{\Gamma} ({\bf \Omega}) ]_{n} =  [(e_l)]_{n} \cup [\beta_{l+1} + (e_l)]_{n} . \end{equation*}

  \end{cor}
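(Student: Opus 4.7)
The plan is to deduce the corollary directly from Lemma \ref{lem:cong_nu} together with condition (c) of the definition of terminal family (Definition \ref{def:level}) and the separated property (Definition \ref{def:nice}). The inclusion and the ``in particular'' equality will be handled separately: the former is purely structural, while the latter requires a cardinality argument.

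For the inclusion $\nu_{\Gamma}({\bf \Omega}) \subset (e_l) \cup [\beta_{l+1}]_{e_l}$, I would proceed form by form. Condition (c) in Definition \ref{def:level} asserts that each $\Omega_j$ lies either in $\oDD{\beta_{l+1}, <}{0}{l+1}$ or in $\oDd{\beta', <}{l+1}$ for some $\beta' \in {\mathcal E}_{\Gamma}$ with $\beta' < \beta_{l+1}$. The second bullet of Lemma \ref{lem:cong_nu} handles the first case, giving $\nu_{\Gamma}(\Omega_j) \in [\beta_{l+1}]_{e_l}$; the first bullet handles the second case, giving $\nu_{\Gamma}(\Omega_j) \in (e_l)$. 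Reducing modulo $n$ then gives the inclusion $[\nu_\Gamma({\bf \Omega})]_n \subset [(e_l)]_n \cup [\beta_{l+1}+(e_l)]_n$.

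For the reverse inclusion, I would combine separatedness with a count. The key numerical fact is that $\nu_l = n/e_l$, so $2 \nu_l = 2n/e_l$, and since $l < g$ the exponent $\beta_{l+1}$ is a Puiseux characteristic exponent of $\Gamma$, forcing $e_{l+1} = \gcd(e_l, \beta_{l+1}) < e_l$ and hence $e_l \geq 2$. Therefore $2 \nu_l \leq n$, so by Definition \ref{def:nice} we have $|\low| = \min(2\nu_l, n) = 2 \nu_l$, i.e.\ $\low = {\mathbb N}_{2 \nu_l}$, and all the values $\nu_{\Gamma}(\Omega_j)$ are already pairwise incongruent modulo $n$. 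Consequently $|[\nu_{\Gamma}({\bf \Omega})]_n| = 2 \nu_l$. On the other hand, $[(e_l)]_n$ is the subgroup of ${\mathbb Z}/n{\mathbb Z}$ generated by $e_l$, which has $n/e_l = \nu_l$ elements; $[\beta_{l+1}+(e_l)]_n$ is a coset of the same size; and these two cosets are disjoint because $\beta_{l+1} \notin (e_l)$ (again by $e_{l+1} < e_l$). Hence the right-hand set also has $2 \nu_l$ elements, forcing the two finite sets (one contained in the other and of the same size) to coincide.

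The only delicate point is the implicit cardinality check that turns the first inclusion into an equality, and this is where one must exploit that $l<g$ in order to guarantee $e_l \geq 2$ and hence $2\nu_l \leq n$; without this, $\low$ could be a proper subset of $\mathbb{N}_{2\nu_l}$ and some elements of ${\bf \Omega}$ would be allowed to repeat classes modulo $n$. No further obstacle is expected: the rest is a direct application of the previously established lemma.
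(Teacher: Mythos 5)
Your proof is correct and follows essentially the same route as the paper: the inclusion comes from Condition \ref{tc} of Definition \ref{def:level} combined with Lemma \ref{lem:cong_nu}, and the equality from the fact that separatedness together with $l<g$ (hence $e_l\geq 2$ and $2\nu_l\leq n$) forces $[\nu_{\Gamma}({\bf \Omega})]_n$ to have exactly $2\nu_l$ elements, matching the cardinality of the disjoint union $[(e_l)]_n\cup[\beta_{l+1}+(e_l)]_n$. You merely spell out the cardinality step (why $\low={\mathbb N}_{2\nu_l}$) in more detail than the paper does, which is a faithful elaboration rather than a different argument.
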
 
 \begin{proof}
   The inclusion is a consequence of Condition \ref{tc} and Lemma
   \ref{lem:cong_nu}.  Since ${\bf \Omega}$ is separated and $l <g$, the set
   $[\nu_{\Gamma} ({\bf \Omega}) ]_{n}$ has $2 \nu_{l}$ elements.  As a consequence
   of $n \in (e_l)$ and $\beta_{l+1} \not \in (e_l)$, we deduce that
 \begin{equation*}
   \sharp [(e_l)]_{n} = \sharp [\beta_{l+1} + (e_l)]_{n} = \frac{n}{e_{l}} =
   \nu_l \ \ \mathrm{and} \ \
   [(e_l)]_{n} \cap [\beta_{l+1} + (e_l)]_{n} = \emptyset.
 \end{equation*}
 As $[\nu_{\Gamma} ({\bf \Omega}) ]_{n}$ is contained in
 $ [(e_l)]_{n} \cup [\beta_{l+1} + (e_l)]_{n}$ and has the same cardinal, they
 coincide.
\end{proof}
The inner loop in Algorithm \ref{alg:construction} requires the following
quite technical definition in order to reason inductively. Assume we have a
terminal family $\mathcal{T}_{\Gamma,l}=(\Omega_1, \hdots, \Omega_{2 \nu_{l}})$ of
level $l$ (which he have at level $0$, with $\mathcal{T}_{\Gamma,0}=(dx,dy)$, as
explained above). The inductive stages will consist of constructing \emph{good}
families iteratively:
% We will construct a terminal family ${\mathcal T}_{\Gamma, l+1}$
% of level $l+1$ from a terminal family ${\mathcal T}_{\Gamma, l}$ of level $l$ in
% several stages for $0 \leq l <g$. So, let us fix
% ${\mathcal T}_{\Gamma, l} = (\Omega_1, \hdots, \Omega_{2 \nu_{l}})$ in order to
% define the good families of level $l+1$.
\begin{defi}
\label{def:und}
Denote
\[
  \overline\upper_{\Gamma, l+1} = \{ j + 2 (n_{l+1}-1) \nu_l : j\in \mathbb{N}_{2\nu_l} \land \Omega_j \in
  \oDD{\beta_{l+1}, <}{0}{l+1} \}
\] 
and
$\upper_{l+1} = \{ j + 2 (n_{l+1}-1) \nu_l : j \in {\mathbb N}_{2
  \nu_{l}} \}$. Notice that this set does not depend on $\Gamma$: it
is just the last $2\nu_l$ elements of $\mathbb{N}_{2\nu_{l+1}}$.  The
former set is the part of $\upper_{l+1}$ ``coming from'' forms
$\Omega_{j}$ whose leading variable is equal to $\beta_{l+1}$.

In the next definition and everywhere else, we assume implicitly that
$\infty-d=\infty+d=\infty$ for any $d\in \mathbb{Z}$.
\end{defi}
\begin{defi}
\label{def:good}
We say that ${\bf \Omega} = (\Omega_{1}^{s}, \hdots, \Omega_{2 \nu_{l+1}}^{s})$ is
a {\it good family of level $l+1$ at stage $s \geq 1$} if the following properties
are satisfied:
\begin{enumerate}[label=(\Roman*)]
\item \label{good1} ${\mathcal T}_{\Gamma, l} \subset {\bf \Omega}$ and more
  precisely $\Omega_{j}^{s} = \Omega_{j}$ for any $j \in {\mathbb N}_{2 \nu_{l}}$;
  our construction keeps the previous terminal family intact at all
  stages;
\item \label{good2} ${\bf \Omega}$ is a free basis of the ${\mathbb C}[[x]]$-module
  ${\mathcal M}_{ 2 \nu_{l+1}}$;
\item \label{good6} the following inclusion holds:
  \begin{multline*}
    {\bf \Omega} \subset
    \cup_{\beta^{\prime} \leq \beta_{l+1}}
    \oDd{\beta^{\prime}, <}{l+1}
    \bigcup \DD{\beta_{l+1},<}{n_{l+1}-1}{l+1}\\
    \bigcup
    \hatDD{{\mathfrak n}(\beta_{l+1}), <}{0}{ l+1}
  \end{multline*}
  (notice the parallelism between this condition, Definition
  \ref{def:level}, and Lemma \ref{lem:cong_nu});
\item \label{good3} for any
  $j \in {\mathbb N}_{2 \nu_{l+1}} \setminus \overline\upper_{\Gamma, l+1}$ with
  $\Beg_{\Gamma}(\Omega_{j}^{s}) > \beta_{l+1}$, we have
  \begin{equation*}
    \Omega_{j}^{s} \in \hatDD{{\mathfrak n}(\beta_{l+1}), <}{0}{ l+1};
  \end{equation*}
\item \label{good4} for any
  $j \in \overline\upper_{\Gamma, l+1}$, we have
  \begin{equation*}
    \Omega_{j}^{s} \in \DD{\beta_{l+1},<}{n_{l+1}-1}{l+1}
  \end{equation*}
  (the last two conditions emphasize the relevance of the set
  $\overline\upper_{\Gamma,l+1}$);
\item \label{good5} there is a partition of ${\mathbb N}_{2 \nu_{l+1}}$ in two sets
  $\low_{\Gamma, l+1}^{s}$, and $\upper_{\Gamma, l+1}^{s}$, each containing $\nu_{l+1}$
  elements, such that ${\mathbb N}_{2 \nu_{l}} \subset \low_{\Gamma, l+1}^{s}$, and
  $\upper_{l+1} \subset \upper_{\Gamma, l+1}^{s}$, satisfying:
 \begin{itemize}
 \item
   $\Beg_{\Gamma}(\Omega_{j}^{s}) < {\mathfrak n}^{s} (\beta_{l+1}) \leq
   \beta_{l+2}$ if $j \in \low_{\Gamma, l+1}^{s}$ and
 \item
   $\beta_{l+2} \geq \Beg_{\Gamma}(\Omega_{j}^{s}) \geq {\mathfrak n}^{s}
   (\beta_{l+1}) $ if $j \in \upper_{\Gamma, l+1}^{s}$,
 \end{itemize}
 where $\beta_{g+1} = \infty$; (thus, at stage $s$, there are $\nu_{l+1}$
 elements $\Omega$ in $\mathbf{\Omega}$ with
 $\Beg_{\Gamma}(\Omega)<\mathfrak{n}^s(\beta_{l+1})$, and other $\nu_{l+1}$ elements with
 $\Beg_{\Gamma}(\Omega)\geq \mathfrak{n}^{s}(\beta_{l+1})$ ---the other bounds are of less
 importance);
\item \label{good8}
  $0 \leq \nu_{\Gamma} (\Omega_{k}^{s}) - \nu_{\Gamma} (\Omega_{k}^{0}) \leq
  \Beg_{\Gamma}(\Omega_{k}^{s}) - \Beg_{\Gamma}(\Omega_{k}^{0})$ for any
  $k \in {\mathbb N}_{2 \nu_{l+1}}$ and the second inequality is an equality if
  $k \in \upper_{l+1}$ (thus, the \textsc{Main Transformation} in
  the inner loop increases the values of $\Beg_{\Gamma}$ at least as much as the
  values of $\nu_{\Gamma}$);
 \item \label{good7} Denote by
   $\upper_{\Gamma, l+1}^{s,<\infty} = 
   \{ j \in \upper_{\Gamma, l+1}^{s} : \Beg_{\Gamma}(\Omega_{j}^{s}) \neq \infty \}$.  
   Then
   \begin{equation*}
     \{ [\nu_{\Gamma}(\Omega_{j}^{s}) - \Beg_{\Gamma}(\Omega_{j}^{s})]_{n}
     : j \in \upper_{\Gamma, l+1}^{s,<\infty}  \}
   \end{equation*}
   is a subset of cardinal $\sharp \upper_{\Gamma, l+1}^{s,<\infty}$ of
   $[(e_{l+1})]_{n}$ and
   \begin{equation*} \{ [\nu_{\Gamma}(\Omega_{j}^{s})]_{n} : j \in \low_{\Gamma,
       l+1}^{s} \} = [(e_{l+1})]_{n} .
   \end{equation*}
   In particular
   $[\nu_{\Gamma}(\Omega_{j}^{s})]_{n} \neq [\nu_{\Gamma}(\Omega_{k}^{s})]_{n}$ if
   $j \neq k$ and $j, k \in \low_{\Gamma, l+1}^{s}$; 
   (notice that the value $\nu_{\Gamma}(\Omega_j^s)-\Beg_{\Gamma}(\Omega_j^s)$ 
   is the exponent $k$ in Definition \ref{def:max}, so that all those exponents are 
   pairwise different modulo $n$ in the family $\upper_{\Gamma,l+1}^{s,<\infty}$).
 %\begin{equation*} \label{eq1}
 %	\begin{split}
 %	\{ [\nu_{\Gamma}(\Omega_{j}^{s})]_{n} : j \in \low_{\Gamma, l+1}^{s} \} = [(e_{l+1})]_{n}  & \ \ \mathrm{and} \\
%	\{ [\nu_{\Gamma}(\Omega_{j}^{s}) - \Beg_{\Gamma}(\Omega_{j}^{s})]_{n} : j \in \upper_{\Gamma, l+1}^{s,<\infty}  \} 
%	\ \mathrm{is \ a \ subset \ of \ cardinal} \
%	\sharp \upper_{\Gamma, l+1}^{s,<\infty}  \ \mathrm{of} \ [(e_{l+1})]_{n}. &
 %	\end{split}
 %\end{equation*}
 %In particular $[\nu_{\Gamma}(\Omega_{j}^{s})]_{n} \neq [\nu_{\Gamma}(\Omega_{k}^{s})]_{n}$ if $j \neq k$
 %and both $j, k \in \low_s$.
  \end{enumerate}
\end{defi}
\begin{rem}
  Notice that $\overline\upper_{\Gamma,l+1}^{s,<\infty}=\upper_{\Gamma,l+1}^s$ for
  $l+1<g$ due to condition \ref{good5}.
\end{rem}
\subsection{From stage $0$ to stage $1$ in level $l+1$}
\label{subsec:from_0_to_1}
The family $\mathcal{G}_{l+1}^{0}$ of stage $0$ constructed in Algorithm
\ref{alg:construction} is not good in general: there 
are properties in Definition \ref{def:good} that are not satisfied. 
For instance, there are $\nu_l$ elements $\Omega$ in  
$\mathcal{G}_{l+1}^{0}$ with $\Beg_{\Gamma}(\Omega_{j}^{0}) < \beta_{l+1}$ 
whereas the other $2 \nu_{l+1} - \nu_{l}$
satisfy $\Beg_{\Gamma}(\Omega_{j}^{0}) = \beta_{l+1}$ 
and thus Condition \ref{good5} does not hold. 
This precludes using it as the
base step in the induction reasoning. However, a careful analysis shows that,
despite that, the \textsc{Main Transformation} applied to $\mathcal{G}_{l+1}^0$
does in fact produce a good family. This is why the passage from stage $0$ to
stage $1$ requires a specific study, which we tackle in this subsection.

Consider a terminal family
${\mathcal T}_{\Gamma, l} = (\Omega_1,\ldots \Omega_{2 {\nu}_l})$ of level
$0 \leq l < g$.  Recall that the initial family
${\mathcal G}_{\Gamma, l+1}^{0} = (\Omega_{1}^{0}, \ldots, \Omega_{2
  \nu_{l+1}}^{0})$ of level $l+1$ is defined using the $l+1$-th approximate root of
$\Gamma$, by the formula
\begin{equation*}
  \Omega^0_{2a\nu_l+b} =
  f_{\Gamma, l+1}^a\Omega_b,\ \mathrm{for}\ 0\leq a < n_{l+1}
  \ \mathrm{and}\
 1\leq b \leq 2 {\nu}_l .
\end{equation*}
Since ${\mathcal T}_{\Gamma, l}$ is a free basis of the ${\mathbb C}[[x]]$-module
${\mathcal M}_{2 \nu_{l}}$ and $f_{\Gamma, l+1}$ is a monic polynomial on $y$ of
degree $\nu_{l}$ (cf. Remark \ref{rem:app_roots}), it follows that
${\mathcal G}_{\Gamma, l+1}^{0}$ is a free basis of the ${\mathbb C}[[x]]$-module
${\mathcal M}_{2 \nu_{l+1}}$.  Moreover, $\Omega_{j}^{0} = \Omega_{j}$ holds for
any $j \in {\mathbb N}_{2 \nu_{l}}$.  %The family ${\mathcal G}_{\Gamma, l+1}^{0}$
% is the stage $0$ family of our construction.
The next remarks are a consequence of
Remark \ref{rem:family_restriction} and Corollaries \ref{cor:f_dominates_omega} and
\ref{cor:f_omega}.
\begin{rem}
 \label{rem:f_dominates_omega} 
 We have
 $\ \Omega_{2 a \nu_{l}+j}^{0} \in \oDD{\beta_{l+1}, \leq}{a-1}{l+1}$
 if $a\geq 1$ and $\Beg_{\Gamma}(\Omega_{j}) < \beta_{l+1}$.
\end{rem}
\begin{rem}
\label{rem:f_omega}
We have
$\ \Omega_{2 a \nu_{l}+j}^{0} \in \oDD{\beta_{l+1}, <}{a}{l+1}$ if
$\Beg_{\Gamma}(\Omega_{j}) = \beta_{l+1} $.
\end{rem}
The idea of the construction of a good family ${\mathcal G}_{\Gamma, l+1}^{1}$ of
level $l+1$ from $\mathcal{G}^0_{\Gamma,l+1}$ is to modify the initial family to
make it closer to be separated. In order to carry out this step, we need to identify
the pairs $\Omega_{j}^{0}, \Omega_{k}^{0}$ with $j \neq k$ such that
$\nu_{\Gamma} (\Omega_{j}^{0}) - \nu_{\Gamma} (\Omega_{k}^{0}) \in (n)$. 
The use of the approximate roots, and more precisely that 
$l+1$-th root has contact $\overline{\beta}_{l+1}$ with $\Gamma$,
is the key to the following result.
\begin{pro}
\label{pro:conflicting_pairs}
We  have $[\nu_{\Gamma} ({\mathcal G}_{\Gamma, l+1}^{0})]_{n} = [(e_{l+1})]_n$.
Let $h \in [(e_{l+1})]_{n}$. Then, the set 
\begin{equation*}
  S_h = \{ d \in {\mathbb N}_{2 \nu_{l+1}} : [\nu_{\Gamma}(\Omega_{d}^{0})]_{n} = h \}
\end{equation*}  
has exactly two elements $j, k$ (with $j < k$) and
$\nu_{\Gamma} (\Omega_{j}^{0}) < \nu_{\Gamma} (\Omega_{k}^{0})$ holds. Moreover,
if $\Omega_{j}^{0} = f_{\Gamma, l+1}^{r_a} \Omega_a$ and
$\Omega_{k}^{0} = f_{\Gamma, l+1}^{r_b} \Omega_b$, then  either
\begin{enumerate}
\item \label{emv} $r_b=r_a+1$, $ \nu_{\Gamma} (\Omega_b) \in (e_l)$ and
 $\nu_{\Gamma} (\Omega_a) \in [\beta_{l+1}]_{e_l}$ or
\item \label{dmv} $r_b=n_{l+1} -1$, $r_a=0$,
  $ \nu_{\Gamma} (\Omega_b) \in [\beta_{l+1}]_{e_l}$ and
  $ \nu_{\Gamma} (\Omega_a) \in (e_l)$.
\end{enumerate}
\end{pro}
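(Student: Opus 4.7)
The plan is to analyze the multiset of values $\nu_{\Gamma}(\Omega^0_{2a\nu_l+b}) = a\overline{\beta}_{l+1} + \nu_{\Gamma}(\Omega_b)$ modulo $n$, using two basic ingredients: the structure of $[\nu_{\Gamma}(\mathcal{T}_{\Gamma,l})]_n$ given by Corollary \ref{cor:cong_term}, and the congruence $\overline{\beta}_{l+1} \equiv \beta_{l+1} \pmod{e_l}$, which follows from the recursion \eqref{equ:rec_beta} together with $n_l\overline{\beta}_l \in (e_{l-1}) \subseteq (e_l)$ and $\beta_l \in (e_l)$.

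First, I partition $\mathbb{N}_{2\nu_l}$ into the $L$-indices $L=\{b:\nu_{\Gamma}(\Omega_b)\in (e_l)\}$ and the $U$-indices $U=\{b:\nu_{\Gamma}(\Omega_b)\in[\beta_{l+1}]_{e_l}\}$, each of cardinality $\nu_l$ by Corollary \ref{cor:cong_term}. Combining the two congruences above with $e_{l+1}\mid\beta_{l+1}$ gives $\nu_{\Gamma}(\Omega^0_{2a\nu_l+b})\in (e_{l+1})+n\mathbb{Z}$ for every admissible pair $(a,b)$, so $[\nu_{\Gamma}(\mathcal{G}^0_{\Gamma,l+1})]_n\subseteq [(e_{l+1})]_n$.

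Next, I would show that within the sub-collection indexed by a fixed type ($L$ or $U$) and $0\leq a<n_{l+1}$, the values are pairwise incongruent modulo $n$. Reducing the congruence $(a-a')\overline{\beta}_{l+1}\equiv \nu_{\Gamma}(\Omega_{b'})-\nu_{\Gamma}(\Omega_b)\pmod n$ modulo $e_l$ yields $(a-a')\beta_{l+1}\equiv 0\pmod{e_l}$, which, since $\gcd(\beta_{l+1},e_l)=e_{l+1}$, forces $n_{l+1}\mid(a-a')$ and hence $a=a'$; then separateness of $\mathcal{T}_{\Gamma,l}$ forces $b=b'$. Each type therefore contributes $\nu_l n_{l+1}=\nu_{l+1}$ distinct classes mod $n$, and since $\lvert[(e_{l+1})]_n\rvert=n/e_{l+1}=\nu_{l+1}$, both types must exhaust $[(e_{l+1})]_n$. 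This simultaneously proves $[\nu_{\Gamma}(\mathcal{G}^0_{\Gamma,l+1})]_n=[(e_{l+1})]_n$ and that every $S_h$ contains exactly one $L$-form and one $U$-form, whence $\lvert S_h\rvert=2$.

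For the remaining classification, write the two elements of $S_h$ as $f_{\Gamma,l+1}^{a_L}\Omega_{b_L}$ (the $L$-form) and $f_{\Gamma,l+1}^{a_U}\Omega_{b_U}$ (the $U$-form). Reducing $\nu_{\Gamma}(\Omega_{b_U})-\nu_{\Gamma}(\Omega_{b_L})\equiv (a_L-a_U)\overline{\beta}_{l+1}\pmod n$ modulo $e_l$ now gives $(a_L-a_U-1)\beta_{l+1}\in (e_l)$, so $n_{l+1}\mid (a_L-a_U-1)$; with $0\leq a_L,a_U\leq n_{l+1}-1$ the only possibilities are $a_L=a_U+1$ (case~(1) of the statement, on setting $b=b_L$, $a=b_U$) or $a_L=0$, $a_U=n_{l+1}-1$ (case~(2), with $b=b_U$, $a=b_L$). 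To identify which member of the pair is $k$, I invoke condition~(b) of Definition \ref{def:level}, $\nu_{\Gamma}(\Omega_b)\leq\overline{\beta}_{l+1}$: in case~(1) the $L$-form exceeds the $U$-form by at least $\overline{\beta}_{l+1}+1-\overline{\beta}_{l+1}=1$, and in case~(2) the $U$-form exceeds the $L$-form by at least $(n_{l+1}-2)\overline{\beta}_{l+1}+1\geq 1$ since $n_{l+1}\geq 2$ always. A direct inspection of the indexing $2a\nu_l+b$ then shows that the larger-valued form has the larger index, so $j<k$ is automatic. The only delicate point I anticipate is the strict inequality $\nu_{\Gamma}(\Omega_j^0)<\nu_{\Gamma}(\Omega_k^0)$ in case~(2) when $n_{l+1}=2$, which is saved precisely by combining condition~(b) with the positivity $\nu_{\Gamma}(\Omega_{b_U})\geq 1$.
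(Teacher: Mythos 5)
Your proposal is correct and is essentially the paper's own argument: the same reduction modulo $e_l$ based on Corollary \ref{cor:cong_term} together with $\overline{\beta}_{l+1}\equiv\beta_{l+1}\pmod{e_l}$ and the minimality of $n_{l+1}$, the same counting against $[(e_{l+1})]_n$ (of cardinality $\nu_{l+1}$), and the same use of condition (b) of Definition \ref{def:level} to get the strict inequality between the two members of $S_h$. The only difference is organizational --- you prove pairwise incongruence within each of the two types and conclude by pigeonhole, while the paper analyzes a conflicting pair directly and then excludes a third member --- which changes nothing of substance (and the case $l=0$ of your congruence is simply $\overline{\beta}_1=\beta_1$).
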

\begin{proof}
  We have $[\nu_{\Gamma} ({\mathcal T}_{\Gamma, l})]_{n} \subset [(e_{l+1})]_n$ by
  Corollary \ref{cor:cong_term}.  We deduce that
  $[\nu_{\Gamma} ({\mathcal G}_{\Gamma, l+1}^{0})]_{n} \subset [(e_{l+1})]_n$ since
  $\nu_{\Gamma} (f_{\Gamma, l+1}) = \overline{\beta}_{l+1} \in (e_{l+1})$.

  Suppose that $\Omega_{j}^{0} = f_{\Gamma, l+1}^{r_a} \Omega_a$ and
  $\Omega_{k}^{0} = f_{\Gamma, l+1}^{r_b} \Omega_b$ satisfy $j <k$ and
  $[\nu_{\Gamma}(\Omega_{j}^{0})]_{n} = [\nu_{\Gamma}(\Omega_{k}^{0})]_{n}$.
  This implies
  \begin{equation*}
    (r_b-r_a) \overline{\beta}_{l+1} + (\nu_{\Gamma}(\Omega_{b})
    - \nu_{\Gamma}(\Omega_{a})) \in (n) .
  \end{equation*}
  We have
  $\nu_{\Gamma}(\Omega_{a}), \nu_{\Gamma}(\Omega_{b}) \in (e_l) \cup
  [\beta_{l+1}]_{e_l}$ by Corollary \ref{cor:cong_term}. We claim that one of them
  belongs to $(e_l)$ and the other one to $[\beta_{l+1}]_{e_l}$.  Otherwise,
  $\nu_{\Gamma}(\Omega_{b}) - \nu_{\Gamma}(\Omega_{a})$ belongs to $(e_l)$ and hence
  $(r_b-r_a ) \overline{\beta}_{l+1} \in (e_l)$.  Since $0 \leq r_b-r_a < n_{l+1}$,
  we obtain $r_b = r_a $ by Equation \eqref{equ:rec_beta} in
  Remark \ref{rem:prop_beta}.  But then
  $[\nu_{\Gamma}(\Omega_{a})]_{n} = [\nu_{\Gamma}(\Omega_{b})]_{n}$, so that $a=b$,
  since $l<g$ and a terminal family is separated. In particular, $j=k$, which is a
  contradiction.

  First, assume
  $\nu_{\Gamma}(\Omega_{a}) \in [\beta_{l+1}]_{e_l} =
  [\overline{\beta}_{l+1}]_{e_l}$, so that $\nu_{\Gamma}(\Omega_{b}) \in (e_l)$.
  This gives $(r_b-r_a-1) \overline{\beta}_{l+1} \in (e_l)$, 
  which by Remark \ref{rem:prop_beta} implies that
  $r_b=r_a+1 > r_a$. Since
  $\nu_{\Gamma} (\Omega_a) \leq \overline{\beta}_{l+1}$ by Definition
  \ref{def:level} (terminal family), we get
  $\nu_{\Gamma} (\Omega_{j}^{0}) < \nu_{\Gamma} (\Omega_{k}^{0})$.

  Now, assume $\nu_{\Gamma}(\Omega_{a}) \in (e_l)$,
  $\nu_{\Gamma}(\Omega_{b}) \in [\overline{\beta}_{l+1}]_{e_l}$.  Since
  $1 \leq r_b-r_a+1 \leq n_{l+1}$, it follows that $r_b-r_a+1 = n_{l+1}$ and hence
  $r_b=n_{l+1}-1$ and $r_a=0$.  Since $r_b>r_a$, we also deduce
  $\nu_{\Gamma} (\Omega_{j}^{0}) < \nu_{\Gamma} (\Omega_{k}^{0})$.

  The fact that there is no other $\Omega_{p}^{0} = f_{\Gamma, l+1}^{r_c} \Omega_c$
  with $p \not \in \{j,k\}$ such that
  $[\nu_{\Gamma}(\Omega_{j}^{0})]_{n} = [\nu_{\Gamma}(\Omega_{p}^{0})]_{n}$ holds
  because either $\nu_{\Gamma}(\Omega_{a}) - \nu_{\Gamma}(\Omega_{c}) \in (e_l)$ or
  $\nu_{\Gamma}(\Omega_{b}) - \nu_{\Gamma}(\Omega_{c}) \in (e_l)$. Therefore,
  $S_{h}$ contains at most $2$ elements for any $h \in [(e_{l+1})]_{n}$.

  Finally, as $\sharp [(e_{l+1})]_n = \nu_{l+1}$, and we know that
  $\sharp {\mathcal G}_{\Gamma, l+1}^{0} = 2 \nu_{l+1}$,
  $[\nu_{\Gamma} ({\mathcal G}_{\Gamma, l+1}^{0})]_{n} \subset [(e_{l+1})]_n$ and
  $\sharp S_{h} \leq 2$ for any $h \in [(e_{l+1})]_{n}$, we get
  $\sharp S_{h} = 2$ for any $h \in [(e_{l+1})]_{n}$, and
  $[\nu_{\Gamma} ({\mathcal G}_{\Gamma, l+1}^{0})]_{n} = [(e_{l+1})]_n$.
\end{proof}
Proposition~\ref{pro:conflicting_pairs} has bearing on the relation 
between the elements $\Omega_j$ and $\Omega_k$, 
and their leading variables, and those of $\Omega_a$ and $\Omega_b$:
\begin{lem}
  \label{rem:conflicting_pairs}
Let $\Omega_{j}^{0} = f_{\Gamma, l+1}^{r_a} \Omega_a$,
$\Omega_{k}^{0} = f_{\Gamma, l+1}^{r_b}\Omega_b$ satisfy the condition
$[\nu_{\Gamma}(\Omega_{j}^{0})]_{n} =
[\nu_{\Gamma}(\Omega_{k}^{0})]_{n}$.  Consider both cases in
Proposition \ref{pro:conflicting_pairs}. We have 
\begin{enumerate}
\item If $\nu_{\Gamma}(\Omega_b)\in(e_l)$, then:
  \begin{itemize}
  \item $\Omega_{a} \in \oDD{\beta_{l+1}, <}{0}{ l+1}$,
\item $\Omega_{b} \in \oDd{\beta', <}{l+1}$ for some
    $\beta' < \beta_{l+1}$,
    \item $\Omega_{j}^{0} \in \oDD{\beta_{l+1}, <}{r_{a}}{ l+1}$, and
    \item $\Omega_{k}^{0} \in \oDD{\beta_{l+1}, \leq}{r_{a}}{ l+1}$;
  \end{itemize}
\item If $\nu_{\Gamma}(\Omega_b)\in[\beta_{l+1}]_{e_l}$ then:
  \begin{itemize}
  \item $\Omega_{b} \in \oDD{\beta_{l+1}, <}{0}{ l+1}$,
    \item $\Omega_{a} =\Omega_{j}^{0} \in \oDd{\beta', <}{l+1} $ for some
    $\beta' < \beta_{l+1}$,
\item $\Omega_{k}^{0} \in \oDD{\beta_{l+1}, <}{n_{l+1}-1}{ l+1}$
  \end{itemize}
\end{enumerate}
\end{lem}
\begin{proof}
The properties of $\Omega_a$ and $\Omega_b$ are a consequence of 
Definition \ref{def:level} and Lemma \ref{lem:cong_nu}.
The results on $\Omega_{j}^{0}$ and $\Omega_{k}^{0}$ 
are derived from Remarks \ref{rem:f_dominates_omega} and \ref{rem:f_omega}.
\end{proof}
We are now in position to define the family of stage $1$:
\begin{defi}
\label{def:good1}
Given the family
${\mathcal G}_{\Gamma, l+1}^{0} = (\Omega_{1}^{0}, \ldots, \Omega_{2
  \nu_{l+1}}^{0})$, of stage $0$ above, we define the family of stage $1$, that is
${\mathcal G}_{\Gamma, l+1}^{1} = (\Omega_{1}^{1}, \ldots, \Omega_{2
  \nu_{l+1}}^{1})$, as follows: for any $h \in [(e_{l+1})]_{n}$, there
are exactly two $\Omega_{j}^{0}$, $\Omega_{k}^{0}$ with $j <k$ such
that
$[\nu_{\Gamma}(\Omega_{j}^{0})]_{n} =
[\nu_{\Gamma}(\Omega_{k}^{0})]_{n} = h$.  We set
\begin{equation*}
  \Omega_{j}^{1} = \Omega_{j}^{0} \ \ \mathrm{and} \ \  \Omega_{k}^{1}
  = \Omega_{k}^{0} - c x^{d} \Omega_{j}^{0}
\end{equation*}
where
$\nu_{\Gamma}(\Omega_{k}^{0}) - \nu_{\Gamma}(\Omega_{j}^{0}) = dn$ and
$c$ is the unique element of ${\mathbb C}^{*}$ such that
$\nu_{\Gamma} (\Omega_{k}^{0} - c x^{d} \Omega_{j}^{0}) > \nu_{\Gamma}
(\Omega_{k}^{0})$.
 \end{defi}
\begin{rem}
\label{rem:stage1_free}
Note that $d$ in Definition \ref{def:good1} always belong to
${\mathbb Z}_{+}$ by Proposition \ref{pro:conflicting_pairs}.
Therefore, ${\mathcal G}_{\Gamma, l+1}^{1}$ is still a free basis of
${\mathcal M}_{2 \nu_{l+1}}$.
\end{rem} 
Our next goal is to show that ${\mathcal G}_{\Gamma, l+1}^{1}$ is a good family of
level $l+1$ at stage $1$.  The next two alternative lemmas are key in the
proof. Notice how Proposition \ref{pro:conflicting_pairs} and Lemma
\ref{rem:conflicting_pairs} are essential tools in both proofs.
\begin{lem}
 \label{lem:non_dom}
 Suppose that
 $[\nu_{\Gamma} (\Omega_{j}^{0})]_{n}=[\nu_{\Gamma} (\Omega_k^0)]_n$
 for $j < k$, and that
 $\Beg_{\Gamma}(\Omega_{j}^{0}) = \Beg_{\Gamma}(\Omega_{k}^{0})
 =\beta_{l+1}$.  Then the following properties hold:
 $\Omega_{j}^{1} = \Omega_{j}^{0}$,
 $\Omega_{k}^{1} \in \hatDD{{\mathfrak n}(\beta_{l+1}),<}{0}{l+1}$ and
 \begin{equation}
\label{equ:step1}
\nu_{\Gamma} (\Omega_{k}^{1}) -   \nu_{\Gamma} (\Omega_{k}^{0})  =
\Beg_{\Gamma}(\Omega_{k}^{1})  - \Beg_{\Gamma}(\Omega_{k}^{0}).
\end{equation}
\end{lem}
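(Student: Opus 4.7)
The plan. By Proposition \ref{pro:conflicting_pairs}, the conflicting pair $(j,k)$ falls into one of two mutually exclusive cases. The hypothesis $\Beg_{\Gamma}(\Omega_j^0) = \beta_{l+1}$ excludes case (2), in which $r_a=0$ and $\Omega_j^0 = \Omega_a \in \oDd{\beta', <}{l+1}$ for some $\beta' < \beta_{l+1}$, forcing $\Beg_{\Gamma}(\Omega_j^0) = \beta' < \beta_{l+1}$. Hence we are in case (1): for some $r_a \ge 0$,
\begin{equation*}
\Omega_j^0 = f_{\Gamma,l+1}^{r_a}\Omega_a \quad \text{and} \quad \Omega_k^0 = f_{\Gamma,l+1}^{r_a+1}\Omega_b,
\end{equation*}
with $\Omega_a \in \oDD{\beta_{l+1},<}{0}{l+1}$ and $\Omega_b \in \oDd{\beta',<}{l+1}$ for some $\beta' < \beta_{l+1}$. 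The equality $\Omega_j^1 = \Omega_j^0$ is then immediate from Definition \ref{def:good1}.

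Applying Corollaries \ref{cor:f_omega} and \ref{cor:f_dominates_omega} gives explicit leading-variable data: both $\Omega_j^0$ and $\Omega_k^0$ have leading variable $\beta_{l+1}$ and degree $r_a$ for the family $\fgb{\Gamma}{\beta_{l+1}}$, with $R_j, R_k \in \mathbb{C}^{*}$ constants, the $c$-sequence of $\Omega_j^0$ equal to $(r_a + c_{\mathfrak{n}^r(\beta_{l+1}),a})_{r \ge 1}$ (strictly increasing in $\mathbb{Q}_{>0}$, since $\Omega_a$'s is), and the $c$-sequence of $\Omega_k^0$ equal to the constant sequence $(r_a+1)_{r \ge 1}$. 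Crucially, both corollaries place their outputs in $\tDd{\beta_{l+1}}{l+1}$, so by definition of $\tDd$ we have $Q_{\beta_{l+1},j}(\Gamma) = Q_{\beta_{l+1},k}(\Gamma) = 0$; because these $Q$-polynomials are constants over $\fgb{\Gamma}{\beta_{l+1}}$, they vanish identically as polynomials.

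Therefore the defining condition $\nu_{\Gamma}(\Omega_k^1) > \nu_{\Gamma}(\Omega_k^0)$ reduces to $(\tilde{c}_k R_k - c\tilde{c}_j R_j)\, a_{\beta_{l+1},\Gamma}^{r_a+1} = 0$, forcing $c = \tilde{c}_k R_k/(\tilde{c}_j R_j)$. With this choice, the coefficient $C'_{\beta_{l+1}}$ of $\Omega_k^1$ vanishes \emph{identically} (not merely at $\Gamma$), while for $r \ge 1$ a direct computation gives
\begin{equation*}
C'_{\mathfrak{n}^{r}(\beta_{l+1})} = \tilde{c}_k R_k \left( 1 - c_{\mathfrak{n}^{r}(\beta_{l+1}),a} \right)\, a_{\beta_{l+1}}^{r_a}\, a_{\mathfrak{n}^{r}(\beta_{l+1})} + (Q_{k,r} - c\, Q_{j,r}).
\end{equation*}
Setting $\tilde{c}' = \tilde{c}_k R_k (1 - c_{\mathfrak{n}(\beta_{l+1}),a})$, $R' = a_{\beta_{l+1}}^{r_a}$, and
\begin{equation*}
c'_{\mathfrak{n}^r(\beta_{l+1})} = \frac{1 - c_{\mathfrak{n}^r(\beta_{l+1}),a}}{1 - c_{\mathfrak{n}(\beta_{l+1}),a}}
\end{equation*}
exhibits $\Omega_k^1$ as an element of $\DD{\mathfrak{n}(\beta_{l+1}),<}{0}{l+1}$ whose $R'$ has the form $c\, a_{\beta_{l+1}}^{s}$ required by Definition \ref{def:Dj}, placing $\Omega_k^1 \in \hatDD{\mathfrak{n}(\beta_{l+1}),<}{0}{l+1}$. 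The new $c$-sequence lies in $\mathbb{Q}_{>0}$ and is strictly increasing because $c_{\mathfrak{n}^{r}(\beta_{l+1}),a} > 1$ for $r \ge 1$ makes numerator and denominator strictly negative, and the absolute value of the numerator grows strictly with $r$; the congruence condition $[k_k]_{e_{l+1}} = 0$ follows from $[k_k]_{e_l}=0$ together with $e_{l+1} \mid e_l$.

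Finally, Equation \eqref{equ:step1} is immediate: $x^d\Omega_j^0$ carries the same $t$-exponent shift $k_k$ as $\Omega_k^0$, so the shift is preserved in $\Omega_k^1$, whence $\nu_{\Gamma}(\Omega_k^1) = k_k + \Beg_{\Gamma}(\Omega_k^1)$; combining with $\nu_{\Gamma}(\Omega_k^0) = k_k + \beta_{l+1} = k_k + \Beg_{\Gamma}(\Omega_k^0)$ and subtracting yields the identity. The main obstacle in the argument is the bookkeeping ensuring that identical (rather than merely on-$\Gamma$) vanishing of $C'_{\beta_{l+1}}$ is forced; this is where $\tDd{\beta_{l+1}}{l+1}$-membership of both $\Omega_j^0$ and $\Omega_k^0$, combined with the fact that the $Q_{\beta_{l+1}}$-polynomials are constants over $\fgb{\Gamma}{\beta_{l+1}}$, is essential and cannot be weakened.
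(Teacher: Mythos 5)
Your proof is correct and follows essentially the same route as the paper's: reduction to case \eqref{emv} of Proposition \ref{pro:conflicting_pairs}, the $c$-sequences and the $\tDd{\beta_{l+1}}{l+1}$-membership supplied by Corollaries \ref{cor:f_dominates_omega} and \ref{cor:f_omega} (which forces identical, not merely on-$\Gamma$, cancellation of the $\beta_{l+1}$-coefficient), and then the new $c$-sequence $\bigl((c_{{\mathfrak n}^{r}(\beta_{l+1})}-1)(c_{{\mathfrak n}(\beta_{l+1})}-1)^{-1}\bigr)_{r\geq 1}$ together with the $R$-term of the form $c\,a_{\beta_{l+1}}^{r_a}$ yielding $\Omega_{k}^{1}\in \hatDD{{\mathfrak n}(\beta_{l+1}),<}{0}{l+1}$ and \eqref{equ:step1}. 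The only slip is your justification of the congruence: for $\Omega_{k}^{0}$ the exponent $k$ of Definition \ref{def:max} satisfies $[k]_{e_{l}}=[r_{a}\beta_{l+1}]_{e_{l}}$, not $[k]_{e_{l}}=[0]_{e_{l}}$, but the required $[k]_{e_{l+1}}=[0]_{e_{l+1}}$ still follows since $e_{l+1}$ divides both $e_{l}$ and $\beta_{l+1}$.
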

\begin{proof}
  Denote $\Omega_{j}^{0} = f_{\Gamma, l+1}^{r_a} \Omega_a$ and
  $\Omega_{k}^{0} = f_{\Gamma, l+1}^{r_b} \Omega_b$.  We are in case
  \eqref{emv} of Proposition \ref{pro:conflicting_pairs} because
  otherwise we would have
  $\Beg_{\Gamma}(\Omega_{j}^{0}) < \beta_{l+1}$ by Lemma
  \ref{rem:conflicting_pairs}.

  As a consequence, $r_b=r_a+1$, $ \nu_{\Gamma} (\Omega_b) \in (e_l)$
  and $\nu_{\Gamma} (\Omega_a) \in [\beta_{l+1}]_{e_l}$ by Proposition
  \ref{pro:conflicting_pairs}.  The $R$-terms
    (cf. Definition \ref{def:max}) of $f_{\Gamma, l+1} \circ \Gamma'$
    and $t (\Gamma')^{*} \Omega_a$, where $\Gamma'$ runs on
    $\fgb{\Gamma}{\beta_{l+1}}$, are non-vanishing complex numbers.
  Thus, the $R$-terms of
    $f_{\Gamma, l+1}^{r_a} \Omega_a$ and
    $f_{\Gamma, l+1}^{r_a +1} \Omega_b$ are of the form
    $c a_{\beta_{l+1}}^{r_a}$ where $c \in {\mathbb C}^{*}$.  The
    $Q$-terms (cf. Definition \ref{def:max}) are both equal to $0$
    since $\Omega_{j}^{0}, \Omega_{k}^{0} \in \tDd{\beta_{l+1}}{l+1}$
    by Corollaries \ref{cor:f_dominates_omega} and \ref{cor:f_omega}.
    Thus $\Omega_{k}^{1}$ belongs to
    $\DD{{\mathfrak n}(\beta_{l+1})}{0}{l+1}$ and its $R$-term is of
    the form $c a_{\beta_{l+1}}^{r_a}$ where $c \in {\mathbb C}^{*}$.
    Moreover, the $c$-sequence of $\Omega_{k}^{0}$ is
    $1, r_{a} + 1, r_{a}+1, \ldots$ by Corollary \ref{cor:f_dominates_omega}
    whereas the $c$-sequence of $\Omega_{j}^{0}$ is
    \begin{equation*}
      1, c_{{\mathfrak n}(\beta_{l+1})} + r_{a} ,
      c_{{\mathfrak n}^{2}(\beta_{l+1})} +r_{a} ,
      \ldots
    \end{equation*}
    by Corollary \ref{cor:f_omega}, where
    $(c_{{\mathfrak n}^{r}(\beta_{l+1})})_{r \geq 0}$ is the
    $c$-sequence of $\Omega_{a}$.  Therefore, the $c$-sequence of
    $\Omega_k^1$ is
    $((c_{{\mathfrak n}^{r}(\beta_{l+1})}-1) (c_{{\mathfrak n}(\beta_{l+1})}-1)^{-1} )_{r \geq 1}$. Hence,
    $\Omega_{j}^{1} = \Omega_{j}^{0} \in \oDD{\beta_{l+1}, <}{r_{a}}{l+1}$ and
    $\Omega_{k}^{1} \in \hatDD{{\mathfrak n}(\beta_{l+1}),<}{0}{l+1}$.
    The last property is immediate.
\end{proof} 
 \begin{lem} 
 \label{lem:dom0}
 Assume that
 $[\nu_{\Gamma} (\Omega^{0}_j)]_{n}=[\nu_{\Gamma}
 (\Omega^{0}_{k})]_{n}$ for $j < k$ and that either
 $\Beg_{\Gamma}(\Omega_{j}^{0})$ or $\Beg_{\Gamma}(\Omega_{k}^{0})$ is
 not $\beta_{l+1}$.  Then we get $\Omega_{j}^{1}=\Omega_{j}^{0}$ and
 $ \Omega_{k}^{1} \in \DD{\beta_{l+1},<}{n_{l+1}-1}{l+1}$.  Even more,
 the $c$-sequences of $\Omega_{k}^{1}$ and $\Omega_{k}^{0}$ coincide,
 and
 \begin{equation*}
   \nu_{\Gamma} (\Omega_{k}^{1}) - \nu_{\Gamma} (\Omega_{k}^{0}) =
   \Beg_{\Gamma}(\Omega_{k}^{1})  - \Beg_{\Gamma}(\Omega_{k}^{0})
 \end{equation*}
holds if  $ \Beg_{\Gamma}(\Omega_{k}^{1}) \neq \infty$. 
%This implies 
%\begin{equation*}  [\nu_{\Gamma} (\Omega_{k}^{1}) - \Beg_{\Gamma}(\Omega_{k}^{1})]_{n} =   
%[\nu_{\Gamma} (\Omega_{k}^{0}) - \Beg_{\Gamma}(\Omega_{k}^{0})]_{n} . \end{equation*}

\end{lem}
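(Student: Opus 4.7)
The plan is to identify which case of Proposition \ref{pro:conflicting_pairs} applies, then analyze how the transformation $\Omega_k^1 = \Omega_k^0 - cx^d\Omega_j^0$ modifies the leading variable structure of $\Omega_k^0$, with the key point being that the correction cannot affect the coefficients defining the $c$-sequence.

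First I would invoke Remark \ref{rem:conflicting_pairs}: case (1) of Proposition \ref{pro:conflicting_pairs} would yield $\Beg_{\Gamma}(\Omega_j^0) = \Beg_{\Gamma}(\Omega_k^0) = \beta_{l+1}$, contradicting the hypothesis, so we must be in case (2). Thus $r_a=0$, $r_b=n_{l+1}-1$, $\Omega_j^0 = \Omega_a \in \oDd{\beta',<}{l+1}$ for some $\beta' < \beta_{l+1}$, and $\Omega_k^0 = f_{\Gamma,l+1}^{n_{l+1}-1}\Omega_b \in \oDD{\beta_{l+1},<}{n_{l+1}-1}{l+1}$ by Corollary \ref{cor:f_omega}, whose $c$-sequence is given explicitly there. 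Definition \ref{def:good1} yields $\Omega_j^1 = \Omega_j^0$ immediately. Expanding $t(\Gamma')^*\Omega_k^0 = t^{k_k}\sum_{r'}C^k_{{\mathfrak n}^{r'}(\beta_{l+1})}t^{{\mathfrak n}^{r'}(\beta_{l+1})}$ and $t(\Gamma')^*\Omega_j^0 = t^{k_j}\sum_r C^j_{{\mathfrak n}^r(\beta')}t^{{\mathfrak n}^r(\beta')}$ over $\fgb{\Gamma}{\beta_{l+1}}$, the cancellation at the leading exponent forces $k_j + nd = k_k + \beta_{l+1} - \beta'$, so the correction term contributes at exponents $k_k + ({\mathfrak n}^r(\beta') - \beta') + \beta_{l+1}$ for $r \geq 0$.

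The combinatorial core will be showing that each shifted exponent ${\mathfrak n}^r(\beta') + (\beta_{l+1} - \beta')$ lies in ${\mathcal E}_{\Gamma} \cap [\beta_{l+1}, \infty)$, and hence equals some ${\mathfrak n}^{r'(r)}(\beta_{l+1})$. Writing ${\mathfrak n}^r(\beta') = \beta_i + ke_i$, one checks $\beta_{l+1} - \beta' \in (e_i)$ via the divisibility relations $e_{l+1} \mid e_j$ and $e_j \mid \beta_j$ for $j \leq l+1$ (when $i \leq l+1$) together with $(e_i) \supseteq (e_{l+1})$ (when $i > l+1$), so that ${\mathfrak n}^r(\beta') + (\beta_{l+1} - \beta')$ lies in the progression $\beta_i + (e_i) \subseteq {\mathcal E}_{\Gamma}$. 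The decisive point is then that ${\mathfrak n}^{r'(r)}(\beta_{l+1}) > {\mathfrak n}^r(\beta')$ (because $\beta_{l+1} > \beta'$), whence the variable $a_{{\mathfrak n}^{r'(r)}(\beta_{l+1})}$ does not occur in $C^j_{{\mathfrak n}^r(\beta')} \in \mathbb{C}[a_{\beta''}]_{\beta'' \leq {\mathfrak n}^r(\beta')}$. Therefore the coefficient of this variable in $C^k_{{\mathfrak n}^{r'}(\beta_{l+1})} - c C^j_{{\mathfrak n}^r(\beta')}$ is unchanged, which preserves the $R$-term, the factor $\tilde{c}^k$, and every $c$-sequence coefficient, placing $\Omega_k^1$ in $\DD{\beta_{l+1},<}{n_{l+1}-1}{l+1}$ with the same $c$-sequence as $\Omega_k^0$.

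For the final equality, both $\Omega_k^0$ and $\Omega_k^1$ share the factor $t^{k_k}$ in their expansions over $\fgb{\Gamma}{\beta_{l+1}}$, so $\nu_{\Gamma}(\Omega_k^i) = k_k + \Beg_{\Gamma}(\Omega_k^i)$ whenever $\Beg_{\Gamma}(\Omega_k^i)$ is finite; subtracting gives $\nu_{\Gamma}(\Omega_k^1) - \nu_{\Gamma}(\Omega_k^0) = \Beg_{\Gamma}(\Omega_k^1) - \Beg_{\Gamma}(\Omega_k^0)$. The hardest step will be the combinatorial realignment sketched above, ensuring that correction exponents realign with ${\mathfrak n}^{r'}(\beta_{l+1})$-positions and no spurious terms outside the leading-variable form are introduced; this reduces to the divisibility pattern of the $e_i$'s and to the fact that all $\beta_j$ with $j \leq l+1$ lie in $(e_{l+1})$.
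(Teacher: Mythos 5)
Your overall strategy is the paper's: reduce to case \eqref{dmv} of Proposition \ref{pro:conflicting_pairs} via Remark \ref{rem:conflicting_pairs} (so $\Omega_j^0=\Omega_a\in \oDd{\beta',<}{l+1}$ with $\beta'<\beta_{l+1}$ and $\Omega_k^0\in\oDD{\beta_{l+1},<}{n_{l+1}-1}{l+1}$), note that after realigning exponents the correction $-cx^d\Omega_j^0$ only adds, at each position ${\mathfrak n}^{r'}(\beta_{l+1})$, polynomials in variables $a_{\beta''}$ with $\beta''$ strictly below that position, so the $R$-term and the $c$-sequence of $\Omega_k^0$ are preserved, and deduce the displayed equality from the common factor $t^{k}$. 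However, the step you yourself single out as the hardest one is carried out incorrectly. You claim that, writing ${\mathfrak n}^{r}(\beta')=\beta_i+ke_i$, one has $\beta_{l+1}-\beta'\in(e_i)$, so that the shifted exponent ${\mathfrak n}^{r}(\beta')+(\beta_{l+1}-\beta')$ lies in the progression $\beta_i+(e_i)$. This fails whenever ${\mathfrak n}^{r}(\beta')<\beta_{l+1}$, i.e. $i\leq l$: in that range $e_i$ divides $\beta'$ and ${\mathfrak n}^{r}(\beta')$, but $e_i$ never divides $\beta_{l+1}$ (since $e_l\mid e_i$ and $e_l\nmid\beta_{l+1}$), hence $e_i\nmid(\beta_{l+1}-\beta')$. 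Concretely, take $n=4$, $\beta_1=6$, $\beta_2=9$, so $e_1=2$, and $\beta'=\beta_1$, $l+1=2$, $r=1$: then ${\mathfrak n}(\beta')=8<9$, the shifted exponent is $8+3=11$, which does belong to ${\mathcal E}_{\Gamma}$ (as $\beta_2+2e_2$) but not to $\beta_1+(e_1)=\{6,8,10,\dots\}$, and $\beta_2-\beta_1=3\notin(2)$. These intermediate exponents cannot be ignored, since the coefficients of $\Omega_j^0$ at exponents in $(\beta',\beta_{l+1})$ are in general nonzero constants, so your argument as written does not establish that $t(\Gamma')^{*}(\Omega_k^1)$ is supported on $k_k+({\mathcal E}_{\Gamma}\cap[\beta_{l+1},\infty))$.

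The fact you need, namely $\beta_{l+1}+({\mathfrak n}^{r}(\beta')-\beta')\in{\mathcal E}_{\Gamma}$ for all $r\geq 0$, is true and is exactly Remark \ref{rem:no_fw_hole}, which is what the paper's proof invokes at this point. Its argument is anchored at the larger exponent rather than at the progression containing ${\mathfrak n}^{r}(\beta')$: by induction on $r$, the increment ${\mathfrak n}^{r+1}(\beta')-{\mathfrak n}^{r}(\beta')$ is divisible by $e_{\sigma}=\gcd(\{n\}\cup\{\beta''\in{\mathcal E}_{\Gamma}:\beta''\leq\sigma\})$ for the current shifted value $\sigma=\beta_{l+1}+({\mathfrak n}^{r}(\beta')-\beta')$, so the next shifted value stays in $\sigma+(e_{\sigma})\subset{\mathcal E}_{\Gamma}$. (Your divisibility relations only give $\beta_{l+1}-\beta'\in(e_{l+1})$, which handles the case $i\geq l+1$ but not $i\leq l$.) If you replace your computation by an appeal to, or the argument of, Remark \ref{rem:no_fw_hole}, the remainder of your proof is correct and coincides with the paper's.
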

\begin{proof}
  Denote $\Omega_{j}^{0} = f_{\Gamma, l+1}^{r_a} \Omega_a$ and
  $\Omega_{k}^{0} = f_{\Gamma, l+1}^{r_b} \Omega_b$.  Since in case \eqref{emv} of
  Proposition \ref{pro:conflicting_pairs} we have
  $\Beg_{\Gamma}(\Omega_{j}^{0})=\Beg_{\Gamma}(\Omega_{k}^{0})=\beta_{l+1}$, we are
  in case \eqref{dmv}, so that $r_b=n_{l+1}-1$, $r_a=0$,
  $ \nu_{\Gamma} (\Omega_b) \in [\beta_{l+1}]_{e_l}$ and
  $\nu_{\Gamma} (\Omega_a) \in (e_l)$.  Lemma \ref{rem:conflicting_pairs} implies
  $\Omega_{k}^{0} \in \oDD{\beta_{l+1}, <}{n_{l+1}-1}{ l+1}$ and
  $\Omega_{j}^{0} \in \oDD{\beta'}{<}{l+1}$ for some $\beta' < \beta_{l+1}$.  Since
  $\beta_{l+1} + ({\mathfrak n}^{r}(\beta') - \beta') \in {\mathcal E}_{\Gamma}$ for
  any $r \in {\mathbb Z}_{\geq 0}$, it follows, from the definition of $\Omega^1_k$,
  that $t (\Gamma')^{*} (\Omega_{k}^{1})$ is of the form  
  $t^{\kappa} \sum_{r \geq 0}^{\infty} C_{{\mathfrak n}^{r}(\beta_{l+1})}
  t^{{\mathfrak n}^{r}(\beta_{l+1})} dt$ when $\Gamma'$ runs on
  $\fgb{\Gamma}{\beta_{l+1}}$.

  The $R$-term of $\Omega_{k}^{0}$ (cf. Definition \ref{def:max}) is of the form
  $c a_{\beta_{l+1}}^{n_{l+1}-1}$ where $c \in {\mathbb C}^{*}$.  Since
  $\beta' < \beta_{l+1}$, it follows that
  $\Omega_{k}^{1} \in \DD{\beta_{l+1},<}{n_{l+1}-1}{l+1}$ and the $c$-sequences
  of $\Omega_{k}^{0}$ and $\Omega_{k}^{1}$ coincide.  The last property is
  immediate.
\end{proof}
Since we want to prove that ${\mathcal G}_{\Gamma, l+1}^{1}$ is good, we need to
define the sets $\low_{\Gamma, l+1}^{1}$ and $\upper_{\Gamma, l+1}^{1}$ in
Definition \ref{def:good} (good family). For each element
$j\in \mathbb{N}_{2\nu_l}$, we are going to specify to which set the elements
$j,j+2\nu_l,\ldots,j+2(n_{l+1}-1)\nu_l$ belong:
\begin{defi}
  \label{def:lu}
  Given $j\in \mathbb{N}_{2\nu_{l}}$:
  % Let $j \in {\mathbb N}_{2 \nu_{l}}$. Let us define to which of
  % $\low_{\Gamma, l+1}^{1}$ and $\upper_{\Gamma, l+1}^{1}$, belong
  % $j, j+ 2 \nu_l, \ldots, j + 2 (n_{l+1}-1) \nu_l$:
  \begin{enumerate}
  \item \label{ml1} If $\Omega_j\in \oDd{\beta', <}{l+1}$ for some
    $\beta'<\beta_{l+1}$, then $j \in \low_{\Gamma, l+1}^{1}$ and
    $j + 2 i \nu_l \in \upper_{\Gamma, l+1}^{1}$ for any $1 \leq i < n_{l+1}$.
  \item \label{mu1} If $\Omega_{j} \in \oDD{\beta_{l+1}, <}{0}{ l+1}$, then
    $j + 2 i \nu_l \in \low_{\Gamma, l+1}^{1}$ for any $0 \leq i < n_{l+1} -1$,
    and $j + 2 (n_{l+1}-1) \nu_l \in \upper_{\Gamma, l+1}^{1}$.
  \end{enumerate}
\end{defi}
\begin{rem}
\label{rem:lu}
Assume that
$[\nu_{\Gamma} (\Omega^{0}_j)]_{n}=[\nu_{\Gamma} (\Omega^{0}_{k})]_{n}$ for
$j < k$.  Definition \ref{def:lu} guarantees that $j \in \low_{\Gamma, l+1}^{1}$
and $k \in \upper_{\Gamma, l+1}^{1}$ by Lemma \ref{rem:conflicting_pairs}.  As
a consequence, both $\low_{\Gamma, l+1}^{1}$ and $\upper_{\Gamma, l+1}^{1}$ have
cardinal $\nu_{l+1}$.  By construction, $\overline\upper_{\Gamma, l+1}$ has
$\nu_{l}$ elements, and the inclusions
${\mathbb N}_{2 \nu_{l}} \subset \low_{\Gamma, l+1}^{1}$, and
$\overline\upper_{\Gamma, l+1} \subset \upper_{l+1} \subset
\upper_{\Gamma, l+1}^{1}$ hold.
\end{rem}
The following technical result will let us prove that
$\mathcal{G}^1_{\Gamma,l+1}$ is indeed a good family. 
\begin{pro}
  \label{pro:aux1}
  The family ${\mathcal G}_{\Gamma, l+1}^{1}$ satisfies (where in all statements, $j,k\in \mathbb{N}_{2\nu_{l+1}}$):
  \begin{enumerate}
  \item \label{g1p1} $j \in \low_{\Gamma, l+1}^{1}$ if and only if
    $\Omega_{j}^{1} = \Omega_{j}^{0}$;
  \item \label{g1p2} if $j \in \low_{\Gamma, l+1}^{1}$, then
    $\Omega_{j}^{1} \in \cup_{\beta^{\prime} \leq \beta_{l+1}}
    \oDd{\beta^{\prime},<}{l+1}$;
  \item \label{g1p3} if
    $k \in \upper_{\Gamma, l+1}^{1} \setminus \overline\upper_{\Gamma,
      l+1}$, then
    $\Omega_{k}^{1} \in \hatDD{{\mathfrak n}(\beta_{l+1}),<}{0}{l+1}$ ;
  \item \label{g1p4} if $k \in \overline\upper_{\Gamma, l+1}$, then
    $\Omega_{k}^{1} \in \DD{\beta_{l+1},<}{n_{l+1}-1}{l+1}$;
  \item \label{g1p5} if
    $k \in \mathbb{N}_{2\nu_{l+1}}$, then
    $\nu_{\Gamma} (\Omega_{k}^{1}) - \nu_{\Gamma} (\Omega_{k}^{0}) =
   \Beg_{\Gamma}(\Omega_{k}^{1})  - \Beg_{\Gamma}(\Omega_{k}^{0})$;
  \item \label{g1p6}
    $ \upper_{\Gamma, l+1}^{1} = \{ j \in {\mathbb N}_{2 \nu_{l+1}} :
    \Beg_{\Gamma}(\Omega_{j}^{1}) > \beta_{l+1} \}$;
  \item \label{g1p7}
    $\{ [\nu_{\Gamma}(\Omega_{j}^{1})]_{n} : j \in \low_{\Gamma, l+1}^{1} \} =
    [(e_{l+1})]_{n}$;
  \item \label{g1p8}
    the set $\{ [\nu_{\Gamma}(\Omega_{k}^{1}) - \Beg_{\Gamma}(\Omega_{k}^{1})]_{n} : k
    \in \upper_{\Gamma, l+1}^{1,<\infty} \}$ is contained in $[(e_{l+1})]_{n}$ and has
    $\sharp \upper_{\Gamma, l+1}^{1,<\infty}$ elements (cf. Definition
    \ref{def:good}).
	\end{enumerate}
      \end{pro}
      Notice, in the following proof, the relevance of Lemma
      \ref{rem:conflicting_pairs}.
\begin{proof}
  Property \eqref{g1p1} is a consequence of Remark \ref{rem:lu}.  This remark
  together with Lemma \ref{rem:conflicting_pairs} imply Property \eqref{g1p2}.

  Let $k \in \upper_{\Gamma, l+1}^{1} \setminus \overline\upper_{\Gamma, l+1}$.
  There is an integer $j \in \low_{\Gamma, l+1}^{1}$ with
  $[\nu_{\Gamma} (\Omega^{0}_j)]_{n}=[\nu_{\Gamma} (\Omega^{0}_{k})]_{n}$. By
  Lemma \ref{rem:conflicting_pairs}, we are not in case \eqref{dmv} of
  Proposition \ref{pro:conflicting_pairs}, and Lemma
  \ref{lem:non_dom} gives
  $\Omega_{k}^{1} \in \hatDD{{\mathfrak n}(\beta_{l+1}),<}{0}{l+1}$, which is
  Property \eqref{g1p3}.

  If $k \in \overline\upper_{\Gamma, l+1}$, we are in case \eqref{dmv} of
  Proposition \ref{pro:conflicting_pairs}, and by Lemma
  \ref{lem:dom0}, we obtain
  $\Omega_{k}^{1} \in \DD{\beta_{l+1},<}{n_{l+1}-1}{l+1}$, i.e. Property \eqref{g1p4}.

  Property \eqref{g1p5} follows from Lemmas \ref{lem:non_dom} and
  \ref{lem:dom0}. 
  
  Lemma \ref{rem:conflicting_pairs} implies
  $\Beg_{\Gamma}(\Omega_{j}^{1}) = \Beg_{\Gamma}(\Omega_{j}^{0}) \leq
  \beta_{l+1}$ for any $j \in \low_{\Gamma, l+1}^{1}$.
  Moreover, notice that
  $\Beg_{\Gamma}(\Omega_{k}^{0}) = \beta_{l+1}$ 
  for any  $k \in \upper_{\Gamma, l+1}^{1}$  by Lemma
  \ref{rem:conflicting_pairs} and hence $\Beg_{\Gamma}(\Omega_{k}^{1}) > \beta_{l+1}$
  by Property \eqref{g1p5}. This concludes the proof of Property \eqref{g1p6}.

  The set
  $S:=\{ [\nu_{\Gamma}(\Omega_{j}^{1})]_{n} : j \in \low_{\Gamma, l+1}^{1} \}$
  is contained in $[\nu_{\Gamma} ({\mathcal G}_{\Gamma, l+1}^{0})]_{n}$ by
  Property \eqref{g1p1}, and thus in $[(e_{l+1})]_{n}$ by Proposition
  \ref{pro:conflicting_pairs}.  Moreover, $S$ has cardinal $\nu_{l+1}$ by Remark
  \ref{rem:lu}.  Since $\sharp S = \sharp [(e_{l+1})]_{n} = \nu_{l+1}$ and
  $S \subset [(e_{l+1})]_{n}$, it follows that $S = [(e_{l+1})]_{n}$, i.e. Property
  \eqref{g1p7}.

  Finally,
  \begin{equation*}
    \nu_{\Gamma} (\Omega_{k}^{1}) -
    \Beg_{\Gamma}(\Omega_{k}^{1}) = \nu_{\Gamma} (\Omega_{k}^{0}) -
    \Beg_{\Gamma}(\Omega_{k}^{0}) = \nu_{\Gamma} (\Omega_{k}^{0}) -
    \beta_{l+1}
  \end{equation*}
  for any $k \in \upper_{\Gamma, l+1}^{1,<\infty}$ by Property \eqref{g1p5}.
  Since
  \begin{equation*}
    \{ [\nu_{\Gamma}(\Omega_{k}^{0})]_{n} : k \in
    \upper_{\Gamma, l+1}^{1} \} = \{ [\nu_{\Gamma}(\Omega_{j}^{0})]_{n} : j \in
    \low_{\Gamma, l+1}^{1} \} =[(e_{l+1})]_{n},
  \end{equation*}
  Property \eqref{g1p8} follows.
\end{proof} 
\begin{cor}
  \label{cor:good-to-good0}
  $\mathcal{G}^{1}_{\Gamma, l+1}$ is a good family of level $l+1$ and of stage
  $1$.
\end{cor}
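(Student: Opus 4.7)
The plan is to verify, one by one, each of the eight conditions of Definition \ref{def:good}, taking $\low_{\Gamma,l+1}^{1}$ and $\upper_{\Gamma,l+1}^{1}$ as supplied by Definition \ref{def:lu}. The heavy lifting has already been packaged into Proposition \ref{pro:aux1}, so the bulk of the proof is translation from its statement.

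I would first dispatch the routine conditions. Condition \ref{good1} follows because every $j \in \mathbb{N}_{2\nu_l}$ lies in $\low_{\Gamma,l+1}^{1}$ by Definition \ref{def:lu}, whence $\Omega_j^{1} = \Omega_j^{0} = f_{\Gamma,l+1}^{0}\Omega_j = \Omega_j$ using Proposition \ref{pro:aux1}\eqref{g1p1}. Condition \ref{good2} is Remark \ref{rem:stage1_free}. Condition \ref{good6} is the union of Properties \eqref{g1p2}, \eqref{g1p3}, and \eqref{g1p4} along the partition $\low_{\Gamma,l+1}^{1} \cup (\upper_{\Gamma,l+1}^{1}\setminus\widetilde{\upper}_{\Gamma,l+1}) \cup \widetilde{\upper}_{\Gamma,l+1}$ of $\mathbb{N}_{2\nu_{l+1}}$. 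Condition \ref{good3} is Property \eqref{g1p3}, once Property \eqref{g1p6} is used to replace the hypothesis ``$\Beg_{\Gamma}(\Omega_k^{1}) > \beta_{l+1}$'' by ``$k \in \upper_{\Gamma,l+1}^{1}$''. Condition \ref{good4} is Property \eqref{g1p4}. Condition \ref{good8} is immediate from Property \eqref{g1p5}, bearing in mind that $\Omega_j^{1} = \Omega_j^{0}$ on $\low_{\Gamma,l+1}^{1}$ (so the differences vanish there) and that $\nu_{\Gamma}(\Omega_k^{1}) > \nu_{\Gamma}(\Omega_k^{0})$ on $\upper_{\Gamma,l+1}^{1}$ by the very choice of the constant $c$ in Definition \ref{def:good1}. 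Condition \ref{good7} combines Properties \eqref{g1p7} and \eqref{g1p8}.

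The only condition requiring a slightly longer argument is Condition \ref{good5}. Remark \ref{rem:lu} already supplies the partition, its two sets of cardinality $\nu_{l+1}$, and the inclusions $\mathbb{N}_{2\nu_l} \subset \low_{\Gamma,l+1}^{1}$ and $\widehat{\upper}_{l+1} \subset \upper_{\Gamma,l+1}^{1}$. For $j \in \low_{\Gamma,l+1}^{1}$, Property \eqref{g1p2} yields $\Beg_{\Gamma}(\Omega_j^{1}) \leq \beta_{l+1} < \mathfrak{n}(\beta_{l+1})$. For $k \in \upper_{\Gamma,l+1}^{1}$, Property \eqref{g1p6} gives $\Beg_{\Gamma}(\Omega_k^{1}) > \beta_{l+1}$, which can be sharpened to $\Beg_{\Gamma}(\Omega_k^{1}) \geq \mathfrak{n}(\beta_{l+1})$ because $\Beg_{\Gamma}$ takes values in $\mathcal{E}_{\Gamma} \cup \{\infty\}$. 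The upper bound $\Beg_{\Gamma} \leq \beta_{l+2}$ needed in both cases comes from Proposition \ref{pro:unique_companion} applied with the index $l+1$: for $k \in \widetilde{\upper}_{\Gamma,l+1}$ we invoke it via Property \eqref{g1p4} with $m = n_{l+1}-1$ and the already established $\Beg_{\Gamma}(\Omega_k^{1}) > \beta_{l+1}$; for $k \in \upper_{\Gamma,l+1}^{1} \setminus \widetilde{\upper}_{\Gamma,l+1}$ we invoke it via Property \eqref{g1p3} with $m=0$.

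The real difficulty has already been absorbed by Proposition \ref{pro:aux1} and, behind it, by the careful case analysis of Lemmas \ref{lem:non_dom} and \ref{lem:dom0}; the proof of the corollary itself is essentially a bookkeeping dispatch. The only spot where I would pause is the upper bound on $\Beg_{\Gamma}$ in Condition \ref{good5}, which hinges on the applicability of Proposition \ref{pro:unique_companion} to each of the two types of forms populating $\upper_{\Gamma,l+1}^{1}$.
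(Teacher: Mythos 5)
Your proof is correct and follows essentially the same route as the paper's: both reduce each condition of Definition \ref{def:good} to the corresponding items of Proposition \ref{pro:aux1}, to Remarks \ref{rem:lu} and \ref{rem:stage1_free}, and to Proposition \ref{pro:unique_companion} (via Properties \eqref{g1p3} and \eqref{g1p4}) for the bound $\Beg_{\Gamma}(\Omega_k^{1}) \leq \beta_{l+2}$ in Condition \ref{good5}. Your write-up is slightly more explicit than the paper's (e.g.\ the sharpening of $\Beg_{\Gamma} > \beta_{l+1}$ to $\Beg_{\Gamma} \geq \mathfrak{n}(\beta_{l+1})$ and the positivity in Condition \ref{good8}), but the argument is the same.
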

\begin{proof}
  In order to prove Condition \ref{good1} of Definition \ref{def:good}, notice
  that ${\mathbb N}_{2 \nu_{l}} \subset \low_{\Gamma, l+1}^{1}$ by Remark
  \ref{rem:lu}, so that $\Omega_{j} = \Omega_{j}^{0} = \Omega_{j}^{1}$ for any
  $j \in {\mathbb N}_{2 \nu_{l}}$.  Condition \ref{good2} is a consequence of
  Remark \ref{rem:stage1_free}.

  The set $\upper_{\Gamma, l+1}^{1}$ is composed of the $1$-forms $\omega$ in
  $\mathcal{G}^{1}_{\Gamma, l+1}$ satisfying
  $\Beg_{\Gamma}(\omega)>\beta_{l+1}$.  Thus, Conditions \ref{good6},
  \ref{good3} and \ref{good4} follow from Properties
  \eqref{g1p2}, \eqref{g1p3} and \eqref{g1p4} in Proposition \ref{pro:aux1}.

  The set $\upper_{l+1}$ is contained in $\upper_{\Gamma, l+1}^{1}$ by
  Remark \ref{rem:lu}.  We also have
  $\Beg_{\Gamma}(\Omega_{k}^{1}) \leq \beta_{l+2}$ for any
  $k \in \upper_{\Gamma, l+1}^{1}$ by Properties \eqref{g1p3} and \eqref{g1p4}
  in Proposition \ref{pro:aux1} and Proposition \ref{pro:unique_companion}.  The
  remaining part of property \ref{good5} holds by construction.

  Condition \ref{good8} is a consequence of Property \eqref{g1p5} in Proposition
  \ref{pro:aux1}.  Notice that the second inequality is, in this case, always an
  equality.

  Finally, Condition \ref{good7} follows from the last two properties in
  Proposition \ref{pro:aux1}.
\end{proof}

\subsection{From stage $s \geq 1$ to stage $s+1$ in level $l+1$}
\label{subsec:from_s_to_s+1}
In this subsection, given a good family $\mathcal{G}^{s}_{\Gamma, l+1}$ of level
$l+1$ and stage $s \geq 1$ with ${\mathfrak n}^{s}(\beta_{l+1}) < \beta_{l+2}$,
we construct a family $\mathcal{G}^{s+1}_{\Gamma, l+1}$ of level $l+1$ and stage
$s+1$, and prove that it is good. This is the \textsc{Main
  Transformation} in Algorithm~\ref{alg:construction}.

In the next subsection, we shall prove that, after a finite number of
iterations, we obtain a terminal family $\mathcal{G}^{d}_{\Gamma, l+1}$ of level
$l+1$.
\begin{rem}
\label{rem:next}
Consider $k \in \upper_{\Gamma, l+1}^{s}$ with
$\Beg_{\Gamma}(\Omega_{k}^{s}) = {\mathfrak n}^{s} (\beta_{l+1})$.  The property
${\mathfrak n}^{s} (\beta_{l+1}) < \beta_{l+2}$ implies
$\nu_{\Gamma}(\Omega_{k}^{s}) \in (e_{l+1})$ by Condition \ref{good7} of good
families. By the same Condition, there exists $j \in \low_{\Gamma, l+1}^{s}$
with $[\nu_{\Gamma}(\Omega_{j}^{s})]_{n} = [\nu_{\Gamma}(\Omega_{k}^{s})]_{n}$. 
The \textsc{Main Transformation} in Algorithm \ref{alg:construction} is only carried out 
for these forms, at this stage, as follows. 
\end{rem}
\begin{defi}
\label{def:next}
Fix $s \geq 1$.  Let $k \in \upper_{\Gamma, l+1}^{s}$ with
$\Beg_{\Gamma}(\Omega_{k}^{s}) = {\mathfrak n}^{s} (\beta_{l+1})$, let
$j \in \low_{\Gamma, l+1}^{s}$ satisfying
$[\nu_{\Gamma}(\Omega_j^s)]_{n}=[\nu_{\Gamma}(\Omega_k^s)]_{n}$, as by
Remark \ref{rem:next}, and
$d = ( \nu_{\Gamma} (\Omega_{k}^{s}) -\nu_{\Gamma} (\Omega_{j}^{s}))/n$.  We
define
\begin{itemize}
\item {\it If $d \geq 0$}, then
  \begin{equation*}
    \Omega_{j}^{s+1} = \Omega_{j}^{s}, \ \ \Omega_{k}^{s+1} =
    \Omega_{k}^{s} - r x^{d} \Omega_{j}^{s},
  \end{equation*}
  where $r\in{\mathbb C}^{*}$ is such that
  $\nu_{\Gamma} (\Omega_{k}^{s} - r x^{d} \Omega_{j}^{s}) > \nu_{\Gamma}
  (\Omega_{k}^{s})$. We set $j \in \low_{\Gamma, l+1}^{s+1}$, and
  $k \in \upper_{\Gamma, l+1}^{s+1}$, and  define $\xi_{s}(k)=k$.
\item {\it If $d < 0$}, then
  \begin{equation*}
    \Omega_{k}^{s+1} = \Omega_{k}^{s}, \ \ \Omega_{j}^{s+1} =
    \Omega_{j}^{s} - r x^{-d} \Omega_{k}^{s},
  \end{equation*}
  where $r\in {\mathbb C}^{*}$ satisfies
  $\nu_{\Gamma} (\Omega_{j}^{s} - r x^{-d} \Omega_{k}^{s}) > \nu_{\Gamma}
  (\Omega_{j}^{s})$.  We set $k \in \low_{\Gamma, l+1}^{s+1}$, and $j \in
  \upper_{\Gamma, l+1}^{s+1}$, and define $\xi_{s}(k)=j$.
\end{itemize}
All other members $j$ of $\low_{\Gamma, l+1}^{s}$ (resp.
$\upper_{\Gamma, l+1}^{s}$) are set in $\low_{\Gamma, l+1}^{s+1}$
(resp. $\upper_{\Gamma, l+1}^{s+1}$) and we define
$\Omega_{j}^{s+1} = \Omega_{j}^{s}$.  This way, we obtain a map
$\xi_{s} : \upper_{\Gamma, l+1}^{s} \to \upper_{\Gamma, l+1}^{s+1}$ by extending
$\xi_s$ as $\xi_{s}(k) = k$ for any $k \in \upper_{\Gamma, l+1}^{s}$ such that
$\Beg_{\Gamma}(\Omega_{k}^{s}) > {\mathfrak n}^{s} (\beta_{l+1})$.
\end{defi}
\begin{defi}
\label{def:zeta} 
Given $s \in {\mathbb Z}_{\geq 1}$, define the map
$\zeta_{s}: \upper_{\Gamma, l+1}^{1} \to \upper_{\Gamma, l+1}^{s}$ as
$\zeta_{s} = \mathrm{id}_{\upper_{\Gamma, l+1}^{1}}$ if $s=1$ and
$\zeta_{s} = \xi_{s-1} \circ \ldots \circ \xi_{1}$ if $s>1$.
\end{defi}
Note that Definition \ref{def:next} guarantees that 
$\nu_{\Gamma} (\Omega_{k}^{s+1}) > \nu_{\Gamma} (\Omega_{k}^{s}) $ whenever
$\Omega_{k}^{s} \neq \Omega_{k}^{s+1}$.

The following result will allow us to prove that the \textsc{Main
  Transformation} maps good families $\mathcal{G}_{\Gamma, l+1}^s$ 
  to good families $\mathcal{G}_{\Gamma, l+1}^{s+1}$, as long as  
  ${\mathfrak n}^{s}(\beta_{l+1}) < \beta_{l+2}$.
 \begin{lem} 
 \label{lem:dom}
 Assume that $[\nu_{\Gamma} (\Omega^{s}_j)]_{n}=[\nu_{\Gamma} (\Omega^{s}_{k})]_{n}$ for $j \neq k$, 
 and that  $\Beg_{\Gamma}(\Omega_{j}^{s}) < \Beg_{\Gamma}(\Omega_{k}^{s}) = {\mathfrak n}^{s} (\beta_{l+1})$. 
 %Denote $(j^{\prime}, k^{\prime}) = (j,k)$ if $\nu_{\Gamma} (\Omega^{s}_j) \leq  \nu_{\Gamma} (\Omega^{s}_k)$
 %and $(j^{\prime}, k^{\prime})  = (k,j)$ otherwise.
  Then 
 \begin{itemize}
 \item $\Omega_{j^{\prime}}^{s+1}=\Omega_{j^{\prime}}^s$ where $j'$ is the element in
   $\{j, k\} \setminus \{ \xi_{s}(k) \}$ (the index corresponding to the $1$-form which does not change);
 \item If $\Omega_{k}^{s} \in \hatDD{{\mathfrak n}(\beta_{l+1}),<}{0}{l+1}$,
   then we also have
   \[
     \Omega_{\xi_{s}(k)}^{s+1} \in \hatDD{{\mathfrak n}(\beta_{l+1}),<}{0}{l+1};\]
 \item If $\Omega_{k}^{s} \in \DD{\beta_{l+1},<}{n_{l+1}-1}{l+1}$, then we also have
   \[
     \Omega^{s+1}_{\xi_s(k)}\in \DD{\beta_{l+1},<}{n_{l+1}-1}{l+1}
   \]
 \item the $c$-sequences of $\Omega_{\xi_{s}(k)}^{s+1}$ and $\Omega_{k}^{s}$ coincide;
 \item
   $\nu_{\Gamma} (\Omega_{\xi_{s}(k)}^{s+1}) - \nu_{\Gamma} (\Omega_{\xi_{s}(k)}^{s}) =
   \Beg_{\Gamma}(\Omega_{\xi_{s}(k)}^{s+1}) -
   \Beg_{\Gamma}(\Omega_{k}^{s})$;
    \item
   $\nu_{\Gamma} (\Omega_{j^{\prime}}^{s+1}) - \nu_{\Gamma} (\Omega_{j}^{s}) \in n
   {\mathbb Z}_{\leq 0}$;
  \end{itemize}
\end{lem}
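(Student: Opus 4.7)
My plan is to split into the two cases of Definition \ref{def:next} based on the sign of $d = (\nu_\Gamma(\Omega_k^s) - \nu_\Gamma(\Omega_j^s))/n$, and argue symmetrically. When $d \geq 0$, $\xi_s(k) = k$ and $j' = j$; when $d < 0$, these roles swap. The first bullet is then immediate from Definition \ref{def:next}: the form indexed by $j'$ is precisely the one kept unmodified. The sixth bullet is also direct: in case $d \geq 0$ the difference vanishes, and in case $d < 0$ it equals $dn \in n\mathbb{Z}_{<0}$.

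For the remaining four bullets I focus on the case $d \geq 0$; the other case is handled by the symmetric argument with the shift $x^{-d}$ applied to $\Omega_k^s$ instead. Writing the leading-variable expansions prescribed by Definition \ref{def:max} over the family $\fgb{\Gamma}{\beta_{l+1}}$,
\[
\frac{(\Gamma')^{\ast}\Omega_k^s}{dt} = t^{k_k}\sum_{r\geq 0} C^k_{\mathfrak{n}^r(\beta_k^{\mathrm{lead}})}\, t^{\mathfrak{n}^r(\beta_k^{\mathrm{lead}})},\quad \frac{(\Gamma')^{\ast}\Omega_j^s}{dt} = t^{k_j}\sum_{r\geq 0} C^j_{\mathfrak{n}^r(\beta_j^{\mathrm{lead}})}\, t^{\mathfrak{n}^r(\beta_j^{\mathrm{lead}})},
\]
with $\beta_k^{\mathrm{lead}} \in \{\beta_{l+1},\mathfrak{n}(\beta_{l+1})\}$ forced by Condition \ref{good6} of $\mathcal{G}_{\Gamma,l+1}^s$ being good, and $\beta_j^{\mathrm{lead}}$ either $\Beg_\Gamma(\Omega_j^s)$ or $\mathfrak{n}(\beta_{l+1})$ similarly. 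Since $x = t^n$ on every parametrization, $\alpha x^d \Omega_j^s$ is the above series of $\Omega_j^s$ multiplied by $\alpha t^{dn}$, and $\alpha$ is chosen to cancel the $\Gamma$-restriction of the coefficient at $t^{k_k + \mathfrak{n}^s(\beta_{l+1})}$. The key claim is that this subtraction affects only the $Q$-parts of each coefficient $C^k_{\mathfrak{n}^r(\beta_k^{\mathrm{lead}})}$, leaving the $R$-part $\tilde{c}\, c_r\, R_k\, a_{\mathfrak{n}^r(\beta_k^{\mathrm{lead}})}$ intact. Granted this, the leading variable, the polynomial $R_k$ (in particular its shape $c\, a_{\beta_{l+1}}^{s_k}$ in the $\hatDD$ case), the degree $m_k$, the $c$-sequence, and the integer $k_k$ are all inherited by $\Omega_{\xi_s(k)}^{s+1}=\Omega_k^{s+1}$; the second, third and fourth bullets follow at once, and the fifth follows from $\nu_\Gamma = k_k + \Beg_\Gamma$ applied before and after.

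The main obstacle is justifying this $R$-preservation claim. In the easy subcase $\Omega_j^s \in \oDd{\beta',<}{l+1}$ with $\beta' < \beta_{l+1}$, the polynomial $R_j$ is already a nonzero constant on $\fgb{\Gamma}{\beta_{l+1}}$, so $\alpha x^d \Omega_j^s$ contributes only $Q$-type (i.e. monomials of degree one in an $a_{\beta''}$ with smaller index) corrections. The delicate subcase is $\Omega_j^s \in \hatDD{\mathfrak{n}(\beta_{l+1}),<}{0}{l+1}$, so that $\beta_j^{\mathrm{lead}} = \mathfrak{n}(\beta_{l+1}) = \beta_k^{\mathrm{lead}}$ and $R_j = c_j a_{\beta_{l+1}}^{s_j}$ could a priori mingle with $R_k$. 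Using $\mathfrak{n}^r(\beta_{l+1}) = \beta_{l+1} + r e_{l+1}$ on the relevant range of $r$, one computes that the $t^{dn}$-shift aligns the index-$r$ term of $\alpha x^d \Omega_j^s$ with the index-$(s-s'+r)$ term of $\Omega_k^s$, where $s' < s$ is such that $\Beg_\Gamma(\Omega_j^s) = \mathfrak{n}^{s'}(\beta_{l+1})$; hence the $a$-monomial contributed by $\alpha x^d \Omega_j^s$ at that position involves only $a_{\beta''}$ with $\beta'' < \mathfrak{n}^{s-s'+r}(\beta_k^{\mathrm{lead}})$, which is precisely the range allowed for absorption into $Q^{k+1}$. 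The strict-$<$ monotonicity hypotheses on the $c$-sequences of $\Omega_k^s$ and $\Omega_j^s$ ensure that the inherited $c$-sequence of $\Omega_k^{s+1}$ remains strictly increasing in $\mathbb{Q}_{>0}$, closing the argument and establishing all six bullets.
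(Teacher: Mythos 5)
You follow the same route as the paper: set $d=(\nu_{\Gamma}(\Omega_k^s)-\nu_{\Gamma}(\Omega_j^s))/n$, treat the two signs of $d$ symmetrically, write both forms' expansions as in Definition \ref{def:max} for the family $\fgb{\Gamma}{\beta_{l+1}}$, and reduce everything to the claim that subtracting the $x$-shifted lower form perturbs only the $Q$-parts of the expansion attached to $\Omega_k^s$, so that the leading variable, the $R$-term (in particular its shape $c\,a_{\beta_{l+1}}^{m}$ in the $\hatDD{{\mathfrak n}(\beta_{l+1}),<}{0}{l+1}$ case), the degree, the prefactor and the $c$-sequence are inherited by $\Omega_{\xi_{s}(k)}^{s+1}$; the first and last bullets are indeed immediate from Definition \ref{def:next}. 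This is exactly the paper's strategy and your reduction is correct.

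The gap lies in the verification of that claim, at two points. First, your case analysis of $\Omega_j^s$ is not exhaustive: for $j \in \low_{\Gamma,l+1}^{s}$ the conditions of Definition \ref{def:good} also allow $\Omega_j^s \in \oDd{\beta_{l+1},<}{l+1}$ with positive degree (for instance $f_{\Gamma,l+1}^{m}\Omega_a$, which is the typical member of $\low_{\Gamma,l+1}^{s}$); this is neither your case $\beta'<\beta_{l+1}$ nor your $\hatDD{{\mathfrak n}(\beta_{l+1}),<}{0}{l+1}$ case. It is harmless, since the contributed coefficients are polynomials in $a_{\beta_{l+1}}$ and in $a_{\hat\beta}$ with $\hat\beta$ strictly below the exponent at which they are inserted, hence absorbable into $Q$, but it must be covered. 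Second, and more substantively, your alignment computation rests on ${\mathfrak n}^{r}(\beta_{l+1})=\beta_{l+1}+re_{l+1}$, which holds only for exponents below $\beta_{l+2}$; the expansions are infinite series, and past $\beta_{l+2}$ the gaps of ${\mathcal E}_{\Gamma}$ change, so the statement that the index-$r$ term of $\alpha x^{d}\Omega_j^s$ aligns with the index-$(s-s'+r)$ term of $\Omega_k^s$ is false for the tail. What actually has to be proved is that \emph{every} exponent of the shifted $j$-expansion lies in the exponent support of the $k$-expansion, i.e.\ equals the prefactor of that expansion plus some $\overline{\beta}\in{\mathcal E}_{\Gamma}$ with $\overline{\beta}$ at least the leading variable of $\Omega_k^s$; otherwise $\Omega_{\xi_{s}(k)}^{s+1}$ need not even admit an expansion of the shape required by Definition \ref{def:max}, and there is no $Q$ to absorb anything into. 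This support containment is the real content of the paper's proof: for ${\mathfrak n}^{r}(\beta')\geq\Beg_{\Gamma}(\Omega_j^s)$ it follows from Remark \ref{rem:no_fw_hole}, and for ${\mathfrak n}^{r}(\beta')<\Beg_{\Gamma}(\Omega_j^s)$ one uses $\beta'\geq\beta_{l+1}$ (Remark \ref{rem:l_b}) together with the fact that the exponents strictly between $\beta_{l+1}$ and $\beta_{l+2}$ and the difference of the two prefactors all lie in $(e_{l+1})$. Once this is in place your argument closes; note also that since the $R$-part is untouched, the $c$-sequence of $\Omega_{\xi_{s}(k)}^{s+1}$ is literally that of $\Omega_k^s$, so no monotonicity of the $c$-sequence of $\Omega_j^s$ is needed.
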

Notice how the second and third properties underscore the importance of the sets $\hatDD{{\mathfrak n}(\beta_{l+1}),<}{0}{l+1}$ and $\DD{\beta_{l+1},<}{n_{l+1}-1}{l+1}$, as they are stable under the \textsc{Main Transformation}. The fifth property allows us to keep track of $\nu_{\Gamma}(\Omega_{\xi_s(k)}^{s+1})$ and $\Beg_{\Gamma}(\Omega_{\xi_s(k)}^{s+1})$, 
as required in Definition \ref{def:good} (good family).
\begin{proof}
  The first property follows by construction.  We have
  $j \in \low_{\Gamma, l+1}^{s}$ and $k \in \upper_{\Gamma, l+1}^{s}$ by Condition
  \ref{good5} in Definition \ref{def:good}.  Moreover,
  $\Omega_{j}^{s} \in \Dd{\beta', <}{l+1}$ for some
  $\beta' \leq {\mathfrak n} (\beta_{l+1})$ and
  $\Omega_{k}^{s} \in \Dd{\beta_{l+1}, <}{l+1} \cup \hatDD{{\mathfrak
      n}(\beta_{l+1}), <}{0}{ l+1}$.  Thus, we deduce that
  $\Omega_{k}^{s} \in \Dd{\beta'', <}{l+1}$ for some
  $\beta'' \in \{ \beta_{l+1}, {\mathfrak n} (\beta_{l+1}) \}$.

  Let $d = ( \nu_{\Gamma} (\Omega_{k}^{s}) -\nu_{\Gamma} (\Omega_{j}^{s}))/n$, and
  define
  \begin{equation*}
    \overline{\Omega}_{j}^{s} = x^{\max (d,0)} {\Omega}_{j}^{s} \ \ \mathrm{and} \ \  
    \overline{\Omega}_{k}^{s} = x^{\max(-d,0)} {\Omega}_{k}^{s}
  \end{equation*}
  We have
  \begin{equation*}
    t (\Gamma')^{*} (\overline{\Omega}_{j}^{s}) = 
    \left( t^{a} \sum_{r=0}^{\infty}
      C_{{\mathfrak n}^{r}(\beta')} t^{{\mathfrak n}^{r}(\beta')} \right) dt
  \end{equation*}
and 
\begin{equation}
\label{equ:exp_k}
 t (\Gamma')^{*} (\overline{\Omega}_{k}^{s}) = 
 \left( t^{b} \sum_{r=0}^{\infty}
   D_{{\mathfrak n}^{r}(\beta'')} t^{{\mathfrak n}^{r}(\beta'')} \right) dt 
\end{equation}
where $\Gamma'$ runs in $\fgb{\Gamma}{\beta_{l+1}}$ (cf. Definition \ref{def:level}).
As
$\nu_{\Gamma} (\overline{\Omega}_{j}^{s}) = \nu_{\Gamma} (\overline{\Omega}_{k}^{s})$,
it follows that
\begin{equation}\label{eq:a-and-b}
  a  + \Beg_{\Gamma}(\Omega_{j}^{s}) = b   + \Beg_{\Gamma}(\Omega_{k}^{s}) .
\end{equation}
We have $a > b$ and moreover $a-b \in (e_{l+1})$, for example because since
$\Beg_{\Gamma}(\Omega_{j}^{s}) < \Beg_{\Gamma}(\Omega_{k}^{s}) < \beta_{l+2}$, then
$\Beg_{\Gamma}(\Omega_{j}^{s}), \Beg_{\Gamma}(\Omega_{k}^{s})\in (e_{l+1})$.

We want to show that $t (\Gamma')^{*} (\overline{\Omega}_{k'}^{s+1})$ admits an
expression of type \eqref{equ:exp_k} for different coefficients in
${\mathbb C}[a_{\hat{\beta}}]_{\hat{\beta} \geq \beta_{l+1}}$.  It suffices to prove
that given $r \in {\mathbb Z}_{\geq 0}$, the expression
$a + {\mathfrak n}^{r} (\beta')$ is of the form $b + \overline{\beta}$ for some
$\overline{\beta} \in {\mathcal E}_{\Gamma}$ such that
$\overline{\beta} \geq \beta''$.  First, assume
${\mathfrak n}^{r} (\beta') \geq \Beg_{\Gamma}(\Omega_{j}^{s})$. This is case is
simple, as using~\eqref{eq:a-and-b}, we get
\begin{multline*} 
  \overline{\beta} =
  \Beg_{\Gamma}(\Omega_{k}^{s}) + (\overline{\beta} - \Beg_{\Gamma}(\Omega_{k}^{s}) )
  =\\
  \Beg_{\Gamma}(\Omega_{k}^{s}) + ({\mathfrak n}^{r} (\beta')  -
  \Beg_{\Gamma}(\Omega_{j}^{s}) ) \geq \Beg_{\Gamma}(\Omega_{k}^{s}) \geq \beta''
\end{multline*}
so that $\overline{\beta} \in {\mathcal E}_{\Gamma}$ by Remark \ref{rem:no_fw_hole}.
Now, suppose ${\mathfrak n}^{r} (\beta') < \Beg_{\Gamma}(\Omega_{j}^{s})$. Since
$\Beg_{\Gamma}(\Omega_{j}^{s}) > \beta'$, we obtain $\beta' \geq \beta_{l+1}$ by
Remark \ref{rem:l_b}. The inequalities
\begin{equation*}
  \beta_{l+1} \leq \beta' \leq {\mathfrak n}^{r} (\beta')  <
  \Beg_{\Gamma}(\Omega_{j}^{s}) < \Beg_{\Gamma}(\Omega_{k}^{s}) < \beta_{l+2}
\end{equation*}
imply that $ {\mathfrak n}^{r} (\beta')  \in (e_{l+1})$ and hence $\overline{\beta}  \in (e_{l+1})$. 
This property, together with $\overline{\beta} > {\mathfrak n}^{r} (\beta') \geq \beta_{l+1}$ imply 
$\overline{\beta} \in {\mathcal E}_{\Gamma}$. Moreover, we get $\overline{\beta} \geq \beta''$ since
$\overline{\beta} \geq {\mathfrak n}(\beta_{l+1})$.

All other properties in the lemma are straightforward.
\end{proof}  
Now, we are ready to study the properties of the family
$\mathcal{G}_{\Gamma, l+1}^{s+1}$, as long as
$\mathfrak{n}^s(\beta_{l+1})<\beta_{l+2}$:
\begin{defi}
  If $\mathfrak{n}^s(\beta_{l+1}) )<\beta_{l+2}$, then we define
  \begin{multline*}
    \upper_{\Gamma, l+1}^{s+1} =
    \{ j \in \upper_{\Gamma,
      l+1}^{s} :
    \Beg_{\Gamma}(\Omega_{j}^{s}) > {\mathfrak n}^{s} (\beta_{l+1}) \}
    \cup \\
    \{ k \in {\mathbb N}_{2 \nu_{l+1}} :
    \Omega_{k}^{s+1} \neq \Omega_{k}^{s} \}
  \end{multline*}
  and $\low_{\Gamma, l+1}^{s+1}$ as its complement in ${\mathbb N}_{2 \nu_{l+1}} $.
\end{defi}
The next result will imply that $\mathcal{G}_{\Gamma,l+1}^{s+1}$ is
good. The importance of the sets
$\hatDD{{\mathfrak n}(\beta_{l+1}),<}{0}{l+1}$ and
$\DD{\beta_{l+1},<}{n_{l+1}-1}{l+1}$ is stressed again.
\begin{pro}
  \label{pro:auxm}
        % Assume $N^{s} (\beta_{l+1}) \in  (e_{l+1})$. 
The family  ${\mathcal G}_{\Gamma, l+1}^{s+1}$ satisfies
\begin{itemize}
\item Both $\low_{\Gamma, l+1}^{s+1}$ and $\upper_{\Gamma, l+1}^{s+1}$ have
  $\nu_{l+1}$ elements;
\item ${\mathbb N}_{2 \nu_{l}} \subset \low_{\Gamma, l+1}^{s+1}$ and
  $\upper_{l+1} \subset \upper_{\Gamma, l+1}^{s+1}$;
\item if $j \in \low_{\Gamma, l+1}^{s+1}$, then
  $\Beg_{\Gamma} (\Omega_{j}^{s+1}) < {\mathfrak n}^{s+1} (\beta_{l+1})$;
\item if $k \in \upper_{\Gamma, l+1}^{s+1}$, then
  $\Beg_{\Gamma}(\Omega_{k}^{s+1}) \geq
  {\mathfrak n}^{s+1}(\beta_{l+1})$ ;
 %\end{itemize}
 %For the next properties, we assume $\Omega_{k}^{s+1} \neq \Omega_{k}^{s}$.  We have
 %\begin{itemize}
\item If
  $k \in \upper_{\Gamma, l+1}^{s}$, then
  \begin{equation*}
    \nu_{\Gamma} (\Omega_{\xi_{s}(k)}^{s+1}) - \nu_{\Gamma} (\Omega_{\xi_{s}(k)}^{s})
    = \Beg_{\Gamma}(\Omega_{\xi_{s}(k)}^{s+1}) -\Beg_{\Gamma}(\Omega_{k}^{s});
  \end{equation*}
\item if $k \in \upper_{\Gamma, l+1}^{s} \setminus \overline\upper_{\Gamma, l+1}$,
  then
  \begin{equation*}
    \Omega_{\xi_{s}(k)}^{s+1} \in \hatDD{{\mathfrak n}(\beta_{l+1}),<}{0}{l+1};
  \end{equation*}
\item if $k \in \overline\upper_{\Gamma, l+1}$, then
  \begin{equation*}
    \Omega_{\xi_{s}(k)}^{s+1} = \Omega_{k}^{s+1} \in
    \DD{\beta_{l+1},<}{n_{l+1}-1}{l+1}.
  \end{equation*}
  % \item $ \Omega_{k}^{s+1} \in \Delta_{\beta_{l+1},<}^{l+1}$ if   $k \in \overline\upper_{\Gamma, l+1}$.
 \end{itemize}
 \end{pro}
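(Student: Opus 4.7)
My plan is to partition $\upper_{\Gamma,l+1}^s$ into \emph{active} indices $k$ (those with $\Beg_{\Gamma}(\Omega_k^s) = \mathfrak{n}^s(\beta_{l+1})$) and \emph{inactive} ones (those with $\Beg_{\Gamma}(\Omega_k^s) > \mathfrak{n}^s(\beta_{l+1})$) and handle the two cases separately. For inactive $k$, Definition \ref{def:next} sets $\Omega_k^{s+1} = \Omega_k^s$ and $\xi_s(k) = k$, and every listed assertion is immediate once one observes that $\mathfrak{n}^s(\beta_{l+1}) < \beta_{l+2}$ forces $\mathfrak{n}^{s+1}(\beta_{l+1}) = \mathfrak{n}(\mathfrak{n}^s(\beta_{l+1}))$, so $\Beg_{\Gamma}(\Omega_k^s) > \mathfrak{n}^s(\beta_{l+1})$ yields $\Beg_{\Gamma}(\Omega_k^{s+1}) \geq \mathfrak{n}^{s+1}(\beta_{l+1})$.

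For an active $k$, Condition \ref{good7} furnishes a unique $j \in \low_{\Gamma,l+1}^s$ with $[\nu_{\Gamma}(\Omega_j^s)]_n = [\nu_{\Gamma}(\Omega_k^s)]_n$: indeed Lemma \ref{lem:cong_nu} together with $\mathfrak{n}^s(\beta_{l+1}) \in (e_{l+1})$ (which follows from $\mathfrak{n}^s(\beta_{l+1}) < \beta_{l+2}$) yields $\nu_{\Gamma}(\Omega_k^s) \in [(e_{l+1})]_n$, and this class is realized by some $j \in \low_{\Gamma,l+1}^s$ by Condition \ref{good7}. Condition \ref{good5} then guarantees $\Beg_{\Gamma}(\Omega_j^s) < \mathfrak{n}^s(\beta_{l+1}) = \Beg_{\Gamma}(\Omega_k^s)$, so Lemma \ref{lem:dom} applies and delivers simultaneously the $c$-sequence preservation of $\Omega_{\xi_s(k)}^{s+1}$ and the identity $\nu_{\Gamma}(\Omega_{\xi_s(k)}^{s+1}) - \nu_{\Gamma}(\Omega_{\xi_s(k)}^s) = \Beg_{\Gamma}(\Omega_{\xi_s(k)}^{s+1}) - \Beg_{\Gamma}(\Omega_k^s)$. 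The propagation of the leading-variable type splits according to whether $k \in \widetilde{\upper}_{\Gamma,l+1}$: Condition \ref{good4} puts $\Omega_k^s \in \DD{\beta_{l+1},<}{n_{l+1}-1}{l+1}$ and Lemma \ref{lem:dom} transfers this to $\Omega_{\xi_s(k)}^{s+1}$; otherwise Condition \ref{good3} places $\Omega_k^s$ in $\hatDD{\mathfrak{n}(\beta_{l+1}),<}{0}{l+1}$ and again Lemma \ref{lem:dom} ensures the transfer.

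The injectivity of the pairing $k \mapsto j$ on active indices (Condition \ref{good7} once more) upgrades $\xi_s$ to a bijection $\upper_{\Gamma,l+1}^s \to \upper_{\Gamma,l+1}^{s+1}$, giving $|\upper_{\Gamma,l+1}^{s+1}| = \nu_{l+1}$ and hence $|\low_{\Gamma,l+1}^{s+1}| = \nu_{l+1}$. The bound $\Beg_{\Gamma}(\Omega_j^{s+1}) < \mathfrak{n}^{s+1}(\beta_{l+1})$ for $j \in \low_{\Gamma,l+1}^{s+1}$ then breaks into two sub-cases: either $j$ already belonged to $\low_{\Gamma,l+1}^s$ and remains untouched (so Condition \ref{good5} at stage $s$ applies verbatim), or $j$ is an index swapped from upper in a $d<0$ branch, in which case $\Omega_j^{s+1} = \Omega_k^s$ still has $\Beg_{\Gamma}$ equal to $\mathfrak{n}^s(\beta_{l+1}) < \mathfrak{n}^{s+1}(\beta_{l+1})$.

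The main obstacle is the pair of stability inclusions $\mathbb{N}_{2\nu_l} \subset \low_{\Gamma,l+1}^{s+1}$ and $\widehat{\upper}_{l+1} \subset \upper_{\Gamma,l+1}^{s+1}$, which amount to ruling out $d < 0$ for pairings involving these distinguished indices. If $j \in \mathbb{N}_{2\nu_l}$ is paired with an active $k$, Condition \ref{good1} together with Condition \ref{tb} of the terminal family give $\nu_{\Gamma}(\Omega_j^s) = \nu_{\Gamma}(\Omega_j) \leq \overline{\beta}_{l+1}$, whereas writing $k = 2a\nu_l + b$ with $a \geq 1$ yields $\nu_{\Gamma}(\Omega_k^0) = a \overline{\beta}_{l+1} + \nu_{\Gamma}(\Omega_b) > \overline{\beta}_{l+1}$, and Condition \ref{good8} promotes this to $\nu_{\Gamma}(\Omega_k^s) \geq \nu_{\Gamma}(\Omega_k^0) > \nu_{\Gamma}(\Omega_j^s)$, so $d > 0$. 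For $k \in \widehat{\upper}_{l+1}$ active, I would use the equality case of Condition \ref{good8} to pin down $\nu_{\Gamma}(\Omega_k^s) = (n_{l+1}-1)\overline{\beta}_{l+1} + \nu_{\Gamma}(\Omega_b) + \mathfrak{n}^s(\beta_{l+1}) - \beta_{l+1}$, then bound $\nu_{\Gamma}(\Omega_{j'}^s)$ from above for any possible partner $j' \in \low_{\Gamma,l+1}^s$ by exploiting the (easily verified) fact that elements in $\low$ are never modified by the \textsc{Main Transformation}, so one only needs to control $\nu_{\Gamma}(\Omega_{j'}^0)$; the comparison then reduces to the terminal-family bound $\nu_{\Gamma}(\Omega_b) \leq \overline{\beta}_{l+1}$, ensuring $d > 0$ and thus keeping $k$ in $\upper_{\Gamma,l+1}^{s+1}$.
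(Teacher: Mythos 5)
Most of your outline reproduces the paper's own proof: the pairing of an active $k$ with a partner $j\in\low_{\Gamma,l+1}^{s}$ via Condition \ref{good7}, the use of Condition \ref{good5} to put Lemma \ref{lem:dom} in position, the transfer of the leading-variable type through Conditions \ref{good3} and \ref{good4}, and the argument for ${\mathbb N}_{2\nu_l}\subset\low_{\Gamma,l+1}^{s+1}$ are exactly the steps taken there. The genuine problem is the step you call ``easily verified'': it is \emph{not} true that an index currently lying in $\low_{\Gamma,l+1}^{s}$ carries an unmodified $1$-form. At each single stage the \textsc{Main Transformation} leaves unchanged the element that is placed in the lower set, but an index in $\low_{\Gamma,l+1}^{s}$ may have spent earlier stages in the upper set, been modified there (the $d\geq 0$ branch of Definition \ref{def:next}), and only later been swapped down in a $d<0$ branch. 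For instance every $k\in\upper_{\Gamma,l+1}^{1}$ already satisfies $\Omega_{k}^{1}=\Omega_{k}^{0}-c\,x^{d}\Omega_{j}^{0}\neq\Omega_{k}^{0}$ (Definition \ref{def:good1}); if such an index is subsequently moved to the lower set, its current form differs from its stage-$0$ form and its $\Gamma$-order has strictly \emph{increased}. This breaks your reduction: to get $d>0$ for a pairing involving $k\in\widehat\upper_{l+1}$ you need an upper bound on $\nu_{\Gamma}(\Omega_{j'}^{s})$, whereas the modifications only raise $\nu_{\Gamma}$, so controlling $\nu_{\Gamma}(\Omega_{j'}^{0})$ bounds the wrong side.

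The missing ingredient is the quantitative part of Condition \ref{good8} applied to the partner $j'$: $\nu_{\Gamma}(\Omega_{j'}^{s})-\nu_{\Gamma}(\Omega_{j'}^{0})\leq \Beg_{\Gamma}(\Omega_{j'}^{s})-\Beg_{\Gamma}(\Omega_{j'}^{0})$, and since $j'\in\low_{\Gamma,l+1}^{s}$ gives $\Beg_{\Gamma}(\Omega_{j'}^{s})<{\mathfrak n}^{s}(\beta_{l+1})$ by Condition \ref{good5} while $\Beg_{\Gamma}(\Omega_{j'}^{0})=\beta_{l+1}$ whenever $j'\notin{\mathbb N}_{2\nu_l}$ (and $\Omega_{j'}^{s}=\Omega_{j'}^{0}$ otherwise, by Condition \ref{good1}), the total increase of $\nu_{\Gamma}$ along the stages is strictly smaller than ${\mathfrak n}^{s}(\beta_{l+1})-\beta_{l+1}$. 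Combining this with the equality case of Condition \ref{good8} for $k\in\widehat\upper_{l+1}$ (your formula for $\nu_{\Gamma}(\Omega_{k}^{s})$, cf. Remark \ref{rem:chu}) and with the stage-$0$ comparison $\nu_{\Gamma}(\Omega_{j'}^{0})\leq (n_{l+1}-1)\overline{\beta}_{l+1}<\nu_{\Gamma}(\Omega_{k}^{0})$ — note that the relevant terminal-family bound is $\nu_{\Gamma}(\Omega_{a})\leq\overline{\beta}_{l+1}$ for the partner's base form $f_{\Gamma,l+1}^{r}\Omega_{a}$ with $r\leq n_{l+1}-2$, not the $\Omega_{b}$ attached to $k$ — one obtains $\nu_{\Gamma}(\Omega_{j'}^{s})<\nu_{\Gamma}(\Omega_{k}^{s})$, hence $d>0$ and $k\in\upper_{\Gamma,l+1}^{s+1}$. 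Without this use of Condition \ref{good8} your proof of $\widehat\upper_{l+1}\subset\upper_{\Gamma,l+1}^{s+1}$ does not go through; the remaining items of the proposition are handled correctly.
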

 \begin{proof}
   We have
   $\sharp \low_{\Gamma, l+1}^{s} = \sharp \upper_{\Gamma, l+1}^{s} = \nu_{l+1}$.  In
   order to show that the analogous property holds for $\low_{\Gamma, l+1}^{s+1}$ and
   $\upper_{\Gamma, l+1}^{s+1}$, it suffices to check out that in every pair $(j,k)$
   as in Definition \ref{def:next}, there is an element in $\low_{\Gamma, l+1}^{s+1}$
   and another in $\upper_{\Gamma, l+1}^{s+1}$, which is clear by definition.
 
 The following equality
 \begin{equation*}
   \nu_{\Gamma} (\Omega_{j}^{s}) =
   \nu_{\Gamma} (\Omega_{j})  \leq \overline{\beta}_{l+1}
 \end{equation*}
 holds for any $j \in {\mathbb N}_{2 \nu_{l}}$ by Condition \ref{good1} of good
 families and Condition \ref{tb} of terminal families.  Since for
 $k \in {\mathbb N}_{2 \nu_{l+1}} \setminus {\mathbb N}_{2 \nu_{l}}$, we have
 \begin{equation*}
   \nu_{\Gamma} (\Omega_{k}^{s}) \geq  \nu_{\Gamma} (\Omega_{k}^{0}) >
   \nu_{\Gamma} (f_{\Gamma, l+1}) = \overline{\beta}_{l+1},
 \end{equation*}
 we get ${\mathbb N}_{2 \nu_{l}}  \subset  \low_{\Gamma, l+1}^{s+1}$.
 
 Let $k \in \upper_{l+1}$. Since
 $\upper_{l+1} \subset \upper_{\Gamma, l+1}^{s}$, it is clear that
 $k \in \upper_{\Gamma, l+1}^{s+1}$ if
 $\Beg_{\Gamma}(\Omega_{k}^{s}) > {\mathfrak n}^{s} (\beta_{l+1})$. So, suppose that
 $\Beg_{\Gamma}(\Omega_{k}^{s}) = {\mathfrak n}^{s} (\beta_{l+1})$. Take
 $j \in \low_{\Gamma, l+1}^{s}$ from Remark \ref{rem:next}.  Since
 $j \not \in \upper_{l+1}$, we get
 $\Omega_{j}^{0} = f_{\Gamma, l+1}^{r} \Omega_a$ for some
 $a \in {\mathbb N}_{2 \nu_{l}}$ and $0 \leq r \leq n_{l+1}-2$. Moreover,
 $\Omega_{k}^{0}$ is of the form
 $\Omega_{k}^{0} = f_{\Gamma, l+1}^{n_{l+1}-1} \Omega_b$ for some
 $b \in {\mathbb N}_{2 \nu_{l}}$ because $k \in \upper_{l+1}$. Since
 $\nu_{\Gamma} (\Omega_{a}) \leq \overline{\beta}_{l+1}$ by Condition \ref{tb} of
 terminal families, we deduce 
 \begin{equation*}
   \nu_{\Gamma} (\Omega_{j}^{0})  \leq (n_{l+1} -1) \overline{\beta}_{l+1} \ \
   \mathrm{and} \ \ 
   \nu_{\Gamma} (\Omega_{k}^{0})  > (n_{l+1} -1) \overline{\beta}_{l+1} .
 \end{equation*}
 In particular $\nu_{\Gamma} (\Omega_{k}^{0}) > \nu_{\Gamma} (\Omega_{j}^{0})$ holds.
 Since
 \begin{equation*}
   \Beg_{\Gamma} (\Omega_{j}^{s}) - \Beg_{\Gamma} (\Omega_{j}^{0}) <
   {\mathfrak n}^{s} (\beta_{l+1})- \beta_{l+1} 
   = \Beg_{\Gamma} (\Omega_{k}^{s}) - \Beg_{\Gamma} (\Omega_{k}^{0}),
 \end{equation*}
  we deduce 
  \begin{multline*}
    \nu_{\Gamma} (\Omega_{j}^{s})
    \leq \nu_{\Gamma} (\Omega_{j}^{0}) + (\Beg_{\Gamma} (\Omega_{j}^{s})
    - \Beg_{\Gamma} (\Omega_{j}^{0})  )
    < \\
    \nu_{\Gamma} (\Omega_{k}^{0}) + (\Beg_{\Gamma} (\Omega_{k}^{s}) -
    \Beg_{\Gamma} (\Omega_{k}^{0}))   =  \nu_{\Gamma} (\Omega_{k}^{s})
  \end{multline*}
  by Condition \ref{good8} of good families. In particular
  $k \in \upper_{\Gamma, l+1}^{s+1}$ holds.
 
  The five remaining properties are a consequence of Conditions \ref{good3} and
  \ref{good4} in Definition \ref{def:good} (good family) and Lemma \ref{lem:dom}.
 \end{proof}
 We can now prove that being good is a hereditary property.
 \begin{pro}
 \label{pro:good-to-good}
 Assume $\mathcal{G}^{s}_{\Gamma, l+1}$ is good and
 ${\mathfrak n}^{s}(\beta_{l+1}) < \beta_{l+2}$.  Then
 $\mathcal{G}^{s+1}_{\Gamma, l+1}$ is good.
\end{pro}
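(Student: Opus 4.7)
The plan is to verify each of the eight conditions \ref{good1}--\ref{good7} of Definition \ref{def:good} for $\mathcal{G}^{s+1}_{\Gamma,l+1}$, assuming inductively that $\mathcal{G}^{s}_{\Gamma,l+1}$ is good. The substantial pointwise work has already been carried out in Lemma \ref{lem:dom} and Proposition \ref{pro:auxm}; what remains is to assemble those results into the global assertion.

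Conditions \ref{good1} and \ref{good2} are nearly immediate. For \ref{good1}, Proposition \ref{pro:auxm} gives ${\mathbb N}_{2\nu_l} \subset \low_{\Gamma,l+1}^{s+1}$, and no index $j \in {\mathbb N}_{2\nu_l}$ can actually be modified by the Main Transformation, since any eligible partner $k$ satisfies $\nu_{\Gamma}(\Omega_k^s) > \overline{\beta}_{l+1} \geq \nu_{\Gamma}(\Omega_j^s)$ (the first inequality is the one used in the proof of Proposition \ref{pro:auxm}), forcing $d \geq 0$ and the modification to land on the upper index. Condition \ref{good2} holds because the Main Transformation is a unipotent elementary transformation on the free $\mathbb{C}[[x]]$-module $\mathcal{M}_{2\nu_{l+1}}$.

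For conditions \ref{good6}, \ref{good3} and \ref{good4}, unmodified elements inherit their membership from stage $s$. For a modified element $\Omega_{\xi_s(k)}^{s+1}$ with $k \in \upper_{\Gamma,l+1}^s$ and $\Beg_{\Gamma}(\Omega_k^s) = \mathfrak{n}^s(\beta_{l+1})$, Lemma \ref{lem:dom} transfers the type of $\Omega_k^s$ to $\Omega_{\xi_s(k)}^{s+1}$: it lies in $\hatDD{\mathfrak{n}(\beta_{l+1}),<}{0}{l+1}$ when $k \notin \widetilde\upper_{\Gamma,l+1}$, and in $\DD{\beta_{l+1},<}{n_{l+1}-1}{l+1}$ when $k \in \widetilde\upper_{\Gamma,l+1}$. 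For Condition \ref{good5}, the cardinalities and the lower bound $\Beg_{\Gamma}(\Omega_k^{s+1}) \geq \mathfrak{n}^{s+1}(\beta_{l+1})$ for $k \in \upper_{\Gamma,l+1}^{s+1}$ come from Proposition \ref{pro:auxm}; the upper bound $\Beg_{\Gamma}(\Omega_k^{s+1}) \leq \beta_{l+2}$ follows by applying Proposition \ref{pro:unique_companion} to each such $\Omega_k^{s+1}$, noting that in both the $\hatDD$ and the $\DD$ cases either $m=0$ or $\Beg_{\Gamma}>\beta_{l+1}$, so the hypothesis of that proposition is satisfied.

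For Condition \ref{good8}, the telescoping is straightforward: unchanged elements inherit the inductive bound, and for a modified element $\Omega_{\xi_s(k)}^{s+1}$ Lemma \ref{lem:dom} supplies
\begin{equation*}
\nu_{\Gamma}(\Omega_{\xi_s(k)}^{s+1}) - \nu_{\Gamma}(\Omega_{\xi_s(k)}^{s}) = \Beg_{\Gamma}(\Omega_{\xi_s(k)}^{s+1}) - \Beg_{\Gamma}(\Omega_{k}^{s}),
\end{equation*}
and the inductive bound for stage $s$ combined with the estimate $\Beg_{\Gamma}(\Omega_{\xi_s(k)}^{0}) \leq \beta_{l+1} \leq \Beg_{\Gamma}(\Omega_k^s)$ (which uses Remarks \ref{rem:f_dominates_omega} and \ref{rem:f_omega} at the base stage) yields the desired inequality.

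The main obstacle is Condition \ref{good7}. The equality $\{[\nu_{\Gamma}(\Omega_j^{s+1})]_n : j \in \low_{\Gamma,l+1}^{s+1}\} = [(e_{l+1})]_n$ holds because in each pair $(j,k)$ handled by the Main Transformation exactly one member survives in $\low_{\Gamma,l+1}^{s+1}$ with $\nu_{\Gamma}$ unchanged, the remaining members of $\low_{\Gamma,l+1}^{s+1}$ are untouched, and the set of classes modulo $n$ is therefore preserved from the inductive hypothesis. For the second half of \ref{good7}, Lemma \ref{lem:dom} also gives
\begin{equation*}
\nu_{\Gamma}(\Omega_{\xi_s(k)}^{s+1}) - \Beg_{\Gamma}(\Omega_{\xi_s(k)}^{s+1}) = \nu_{\Gamma}(\Omega_{\xi_s(k)}^{s}) - \Beg_{\Gamma}(\Omega_{k}^{s}),
\end{equation*}
so the shifted quantities $\nu_{\Gamma} - \Beg_{\Gamma}$ on $\overline\upper_{\Gamma,l+1}^{s+1}$ are, class by class modulo $n$, a bijective image of the corresponding stage-$s$ values, giving the required cardinality. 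Lemma \ref{lem:cong_nu}, applied type-by-type to the three families appearing in \ref{good6}, confirms that these classes lie in $[(e_{l+1})]_n$, closing the induction.
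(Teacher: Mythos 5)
Your overall route coincides with the paper's: you verify Definition \ref{def:good} condition by condition, delegating the pointwise work to Lemma \ref{lem:dom} and Proposition \ref{pro:auxm}, and invoking Proposition \ref{pro:unique_companion} exactly where the paper does, namely for the bound $\Beg_{\Gamma}(\Omega_{k}^{s+1}) \leq \beta_{l+2}$ in Condition \ref{good5}. Your treatments of Conditions \ref{good1}, \ref{good2}, \ref{good6}, \ref{good3}, \ref{good4}, \ref{good5} and \ref{good7} are correct and essentially identical to the paper's.

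The step that does not work as written is Condition \ref{good8}. Put $r=\xi_{s}(k)$ for the modified index. Lemma \ref{lem:dom} gives $\nu_{\Gamma}(\Omega_{r}^{s+1})-\nu_{\Gamma}(\Omega_{r}^{s})=\Beg_{\Gamma}(\Omega_{r}^{s+1})-\Beg_{\Gamma}(\Omega_{k}^{s})$, and adding the stage-$s$ bound $\nu_{\Gamma}(\Omega_{r}^{s})-\nu_{\Gamma}(\Omega_{r}^{0})\leq\Beg_{\Gamma}(\Omega_{r}^{s})-\Beg_{\Gamma}(\Omega_{r}^{0})$ leaves you with the extra term $\Beg_{\Gamma}(\Omega_{r}^{s})-\Beg_{\Gamma}(\Omega_{k}^{s})$; so what you must show is $\Beg_{\Gamma}(\Omega_{r}^{s})\leq\Beg_{\Gamma}(\Omega_{k}^{s})$. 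Your cited estimate $\Beg_{\Gamma}(\Omega_{r}^{0})\leq\beta_{l+1}\leq\Beg_{\Gamma}(\Omega_{k}^{s})$ compares the wrong quantities: it only replaces $-\Beg_{\Gamma}(\Omega_{k}^{s})$ by $-\Beg_{\Gamma}(\Omega_{r}^{0})$ and leaves the nonnegative term $\Beg_{\Gamma}(\Omega_{r}^{s})-\Beg_{\Gamma}(\Omega_{r}^{0})$ uncancelled; note that a low index may well carry $\Beg_{\Gamma}(\Omega_{r}^{s})>\beta_{l+1}$ (an upper index demoted to $\low$ at an earlier stage has $\Beg_{\Gamma}={\mathfrak n}^{s'}(\beta_{l+1})$), so the base-stage estimate cannot substitute for the needed one. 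The correct input is Condition \ref{good5} at stage $s$: either $r=k$, or $r=j\in\low_{\Gamma,l+1}^{s}$ and then $\Beg_{\Gamma}(\Omega_{j}^{s})<{\mathfrak n}^{s}(\beta_{l+1})=\Beg_{\Gamma}(\Omega_{k}^{s})$; this is precisely how the paper closes its step inequality \eqref{equ:good8}. Finally, Condition \ref{good8} also requires the second inequality to be an equality for $k\in\widehat\upper_{l+1}$, which you never verify although it is used later (Remark \ref{rem:chu} and the results on $\Lambda_{\Gamma}$); it does follow from the same pieces, since for $k\in\widehat\upper_{l+1}$ one has $\xi_{s}(k)=k$, so both the step relation from Lemma \ref{lem:dom} and the inductive stage-$s$ relation are equalities.
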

\begin{proof}
  Condition \ref{good1} in Definition \ref{def:good} for
  $\mathcal{G}^{s}_{\Gamma, l+1}$ and
  ${\mathbb N}_{2 \nu_{l}} \subset \low_{\Gamma, l+1}^{s+1}$ (Proposition
  \ref{pro:auxm}) imply Condition \ref{good1} for $\mathcal{G}^{s+1}_{\Gamma, l+1}$.
  Condition \ref{good2} holds by construction. Conditions \ref{good6}, \ref{good3}
  and \ref{good4} for $\mathcal{G}^{s+1}_{\Gamma, l+1}$ are consequences of the
  analogous properties for $\mathcal{G}^{s}_{\Gamma, l+1}$ and the two last
  properties in Proposition \ref{pro:auxm}.

  We have ${\mathbb N}_{2 \nu_{l}} \subset \low_{\Gamma, l+1}^{s+1}$ and
  $\upper_{l+1} \subset \upper_{\Gamma, l+1}^{s+1}$ by Proposition
  \ref{pro:auxm}. Moreover, $\Beg_{\Gamma}(\Omega_{k}^{s+1}) \leq \beta_{l+2}$ holds
  for any $k \in \upper_{\Gamma, l+1}^{s+1}$ by Condition \ref{good5} for
  $\mathcal{G}^{s}_{\Gamma, l+1}$, the two last properties in Proposition
  \ref{pro:auxm} and Proposition \ref{pro:unique_companion}
   (this is one of the key consequences of that result together with
  Corollary \ref{cor:f_omega}: see Remark \ref{rem:ov_imp_t}).
  All other properties in Condition \ref{good5} for
  $\mathcal{G}^{s+1}_{\Gamma, l+1}$ are satisfied by construction.

  We claim that
\begin{equation}
\label{equ:good8}
0 \leq \nu_{\Gamma} (\Omega_{k}^{s+1}) - \nu_{\Gamma} (\Omega_{k}^{s}) 
\leq  \Beg_{\Gamma}(\Omega_{k}^{s+1})  - \Beg_{\Gamma}(\Omega_{k}^{s})  
 \end{equation}
 holds for any $k \in {\mathbb N}_{2 \nu_{l+1}}$.  Assume that
 $\Omega_{k}^{s} \neq \Omega_{k}^{s+1}$ since it is obviously true otherwise.  We
 have $k \in \upper_{\Gamma, l+1}^{s+1}$,
 \begin{multline*}
   0 \leq \nu_{\Gamma} (\Omega_{k}^{s+1}) - \nu_{\Gamma} (\Omega_{k}^{s}) 
   =
   \Beg_{\Gamma}(\Omega_{k}^{s+1})   -   \Beg_{\Gamma}(\Omega_{\xi_{s}^{-1}(k)}^{s})
   \leq \\
   \Beg_{\Gamma}(\Omega_{k}^{s+1}) -  \Beg_{\Gamma}(\Omega_{k}^{s})
 \end{multline*}
 by Proposition \ref{pro:auxm}.  The last inequality is an equality if
 $k \in \upper_{\Gamma, l+1}^{s}$, and in particular if $k \in \upper_{l+1}$.
 As a consequence, Condition \ref{good8} for $\mathcal{G}^{s}_{\Gamma, l+1}$ implies
 Condition \ref{good8} for $\mathcal{G}^{s+1}_{\Gamma, l+1}$.

 Let us consider $k \in \upper_{\Gamma, l+1}^{s}$ with
 $\Beg_{\Gamma}(\Omega_{k}^{s}) = {\mathfrak n}^{s} (\beta_{l+1})$. Let $j$ be the
 element of $\low_{\Gamma, l+1}^{s}$ provided by Remark \ref{rem:next}.  Denote by
 $j'$ the element in $\{j, k \} \setminus \{ \xi_{s} (k) \}$.  Note that
 $j' \in \low_{\Gamma, l+1}^{s+1}$ and $\xi_{s}(k) \in \upper_{\Gamma, l+1}^{s+1}$.
 We obtain
 \begin{equation*}
   \nu_{\Gamma}({\Omega}_{j'}^{s+1}) - \nu_{\Gamma}({\Omega}_{j}^{s})
   \in n {\mathbb Z}_{\leq 0}
 \end{equation*}
 and
 \begin{equation*}
   \nu_{\Gamma}({\Omega}_{\xi_{s}(k)}^{s+1}) -
   \Beg_{\Gamma}({\Omega}_{\xi_{s}(k)}^{s+1})   - 
   (\nu_{\Gamma}({\Omega}_{k}^{s}) -
   \Beg_{\Gamma}({\Omega}_{k}^{s})) \in n {\mathbb Z}_{\geq 0}
 \end{equation*}
 by Lemma \ref{lem:dom}. Now, Condition \ref{good7} for
 $\mathcal{G}^{s+1}_{\Gamma, l+1}$ is an immediate consequence of Condition
 \ref{good7} for $\mathcal{G}^{s}_{\Gamma, l+1}$.
\end{proof}
\subsection{Finiteness of the Algorithm}
We can finally prove that a terminal family of level $l+1$ is obtained after a finite number of iterations of Algorithm \ref{alg:construction}.
\begin{pro}
\label{pro:from-l-to-next}  
Consider the terminal family
${\mathcal T}_{\Gamma, l} = (\Omega_1, \ldots, \Omega_{2 \nu_{l}})$ of level
$0 \leq l < g$.  Then there is $s \in {\mathbb Z}_{\geq 1}$ such that
${\mathcal G}_{\Gamma, l+1}^{s}= (\Omega_{1}^{s}, \ldots, \Omega_{2 \nu_{l+1}}^{s})$
is a terminal family of level $l+1$ with $\Omega_{j}^{s} = \Omega_{j}$ for any
$j \in {\mathbb N}_{2 \nu_{l}}$.  Moreover, we have
\begin{itemize}
\item ${\mathfrak n}^{s} (\beta_{l+1}) = \beta_{l+2}$ if $l+1 < g$;
\item $(\Omega_{j}^{s} : j \in \low_{\Gamma, g}^{s})$ is a ${\mathbb C}[[x]]$-basis
  for $\Gamma$ if $l+1 = g$.
\end{itemize}
\end{pro}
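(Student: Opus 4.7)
The plan is to inductively produce a sequence of good families of level $l+1$ using the machinery of the previous two subsections, and then show that at a well-defined index $s$ the resulting family satisfies the three conditions of Definition \ref{def:level}. I start by building $\mathcal{G}_{\Gamma,l+1}^{0}$ from $\mathcal{T}_{\Gamma,l}$ via \eqref{equ:ini_fam} and pass to $\mathcal{G}_{\Gamma,l+1}^{1}$ by the transformation of Definition \ref{def:good1}: by Corollary \ref{cor:good-to-good0} this is good of stage $1$. While ${\mathfrak n}^{s}(\beta_{l+1}) < \beta_{l+2}$ holds, I apply the \textsc{Main Transformation} of Definition \ref{def:next} and invoke Proposition \ref{pro:good-to-good} to ensure goodness is preserved at stage $s+1$. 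Condition \ref{good1} of goodness, iterated, gives $\Omega_j^s = \Omega_j$ for all $j \in \mathbb{N}_{2\nu_l}$ at every stage.

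For $l+1 < g$, since $\beta_{l+2}$ lies in $\mathcal{E}_\Gamma$ and strictly exceeds $\beta_{l+1}$, there exists a (minimal) $s \in \mathbb{Z}_{\geq 1}$ with ${\mathfrak n}^{s}(\beta_{l+1}) = \beta_{l+2}$, at which I stop. I then check the three terminal conditions. Condition \ref{tb} is automatic from Condition \ref{good8} and the fact that the elements of $\upper_{\Gamma,l+1}^{s}$ are precisely those whose $\Beg_\Gamma$ hits $\beta_{l+2} = \overline{\beta}_{l+2} - (n_{l+1}\overline{\beta}_{l+1} - \overline{\beta}_{l+1})\leq \overline{\beta}_{l+2}$ using \eqref{equ:rec_beta}. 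Condition \ref{tc} follows from Condition \ref{good6} together with Conditions \ref{good3} and \ref{good4}: for $k \in \widetilde\upper_{\Gamma,l+1}$ goodness gives $\Omega_k^s \in \DD{\beta_{l+1},<}{n_{l+1}-1}{l+1}$ and Remark \ref{rem:nb_puiseux} places it in $\oDD{\beta_{l+2},<}{0}{l+2}$ since $\Beg_\Gamma(\Omega_k^s) = \beta_{l+2}$; similarly the $\hatDD{}{}{}$-elements enter $\oDd{\beta_{l+2},<}{l+2}$. The delicate one is the separated condition \ref{ta}.

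To establish \ref{ta} (which for $l+1 < g$ reduces to pairwise distinctness of all $2\nu_{l+1}$ classes mod $n$), I split $\mathbb{N}_{2\nu_{l+1}}$ as $\low_{\Gamma,l+1}^{s} \sqcup \upper_{\Gamma,l+1}^{s}$. By Condition \ref{good7}, the $\nu_{l+1}$ classes $[\nu_\Gamma(\Omega_j^s)]_n$ for $j \in \low_{\Gamma,l+1}^{s}$ are exactly $[(e_{l+1})]_n$, all pairwise distinct. For $k \in \upper_{\Gamma,l+1}^{s}$, since we stop precisely when $\Beg_\Gamma(\Omega_k^s) = \beta_{l+2}$ for all the active elements, Lemma \ref{lem:cong_nu} combined with Conditions \ref{good3} and \ref{good4} puts $[\nu_\Gamma(\Omega_k^s)]_n \in [\beta_{l+2}]_{e_{l+1}}$; since $\beta_{l+2} \notin (e_{l+1})$, this class is disjoint from $[(e_{l+1})]_n$. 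The pairwise distinctness within $\upper_{\Gamma,l+1}^{s}$ follows from the injectivity part of Condition \ref{good7}: the offsets $\nu_\Gamma(\Omega_k^s) - \Beg_\Gamma(\Omega_k^s)$ lie in distinct classes of $[(e_{l+1})]_n$, and adding the common value $\beta_{l+2}$ preserves distinctness. This is the step I expect to require the most careful bookkeeping, because it depends on the precise matching between the indexing in $\overline\upper_{\Gamma,l+1}^{s}$ and the translation structure of the offsets.

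For the case $l+1 = g$, I iterate until the lower family $(\Omega_j^s : j \in \low_{\Gamma,g}^{s})$ contains $\nu_g = n$ elements whose $\nu_\Gamma$-values exhaust all $n$ classes modulo $n$ and are minimal in those classes among all $\Omega_k^s$. Condition \ref{good7} still gives distinctness inside the lower part, and Condition \ref{good8} shows that $\nu_\Gamma(\Omega_k^s)$ for $k \in \upper_{\Gamma,g}^{s}$ strictly exceeds any partner in $\low_{\Gamma,g}^{s}$. Thus $(\Omega_j^s : j \in \low_{\Gamma,g}^{s})$ is a $\mathbb{C}[[x]]$-basis for $\Gamma$ via the criterion underlying Proposition \ref{pro:unique_expression_cbasis} (every class mod $n$ is represented, with minimal representative). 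Finally, the algorithm halts in finitely many stages because each iteration strictly increases $\Beg_\Gamma$ on the elements of $\widehat\upper_{l+1}$, and for $l+1 = g$ the $\nu_\Gamma$-values in $\low_{\Gamma,g}^{s}$ are bounded by $\nu_\Gamma(f_{\Gamma,g+1}) < \infty$ plus a fixed offset, forcing stabilization after finitely many steps.
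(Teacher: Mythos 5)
Your overall route coincides with the paper's: propagate goodness with Corollary \ref{cor:good-to-good0} and Proposition \ref{pro:good-to-good}, stop at the stage $s$ with ${\mathfrak n}^{s}(\beta_{l+1})=\beta_{l+2}$ when $l+1<g$, and check terminality via Condition \ref{good7} (separatedness), Condition \ref{good8} with \eqref{equ:rec_beta} (Condition \ref{tb}), and Conditions \ref{good6}, \ref{good3}, \ref{good4} together with Remark \ref{rem:nb_puiseux} (Condition \ref{tc}). However, your termination argument in the case $l+1=g$ does not work as written. You bound the values on $\low_{\Gamma,g}^{s}$ by ``$\nu_{\Gamma}(f_{\Gamma,g+1})<\infty$ plus a fixed offset'', but $f_{\Gamma,g+1}$ is an irreducible equation of $\Gamma$, so $\nu_{\Gamma}(f_{\Gamma,g+1})=\infty$ and this bound is vacuous. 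Likewise, Condition \ref{good8} does not show that $\nu_{\Gamma}(\Omega_{k}^{s})$ for $k\in\upper_{\Gamma,g}^{s}$ exceeds the lower values at a given stage; that inequality is exactly the stopping criterion, not a consequence of goodness. The repair is the paper's monotonicity argument: by Definition \ref{def:next}, $\max_{j\in\low_{\Gamma,g}^{s}}\nu_{\Gamma}(\Omega_{j}^{s})$ is non-increasing in $s$ (when a pair is swapped, the element entering $\low_{\Gamma,g}^{s+1}$ has strictly smaller value than the one it replaces), while $\min_{k\in\upper_{\Gamma,g}^{s}}\nu_{\Gamma}(\Omega_{k}^{s})\geq\Beg_{\Gamma}(\Omega_{k}^{s})\geq{\mathfrak n}^{s}(\beta_{g})\to\infty$, so inequality \eqref{cond4} eventually holds; then the proposition following Definition \ref{def:nice} (not Proposition \ref{pro:unique_expression_cbasis}, which presupposes a basis) yields the ${\mathbb C}[[x]]$-basis.

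A smaller issue: your justification of Condition \ref{tb} is incomplete and contains an algebra slip. The identity from \eqref{equ:rec_beta} is $\beta_{l+2}=\overline{\beta}_{l+2}-n_{l+1}\overline{\beta}_{l+1}+\beta_{l+1}$ (you wrote $\overline{\beta}_{l+1}$ in the last term), and Condition \ref{good8} only controls $\nu_{\Gamma}(\Omega_{j}^{s})-\nu_{\Gamma}(\Omega_{j}^{0})$; to conclude $\nu_{\Gamma}(\Omega_{j}^{s})\leq\overline{\beta}_{l+2}$ you also need $\nu_{\Gamma}(\Omega_{j}^{0})\leq n_{l+1}\overline{\beta}_{l+1}$, which follows from $\Omega_{j}^{0}=f_{\Gamma,l+1}^{r}\Omega_{a}$ with $0\leq r<n_{l+1}$ and Condition \ref{tb} for ${\mathcal T}_{\Gamma,l}$. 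With these two points fixed, your proof matches the paper's.
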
 
\begin{proof}
  We know that any good family ${\mathcal G}_{\Gamma, l+1}^{d}$, with $d \geq 1$, is
  a free basis of ${\mathcal M}_{2 \nu_{l+1}}$.
  
  $\bullet$ Case $l+1 < g$. Consider $s \in {\mathbb Z}_{\geq 1}$ with
  ${\mathfrak n}^{s} (\beta_{l+1}) = \beta_{l+2}$. Let $1 \leq d < s$.  Since
  $\nu_{\Gamma} ({\mathcal G}_{\Gamma, l+1}^{d}) \subset (e_{l+1})$ by Condition
  \ref{good7}, ${\mathcal G}_{\Gamma, l+1}^{d}$ has $2 \nu_{l+1}$ elements, and
  $\sharp [(e_{l+1})]_{n} = \nu_{l+1}$, it follows that
  ${\mathcal G}_{\Gamma, l+1}^{d}$ is non-separated and thus non-terminal. So, we can
  focus on ${\mathcal G}_{\Gamma, l+1}^{s}$.  The good family
  ${\mathcal G}_{\Gamma, l+1}^{s}$ is separated since
  $\Beg_{\Gamma}(\Omega_{k}^{s}) = \beta_{l+2}$ for any
  $k \in \upper_{\Gamma, l+1}^{s}$, $\beta_{l+2} \not \in (e_{l+1})$ and
  Condition \ref{good7} of good families for ${\mathcal G}_{\Gamma,
    l+1}^{s}$. Moreover, since any $\Omega_{j}^{0}$ is of the form
  $f_{\Gamma, l+1}^{r} \Omega_{a}$ for some $0 \leq r < n_{l+1}$ and
  $a \in {\mathbb N}_{2 \nu_{l}}$, we deduce
  \begin{multline*}
    \nu_{\Gamma} (\Omega_{j}^{s}) \leq \nu_{\Gamma} (\Omega_{j}^{0}) +
    (\Beg_{\Gamma}(\Omega_{j}^{s}) - \Beg_{\Gamma}(\Omega_{j}^{0})) 
    \leq \\
    n_{l+1} \overline{\beta}_{l+1} + (\beta_{l+2} - \beta_{l+1}) =
    \overline{\beta}_{l+2} \end{multline*}
  for any $j \in {\mathbb N}_{2 \nu_{l+1}}$ by equation \eqref{equ:rec_beta}
  (page \pageref{equ:rec_beta}).
  It only remains to prove Condition \ref{tc} of terminal families.
  Let $j \in {\mathbb N}_{2 \nu_{l+1}}$. 
  Suppose first $\Beg_{\Gamma}(\Omega_{j}^{s}) \leq \beta_{l+1}$, which implies
  $\Omega_{j}^{s} = \Omega_{j}^{0}$, 
  i.e. $j \in \cap_{a \in {\mathbb N}_{s}} \low_{\Gamma, l+1}^{a}$.
  As a consequence, we obtain 
\begin{equation*}
  \Omega_{j}^{s} \in \cup_{\beta^{\prime} \leq \beta_{l+1}}
  \oDd{\beta^{\prime}, <}{l+1} 
  \subset \cup_{\beta^{\prime} < \beta_{l+2}} \oDd{\beta^{\prime}, <}{l+2}
\end{equation*}
by Lemma \ref{rem:conflicting_pairs} and Remark \ref{rem:family_restriction}.  Now,
suppose $\Beg_{\Gamma}(\Omega_{j}^{s}) > \beta_{l+1}$. Then
$\Omega_{j}^{s} \in \Dd{\beta_{l+1}, <}{l+1} \cup \hatDD{{\mathfrak n}(\beta_{l+1}),
  <}{0}{ l+1}$ by Conditions \ref{good3} and \ref{good4} of good families. Since
$\Beg_{\Gamma}(\Omega_{j}^{s}) \leq \beta_{l+2}$, we obtain
\begin{multline*}
  \Omega_{j}^{s} \in \oDD{\Beg_{\Gamma}(\Omega_{j}^{s}), <}{0}{l+2} \subset \\
  \oDD{\beta_{l+2}, <}{0}{ l+2}
  \bigcup \cup_{\beta' \in {\mathcal E}(\Gamma), \ \beta' < 
  \beta_{l+2}} \oDd{\beta', <}{l+2}
\end{multline*}
by Remark \ref{rem:nb_puiseux}. Therefore, Condition \ref{tc} holds and
${\mathcal G}_{\Gamma, l+1}^{s}$ is a terminal family of level $l+1$.

$\bullet$ Assume now that $l+1 = g$. Condition \ref{tb} is empty. The proof of
Condition \ref{tc} for ${\mathcal G}_{\Gamma, l+1}^{d}$, and any
$d \in {\mathbb Z}_{\geq 1}$, is analogous to the previous case.  So it remains to
prove that there exists $s \in {\mathbb Z}_{\geq 1}$ such that
${\mathcal G}_{\Gamma, l+1}^{s}$ is separated. Let us remark that
$\sharp \low_{\Gamma, l+1}^{d} = n$ and
$(\nu_{\Gamma} (\Omega_{j}^{d}))_{j \in \low_{\Gamma, l+1}^{d}}$ defines pairwise
different classes modulo $n$ for any $d \in {\mathbb Z}_{\geq 1}$ by Condition
\ref{good7} of Definition \ref{def:good}.  It is clear, by definition, that
\begin{equation*}
  \max_{j \in \low_{\Gamma, l+1}^{d+1}}  \nu_{\Gamma} (\Omega_{j}^{d+1}) \leq
  \max_{j \in \low_{\Gamma, l+1}^{d}}  \nu_{\Gamma} (\Omega_{j}^{d})
\end{equation*}
and that
\begin{equation*}
  \min_{j \in \upper_{\Gamma, l+1}^{d}}  \nu_{\Gamma} (\Omega_{j}^{d}) \geq
  \mathfrak{n}^{d} (\beta_{l+1})
\end{equation*}
for any $d \in {\mathbb Z}_{\geq 1}$. Hence,
$\min_{j \in \upper_{\Gamma, l+1}^{s}} \nu_{\Gamma} (\Omega_{j}^{s}) > \max_{j \in
  \low_{\Gamma, l+1}^{s}} \nu_{\Gamma} (\Omega_{j}^{s})$ for some
$s \in {\mathbb Z}_{\geq 1}$. Thus, ${\mathcal G}_{\Gamma, l+1}^{s}$ is separated,
completing the proof.
\end{proof}
\begin{rem}
\label{rem:chu}
Consider $0 \leq l < g$ and $k \in \upper_{l+1}$. By construction, we have
$\Omega_{k}^{0} = f_{\Gamma, l+1}^{n_{l+1}-1} \Omega_{j}$ where
$\Omega_{j} \in {\mathcal T}_{\Gamma, l}$.  Condition \ref{good8} of Definition
\ref{def:good} implies
\begin{equation*}
  \nu_{\Gamma} (\Omega_{k}^{s}) = \nu_{\Gamma} (\Omega_j) +
  (n_{l+1} -1) \overline{\beta}_{l+1} + 
  \Beg_{\Gamma}(\Omega_{k}^{s}) - \beta_{l+1}.
\end{equation*}
Now, suppose $l+1<g$. Let $s_{l+1} \in {\mathbb Z}_{\geq 1}$ with
${\mathfrak n}^{s_{l+1}} (\beta_{l+1}) = \beta_{l+2}$.  The $1$-form
$\Omega_{k}^{s_{l+1}}$ belongs to ${\mathcal T}_{\Gamma, l+1}$ by Proposition
\ref{pro:from-l-to-next} and satisfies
\begin{equation*}
  \nu_{\Gamma} (\Omega_{k}^{s_{l+1}}) = \nu_{\Gamma} (\Omega_j) +
  (n_{l+1} -1) \overline{\beta}_{l+1} + 
  \beta_{l+2} - \beta_{l+1} =
  \nu_{\Gamma} (\Omega_j) + \overline{\beta}_{l+2} - \overline{\beta}_{l+1}
\end{equation*}
by equation \eqref{equ:rec_beta}.
\end{rem}
\begin{rem}
\label{rem:generators_construction_0} 
For $0 \leq l < g$, the terminal family
${\mathcal T}_{\Gamma, l} = (\Omega_1, \ldots, \Omega_{2 \nu_{l}})$ we have
computed satisfies $\nu_{\Gamma} (\Omega_{2 \nu_{l}}) =
\overline{\beta}_{l+1}$: this clearly holds for $l=0$ and if it does for $l < g-1$,
the family 
${\mathcal T}_{\Gamma, l+1} = (\Omega_1, \ldots, \Omega_{2 \nu_{l+1}})$ we have computed satisfies
$\nu_{\Gamma} (\Omega_{2 \nu_{l+1}}) = \overline{\beta}_{l+2}$ by Remark
\ref{rem:chu}.
\end{rem}\begin{rem}
\label{rem:tracking_leading}
Consider $0 \leq l < g$ and $k \in  \upper_{\Gamma, l+1}^{1}$. We have
\begin{equation*}
  \nu_{\Gamma} (\Omega_{k}^{1}) - \nu_{\Gamma} (\Omega_{k}^{0}) =
  \Beg_{\Gamma}(\Omega_{k}^{1})  - \beta_{l+1}
\end{equation*}
 by property \eqref{g1p5} in Proposition \ref{pro:aux1}. Moreover,
 \begin{equation*}
   \nu_{\Gamma} (\Omega_{\xi_{s}(k')}^{s+1}) - \nu_{\Gamma}
   (\Omega_{k'}^{s}) - ( \Beg_{\Gamma}(\Omega_{\xi_{s}(k')}^{s+1})
   -\Beg_{\Gamma}(\Omega_{k'}^{s})) \in n {\mathbb Z}_{\geq 0}
 \end{equation*}
 holds by Proposition \ref{pro:auxm} for all $k' \in \upper_{\Gamma, l+1}^{s}$,
 $s \in {\mathbb Z}_{\geq 1}$ such that
 $\Beg_{\Gamma}(\Omega_{\xi_{s}(k)}^{s+1}) \neq \infty$.  In particular, it is
 satisfied for $k' = \zeta_{s}(k)$ (cf. Definition \ref{def:zeta}).  By adding the
 previous expressions, we obtain
 \begin{equation*}
   \nu_{\Gamma} (\Omega_{\zeta_{s}(k)}^{s}) - \nu_{\Gamma} (\Omega_{k}^{0}) -
   ( \Beg_{\Gamma}(\Omega_{\zeta_{s}(k)}^{s})   - \beta_{l+1})
   \in n {\mathbb Z}_{\geq 0}
 \end{equation*}
 for all $k \in \upper_{\Gamma, l+1}^{1}$ and $s \in {\mathbb Z}_{\geq 1}$ with
 $\Beg_{\Gamma}(\Omega_{\zeta_{s}(k)}^{s}) \neq \infty$. Consider now the case
 $l+1 <g$.  The previous expression particularizes to
 \begin{equation*}
   \nu_{\Gamma} (\Omega_{\zeta_{s_{l+1}}(k)}^{s_{l+1}}) - \nu_{\Gamma} (\Omega_{k}^{0}) - 
   ( \beta_{l+2}  - \beta_{l+1})  \in n {\mathbb Z}_{\geq 0}
 \end{equation*}
 for all $k \in \upper_{\Gamma, l+1}^{1}$, where
 ${\mathfrak n}^{s_{l+1}} (\beta_{l+1}) = \beta_{l+2}$.  Since the image of
 $\zeta_{s_{l+1}}$ is $\upper_{\Gamma, l+1}^{s_{l+1}}$, we obtain, in this way, an
 expression for $\nu_{\Gamma}(\Omega)$ for the $1$-forms
 $\Omega \in {\mathcal T}_{\Gamma, l+1}$ with $\Beg_{\Gamma}(\Omega) = \beta_{l+2}$.
\end{rem}
\section{Geometric Analysis of the Construction}
\label{sec:analysis_construction}
Given the singular branch $\Gamma$, we have constructed the terminal families
\begin{equation*}
  {\mathcal T}_{\Gamma, 0}, \ldots, {\mathcal T}_{\Gamma, g-1}, \mathcal{T}_{\Gamma,g}
\end{equation*}
from the intermediate good families
\begin{equation*}
    {\mathcal G}_{\Gamma, l}^{0},  {\mathcal G}_{\Gamma, l}^{1}, \hdots, \mathcal{G}_{\Gamma,l}^{s_l}
\end{equation*}
where ${\mathfrak n}^{s_{l}} (\beta_{l}) = \beta_{l+1}$ for $0<l<g$ in Section
\ref{sec:construction_cx_bases}.  Recall that
$\mathcal{T}_{\Gamma,g}={\mathcal G}_{\Gamma, g}^{s_g}$ is a terminal family of level $g$
for some sufficiently large $s_g \in {\mathbb Z}_{\geq 1}$ by the proof of Proposition
\ref{pro:from-l-to-next}.
\begin{rem}
\label{rem:generators_construction} 
The ${\mathbb C}[[x]]$-basis $(\Omega_{j}^{s_{g}} : j \in \low_{\Gamma, g}^{s_{g}})$ for
$\Gamma$ provided by our method satisfies
$\nu_{\Gamma} ( \Omega_{2 \nu_{l}}^{s_g}) = \overline{\beta}_{l+1}$ for any $0 \leq l <g$
by Remark \ref{rem:generators_construction_0}.  In particular, we could replace
$\Omega_{2 \nu_{l}}^{s_g}$ with $d f_{\Gamma, l+1} \in {\mathcal M}_{2 \nu_{l}}$ in our
${\mathbb C}[[x]]$-basis.
\end{rem}
\subsection{Step by step analysys}
\label{subsec:step_by_step}
In this subsection we give an explicit interpretation of our step-by-step
construction: what we shall see is that, the \textsc{Main Transformation} in each
stage factors out the contribution of a coefficient $a_{\beta}$ with
$\beta \in {\mathcal E}_{\Gamma}$. This has important geometric implications, as the next
subsection shows.

Let us first describe explicitly the (non-vanishing) terms of lowest order
of the $1$-forms $t (\overline{\Gamma})^{*} (\Omega_j^{s})$ for
$\Omega_{j}^{s} \in {\mathcal G}_{\Gamma, l+1}^{s}$.
\begin{defi}
\label{def:gen_ord_s}
Given $\Omega \in \hat{\Omega}^{1} \cn{2}$, we define
\begin{equation*}
  \nu_{\Gamma, s} (\Omega) = \min \{ \nu_{\overline{\Gamma}} (\Omega) :
  \overline{\Gamma} \in  \fgba{\Gamma}{{\mathfrak n}^{s}(\beta_{l+1})} \}.
\end{equation*}
It is the generic order of $\nu_{\overline{\Gamma}} (\Omega)$ for
$\overline{\Gamma} \in \fgba{\Gamma}{{\mathfrak
    n}^{s}(\beta_{l+1})}$.
\end{defi}  
\begin{lem} 
\label{lem:lot}
Let $0 \leq l < g$ and consider an index $s \geq 1$ and a $1$-form
$\Omega_{j}^{s} \in {\mathcal G}_{\Gamma, l+1}^{s}$.  Let  
$\iota = \min(\Beg_{\Gamma}(\Omega_{j}^{s}), {\mathfrak n}^{s}(\beta_{l+1}) )$.  Then
\begin{equation*}
  t (\overline{\Gamma})^{*} (\Omega_j^{s})  =
  \left( C_{\iota} (\overline{\Gamma}) t^{\nu_{\Gamma, s} (\Omega_{j}^{s})} 
  + O \left( t^{\nu_{\Gamma, s} (\Omega_{j}^{s})+1} \right) \right) dt
\end{equation*}
for any $\overline{\Gamma} \in \fgb{\Gamma}{{\mathfrak n}^{s}(\beta_{l+1})}$.
Moreover, $C_{\iota}$ is a non-zero linear function of
$a_{\iota, \overline{\Gamma}}$ in
$\fgb{\Gamma}{{\mathfrak n}^{s}(\beta_{l+1})}$, and it is constant if and
only if $j \in \low_{\Gamma, l+1}^{s}$.
 \end{lem}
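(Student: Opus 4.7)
The plan is to deploy the leading-variable machinery of section \ref{sec:leading}. First, Condition \ref{good6} tells us that $\Omega_j^s$ lies in one of the three sets $\oDd{\beta', <}{l+1}$ (for some $\beta' \leq \beta_{l+1}$), $\DD{\beta_{l+1}, <}{n_{l+1}-1}{l+1}$, or $\hatDD{{\mathfrak n}(\beta_{l+1}), <}{0}{l+1}$. In every case $\Omega_j^s$ has a leading variable $\beta$ for the family $\fgb{\Gamma}{\beta_{l+1}}$, and this leading variable satisfies $\beta \leq {\mathfrak n}(\beta_{l+1}) \leq {\mathfrak n}^s(\beta_{l+1})$ since $s \geq 1$. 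By Definition \ref{def:max}, for any $\overline{\Gamma} \in \fgb{\Gamma}{{\mathfrak n}^s(\beta_{l+1})}$ we obtain an expansion
\begin{equation*}
 t(\overline{\Gamma})^*(\Omega_j^s) = t^k \sum_{r \geq 0} C_{{\mathfrak n}^r(\beta)}(\overline{\Gamma})\, t^{{\mathfrak n}^r(\beta)}\, dt,
\end{equation*}
where each coefficient decomposes as $C_{{\mathfrak n}^r(\beta)} = \tilde{c}\, c_{{\mathfrak n}^r(\beta)}\, a_\beta^{m'} R\, a_{{\mathfrak n}^r(\beta)} + Q_{{\mathfrak n}^r(\beta)}$ with $R$ involving only coefficients $a_{\beta'}$ for $\beta' < \beta$ and $Q_{{\mathfrak n}^r(\beta)}$ involving only those for $\beta' < {\mathfrak n}^r(\beta)$.

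The crucial reduction is that membership in $\fgb{\Gamma}{{\mathfrak n}^s(\beta_{l+1})}$ forces $a_{\beta', \overline{\Gamma}} = a_{\beta', \Gamma}$ for every $\beta' < {\mathfrak n}^s(\beta_{l+1})$, so $C_{\beta'}(\overline{\Gamma}) = C_{\beta'}(\Gamma)$ for such $\beta'$. The remainder of the argument then splits according to Condition \ref{good5}. If $j \in \low_{\Gamma, l+1}^s$, then $\Beg_{\Gamma}(\Omega_j^s) < {\mathfrak n}^s(\beta_{l+1})$ and $m = \Beg_{\Gamma}(\Omega_j^s)$; all earlier coefficients vanish by the very definition of $\Beg_{\Gamma}$, while $C_m(\overline{\Gamma}) = C_m(\Gamma) \neq 0$ is a non-zero constant on $\fgb{\Gamma}{{\mathfrak n}^s(\beta_{l+1})}$. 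If $j \in \upper_{\Gamma, l+1}^s$, then $\Beg_{\Gamma}(\Omega_j^s) \geq {\mathfrak n}^s(\beta_{l+1})$ and $m = {\mathfrak n}^s(\beta_{l+1})$; again all earlier coefficients vanish, and at exponent $m$ we obtain
\begin{equation*}
 C_m(\overline{\Gamma}) = \tilde{c}\, c_m\, (a_\beta^{m'} R)(\Gamma)\, a_{m, \overline{\Gamma}} + Q_m(\Gamma),
\end{equation*}
which is a (non-constant) linear function of $a_{m, \overline{\Gamma}}$.

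The main technical point to verify is that the linear coefficient $\tilde{c}\, c_m\, (a_\beta^{m'} R)(\Gamma)$ does not vanish. This follows from three facts: $c_m > 0$ because the $c$-sequences attached to each of the three sets in Condition \ref{good6} are strictly increasing and positive; $R(\Gamma) \neq 0$ by Definition \ref{def:max}; and when $m' > 0$ the leading variable $\beta$ must lie in $\{\beta_1, \ldots, \beta_{l+1}\}$, so $a_{\beta, \Gamma} \neq 0$ because $\Gamma \in \fg{\Gamma}^{*}$. One must also check that the whole factor $(a_\beta^{m'} R)(\overline{\Gamma})$ is in fact constant on $\fgb{\Gamma}{{\mathfrak n}^s(\beta_{l+1})}$: every coefficient it depends on is indexed by an exponent $\leq \beta \leq {\mathfrak n}^s(\beta_{l+1})$ and is hence fixed by the defining constraint of the subfamily. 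Once $m$ and $C_m$ have been identified in each case, the value $\nu_{\Gamma, s}(\Omega_j^s) = k + m$ is read off directly from the expansion, completing the proof.
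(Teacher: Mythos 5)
Your proof is correct and follows essentially the same route as the paper: write $t(\overline{\Gamma})^{*}\Omega_j^s$ via the leading-variable expansion guaranteed by Condition (III), observe that all coefficients $C_{\eta}$ with $\eta < {\mathfrak n}^{s}(\beta_{l+1})$ are constant on $\fgb{\Gamma}{{\mathfrak n}^{s}(\beta_{l+1})}$ and vanish below $\Beg_{\Gamma}(\Omega_j^s)$, and then split into the $\low$/$\upper$ cases of Condition (VI). The only difference is cosmetic: where the paper invokes Conditions (IV)--(V) and the ``degree $0$'' observation, you verify the non-vanishing of the linear coefficient $\tilde{c}\,c_m\,(a_{\beta}^{m'}R)(\Gamma)$ directly from Definition \ref{def:max} (positivity of the $c$-sequence, $R(\Gamma)\neq 0$, and $a_{\beta,\Gamma}\neq 0$ when $m'>0$), which is a perfectly valid unwinding of the same facts.
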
    
\begin{proof}
  We have $\Omega_{j}^{s} \in \Dd{\beta', <}{l+1}$ for some
  $\beta' \in {\mathcal E}_{\Gamma}$ by Condition \ref{good6} of Definition
  \ref{def:level} (notice that $s\geq 1$). So, we obtain
  \begin{equation*}
    t (\overline{\Gamma})^{*} (\Omega_j^{s}) = \left( t^{k} \sum_{r=0}^{\infty}
      C_{{\mathfrak n}^{r} (\beta')} t^{{\mathfrak n}^{r} (\beta')} \right) dt,
  \end{equation*}
  cf. Definition \ref{def:max}. Since $C_{\eta}$ depends on
  $(a_{\beta, \overline{\Gamma}})_{\beta \leq \eta}$ for
  $\overline{\Gamma} \in \fgb{\Gamma}{\beta_{l+1}}$, we deduce that for any
  $\eta < {\mathfrak n}^{s}(\beta_{l+1})$,
  $ C_{\eta}\equiv C_{\eta}(\overline{\Gamma}) \equiv C_{\eta}(\Gamma)$ whenever
  $\overline{\Gamma}\in \fgb{\Gamma}{{\mathfrak n}^{s}(\beta_{l+1})}$. Notice that
  $C_{\eta}(\Gamma) =0$ for any $\eta < \Beg_{\Gamma}(\Omega_{j}^{s})$.  In
  particular, for $\eta <\iota$, we obtain
  $C_{\eta} \equiv C_{\eta}(\overline{\Gamma})\equiv 0$ for
  $\overline{\Gamma}\in \fgb{\Gamma}{{\mathfrak n}^{s}(\beta_{l+1})}$.

  Take $j \in \low_{\Gamma, l+1}^{s}$. Since
  $\Beg_{\Gamma}(\Omega_{j}^{s}) < {\mathfrak n}^{s}(\beta_{l+1})$ by Condition
  \ref{good5} of Definition \ref{def:good}, it follows that
  $C_{\Beg_{\Gamma}(\Omega_j^s)}
  \equiv C_{\Beg_{\Gamma}(\Omega_{j}^{s})}(\Gamma) \neq 0$ for any branch
  $\overline{\Gamma}\in\fgb{\Gamma}{{\mathfrak n}^{s}(\beta_{l+1})}$.

  Finally, consider $j \in \upper_{\Gamma, l+1}^{s}$. Condition \ref{good5} implies
  $\iota = {\mathfrak n}^{s}(\beta_{l+1})$.  Also,by
  Conditions \ref{good3} and \ref{good4}, $\Omega_{j}^{s}$, belongs to either $\Dd{\beta_{l+1}, <}{l+1}$ or to
  $\hatDD{{\mathfrak n}(\beta_{l+1}), <}{0}{l+1}$.  In both cases, the restriction of
  $C_{\iota}(\overline{\Gamma})$ to $\fgb{\Gamma}{{\mathfrak n}^{s}(\beta_{l+1})}$ is a
  non-constant linear polynomial on $a_{\iota, \overline{\Gamma}}$ since
  $\Omega_{j}^{s}$ has leading variable of degree $0$.  Recall that if
  $\Omega_j^s\in \Dd{\beta_{l+1}, <}{l+1}$, this last property follows because
  $j\in \upper_{\Gamma,l+1}^s$ implies by definition that
  $\Beg_{\Gamma}(\Omega_{j}^{s}) > \beta_{l+1}$.
\end{proof}
\begin{defi}
\label{def:sing_dir_j}
Let $ 0\leq l < g$, $s \geq 1$ and
$\Omega_{j}^{s} \in {\mathcal G}_{\Gamma, l+1}^{s}$ with
$j \in \upper_{\Gamma, l+1}^{s}$.  We define
\begin{equation*}
  {\mathcal D}_{\Gamma} ({\mathfrak n}^{s}(\beta_{l+1}), j) =
  \{ c \in \mathbb{C} : C_{{\mathfrak n}^{s}(\beta_{l+1})}(c) = 0 \} ,
\end{equation*}
cf. Lemma \ref{lem:lot}. It is the set of values
$a_{{\mathfrak n}^{s}(\beta_{l+1}), \overline{\Gamma}}$ such that any curve
$\overline{\Gamma} \in \fgba{\Gamma}{{\mathfrak n}^{s}(\beta_{l+1})}$ 
with such coefficient in the term $t^{\mathfrak{n}^s(\beta_{l+1})}$
satisfies $\nu_{\overline{\Gamma}} (\Omega_{j}^{s}) > \nu_{\Gamma, s}
(\Omega_{j}^{s})$. It is, by Lemma \ref{lem:lot} either a singleton or the empty
set, but the set notation will simplify the notation later on.
\end{defi}
\begin{rem}
\label{rem:sd_l}
Let
$\overline{\Gamma} \in \fgba{\Gamma}{{\mathfrak n}^{s}(\beta_{l+1})}$. 
Then, given $j \in \upper_{\Gamma, l+1}^{s}$, we have the following sequence of equivalences:
\begin{multline*}
  \Beg_{\overline{\Gamma}} (\Omega_{j}^{s}) > {\mathfrak n}^{s}(\beta_{l+1})
  \Leftrightarrow
  a_{{\mathfrak n}^{s}(\beta_{l+1}), \overline{\Gamma}}
  \in {\mathcal D}_{\Gamma} ({\mathfrak n}^{s}(\beta_{l+1}), j)\\
  \Leftrightarrow
  \nu_{\overline{\Gamma}} (\Omega_{j}^{s}) >  \nu_{\Gamma, s} (\Omega_{j}^{s}).
\end{multline*}
\end{rem}
Next, we describe which coefficients the good and terminal families that appear in
the construction of a ${\mathbb C}[[x]]$-basis depend on.
\begin{pro}
  \label{pro:term_good_dep}
  Let $0 \leq l <g$ and $s \geq 0$. Then the family ${\mathcal G}_{\Gamma, l+1}^{s}$
  depends just on
  $(a_{\beta, \overline{\Gamma}})_{\beta \in {\mathcal E}_{\Gamma}, \ \beta < {\mathfrak
      n}^{s}(\beta_{l+1})}$ for any $\overline{\Gamma} \in \fga{\Gamma}$.  In other
  words: ${\mathcal G}_{\Gamma, l+1}^{s} = {\mathcal G}_{\overline{\Gamma}, l+1}^{s}$ for
  any
  $\overline{\Gamma} \in \fgba{\Gamma}{{\mathfrak n}^{s}(\beta_{l+1})}$.  
  Moreover, the family ${\mathcal T}_{\Gamma, l}$ depends just
  on
  $(a_{\beta, \overline{\Gamma}})_{\beta \in {\mathcal E}_{\Gamma}, \ \beta < \beta_{l+1}}$ 
  for any $\overline{\Gamma} \in \fga{\Gamma}$.
\end{pro}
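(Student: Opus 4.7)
The plan is to prove Proposition \ref{pro:term_good_dep} by a nested induction that mirrors the structure of Algorithm \ref{alg:construction}: an outer induction on the level $l$ for the terminal family statement, together with an inner induction on the stage $s$ for the statement about ${\mathcal G}_{\Gamma, l+1}^{s}$. The base of the outer induction is trivial since ${\mathcal T}_{\Gamma, 0} = \{dx, dy\}$ depends on no coefficients of $\Gamma$. For the outer step, assuming the assertion for ${\mathcal T}_{\Gamma, l}$, the inner induction should deliver the statement for every ${\mathcal G}_{\Gamma, l+1}^{s}$; specialising at the stopping stage $s_{l+1}$ (where ${\mathfrak n}^{s_{l+1}}(\beta_{l+1}) = \beta_{l+2}$, by Proposition \ref{pro:from-l-to-next}) then yields the dependence of ${\mathcal T}_{\Gamma, l+1}$ on coefficients with $\beta < \beta_{l+2}$.

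For the inner induction, the base case $s=0$ follows immediately from the construction \eqref{equ:ini_fam}: the forms in ${\mathcal G}_{\Gamma, l+1}^{0}$ are products of elements of ${\mathcal T}_{\Gamma, l}$ and powers of the approximate root $f_{\Gamma, l+1}$, both of which depend only on $(a_{\beta, \overline{\Gamma}})_{\beta < \beta_{l+1}}$ — the former by the outer induction hypothesis, the latter by the explicit formula \eqref{equ:app_root_exp}. Since $\beta_{l+1} = {\mathfrak n}^{0}(\beta_{l+1})$, the base case holds.

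For the inner step from $s$ to $s+1$, I will unpack the \textsc{Main Transformation}. According to Definitions \ref{def:good1} and \ref{def:next}, the passage is entirely prescribed by: (i) determining which pairs $(j,k)$ have congruent $\nu_{\Gamma}$-values and satisfy $\Beg_{\Gamma}(\Omega_{k}^{s}) = {\mathfrak n}^{s}(\beta_{l+1})$; (ii) the integer exponent $d$ with $dn = \nu_{\Gamma}(\Omega_{k}^{s}) - \nu_{\Gamma}(\Omega_{j}^{s})$; and (iii) the unique scalar $\alpha \in {\mathbb C}^{*}$ that kills the leading term. The crucial observation, supplied by Lemma \ref{lem:lot}, is that each of these data is a function of the coefficient $C_{m}(\Gamma)$ of $t\Gamma^{*}(\Omega_{j}^{s})$ and $t\Gamma^{*}(\Omega_{k}^{s})$ at some order $m \leq {\mathfrak n}^{s}(\beta_{l+1})$, and that $C_{m}$ is polynomial in $(a_{\beta})_{\beta \leq {\mathfrak n}^{s}(\beta_{l+1})}$. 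Combining this with the inner induction hypothesis gives the dependence of ${\mathcal G}_{\Gamma, l+1}^{s+1}$ on $(a_{\beta, \overline{\Gamma}})_{\beta \leq {\mathfrak n}^{s}(\beta_{l+1})}$, i.e.\ on those with $\beta < {\mathfrak n}^{s+1}(\beta_{l+1})$.

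I expect the main technical subtlety to be the careful bookkeeping in step (iii): one must verify that the information needed to compute $\alpha$ genuinely stops at order ${\mathfrak n}^{s}(\beta_{l+1})$, and in particular that the identification of $\Beg_{\Gamma}(\Omega_{k}^{s})$ with ${\mathfrak n}^{s}(\beta_{l+1})$ can be decided from the same range of coefficients. This is precisely the content of Lemma \ref{lem:lot}, together with the good family Conditions \ref{good3}--\ref{good5} of Definition \ref{def:good}; once these are invoked, the computation becomes a direct substitution argument, and no further analytic input is required.
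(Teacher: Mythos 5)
Your proposal is correct and follows essentially the same route as the paper's own proof: an outer induction on the level combined with an inner induction on the stage, with the base case coming from the explicit formula for the approximate root (so that ${\mathcal G}_{\Gamma,l+1}^{0}$ inherits the dependence from ${\mathcal T}_{\Gamma,l}$) and the inductive step from Lemma \ref{lem:lot}, which shows that all the data of the \textsc{Main Transformation} (the matched pairs, the exponent $d$ and the scalar $\alpha$, as well as whether $\Beg_{\Gamma}(\Omega_{k}^{s})={\mathfrak n}^{s}(\beta_{l+1})$) are determined by the coefficients $a_{\beta}$ with $\beta \leq {\mathfrak n}^{s}(\beta_{l+1})$. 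The only caveat, shared equally by the paper's write-up, is that the passage from stage $0$ to stage $1$ really rests on the separate stage-$0$ analysis (Proposition \ref{pro:conflicting_pairs} together with Corollaries \ref{cor:f_dominates_omega} and \ref{cor:f_omega}) rather than on Lemma \ref{lem:lot} itself, since ${\mathcal G}_{\Gamma,l+1}^{0}$ is not a good family.
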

\begin{proof} 
  Fix $0 \leq l < g$.  Let us assume that ${\mathcal G}_{\Gamma, l+1}^{s}$
  satisfies the thesis for some $s \in {\mathbb Z}_{\geq 0}$ with
  ${\mathfrak n}^{s}(\beta_{l+1}) < \beta_{l+2}$.  Consider
  $j, k \in {\mathbb N}_{2 \nu_{l+1}}$ with $j \in \low_{\Gamma, l+1}^{s}$ and
  $k \in \upper_{\Gamma, l+1}^{s}$.
%Let us consider the notations in Definition \ref{def:next}.
  Note that $\Beg_{\overline{\Gamma}}(\Omega_{j}^{s})$,
  $\nu_{\overline{\Gamma}} (\Omega_{j}^{s})$ and the non-vanishing term of lowest
  order of $(\overline{\Gamma})^{*} \Omega_{j}^{s}$ are constant for those
  $\overline{\Gamma} \in \fgba{\Gamma}{{\mathfrak n}^{s}(\beta_{l+1})}$ 
  by Lemma \ref{lem:lot}. It also implies that the
  non-vanishing term of lowest order of $(\overline{\Gamma})^{*} \Omega_{k}^{s}$
%$D_{{\mathfrak n}^{s}(\beta_{l+1})}$ 
  is constant in the family
  $\overline{\Gamma} \in \fgb{\Gamma}{{\mathfrak n}^{s+1}(\beta_{l+1})}$, and that
  $\Beg_{\overline{\Gamma}} (\Omega_{k}^{s}) = {\mathfrak n}^{s}(\beta_{l+1})$
  holds either for all
  $\overline{\Gamma} \in \fgba{\Gamma}{{\mathfrak n}^{s+1}(\beta_{l+1})}$, or for none.  
  If $\Beg_{\Gamma} (\Omega_{k}^{s}) = {\mathfrak n}^{s}(\beta_{l+1})$ 
  then the function
  $\overline{\Gamma} \mapsto \nu_{\overline{\Gamma}} (\Omega_{k}^{s})$ is also
  constant in
  $\fgba{\Gamma}{{\mathfrak n}^{s+1}(\beta_{l+1})}$ by Lemma
  \ref{lem:lot}.  Definition \ref{def:next} (the \textsc{Main
    Transformation}) makes clear  that
  ${\mathcal G}_{\overline{\Gamma}, l+1}^{s+1} = {\mathcal G}_{\Gamma, l+1}^{s+1}$
  for any
  $\overline{\Gamma} \in \fgba{\Gamma}{{\mathfrak n}^{s+1}(\beta_{l+1})}$.

  Consider the case $l + 1 < g$.  Let $s_{l+1} \in {\mathbb Z}_{\geq 1}$ with
  ${\mathfrak n}^{s_{l+1}} (\beta_{l+1}) = \beta_{l+2}$.  The previous discussion,
  together with an induction argument, imply that if
  ${\mathcal G}_{\Gamma, l+1}^{0}$ satisfies the thesis then
  ${\mathcal T}_{\Gamma, l+1}$ does. It is clear that ${\mathcal T}_{\Gamma, 0}$
  satisfies the thesis since it depends on nothing.  So in order to end the proof
  by induction on $l$, it suffices to show that if ${\mathcal T}_{\Gamma, l}$
  satisfies the thesis for some $0 \leq l < g$ so it does
  ${\mathcal G}_{\Gamma, l+1}^{0}$. This is clear by construction, as the
  $j$-th approximate root depends only on the coefficients before
  $\beta_j$, that is: $f_{\overline{\Gamma}, l+1} = f_{\Gamma, l+1}$ for
  any $\overline{\Gamma} \in \fgba{\Gamma}{\beta_{l+1}}$.
\end{proof} 
\begin{rem}
  \label{rem:coef_to_coef}
  In the process of building a ${\mathbb C}[[x]]$-basis of
  $\overline{\Gamma} \in \fga{\Gamma}$, we can think of
  ${\mathcal T}_{\overline{\Gamma}, l}$ (resp.
  ${\mathcal G}_{\overline{\Gamma}, l+1}^{s}$) as a result of the process when we
  just consider coefficients $a_{\beta, \overline{\Gamma}}$ with
  $\beta < \beta_{l+1}$ (resp. $\beta <{\mathfrak n}^{s}(\beta_{l+1})$) for
  $0 \leq l < g$.  Thus, when we build
  ${\mathcal G}_{\overline{\Gamma}, l+1}^{s+1}$ from
  ${\mathcal G}_{\overline{\Gamma}, l+1}^{s}$, we are considering the contribution
  to the process of the coefficient
  $a_{{\mathfrak n}^{s}(\beta_{l+1}), \overline{\Gamma}}$.
\end{rem}
Since  ${\mathcal G}_{\overline{\Gamma}, l+1}^{s}$ does not depend on
$\overline{\Gamma} \in \fgba{\Gamma}{{\mathfrak n}^{s}(\beta_{l+1})}$, 
it is natural to
study whether the functions
$\overline{\Gamma} \mapsto \nu_{\overline{\Gamma}} (\Omega_{j}^{s})$ are constant in
$\fgba{\Gamma}{{\mathfrak n}^{s}(\beta_{l+1})}$.  This divides the
$1$-forms $\Omega_{j}^{s}$ in ${\mathcal G}_{\Gamma, l+1}^{s}$, with $s \geq 1$, in two
sets depending on that property. The non-constant case is interesting, because then
$\nu_{\overline{\Gamma}} (\Omega_{j}^{s})$ depends on coefficients that will be processed
in future steps of the method.  This alternative, which is the same as Definition
\ref{def:good}, is introduced in the following remark, which is a direct combination of
the results in Lemma \ref{lem:lot} and Proposition \ref{pro:term_good_dep}.  
\begin{rem}
  Neither of the sets $\low_{\overline{\Gamma}, l+1}^{s}$ and 
  $\upper_{\overline{\Gamma}, l+1}^{s}$ depends on
  $\overline{\Gamma} \in \fgba{\Gamma}{{\mathfrak n}^{s}(\beta_{l+1})}$. 
  Moreover, for $j\in \low_{\Gamma,l+1}^s$, the analogous property holds for the integers
  $\Beg_{\overline{\Gamma}}(\Omega_{j}^{s})$ and
  $\nu_{\overline{\Gamma}} (\Omega_{j}^{s})$, and the non-vanishing term of lowest
  order of $(\overline{\Gamma})^{*} \Omega_{j}^{s}$.

  Note also that, for $k \in \upper_{\Gamma, l+1}^{s}$, if
  ${\mathfrak n}^{s}(\beta_{l+1}) < \beta_{l+2}$ then
  $\Beg_{\overline{\Gamma}}(\Omega_{k}^{s})$ and
  $\nu_{\overline{\Gamma}} (\Omega_{k}^{s})$ are non-constant in
  $\fgba{\Gamma}{{\mathfrak n}^{s}(\beta_{l+1})}$.
  Indeed, the coefficient of the non-vanishing term of lowest order of
  $(\overline{\Gamma})^{*} \Omega_{k}^{s}$ is a non-constant linear polynomial in
  $a_{{\mathfrak n}^{s}(\beta_{l+1}), \overline{\Gamma}}$ for
  $\overline{\Gamma} \in \fgba{\Gamma}{{\mathfrak n}^{s}(\beta_{l+1})}$.
\end{rem}
\begin{rem}
  Let $0 \leq l <g$. There is a set of special $1$-forms in
  ${\mathcal G}_{\Gamma, l+1}^{1}$, namely the $1$-forms $\Omega_{k}^{1}$ with
  $k \in \upper_{\Gamma, l+1}^{1} \setminus \overline\upper_{\Gamma, l+1}$.  These
  $1$-forms belong to $\hatDD{{\mathfrak n}(\beta_{l+1}),<}{0}{l+1}$ by property
  \eqref{g1p3} of Proposition \ref{pro:aux1}. We can improve the result in Proposition
  \ref{pro:term_good_dep}.  Indeed $\Omega_{k}^{1}$ does not depend on 
  $\overline{\Gamma} \in \fgba{\Gamma}{\beta_{l+1}}$.  This is because when we define
  $\Omega_{k}^{1} = \Omega_{k}^{0} - c x^{d} \Omega_{j}^{0}$ (cf. Definition
  \ref{def:good1}, the \textsc{Main Transformation} at stage $0$), we have
  $\Omega_{j}^{0}, \Omega_{k}^{0} \in \oDD{\beta_{l+1}, <}{r}{ l+1} \cap
  \tDd{\beta_{l+1}}{l+1}$ for some $0 \leq r \leq n_{l+1}-2$ by Lemma
  \ref{rem:conflicting_pairs} and Corollaries \ref{cor:f_dominates_omega} and
  \ref{cor:f_omega}.  Hence, even if the non-vanishing terms of lowest order of
  $(\overline{\Gamma})^{*} \Omega_{j}^{0}$ and $(\overline{\Gamma})^{*} \Omega_{k}^{0}$
  are non-constant in $\fgba{\Gamma}{\beta_{l+1}}$, their quotient
  is of the form $\frac{\kappa_1 a_{\beta_{l+1}}}{k_2 a_{\beta_{l+1}}}$, which is
  constant. This independence on $a_{\beta_{l+1}, \overline{\Gamma}}$ is another
  side of the dicritical nature of $\Omega_{k}^{1}$ described in Proposition \ref{pro:jG}.
\end{rem}  
\subsection{Properties of the $1$-forms in the construction}
\label{subsec:properties_forms_construction}
In order to prove our main results, 
namely  Theorems \ref{teo:collection} and \ref{teo:lambda_on_s_b},
we need to understand what types of forms appear in the construction of a
${\mathbb C}[[x]]$-basis of $\Gamma$.  Indeed, our construction provides interesting
additional structure. For instance, we can associate a formal invariant curve to every
$1$-form in a terminal family, a so called companion curve. 
Recall, at this point, Definition \ref{def:Dj}, and Propositions \ref{pro:jG} and
\ref{pro:wu} in Subsection \ref{subsec:foliation}.
\begin{pro}
\label{pro:type_forms}
Consider the construction of a ${\mathbb C}[[x]]$-basis ${\mathcal B}$ for $\Gamma$
in section \ref{sec:construction_cx_bases}.  Then $\Omega$ \touches{} $\Gamma$ to
order $ \Beg_{\Gamma}(\Omega)$ for any $\Omega \in {\mathcal T}_{\Gamma,
  g}$. Moreover, we have ${\mathcal T}_{\Gamma, 0} = (dx, dy)$ and given
$0 \leq l <g$, the $1$-forms in
$({\mathcal B} \cap {\mathcal T}_{\Gamma, l+1}) \setminus {\mathcal T}_{\Gamma, l}$
are of one of the following types:
\begin{enumerate}
\item \label{type:dicritical}
  $\Omega_{k}^{s_{l+1}} \in \hatDD{{\mathfrak n}(\beta_{l+1}),<}{0}{l+1}$ with
  $\beta_{l+1} < \Beg_{\Gamma}(\Omega_{k}^{s_{l+1}}) \leq \beta_{l+2}$ and
  $k \not \in \overline\upper_{\Gamma, l+1}$. There is a unique companion curve of
  $\Omega_{k}^{s_{l+1}} $ that is equisingular to $\Gamma_{<\beta_{l+2}}$.
  Moreover, $\Omega_{k}^{s_{l+1}} $ fans $\Gamma$ to genus $l+1$.
\item \label{type:induced_c} $ \Omega_{k}^{s_{l+1}} \in \Dd{\beta_{l+1},<}{l+1}$ if
  $k \in \overline\upper_{\Gamma, l+1}$.  In this case, we have $l+1 < g$ and
  $\Beg_{\Gamma}(\Omega_{k}^{s_{l+1}}) = \beta_{l+2}$.  There is a unique formal
  companion curve $\overline{\Gamma}$ of $\Omega_{k}^{s_{l+1}}$ that is
  equisingular to $\Gamma_{<\beta_{l+2}}$.  We have
\begin{equation}
  \label{equ:no_collection} 
  \nu_{\Gamma} (\Omega_{k}^{s_{l+1}} ) = \nu_{\Gamma}(\Omega_j) +
  \overline{\beta}_{l+2} - \overline{\beta}_{l+1} 
\end{equation}
for some $\Omega_j \in {\mathcal T}_{\Gamma, l}$ with
$\nu_{\Gamma} (\Omega_j) \in [\beta_{l+1}]_{e_l}$. Moreover, ${\mathfrak D}_{l+1}$
is invariant by the strict transform ${\mathfrak F}$ of the foliation
$\Omega_{k}^{s_{l+1}}=0$ by $\pi =\pi_{1} \circ \ldots \circ \pi_{\iota}$ where
$\iota =\hat{\theta}^{-1}(\beta_{l+1})$.  Indeed, all trace points of
${\mathfrak D}_{l+1}$ in $E_{\iota}$ are regular points of ${\mathfrak F}$ except
$P_{\Gamma, \iota} = P_{\overline{\Gamma}, \iota}$.
\item \label{type:induced_s} $\Omega_{k}^{s_{l+1}} =f_{\Gamma, l+1}^{m} \omega$
  with $0 < m < n_{l+1}-1$, $\omega \in {\mathcal T}_{\Gamma, l}$,
  $\Beg_{\Gamma}(\omega)= \beta_{l+1}$ and
  $\nu_{\Gamma} (\omega) \in [\beta_{l+1}]_{e_{l}}$. Any formal companion curve of
  $\Omega_{k}^{s_{l+1}}$ is equisingular to $\Gamma_{<\beta_{l+1}}$.
\end{enumerate}
%for any $0 \leq l < g$.
\end{pro}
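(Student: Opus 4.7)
The plan is to prove the proposition by an inductive case analysis on the level $l$, combining the structural properties of good and terminal families established in Section \ref{sec:construction_cx_bases} with the geometric characterizations of $1$-forms with leading variables given by Propositions \ref{pro:unique_companion}, \ref{pro:jG} and \ref{pro:wu}. Since ${\mathbb N}_{2 \nu_l} \subset \low_{\Gamma, l+1}^{s}$ for every $s$ by Condition \ref{good1} of Definition \ref{def:good}, the forms of ${\mathcal T}_{\Gamma, l}$ are transferred unchanged to ${\mathcal T}_{\Gamma, l+1}$, so the new forms are precisely those indexed by ${\mathbb N}_{2 \nu_{l+1}} \setminus {\mathbb N}_{2 \nu_l}$. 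For the preliminary assertion that every $\Omega \in {\mathcal T}_{\Gamma, g}$ \touch{}s $\Gamma$ to order $\Beg_{\Gamma}(\Omega)$, I would invoke Condition \ref{tc} of Definition \ref{def:level}: each such $\Omega$ lies in $\oDD{\beta_{l+1}, <}{0}{l+1}$ or in $\oDd{\beta', <}{l+1}$ for some $\beta' < \beta_{l+1}$, and in both alternatives the hypotheses of Proposition \ref{pro:unique_companion} are satisfied, yielding the claim.

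For the classification, I split ${\mathbb N}_{2 \nu_{l+1}} \setminus {\mathbb N}_{2 \nu_l}$ into three pieces: indices in $\upper_{\Gamma, l+1}^{s_{l+1}} \setminus \widetilde\upper_{\Gamma, l+1}$ (which automatically satisfy $\beta_{l+1} < \Beg_{\Gamma}(\Omega_k^{s_{l+1}}) \leq \beta_{l+2}$ by Condition \ref{good5}, yielding type \ref{type:dicritical}); indices in $\widetilde\upper_{\Gamma, l+1}$ (type \ref{type:induced_c}); and the remaining indices, which lie in $\low_{\Gamma, l+1}^{s_{l+1}}$ (type \ref{type:induced_s}). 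For type \ref{type:dicritical}, Condition \ref{good3} places the form in $\hatDD{\mathfrak{n}(\beta_{l+1}),<}{0}{l+1}$, Proposition \ref{pro:jG} shows it fans $\Gamma$ to genus $l+1$, and Corollary \ref{cor:wh0} provides the unique analytic companion curve, which is equisingular to $\Gamma_{<\beta_{l+2}}$ by the preceding bound.

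For type \ref{type:induced_c}, the case $l+1 = g$ does not occur because $\widetilde\upper_{\Gamma, g}$ requires $\beta_{l+2}$ to be finite. Condition \ref{good4} yields membership in $\DD{\beta_{l+1},<}{n_{l+1}-1}{l+1}$, with $\Beg_{\Gamma}(\Omega_k^{s_{l+1}}) = \beta_{l+2}$ imposed by the termination criterion ${\mathfrak n}^{s_{l+1}}(\beta_{l+1}) = \beta_{l+2}$. Proposition \ref{pro:wu} then directly supplies the invariance and elementary singularity properties of $\mathfrak{D}_{l+1}$, including the trace point identification $P_{\Gamma,\iota}=P_{\overline{\Gamma},\iota}$; the explicit formula \eqref{equ:no_collection} is the content of Remark \ref{rem:chu} applied to $\Omega_k^0 = f_{\Gamma, l+1}^{n_{l+1}-1}\Omega_j$; and the congruence $\nu_{\Gamma}(\Omega_j) \in [\beta_{l+1}]_{e_l}$ follows from Lemma \ref{lem:cong_nu} applied to $\Omega_j \in \oDD{\beta_{l+1}, <}{0}{l+1}$.

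For type \ref{type:induced_s}, the remaining indices are of the form $j + 2m\nu_l$ with $\Omega_j \in \oDD{\beta_{l+1},<}{0}{l+1}$ and $0 < m < n_{l+1}-1$: case \ref{mu1} of Definition \ref{def:lu} places them in $\low_{\Gamma, l+1}^{1}$, and they remain in the lower set through every subsequent stage by construction of the \textsc{Main Transformation}, so $\Omega_k^{s_{l+1}} = \Omega_k^{0} = f_{\Gamma, l+1}^{m}\Omega_j$ is never modified. The conditions $\Beg_{\Gamma}(\Omega_j) = \beta_{l+1}$ and $\nu_{\Gamma}(\Omega_j) \in [\beta_{l+1}]_{e_l}$ follow from the definition of $\oDD$ and Lemma \ref{lem:cong_nu} respectively, while any formal companion curve of $f_{\Gamma, l+1}^{m}\Omega_j$ is either $\{f_{\Gamma, l+1} = 0\}$ (equisingular to $\Gamma_{<\beta_{l+1}}$ by construction of the approximate root) or the unique companion of $\Omega_j$ (likewise equisingular to $\Gamma_{<\beta_{l+1}}$ by Proposition \ref{pro:unique_companion}). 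The main technical obstacle will be ensuring that the hypotheses of Proposition \ref{pro:wu} are verified cleanly in type \ref{type:induced_c}, in particular that the strict inequality $\Beg_{\Gamma}(\Omega_k^{s_{l+1}}) = \beta_{l+2} > \beta_{l+1}$ is a genuine consequence of the termination criterion together with Condition \ref{good5}, rather than requiring additional argument about which indices survive into $\widetilde\upper_{\Gamma, l+1}$ after the initial stage.
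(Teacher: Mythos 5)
Your overall plan (a case analysis on the new forms of each level, invoking Conditions \ref{good3}, \ref{good4}, \ref{good5}, Propositions \ref{pro:jG} and \ref{pro:wu}, Corollary \ref{cor:wh0}, Remark \ref{rem:chu} and Lemma \ref{lem:cong_nu}) is the same as the paper's, but the way you set up the trichotomy has a genuine gap. You partition ${\mathbb N}_{2\nu_{l+1}}\setminus{\mathbb N}_{2\nu_l}$ by final-stage membership in $\upper_{\Gamma,l+1}^{s_{l+1}}\setminus\widetilde\upper_{\Gamma,l+1}$, $\widetilde\upper_{\Gamma,l+1}$ and $\low_{\Gamma,l+1}^{s_{l+1}}$, and you assert that the indices of the last block ``remain in the lower set through every subsequent stage'', so that $\Omega_k^{s_{l+1}}=\Omega_k^{0}=f_{\Gamma,l+1}^{m}\Omega_j$ is never modified. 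This is false: in the \textsc{Main Transformation} (Definition \ref{def:next}, case $d<0$) an index of $\upper_{\Gamma,l+1}^{s}$ is moved into $\low_{\Gamma,l+1}^{s+1}$ carrying a form with $\Beg_{\Gamma}={\mathfrak n}^{s}(\beta_{l+1})>\beta_{l+1}$ (possibly already modified at earlier stages), while the matched $\low$-index moves to $\upper$ and is modified; such swaps can occur in general (they are exactly what the set ${\mathcal Q}_{\Gamma,l+1}^{s}$ of Section \ref{sec:singular_directions} records). Hence $\low_{\Gamma,l+1}^{s_{l+1}}\setminus{\mathbb N}_{2\nu_l}$ may contain forms with $\beta_{l+1}<\Beg_{\Gamma}(\Omega_k^{s_{l+1}})<\beta_{l+2}$ — precisely what Condition \ref{good5} permits — which can belong to ${\mathcal B}$ and are of type \eqref{type:dicritical} with intermediate $\Beg_{\Gamma}$; your scheme either misassigns them to type \eqref{type:induced_s} or leaves them unclassified. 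The paper instead splits on the value of $\Beg_{\Gamma}(\Omega)$: when $\Beg_{\Gamma}(\Omega)>\beta_{l+1}$, Conditions \ref{good3}/\ref{good4} apply regardless of whether the index sits in $\low$ or $\upper$ at stage $s_{l+1}$, giving types \eqref{type:dicritical}/\eqref{type:induced_c}; only in the remaining case $\Beg_{\Gamma}(\Omega)=\beta_{l+1}$ does one argue that the index stayed in $\low$ at every stage and the form is the unmodified product $f_{\Gamma,l+1}^{m}\Omega_j$ with $0<m<n_{l+1}-1$ and $\Beg_{\Gamma}(\Omega_j)=\beta_{l+1}$.

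A second, smaller but also genuine, error: your reason for excluding $l+1=g$ in type \eqref{type:induced_c} — that $\widetilde\upper_{\Gamma,g}$ ``requires $\beta_{l+2}$ to be finite'' — is not valid. Definition \ref{def:und} makes perfect sense for $l+1=g$, and the terminal family of level $g$ does contain $\widetilde\upper$-indexed forms in $\Dd{\beta_{g},<}{g}$. The exclusion genuinely uses the hypothesis $\Omega\in{\mathcal B}$: by Proposition \ref{pro:from-l-to-next} the ${\mathbb C}[[x]]$-basis at level $g$ is $(\Omega_j^{s_g}:j\in\low_{\Gamma,g}^{s_g})$, while $\widetilde\upper_{\Gamma,g}\subset\widehat\upper_{g}\subset\upper_{\Gamma,g}^{s_g}$, so a $\widetilde\upper$-indexed form cannot lie in ${\mathcal B}$ when $l+1=g$. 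Relatedly, your proof of the preliminary claim that every $\Omega\in{\mathcal T}_{\Gamma,g}$ \touch{}s $\Gamma$ to order $\Beg_{\Gamma}(\Omega)$ via Condition \ref{tc} plus Proposition \ref{pro:unique_companion} is not licensed as stated: at level $g$ Condition \ref{tc} only gives membership in sets of the form $\oDd{\beta',<}{g+1}$, whereas Proposition \ref{pro:unique_companion} requires a family index $j\in{\mathbb N}_{g}$ with $\beta_j\leq\beta\leq\beta_{j+1}$ (and $m=0$ or $\Beg_{\Gamma}>\beta_j$); the paper obtains the \touch{}ing statement inside the same case-by-case analysis, where those hypotheses are verified.
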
  
\begin{proof}
  Since ${\mathcal T}_{\Gamma, 0} = (dx, dy)$ by construction and both $1$-forms
  \touch{} $\Gamma$ to some order, let us focus on the $1$-forms
  $\Omega \in {\mathcal T}_{\Gamma, l+1} \setminus {\mathcal T}_{\Gamma, l}$ for
  any $0 \leq l < g$.

  Recall that
  ${\mathfrak n}^{s_{l+1}} (\beta_{l+1}) = \beta_{l+2}$ for $0 < l < g-1$ and $s_g$
  is chosen so that ${\mathcal G}_{\Gamma, g}^{s_g}$ is a terminal family of level
  $g$.  We have ${\mathcal T}_{\Gamma, l+1} = {\mathcal G}_{\Gamma, l+1}^{s_{l+1}}$
  and in particular ${\mathcal T}_{\Gamma, l+1}$ is a good family of level $l+1$
  for $0 \leq l < g$.

  Consider $\Omega:=\Omega_{k}^{s_{l+1}}$ in
  ${\mathcal T}_{\Gamma, l+1} \setminus {\mathcal T}_{\Gamma, l}$.  
  Firstly, if $\Beg_{\Gamma}(\Omega) > \beta_{l+1}$, then it is of one of the first
  two types, by Conditions \ref{good3} and \ref{good4} in Definition
  \ref{def:good}. If $\Omega \in \hatDD{{\mathfrak n}(\beta_{l+1}),<}{0}{l+1}$, the
  properties stated in the first item for $\Omega$ are a consequence of Proposition
  \ref{pro:jG} and Corollary \ref{cor:wh0}.  If, otherwise,
  $k \in \overline\upper_{\Gamma, l+1}$, the properties stated for $\Omega$ in the
  second item derive from Proposition \ref{pro:wu}, except $l+1<g$ and equation
  \eqref{equ:no_collection}.  Notice that since $k \in \low_{\Gamma, g}^{s_g}$ for
  $l+1=g$ by Proposition \ref{pro:from-l-to-next} and
  $\overline\upper_{\Gamma, g} \subset \upper_{\Gamma, g}^{s_g}$, we have
  necessarily that $l+1 <g$.  In particular, $\Beg_{\Gamma}(\Omega)= \beta_{l+2}$.
  We also know that $\Omega_{k}^{0} = f_{\Gamma, l+1}^{n_{l+1}-1} \Omega_j$ for
  some $\Omega_{j} \in {\mathcal T}_{\Gamma, l}$ with
  $\Omega_j \in \oDD{\beta_{l+1}, <}{0}{l+1}$ by Definition \ref{def:und}; this
  gives $\nu_{\Gamma} (\Omega_j) \in [\beta_{l+1}]_{e_l}$ by Lemma
  \ref{lem:cong_nu}. Now, Equation \eqref{equ:no_collection} holds by Remark
  \ref{rem:chu}.
 
  Let us consider the remaining case: $\Beg_{\Gamma}(\Omega)=\beta_{l+1}$.  By
  construction, $\Omega = \Omega_{k}^{s_{l+1}} = \Omega_{k}^{0}$ and
  $k \in \cap_{s \in {\mathbb N}_{s_{l+1}}} \low_{\Gamma, l+1}^{s}$. Thus
  $\Omega= f_{\Gamma, l+1}^{a} \Omega_{j}$ for some $0 \leq a < n_{l+1}$ and
  $\Omega_{j} \in {\mathcal T}_{\Gamma, l}$.  Since
  $\Omega \not \in {\mathcal T}_{\Gamma, l}$, we have $a \geq 1$.  Moreover, $k$
  does not belong to $\upper_{l+1}$ since $k \in \low_{\Gamma, l+1}^{1}$
  and Condition \ref{good5} of Definition \ref{def:good} holds. Hence,
  $a < n_{l+1}-1$.  Also, the fact that $k \in \low_{\Gamma, l+1}^{1}$ implies
  $\Beg_{\Gamma}(\Omega_j) = \beta_{l+1}$ by Definition \ref{def:lu}.  Using Lemma
  \ref{lem:cong_nu}, and the properties of terminal families, we conclude that
  $\Omega_j \in \oDD{\beta_{l+1}, <}{0}{ l+1}$ and that
  $\nu_{\Gamma} (\Omega_{j}) \in [\beta_{l+1}]_{e_{l}}$.  Thus, $\Omega_j$
  \touches{} $\Gamma$ to order $ \beta_{l+1}$ and has a unique formal companion
  curve equisingular to $\Gamma_{<\beta_{l+1}}$ by Proposition
  \ref{pro:unique_companion}. As a consequence, $\Omega$ \touches{} $\Gamma$ to
  order $ \beta_{l+1}$ because
  $\{ f_{\Gamma, l+1} = 0 \} = \Gamma_{< \beta_{l+1}}$.  
\end{proof}
%\begin{cor}
%Consider the construction of a ${\mathbb C}[[x]]$-basis ${\mathcal B}$ of $\Gamma$ in section \ref{sec:construction_cx_bases}.
%Any $1$-form $\Omega \in {\mathcal B}$ \touches{} $\Gamma$ to order $ \Beg_{\Gamma}(\Omega)$. 
%\end{cor}
%\begin{proof}
%The result is obvious for $\Omega \in {\mathcal T}_{\Gamma, 0}$. So it suffices to show that if 
%it holds for $\Omega \in {\mathcal T}_{\Gamma, l}$ then so it does for
%$\Omega \in {\mathcal T}_{\Gamma, l+1} \setminus {\mathcal T}_{\Gamma, l}$ for any $0 \leq l <g$.
%
%Let $\Omega \in {\mathcal T}_{\Gamma, l+1} \setminus {\mathcal T}_{\Gamma, l}$. It is of one of the three types described 
%in Proposition \ref{pro:type_forms}. The result is a consequence of Corollary \ref{cor:wh0} in the second case.
%In the third case, we have $\Beg_{\Gamma}(\Omega_{k}^{s_{l+1}}) = \beta_{l+2}$
%since $k \in \overline\upper_{\Gamma, l+1} \subset \upper_{\Gamma, l+1}^{s_{l+1}}$ and
%${\mathfrak n}^{s_{l+1}} (\beta_{l+1}) = \beta_{l+2}$. Thus $\Omega$ \touches{} $\Gamma$ to order $ \Beg_{\Gamma}(\Omega)$
%by Proposition \ref{pro:wu}.
%
%It remains to consider the first case. We have $\omega \in  \oDD{\beta_{l+1}, <}{0}{ l+1}$
%by the properties of terminal families and Lemma \ref{lem:cong_nu}.
%Since the result holds in ${\mathcal T}_{\Gamma, l}$, it follows that
%Thus $\omega$ \touches{} $\Gamma$ to order $ \beta_{l+1}$. We have that $f_{\Gamma, l+1}^{m} \omega$
%\touches{} $\Gamma$ to order $ \beta_{l+1}$ because $\{ f_{\Gamma, l+1} = 0 \} = \Gamma_{< \beta_{l+1}}$.
% \end{proof}
\subsection{Properties of the semimodule}
\label{subsec:properties_semimodule}
Now, let us introduce a couple of consequences of the construction of a
${\mathbb C}[[x]]$-basis of $\Gamma$ for the semimodule $\Lambda_{\Gamma}$.
\begin{cor}
  We have
  $\Lambda_{\Gamma} \cap {\mathbb Z}_{\leq \overline{\beta}_{l+1}} \subset (e_l)
  \cup [\beta_{l+1}]_{e_l}$
%\begin{equation*} \{ k \in \Lambda_{\Gamma} : k \leq \overline{\beta}_{l+1} \} \subset (e_l) \cup [\beta_{l+1}]_{e_l} \end{equation*}
  for any $0 \leq l < g$.
\end{cor}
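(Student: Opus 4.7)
The plan is to transfer the assertion from $\Lambda_{\Gamma}$ to a congruence property of the terminal family $\mathcal{T}_{\Gamma,l}$ of level $l$, which exists by Proposition~\ref{pro:from-l-to-next}. The essential tool is the decomposition
\[
  \hat{\Omega}^{1}\cn{2} = \mathcal{M}_{2\nu_{l}} + (f_{\Gamma,l+1})\,dx + (f_{\Gamma,l+1})\,dy,
\]
which holds because $f_{\Gamma,l+1}$ is a monic polynomial in $y$ of degree $\nu_{l}$ (Remark~\ref{rem:app_roots}): for any $g \in \mathbb{C}[[x,y]]$, Euclidean division by $f_{\Gamma,l+1}$ in $y$ writes $g = g_{0} + f_{\Gamma,l+1}\,h$ with $\deg_{y}(g_{0}) < \nu_{l}$, so both $g\,dx$ and $g\,dy$ split as claimed.

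Fix $v \in \Lambda_{\Gamma}\cap \mathbb{Z}_{\leq \overline{\beta}_{l+1}}$, choose $\omega$ with $\nu_{\Gamma}(\omega)=v$, and decompose $\omega = \omega_{0} + f_{\Gamma,l+1}\omega_{1}$ as above. A direct inspection of the parametrization $\Gamma(t)=(t^{n},\,t^{\beta_{1}}+\cdots)$ shows that every nonzero $1$-form satisfies $\nu_{\Gamma}\geq n$, the minimum being attained at $dx$ (this uses $\beta_{1}>n$, valid because $\Gamma$ is singular). As $\nu_{\Gamma}(f_{\Gamma,l+1})=\overline{\beta}_{l+1}$, I obtain $\nu_{\Gamma}(f_{\Gamma,l+1}\omega_{1}) \geq \overline{\beta}_{l+1}+n > \overline{\beta}_{l+1}\geq v$, which forces $v = \nu_{\Gamma}(\omega_{0})$.

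Now expand $\omega_{0} = \sum_{j=1}^{2\nu_{l}} a_{j}(x)\Omega_{j}$ along the terminal family $\mathcal{T}_{\Gamma,l} = (\Omega_{1},\ldots,\Omega_{2\nu_{l}})$ and write $\Gamma^{*}\Omega_{j} = h_{j}(t)\,dt$. Then $\Gamma^{*}\omega_{0} = \bigl(\sum_{j} a_{j}(t^{n})\,h_{j}(t)\bigr)dt$, and every $t$-monomial in $a_{j}(t^{n})\,h_{j}(t)$ carries an exponent congruent to $\nu_{\Gamma}(\Omega_{j})-1 \pmod{n}$. Since the coefficient of $t^{v-1}$ in $\Gamma^{*}\omega_{0}$ is nonzero by the definition of $v$, there must exist some index $j$ with $v\equiv \nu_{\Gamma}(\Omega_{j})\pmod{n}$. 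Corollary~\ref{cor:cong_term} gives $\nu_{\Gamma}(\Omega_{j})\in (e_{l})\cup [\beta_{l+1}]_{e_{l}}$, and because $e_{l}$ divides $n$ I conclude that $v \in (e_{l})\cup [\beta_{l+1}]_{e_{l}}$.

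The only mildly delicate point is the bound $\nu_{\Gamma}(\omega_{1})\geq n$ for nonzero $\omega_{1}$, but this is straightforward: for $\omega_{1}=a\,dx + b\,dy$ the pull-back starts at a $t$-power of order at least $\min(n-1,\,\beta_{1}-1)= n-1$, because $\Gamma^{*}(dx)=n\,t^{n-1}\,dt$ and $\Gamma^{*}(dy)$ starts at $t^{\beta_{1}-1}$ with $\beta_{1}\geq n+1$. Everything else is bookkeeping.
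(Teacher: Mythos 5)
The overall route you take (Euclidean division by the approximate root $f_{\Gamma,l+1}$ to reduce a form of $\Gamma$-order $v\leq\overline{\beta}_{l+1}$ to an element of ${\mathcal M}_{2\nu_{l}}$, then reading the congruence off the terminal family ${\mathcal T}_{\Gamma,l}$ via Corollary \ref{cor:cong_term}) is sound and genuinely different from the paper's proof, which instead uses that ${\mathcal T}_{\Gamma,g}$ contains a ${\mathbb C}[[x]]$-basis and that every $\Omega\in{\mathcal T}_{\Gamma,g}\setminus{\mathcal T}_{\Gamma,l}$ satisfies $\nu_{\Gamma}(\Omega)>\overline{\beta}_{l+1}$, so that the statement reduces at once to Corollary \ref{cor:cong_term}. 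Your preliminary estimates are also fine: $\nu_{\Gamma}(\omega_{1})\geq n$ for any nonzero $1$-form, hence $\nu_{\Gamma}(f_{\Gamma,l+1}\omega_{1})\geq\overline{\beta}_{l+1}+n>v$ and $\nu_{\Gamma}(\omega_{0})=v$.

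However, the congruence step contains a genuine error: it is false that every $t$-monomial of $a_{j}(t^{n})h_{j}(t)$ has exponent congruent to $\nu_{\Gamma}(\Omega_{j})-1$ modulo $n$. Only the \emph{leading} term of $h_{j}(t)$ is controlled; the higher-order terms of $\Gamma^{*}\Omega_{j}$ fill several classes modulo $n$ in general. Already for $\Omega_{2}=dy$ (which belongs to every terminal family) one has $\Gamma^{*}(dy)=\bigl(\sum_{\beta}\beta\,a_{\beta,\Gamma}t^{\beta-1}\bigr)dt$, and for instance for $\Gamma(t)=(t^{4},t^{6}+t^{7})$ this is $(6t^{5}+7t^{6})dt$, with exponents in two different classes modulo $4$. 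Consequently, a nonzero coefficient of $t^{v-1}$ in $\Gamma^{*}\omega_{0}$ does not, by your argument, force $v\equiv\nu_{\Gamma}(\Omega_{j})\pmod{n}$ for some $j$. The step can be repaired without changing your strategy: since $l<g$, the terminal family ${\mathcal T}_{\Gamma,l}$ is separated with $2\nu_{l}\leq n$ elements, so the values $\nu_{\Gamma}(\Omega_{1}),\ldots,\nu_{\Gamma}(\Omega_{2\nu_{l}})$ are pairwise incongruent modulo $n$ (this is exactly what is used in the proof of Corollary \ref{cor:cong_term}). Hence the orders $\nu_{\Gamma}(a_{j}(x)\Omega_{j})$ are pairwise distinct, the minimum is attained at a single index $j_{0}$, no cancellation of lowest-order terms can occur, and
\begin{equation*}
  v=\nu_{\Gamma}(\omega_{0})=\nu_{\Gamma}(a_{j_{0}}(x)\Omega_{j_{0}})
  \equiv\nu_{\Gamma}(\Omega_{j_{0}}) \pmod{n},
\end{equation*}
which is the same no-cancellation argument as in Proposition \ref{pro:unique_expression_cbasis}. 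Combined with Corollary \ref{cor:cong_term} and $n\in(e_{l})$, this gives $v\in(e_{l})\cup[\beta_{l+1}]_{e_{l}}$ as you intended.
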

\begin{proof}
  Consider a terminal family ${\mathcal T}_{\Gamma, g}$ constructed as described in
  section \ref{sec:construction_cx_bases}.  It contains a ${\mathbb C}[[x]]$-basis.
  Since $\nu_{\Gamma} (\Omega) > \overline{\beta}_{l+1}$ for any
  $\Omega \in {\mathcal T}_{\Gamma, g} \setminus {\mathcal T}_{\Gamma, l}$ and
  $n \in (e_l)$, it suffices to show
  $\nu_{\Gamma} ({\mathcal T}_{\Gamma, l}) \subset (e_l) \cup [\beta_{l+1}]_{e_l}$.
  This an immediate consequence of Corollary \ref{cor:cong_term}.
\end{proof}
Let us restate here Theorem \ref{teo:collection}
\begin{pro}
\label{pro:L_is_c_coll}
Let $\Gamma$ be a singular branch of planar curve.  Then $\Lambda_{\Gamma}$ is a
${\mathcal C}$-collection.
\end{pro}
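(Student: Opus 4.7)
The closure condition $\Lambda_\Gamma + \mathcal{S}_\Gamma \subset \Lambda_\Gamma$ is immediate from the multiplicativity of $\nu_\Gamma$ and needs no further argument. The real content is the displacement condition: for $m\in\Lambda_\Gamma$ with $m\le \overline{\beta}_j$ and any $j\le k\le g$, one needs $m+\overline{\beta}_k-\overline{\beta}_j\in\Lambda_\Gamma$. The plan is to reduce this to the case $k=j+1$ by telescoping, then extract the shifted value directly from Remark~\ref{rem:chu} applied to a basis representative of $m$. Since $m_{k+1}:=m+\overline{\beta}_{k+1}-\overline{\beta}_j$ satisfies $m_{k+1}=m_k+\overline{\beta}_{k+1}-\overline{\beta}_k$ and $m_k\le\overline{\beta}_k$ (by induction starting from $m\le\overline{\beta}_j$), iterating the $k=j+1$ case covers every index up to $g$; the boundary instances $k=j$ and $j=g$ are trivial.

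For the key step, fix a $\mathbb{C}[[x]]$-basis $\mathcal{B}\subset\mathcal{T}_{\Gamma,g}$ produced by the construction in Section \ref{sec:construction_cx_bases} and assume $j<g$. By Proposition~\ref{pro:unique_expression_cbasis} there exist unique $\Omega\in\mathcal{B}$ and $r\in\mathbb{Z}_{\ge 0}$ with $m=rn+\nu_\Gamma(\Omega)$. I will show that $\Omega\in\mathcal{T}_{\Gamma,j-1}$. If instead $\Omega\in\mathcal{T}_{\Gamma,l+1}\setminus\mathcal{T}_{\Gamma,l}$ for some $l$, then its index $k$ lies in $\mathbb{N}_{2\nu_{l+1}}\setminus\mathbb{N}_{2\nu_l}$, so by construction $\Omega_k^0=f_{\Gamma,l+1}^{a}\Omega_b$ with $a\ge 1$ and $\Omega_b\in\mathcal{T}_{\Gamma,l}$; Condition~\ref{good8} of good families then gives $\nu_\Gamma(\Omega)\ge\nu_\Gamma(\Omega_k^0)\ge \overline{\beta}_{l+1}+n>\overline{\beta}_{l+1}$. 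Combining this with $\nu_\Gamma(\Omega)\le m\le\overline{\beta}_j$ forces $l+1\le j-1$, hence $\Omega\in\mathcal{T}_{\Gamma,l+1}\subset\mathcal{T}_{\Gamma,j-1}$, as claimed.

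Write $\Omega=\Omega_b$ with $b\in\mathbb{N}_{2\nu_{j-1}}$ and set $k_0=b+2(n_j-1)\nu_{j-1}\in\widehat\upper_j$. Since $j<g$, Remark~\ref{rem:chu} applies at level $l+1=j$ and yields $\Omega_{k_0}^{s_j}\in\mathcal{T}_{\Gamma,j}$ together with the value identity
\begin{equation*}
  \nu_\Gamma(\Omega_{k_0}^{s_j})=\nu_\Gamma(\Omega)+\overline{\beta}_{j+1}-\overline{\beta}_j.
\end{equation*}
Therefore $x^{r}\Omega_{k_0}^{s_j}$ is a $1$-form with $\nu_\Gamma(x^{r}\Omega_{k_0}^{s_j})=m+\overline{\beta}_{j+1}-\overline{\beta}_j$, placing this integer in $\Lambda_\Gamma$ and closing the argument.

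The only delicate point is the containment $\Omega\in\mathcal{T}_{\Gamma,j-1}$ in the second paragraph: it relies on the fact that every $1$-form introduced at level $l+1$ (beyond the inherited ones of $\mathcal{T}_{\Gamma,l}$) has $\Gamma$-order strictly above $\overline{\beta}_{l+1}$, and this requires Condition~\ref{good8} of good families to propagate $\nu_\Gamma$ monotonically through all stages of the construction. Once that is in hand, the passage to the new form of value $m+\overline{\beta}_{j+1}-\overline{\beta}_j$ is essentially a bookkeeping use of Remark~\ref{rem:chu}; the telescoping then extends the shift uniformly from $k=j+1$ to all $k\le g$.
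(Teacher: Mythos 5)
Your proof is correct and follows essentially the same route as the paper's: reduce by telescoping to the shift $\overline{\beta}_{j+1}-\overline{\beta}_j$, write $m=rn+\nu_\Gamma(\Omega)$ with $\Omega$ in the terminal family $\mathcal{T}_{\Gamma,j-1}$ (since all forms added at later levels have $\Gamma$-order above $\overline{\beta}_{l+1}$), and then read the shifted value off Remark~\ref{rem:chu}. The only difference is that you spell out the details (via Proposition~\ref{pro:unique_expression_cbasis} and Condition~\ref{good8}) that the paper leaves implicit when asserting $\lambda=\nu_\Gamma(\Omega_j)+dn$ with $\Omega_j\in\mathcal{T}_{\Gamma,l-1}$.
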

\begin{proof}
  Consider the construction of a terminal family ${\mathcal T}_{\Gamma, g}$ of
  $\Gamma$ in section \ref{sec:construction_cx_bases}. It contains a
  ${\mathbb C}[[x]]$-basis of $\Gamma$.  It suffices to show that if
  $\lambda \in \Lambda_{\Gamma}$ with $\lambda \leq \overline{\beta}_{l}$ and
  $l \in {\mathbb N}_{g}$ then
  $\lambda + \overline{\beta}_{r} - \overline{\beta}_{l} \in \Lambda_{\Gamma}$ for
  any $l < r \leq g$.  However, it suffices to show the result for $r=l+1$ since,
  if we do, we obtain
  \begin{equation*}
    \lambda + \overline{\beta}_{l+1} - \overline{\beta}_{l} \in
    {\mathbb Z}_{\leq \overline{\beta}_{l+1}} \cap \Lambda_{\Gamma}
  \end{equation*}
and thus 
\begin{equation*}
  \lambda + \overline{\beta}_{l+2} - \overline{\beta}_{l} =
  (\lambda + \overline{\beta}_{l+1} - \overline{\beta}_{l} ) + 
  (\overline{\beta}_{l+2} - \overline{\beta}_{l+1} )
  \in {\mathbb Z}_{\leq \overline{\beta}_{l+2}} \cap \Lambda_{\Gamma}
\end{equation*}
if $l+2 \leq g$ and so on.

We have $\lambda = \nu_{\Gamma} (\Omega_{j}) + dn$ for
some $\Omega_{j} \in {\mathcal T}_{\Gamma, l-1}$ and $d \in {\mathbb Z}_{\geq 0}$
since $\nu_{\Gamma} (\Omega) > \overline{\beta}_{l}$ for any
$\Omega \in {\mathcal T}_{\Gamma, g} \setminus {\mathcal T}_{\Gamma, l-1}$.  It
suffices to prove the result for $\lambda = \nu_{\Gamma} (\Omega_{j})$.  This is a
direct consequence of Remark \ref{rem:chu}.
%Denote $k = 2 (n_l -1) \nu_{l-1} +j$. We have $k \in \upper_{l}$ and 
%$\Omega_{k}^{0} = f_{\Gamma, l}^{n_{l}-1} \Omega_j$. As a consequence, we obtain 
%\begin{equation*} \nu_{\Gamma} (\Omega_{k}^{s_{l}}) = \lambda + (n_{l}-1)
%  \overline{\beta}_l + (\beta_{l+1} - \beta_l) \end{equation*}
%by Conditions \ref{good5} and \ref{good8} in Definition \ref{def:good}, where
% ${\mathfrak n}^{s_{l}} (\beta_{l}) = \beta_{l+1}$.  This leads us to
% \begin{equation*} \nu_{\Gamma} (\Omega_{k}^{s_{l}}) = \lambda +
%   \overline{\beta}_{l+1} - \overline{\beta}_l \in
%   \Lambda_{\Gamma} \end{equation*}
%by equation \eqref{equ:rec_beta}.
\end{proof}
\subsection{Hidden coefficients}
The dimension of the analytic moduli of $\Gamma$ is equal to 
the cardinal of ${\mathcal E}_{\Gamma}  \setminus (\Lambda - n)$
where $\Lambda$ is the generic semimodule $\Lambda_{\overline{\Gamma}}$ 
for $\overline{\Gamma} \in \fga{\Gamma}$ \cite[Remark 5.11]{ayuso2024construction}. 
Let us see
that in many cases we can find ``hidden" coefficients $\beta \in {\mathcal E}_{\Gamma}$
such that $\beta + n \not \in \Lambda_{\overline{\Gamma}}$ for any $\overline{\Gamma} \in \fga{\Gamma}$. 
In this way, our methods recovers results by Zariski \cite[Chapter IV.2]{Zariski:moduli} in a simpler way. 

Let $\Gamma$ be a singular branch of planar curve. Assume $g \geq 2$ and define 
\[ \beta^{*} = \min \{ \beta \in {\mathcal E}_{\Gamma} : \beta > \beta_{2} \ \& \ \beta \not \in (e_1) \} . \]
Notice that since $\beta_{2} + e_2$ and $\beta_2 + 2 e_2$ belong to ${\mathcal E}_{\Gamma}$ and at least one of
them is not a multiple of $e_1$, we deduce $\beta^{*} \leq \beta_2 + 2 e_2$.
\begin{pro}
Assume that  $n_1=n_2=2$, $\beta^{*} - \beta_2 = 2e_2$ and 
$\beta_1 = (n_1 + 1) e_1$ do not hold simultaneously. Then 
$\beta^{*} +n \not \in \Lambda_{\Gamma}$. In particular,   $\beta^{*} +n \not \in \Lambda_{\overline{\Gamma}}$
for any $\overline{\Gamma} \in \fga{\Gamma}$. 
\end{pro}
\begin{proof}
Consider a terminal family ${\mathcal T}_{\Gamma, g}$ and a ${\mathbb C}[[x]]$-basis ${\mathcal B}$ for $\Gamma$. 
Assume that $\beta^{*} +n \in \Lambda_{\Gamma}$. 
Since $\nu_{\Gamma} (\Omega) \geq n + \overline{\beta}_2$ for 
$\Omega \in {\mathcal T}_{\Gamma, g} \setminus {\mathcal T}_{\Gamma, 1}$ and 
\begin{equation}
\label{equ:aux_comb}
\beta^{*} + n  \leq \beta_2 + 2 e_{2} +n  \leq \beta_2 + n_{2} e_{2} + n \leq   \beta_2 + e_1 + n \leq \beta_2 + \beta_1
 \end{equation}
 and then 
 \begin{equation*}
\beta^{*} + n    \leq \beta_2 + \beta_1
  \leq \beta_2 + (n_1 -1) \beta_1 = \overline{\beta}_2, 
 \end{equation*}
it follows that $\beta^{*} + n = \nu_{\Gamma} (\Omega) + l n$ for some
$\Omega \in {\mathcal T}_{\Gamma, 1}$ and $l \in {\mathbb Z}_{\geq 0}$.
So, we can consider the families ${\mathcal G}_{\Gamma,1}^{s} = (\Omega_{1}^{s}, \hdots, \Omega_{2 \nu_1}^{s})$
for $0 \leq s \leq s_1$ where ${\mathfrak n}^{s_1}(\beta_1) = \beta_2$.
The property $\beta^{*} \not \in (e_1)$ implies $\Omega \in  \oDD{\beta_{2}, <}{0}{2}$ by Lemma \ref{lem:cong_nu}
and thus $\Omega = \Omega_{k}^{s_1}$ for some $k \in \upper_{\Gamma, 1}^{s_1}$. We obtain 
 \begin{equation*}
   \nu_{\Gamma} (\Omega_{k}^{s_1}) - \nu_{\Gamma} (\Omega_{\zeta_{s_{1}}^{-1}(k)}^{0}) -
   ( \Beg_{\Gamma}(\Omega_{k}^{s_{1}})   - \beta_{1})
   \in n {\mathbb Z}_{\geq 0}
 \end{equation*}
 by Remark \ref{rem:tracking_leading}. As a consequence, 
 \[  \nu_{\Gamma} (\Omega) - \beta_{2} =
 \nu_{\Gamma} (\Omega_{k}^{s_1}) - \Beg_{\Gamma}(\Omega_{k}^{s_{1}}) = m \beta_1 + r n \]
 holds for some $m, r \in {\mathbb Z}_{\geq 0}$ with $m+r \geq 1$ and thus
 \[  \beta^{*} + n - \beta_2 =  m \beta_1 + (r+l) n .\]
 Since $0 < \beta^{*} - \beta_2 \leq 2  e_1 < n$ by equation \eqref{equ:aux_comb},  
 we get $\beta^{*} - \beta_2 \not \in (n)$ and hence $m \geq 1$.
 Since $\beta^{*} - \beta_{2} +n \leq \beta_1$ by equation \eqref{equ:aux_comb}, 
 we deduce $m=1$, $r=l=0$ and morever, the inequalities in equation 
 \eqref{equ:aux_comb} are indeed equalities. In particular, we obtain 
 $\beta^{*} = \beta_2 + 2 e_2$, $n_2 =2$ and $\beta_1 = (n_1 + 1) e_1$.
 
Notice that $\zeta_{s_{1}}^{-1}(k) \in \upper_{\Gamma, 1}^{1}$ and 
$\nu_{\Gamma} (\Omega_{\zeta_{s_{1}}^{-1}(k)}^{0}) - \beta_1 = \beta_1$ since $r=0$. 
This implies $\zeta_{s_{1}}^{-1}(k) = 4$ and $\Omega_{4}^{0} = f_{\Gamma, 1} dy$. 
Since $4 \in \upper_{\Gamma, 1}^{1}$, we obtain $n_1 =2$ by Definition \ref{def:lu}. 
\end{proof}
\section{Generation of the semimodule $\Lambda_{\Gamma}$}
\label{sec:generation_semimodule}
In  this section, given the singular branch $\Gamma$, we discuss several types
of generator sets for its semimodule $\Lambda_{\Gamma}$ and how they naturally induce
generator sets of $1$-forms with meaningful geometrical properties.  In particular,
we prove Theorems \ref{teo:lambda_on_s_b} and \ref{teo:gen_sg_in_sbasis} (pages \pageref{teo:lambda_on_s_b} and \pageref{teo:gen_sg_in_sbasis}). As will become apparent, all the results
follow from the properties of good and terminal families.

In the Introduction, we defined the concept of ${\mathbb C}[[x]]$-collections and
${\mathcal C}$-collections for subsets of ${\mathbb Z}_{\geq 1}$.  We now define two
other types of collections related to $\Gamma$ and $\Lambda_{\Gamma}$.
\begin{defi}
  Let $S$ be a subset of ${\mathbb Z}_{\geq 1}$. We say that it is an
  ${\mathcal S}_{\Gamma}$-{\it collection} (or just ${\mathcal S}$-collection if
  $\Gamma$ is implicit) if $S + {\mathcal S}_{\Gamma} \subset S$.
\end{defi}
\begin{defi}
  Let $S$ be a subset of ${\mathbb Z}_{\geq 1}$. We say that it is a
  ${\mathcal C}_{\Gamma}^{w}$-{\it collection} (or just
  ${\mathcal C}^{w}$-collection if $\Gamma$ is implicit) if
  $S + {\mathcal S}_{\Gamma} \subset S$ and given any choice of $1 \leq l <g$ and
  $\lambda \leq \overline{\beta}_{l}$ with
  $\lambda \in S \cap [\beta_l]_{e_{l-1}}$, we have
  $\lambda + \overline{\beta}_{l+1} - \overline{\beta}_{l} \in S$.
\end{defi}
\begin{rem}
\label{rem:cw_gen}
Let $S$ be a ${\mathcal C}^{w}$-collection.  Assume
$\lambda \in S \cap [\beta_l]_{e_{l-1}}$ with $\lambda \leq \overline{\beta}_{l}$
and $l < k \leq g$. Then
$\lambda + \overline{\beta}_{k} - \overline{\beta}_{l} \in S$ holds. This is clear
for $k=l+1$ by definition. Moreover, if
$\lambda + \overline{\beta}_{k} - \overline{\beta}_{l} \in S$ for some $l < k < g$
then
\begin{equation*}
  \lambda + \overline{\beta}_{k+1} -  \overline{\beta}_{l} = 
  (\lambda + \overline{\beta}_{k} -  \overline{\beta}_{l}) +
  \overline{\beta}_{k+1} -  \overline{\beta}_{k}.
\end{equation*}
This expression belongs also to $S$ by definition of ${\mathcal C}^{w}$-collection
since
$\lambda + \overline{\beta}_{k} - \overline{\beta}_{l} \leq \overline{\beta}_{k}$
and
$[\lambda + \overline{\beta}_{k} - \overline{\beta}_{l}]_{e_{k-1}} =
[\beta_{k}]_{e_{k-1}}$.
\end{rem}
\begin{rem}
  We have
  \begin{equation*}
    \mathcal{C}-\mathrm{collection} \Rightarrow
    \mathcal{C}^{w}-\mathrm{collection} \Rightarrow \mathcal{S}-\mathrm{collection}
    \Rightarrow {\mathbb C}[[x]]-\mathrm{collection} .
  \end{equation*}
  As a consequence $\Lambda_{\Gamma}$ is a ${\mathcal J}$-collection for
  $\mathcal{J} \in \{ {\mathbb C}[[x]], \mathcal{S}, \mathcal{C}^{w}, \mathcal{C}
  \}$ by Proposition \ref{pro:L_is_c_coll}.
\end{rem}
\begin{rem}
  Let
  $\mathcal{J} \in \{ {\mathbb C}[[x]], \mathcal{S}, \mathcal{C}^{w}, \mathcal{C}
  \}$.  It is clear that an arbitrary intersection of $\mathcal{J}$-collections is
  also a $\mathcal{J}$-collection.
\end{rem}
The previous remark implies that the next definition makes sense.
\begin{defi}
  \label{def:gen_collection}
  Let
  $\mathcal{J} \in \{ {\mathbb C}[[x]], \mathcal{S}, \mathcal{C}^{w}, \mathcal{C}
  \}$.  Given a subset $S$ of ${\mathbb Z}_{\geq 1}$, we denote by
  ${\langle S \rangle}_{\mathcal J}$ the minimal $\mathcal{J}$-collection
  containing $S$. We say that $T$ is a {\it generator set} of a
  ${\mathcal J}$-collection $S$ if ${\langle T \rangle}_{\mathcal J} = S$.
\end{defi} 
\begin{rem}
  \label{rem:gen_c_s}
  Let $S$ be a subset of ${\mathbb Z}_{\geq 1}$. Then we have
  \begin{itemize}
  \item
    ${\langle S \rangle}_{\mathbb{C}[[x]]} = \{ k + rn : k \in S \ \mathrm{and} \ r
    \in {\mathbb Z}_{\geq 0} \}$;
  \item
    ${\langle S \rangle}_{\mathcal S} = \{ k + r : k \in S \ \mathrm{and} \ r \in
    {\mathcal S}_{\Gamma} \cup \{0\} \}$.
\end{itemize}
\end{rem}  
\begin{pro}
  \label{pro:c_coll_gen}
  Let $S$ be a subset of ${\mathbb Z}_{\geq 1}$. Then
  ${\langle S \rangle}_{\mathcal C}$ consists of the expressions of the form
  \begin{equation}
    \label{equ:description_c}
    k + \sum_{j=1}^{r} (\overline{\beta}_{b_j} - \overline{\beta}_{a_j}) + s
  \end{equation}
  where $r \geq 0$, $s \in {\mathcal S}_{\Gamma} \cup \{0\}$,
  $k \leq \overline{\beta}_{a_1} < \overline{\beta}_{b_1} \leq
  \overline{\beta}_{a_2} < \overline{\beta}_{b_2} \leq \ldots <
  \overline{\beta}_{b_r} $ and $k \in S$.
\end{pro}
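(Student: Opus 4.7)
The plan is to establish both inclusions for $T$, where $T$ denotes the set of expressions described in~(\ref{equ:description_c}).

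For $T\subset\langle S\rangle_{\mathcal C}$ I would argue by induction on $r$. The base case $r=0$ gives $k+s$ with $k\in S$ and $s\in\mathcal{S}_{\Gamma}\cup\{0\}$, which lies in $\langle S\rangle_{\mathcal C}$ since the latter is an $\mathcal{S}$-collection containing $S$. For the inductive step, the partial sum $m'=k+\sum_{j=1}^{r-1}(\overline{\beta}_{b_j}-\overline{\beta}_{a_j})$ is of type~(\ref{equ:description_c}) with $r-1$ pairs and $s=0$, hence belongs to $\langle S\rangle_{\mathcal C}$ by induction. The sorting constraint gives $m'\leq\overline{\beta}_{b_{r-1}}\leq\overline{\beta}_{a_r}$ (with the convention that $m'=k\leq\overline{\beta}_{a_1}$ when $r=1$). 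The defining $\mathcal{C}$-property then yields $m'+\overline{\beta}_{b_r}-\overline{\beta}_{a_r}\in\langle S\rangle_{\mathcal C}$, and finally adding $s$ via the $\mathcal{S}$-property produces the element of~(\ref{equ:description_c}).

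The reverse inclusion reduces to showing that $T$ is itself a $\mathcal{C}$-collection containing $S$. Membership $S\subset T$ is witnessed by $r=0$, $s=0$, and closure under addition of elements of $\mathcal{S}_{\Gamma}$ is immediate by absorbing into $s$. The main task is to prove that $T$ is stable under the $\mathcal{C}$-move $m\mapsto m+\overline{\beta}_k-\overline{\beta}_j$ whenever $m\leq\overline{\beta}_j$ and $j\leq k\leq g$.

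This stability property is where I expect the real obstacle to lie. The strategy is induction on $r$. Fix a representation $m=k_0+\sum_{i=1}^{r}(\overline{\beta}_{b_i}-\overline{\beta}_{a_i})+s$. When $b_r\leq j$, the new pair $(j,k)$ can be appended at the end of the sorted sequence, since $\overline{\beta}_{b_r}\leq\overline{\beta}_j<\overline{\beta}_k$; this handles the easy case (in particular the base step $r=0$, where $k_0\leq m\leq\overline{\beta}_j$ allows taking $(a_1,b_1)=(j,k)$). When $b_r>j$, I would set $m^{\ast}=k_0+\sum_{i=1}^{r-1}(\overline{\beta}_{b_i}-\overline{\beta}_{a_i})$, observe that $m^{\ast}\leq\overline{\beta}_{a_r}$ and also (since $m=m^{\ast}+\overline{\beta}_{b_r}-\overline{\beta}_{a_r}+s\leq\overline{\beta}_j<\overline{\beta}_{b_r}$) that $m^{\ast}<\overline{\beta}_{a_r}$ strictly. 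I then split on the position of $j$, $k$ relative to $a_r$, $b_r$. The principal subcases are: (i) $k\leq a_r$, where the pair $(j,k)$ can be inserted just before $(a_r,b_r)$ in the sorted sequence, the condition $\overline{\beta}_k\leq\overline{\beta}_{a_r}$ being exactly what is needed; (ii) $a_r<j$, where the identity $(\overline{\beta}_{b_r}-\overline{\beta}_{a_r})+(\overline{\beta}_k-\overline{\beta}_j)=(\overline{\beta}_j-\overline{\beta}_{a_r})+(\overline{\beta}_{b_r}-\overline{\beta}_j)+(\overline{\beta}_k-\overline{\beta}_j)$ (or a similar rearrangement) allows replacement of $(a_r,b_r)$ by the sorted triple $(a_r,j),(j,b_r)$ combined with an application of the inductive hypothesis to a shorter representation; (iii) the remaining mixed positions, handled by reducing the $\mathcal{C}$-move $(j,k)$ to the composition of unit moves $(i,i+1)$ for $j\leq i<k$, each of which preserves the form~(\ref{equ:description_c}) by one of the previous subcases or by exploiting that $\overline{\beta}_{b_r}\in\mathcal{S}_{\Gamma}$ may be absorbed into $s$ to trim the trailing pair when convenient. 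The bookkeeping in subcases (ii)--(iii) is the main technical difficulty, but the induction on $r$ together with the flexibility of placing absorbed semigroup elements into $s$ closes the argument.
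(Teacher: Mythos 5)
Your first inclusion (every expression of the form \eqref{equ:description_c} lies in ${\langle S \rangle}_{\mathcal C}$) is correct and is essentially the paper's argument: the telescoping estimate $m' \leq \overline{\beta}_{b_{r-1}} \leq \overline{\beta}_{a_r}$ legitimises the $\mathcal C$-move on the partial sum, and $s$ is absorbed afterwards by the $\mathcal S$-property.

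The gap is in the closure of $T$ under the $\mathcal C$-move, exactly the part you defer as ``the main technical difficulty''. The subcases with $j < b_r$ that you propose to treat by inserting $(j,k)$ before $(a_r,b_r)$, splitting $(a_r,b_r)$ into $(a_r,j),(j,b_r)$, or decomposing into unit moves are never actually proved, and the manipulations you sketch do not work as stated: the rearrangement in your subcase (ii) produces a collection of differences that cannot be put in the required sorted form (a pair ending at $b_r$ would have to be followed by a pair starting at $j < b_r$), and reducing to unit moves $(i,i+1)$ in (iii) merely reproduces the same difficulty for $i < b_r$. What you are missing is the arithmetic fact that renders all of these cases vacuous. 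By equation \eqref{equ:rec_beta} (cf.\ Remark \ref{rem:prop_beta}) one has $\overline{\beta}_{b_r} > 2\,\overline{\beta}_{b_r-1}$, while $\overline{\beta}_{a_r} \leq \overline{\beta}_{b_r-1}$; hence any $m \in T$ admitting a representation with $r \geq 1$ pairs satisfies $m \geq m - s > \overline{\beta}_{b_r} - \overline{\beta}_{a_r} > \overline{\beta}_{b_r-1}$. Consequently the hypothesis $m \leq \overline{\beta}_{j}$ of the $\mathcal C$-move forces $j \geq b_r$, so the new pair $(j,k)$ can always be appended at the end of the sorted chain and the case $j < b_r$ simply never occurs. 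This one-line observation is how the paper closes the argument; without it (or a genuine completion of your subcases, which as sketched would fail) your proof of the reverse inclusion is incomplete.
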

\begin{proof}
  We denote by $T_{m}$ the set of expressions of the form \eqref{equ:description_c}
  with $r=m$.  It is clear that
  $T_{0} \cup T_{1} \subset {\langle S \rangle}_{\mathcal C}$ by the definition of
  a ${\mathcal C}$-collection.
  
  %We claim that $T_{r+1}$ is contained in the ${\mathcal C}$-collection generated
  %by $T_{r}$ for any $r \geq 1$. 
  Let us analyze an expression $k'$ of the form \eqref{equ:description_c} in
  $T_{r}$ for $r \geq 1$. We have
  \begin{equation*}
    \overline{\beta}_{b_{r}-1}  < \overline{\beta}_{b_r} - \overline{\beta}_{a_r}
    <  k' - s =
    (k - \overline{\beta}_{a_1}) +
    \sum_{j=0}^{r-1} (\overline{\beta}_{b_j} - \overline{\beta}_{a_{j+1}}) +
    \overline{\beta}_{b_{r}} \leq \overline{\beta}_{b_{r}}
  \end{equation*}
  where we use $\overline{\beta}_{a_r} \leq \overline{\beta}_{b_{r-1}}$ and
  $\overline{\beta}_{b_r} > 2 \overline{\beta}_{b_{r-1}}$ (cf. Remark
  \ref{rem:prop_beta}). 
  Thus, $k' -s + (\overline{\beta}_{b_{r+1}} - \overline{\beta}_{a_{r+1}})
  \in {\langle T_r \rangle}_{\mathcal C}$ if
  $\overline{\beta}_{b_{r}} \leq
  \overline{\beta}_{a_{r+1}} < \overline{\beta}_{b_{r+1}}$.
  It follows that $T_{r+1} \subset {\langle T_r \rangle}_{\mathcal C}$ for $r \geq 0$.
  This implies 
  $\cup_{r \geq 0} T_r \subset {\langle S \rangle}_{\mathcal C}$.

  We claim that $\cup_{r \geq 0} T_r$ is a ${\mathcal C}$-collection.
  It suffices to show that if $k'$ is of the form \eqref{equ:description_c}
  and satisfies
  $k' \leq \overline{\beta}_{a_{r+1}} < \overline{\beta}_{b_{r+1}}$ then
  $k' + (\overline{\beta}_{b_{r+1}} - \overline{\beta}_{a_{r+1}})$
  belongs to $T_{r+1}$. 
  Since $\overline{\beta}_{b_{r}-1} < k' -s \leq \overline{\beta}_{a_{r+1}}$, 
  we get $\overline{\beta}_{b_{r}} \leq \overline{\beta}_{a_{r+1}}$
  and the claim holds by definition of $T_{r+1}$.

  Since $\cup_{r \geq 0} T_r$ is a ${\mathcal C}$-collection contained in
  ${\langle S \rangle}_{\mathcal C}$ and $S \subset \cup_{r \geq 0} T_r$, we
  get ${\langle S \rangle}_{\mathcal C} = \cup_{r \geq 0} T_r$.
\end{proof}  
\begin{pro}
  \label{pro:cw_coll_gen}
  Let $S$ be a subset of ${\mathbb Z}_{\geq 1}$.
%with $S \cap {\mathbb Z}_{\leq \overline{\beta}_j} \subset (e_j)$ for any $j \in {\mathbb N}_{g}$. 
  Then ${\langle S \rangle}_{\mathcal{C}^{w}}$ consists of
  $S + ({\mathcal S}_{\Gamma} \cup \{0\})$ and all the expressions of the form
  \begin{equation}
    \label{equ:description_cw}
    k + \overline{\beta}_{b} - \overline{\beta}_{a} + s
  \end{equation}
  where $k \in S$, $s \in {\mathcal S}_{\Gamma} \cup \{0\}$,
  $k \leq \overline{\beta}_{a} < \overline{\beta}_{b}$ and
  $k \in [\beta_a]_{e_{a-1}}$.
\end{pro}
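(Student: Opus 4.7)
Following the template of the proof of Proposition \ref{pro:c_coll_gen}, I will denote $T_0 := S + (\mathcal{S}_{\Gamma} \cup \{0\})$ and $T_1$ the set of expressions of the form \eqref{equ:description_cw}, and I will show $\langle S \rangle_{\mathcal{C}^w} = T_0 \cup T_1$. The inclusion $T_0 \cup T_1 \subset \langle S \rangle_{\mathcal{C}^w}$ is immediate: $T_0$ is contained in the $\mathcal{S}$-collection generated by $S$, and the elements of $T_1$ lie in $\langle S \rangle_{\mathcal{C}^w}$ by Remark \ref{rem:cw_gen}. Since $S \subset T_0$, the reverse inclusion reduces to proving that $T_0 \cup T_1$ is a $\mathcal{C}^w$-collection.

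Closure under $\mathcal{S}_\Gamma$-addition is clear from the shape of \eqref{equ:description_cw}. The real content is the following: given $\mu \in T_0 \cup T_1$ with $\mu \leq \overline{\beta}_l$ and $\mu \in [\beta_l]_{e_{l-1}}$ for some $1 \leq l < g$, I must produce $\mu + \overline{\beta}_{l+1} - \overline{\beta}_l$ inside $T_0 \cup T_1$. The plan rests on two observations I will use repeatedly. First, a congruence fact: since the chain $e_g \mid e_{g-1} \mid \ldots \mid e_0$ gives $e_{l-1} \mid e_a$ for $a \leq l-1$, we have $\overline{\beta}_a, \beta_a \in (e_{l-1})$ for $a \leq l-1$, whereas $\beta_l \not\equiv 0 \pmod{e_{l-1}}$ (and $\overline{\beta}_l \equiv \beta_l \pmod{e_{l-1}}$ via equation \eqref{equ:rec_beta}). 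Second, a size bound: if $\mu \leq \overline{\beta}_l$ and one of the summands in $\mu$ is positive, then any $s \in \mathcal{S}_\Gamma$ appearing in the decomposition satisfies $s < \overline{\beta}_l$, forcing $s$ to be a non-negative combination of $n, \overline{\beta}_1, \ldots, \overline{\beta}_{l-1}$, hence $s \in (e_{l-1})$.

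If $\mu = k + s \in T_0$, the size bound gives $s \in (e_{l-1})$, so $k \equiv \mu \equiv \beta_l \pmod{e_{l-1}}$ and $k \leq \overline{\beta}_l$; then $\mu + \overline{\beta}_{l+1} - \overline{\beta}_l = k + \overline{\beta}_{l+1} - \overline{\beta}_l + s$ fits \eqref{equ:description_cw} with $a = l$, $b = l+1$. The harder case is $\mu = k + \overline{\beta}_b - \overline{\beta}_a + s \in T_1$, and here the key structural claim I need to prove is that necessarily $b = l$. To rule out $b < l$: the size bound gives $s \in (e_{l-1})$ and the congruence fact gives $\mu \equiv k \equiv \beta_a \equiv 0 \pmod{e_{l-1}}$, contradicting $\mu \in [\beta_l]_{e_{l-1}}$. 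To rule out $b > l$: since $\mu \leq \overline{\beta}_l$ forces $\overline{\beta}_a > \overline{\beta}_b - \overline{\beta}_l$, and since $\overline{\beta}_{j+1} > 2\overline{\beta}_j$ (a consequence of Remark \ref{rem:prop_beta} via $n_j \geq 2$, already used in the proof of Proposition \ref{pro:c_coll_gen}), a short case split on $a < l$, $a = l$, $a > l$ yields a contradiction. Once $b = l$ is established, the transformation is free: $\mu + \overline{\beta}_{l+1} - \overline{\beta}_l = k + \overline{\beta}_{l+1} - \overline{\beta}_a + s$ lies in $T_1$ with the same $k, a, s$ and new upper index $l+1$, all constraints on $k$ being preserved and $\overline{\beta}_a < \overline{\beta}_{l+1}$ holding because $a < l < l+1$.

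The main obstacle is Case B, specifically the structural claim that $b \neq l$ is impossible. This is what prevents the generator set $T_1$ from needing to accommodate multiple telescoped jumps $\overline{\beta}_{b_1} - \overline{\beta}_{a_1} + \overline{\beta}_{b_2} - \overline{\beta}_{a_2} + \cdots$ as in Proposition \ref{pro:c_coll_gen}: the weaker closure condition of $\mathcal{C}^w$-collections, combined with the size and congruence constraints forced by $\mu \leq \overline{\beta}_l$ and $\mu \in [\beta_l]_{e_{l-1}}$, pins $b$ to equal $l$ so that all iterated jumps collapse into a single term $\overline{\beta}_{b'} - \overline{\beta}_a$.
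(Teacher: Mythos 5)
Your proposal is correct and follows essentially the same route as the paper's proof: sandwich the candidate set $T_0\cup T_1$ via Remark \ref{rem:cw_gen}, then verify the ${\mathcal C}^{w}$-closure by forcing $s\in (e_{l-1})$ through Remark \ref{rem:prop_beta} and pinning the jump index to $b=l$ with the same combination of congruences modulo $e_{l-1}$ and the growth estimate $\overline{\beta}_{b}>2\overline{\beta}_{b-1}$ (this is exactly the paper's ``$c=b$'' step). The only differences are cosmetic: you check just the consecutive jump required by the definition, whereas the paper verifies arbitrary jumps $d>c$.
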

\begin{proof}
  Denote by $T$ the set consisting of $S + ({\mathcal S}_{\Gamma} \cup \{0\})$ and
  the expressions of the form \eqref{equ:description_cw}.  It is clear that $T$
  contains $S$.  Moreover, it is contained in
  ${\langle S \rangle}_{\mathcal{C}^{w}}$ by Remark \ref{rem:cw_gen}.
 %since
 %\begin{equation*} k + \overline{\beta}_{b} - \overline{\beta}_{a}  = k + \sum_{j=a}^{b-1} (\overline{\beta}_{j+1} - \overline{\beta}_{j}) \end{equation*}

 %and 
 %$k + \sum_{j=a}^{c} (\overline{\beta}_{j+1} - \overline{\beta}_{j}) \in [\beta_{c+1}]_{e_c} \cap {\mathbb Z}_{ \leq \overline{\beta}_{c+1}}$
 %for any $a \leq c < b-1$, we deduce that $T$ is contained in ${\langle S \rangle}_{\mathcal{C}^{w}}$.
  It suffices to show that $T$ is a ${\mathcal C}^{w}$-collection. Consider
  \begin{equation*}
    k' = k  + s  \leq \overline{\beta}_{a}
  \end{equation*}
  with $k \in S$, $s \in {\mathcal S}_{\Gamma} \cup \{0\}$ and
  $k' \in [\beta_a]_{e_{a-1}}$.  Let us show that
  $k' + \overline{\beta}_{b} - \overline{\beta}_a$ belongs to $T$ if $a < b$.  We
  have $s \in (e_{a-1})$ since $s < \overline{\beta}_{a}$ (cf. Remark
  \ref{rem:prop_beta}).  This implies $k \in [\beta_a]_{e_{a-1}}$ and hence
  $k' + \overline{\beta}_{b} - \overline{\beta}_{a} \in T$.  Finally, consider
  \begin{equation*}
    k' =k + \overline{\beta}_{b} - \overline{\beta}_{a} + s
    \leq \overline{\beta}_{c}
  \end{equation*}
  with $k \in S$, $s \in {\mathcal S}_{\Gamma} \cup \{0\}$,
  $k \in [\beta_a]_{e_{a-1}}$ and $k' \in [\beta_c]_{e_{c-1}}$.  Let us show
  $k' + \overline{\beta}_{d} - \overline{\beta}_c \in T$ if $d>c$.  Since
  $\overline{\beta}_{b} - \overline{\beta}_{a} > \overline{\beta}_{b-1}$ it follows
  that $c \geq b > a$.  Since $s < \overline{\beta}_c$, we get $s \in
  (e_{c-1})$. Moreover, we obtain
  $\overline{\beta}_{a}, k \in (e_a) \subset (e_{c-1})$ as a consequence of
  $[\beta_a]_{e_{a-1}} \subset (e_{a})$.  The property $k' \not \in (e_{c-1})$,
  together with $\{ \overline{\beta}_a, s, k \} \subset (e_{c-1})$, implies
  $\overline{\beta}_{b} \not \in (e_{c-1})$.  We deduce $c=b$. Therefore, we have
  that
  \begin{equation*}
    k' + \overline{\beta}_{d} - \overline{\beta}_{c}  =
    k + \overline{\beta}_{d} - \overline{\beta}_{a} + s
  \end{equation*}
  belongs to $T$.
\end{proof}  
\begin{pro}
  \label{pro:min_gen}
  Let $S$ be a ${\mathcal J}$-collection where
  $\mathcal{J} \in \{ {\mathbb C}[[x]], \mathcal{S}, \mathcal{C}^{w}, \mathcal{C}
  \}$.  The set
  \begin{equation*}
    {\mathcal B} = \{ k \in {\mathbb Z}_{\geq 1} : k \not \in
    {\langle S \cap {\mathbb Z}_{<k} \rangle}_{\mathcal J} \}
  \end{equation*}
 satisfies 
 \begin{itemize}
  \item ${\mathcal B}$ is contained in any generator set of $S$ and
 \item ${\mathcal B}$ is a generator set of $S$. 
 \end{itemize}
 In particular ${\mathcal B}$ is the minimal generator set of $S$.
\end{pro}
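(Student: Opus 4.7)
The plan is to reduce both bullet points to a single structural property of the four notions of generation considered: if $k\in \langle T\rangle_{\mathcal{J}}$ and $k\notin T$, then already $k\in \langle T\cap \mathbb{Z}_{<k}\rangle_{\mathcal{J}}$. This property is transparent from the explicit descriptions collected just before the statement. For $\mathcal{J}\in\{\mathbb{C}[[x]],\mathcal{S}\}$ it is Remark~\ref{rem:gen_c_s}, since any element of $\langle T\rangle_{\mathcal{J}}$ has the form $t+\mu$ with $t\in T$ and $\mu\in\{0\}\cup n\mathbb{Z}_{\geq 1}$ or $\mu\in\{0\}\cup\mathcal{S}_{\Gamma}$, so $k\notin T$ forces $\mu>0$ and thus $t<k$. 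For $\mathcal{J}=\mathcal{C}^{w}$ (resp.\ $\mathcal{J}=\mathcal{C}$) one uses the decomposition~\eqref{equ:description_cw} (resp.~\eqref{equ:description_c}) and notes that each increment $\overline{\beta}_{b_j}-\overline{\beta}_{a_j}$ is strictly positive because $\overline{\beta}_{a_j}<\overline{\beta}_{b_j}$, so again $k\neq k_0$ forces $k_0<k$.

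Next I would deduce the first bullet. Let $T$ be an arbitrary generator set of $S$, so $T\subseteq S$ and $\langle T\rangle_{\mathcal{J}}=S$. Take $k\in\mathcal{B}$ and assume, for contradiction, that $k\notin T$. Applying the structural property to $T$, we obtain $k\in\langle T\cap\mathbb{Z}_{<k}\rangle_{\mathcal{J}}$; by monotonicity of $\langle\cdot\rangle_{\mathcal{J}}$ and $T\subseteq S$ this yields $k\in\langle S\cap\mathbb{Z}_{<k}\rangle_{\mathcal{J}}$, contradicting $k\in\mathcal{B}$. Hence $k\in T$, and $\mathcal{B}\subseteq T$.

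The second bullet is handled by strong induction on $k$. The inclusion $\langle\mathcal{B}\rangle_{\mathcal{J}}\subseteq S$ is immediate since $\mathcal{B}\subseteq S$ and $S$ is itself a $\mathcal{J}$-collection. For the reverse, fix $k\in S$ and suppose inductively that $S\cap\mathbb{Z}_{<k}\subseteq\langle\mathcal{B}\rangle_{\mathcal{J}}$. Either $k\in\mathcal{B}$, in which case there is nothing to prove, or $k\notin\mathcal{B}$, which by definition means $k\in\langle S\cap\mathbb{Z}_{<k}\rangle_{\mathcal{J}}$. By the inductive hypothesis, $\langle S\cap\mathbb{Z}_{<k}\rangle_{\mathcal{J}}\subseteq\langle\langle\mathcal{B}\rangle_{\mathcal{J}}\rangle_{\mathcal{J}}=\langle\mathcal{B}\rangle_{\mathcal{J}}$, where the last equality holds because $\langle\mathcal{B}\rangle_{\mathcal{J}}$ is already a $\mathcal{J}$-collection. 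Hence $k\in\langle\mathcal{B}\rangle_{\mathcal{J}}$, completing the induction and thus the proof.

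The main obstacle is essentially bookkeeping rather than substance: one has to verify carefully, in the $\mathcal{C}$ and $\mathcal{C}^{w}$ cases, that every decomposition in Propositions~\ref{pro:c_coll_gen} and~\ref{pro:cw_coll_gen} produces a strictly smaller ``base element'' whenever it is nontrivial. Once this is settled, minimality of $\mathcal{B}$ is automatic: $\mathcal{B}$ is contained in every generator set (first bullet) and is itself a generator set (second bullet), so $\mathcal{B}$ is the unique minimum generator set of $S$ with respect to inclusion.
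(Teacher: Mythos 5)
Your proof is correct, and its overall architecture (minimality by contradiction with an arbitrary generator set, generation by strong induction on $k$ using idempotence of ${\langle \cdot \rangle}_{\mathcal J}$) matches the paper's; the induction for the second bullet is essentially the paper's argument, just phrased through membership $k\in{\langle {\mathcal B}\rangle}_{\mathcal J}$ instead of truncated equalities ${\langle {\mathcal B} \cap {\mathbb Z}_{\leq k} \rangle}_{\mathcal J} = {\langle S \cap {\mathbb Z}_{\leq k} \rangle}_{\mathcal J}$. The one genuine difference is how you justify the key fact that $k \in {\langle T\rangle}_{\mathcal J}\setminus T$ forces $k \in {\langle T \cap {\mathbb Z}_{<k}\rangle}_{\mathcal J}$: you extract it from the explicit descriptions of ${\langle \cdot \rangle}_{\mathcal J}$ (Remark \ref{rem:gen_c_s} and Propositions \ref{pro:c_coll_gen}, \ref{pro:cw_coll_gen}), checking that every nontrivial decomposition \eqref{equ:description_c} or \eqref{equ:description_cw} has a strictly smaller base element, whereas the paper avoids those decompositions altogether and simply observes that ${\langle T \cap {\mathbb Z}_{<k} \rangle}_{\mathcal J} \cup {\mathbb Z}_{>k}$ is a ${\mathcal J}$-collection containing $T$ (when $k\notin T$), which only uses that all the defining increments ($n$, elements of ${\mathcal S}_{\Gamma}$, and $\overline{\beta}_{b}-\overline{\beta}_{a}$ with $a\leq b$) are nonnegative. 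Your route is legitimate since those propositions are proved before this statement, but it imports more machinery (and the ``bookkeeping'' you worry about is exactly the content of those propositions), while the paper's union trick is more economical and would apply verbatim to any notion of collection whose generating operations never decrease values. One small shared caveat: both you and the paper tacitly read the definition of ${\mathcal B}$ as ranging over $k\in S$ (otherwise ${\mathcal B}\subset S$, used in both proofs of the second bullet, would fail), so this is not a gap on your side.
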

\begin{proof}
  Let $T$ a generator set of $S$ and $k \in {\mathcal B}$. Assume, aiming at a
  contradiction that $k \not \in T$. We have
  \begin{equation*}
    k \in {\langle T \rangle}_{\mathcal J}
    \subset {\langle T  \cap {\mathbb Z}_{<k}  \rangle}_{\mathcal J}
    \cup {\mathbb Z}_{>k}
    \subset   {\langle S \cap {\mathbb Z}_{<k} \rangle}_{\mathcal J}
    \cup {\mathbb Z}_{>k},
  \end{equation*}
  where the first inclusion is a consequence of
  ${\langle T \cap {\mathbb Z}_{<k} \rangle}_{\mathcal J} \cup {\mathbb Z}_{>k}$
  being a ${\mathcal J}$-collection containing $T$. Since
  $k \not \in {\langle S \cap {\mathbb Z}_{<k} \rangle}_{\mathcal J} \cup {\mathbb
    Z}_{>k}$, we obtain a contradiction.
 
  In order to prove $S={\langle {\mathcal B} \rangle }_{\mathcal J}$ it suffices to
  show
  \begin{equation*}
    {\langle {\mathcal B} \cap {\mathbb Z}_{\leq k} \rangle
    }_{\mathcal J} = {\langle S \cap {\mathbb Z}_{\leq k} \rangle }_{\mathcal J}
  \end{equation*}
  for any $k \in {\mathbb Z}_{\geq 0}$. Let us show this result by induction on
  $k$. The property is satisfied for $k=0$.  Suppose it holds for
  $k \in {\mathbb Z}_{\geq 0}$.  We have
  \begin{equation}
    \label{equ:sbj}
    {\langle S \cap {\mathbb Z}_{\leq k} \rangle }_{\mathcal J} 
    \subset
    {\langle {\mathcal B} \cap {\mathbb Z}_{\leq k} \rangle }_{\mathcal J}
    \subset 
    {\langle {\mathcal B} \cap {\mathbb Z}_{\leq k+1} \rangle }_{\mathcal J}.   
  \end{equation}
  We claim that $k+1 \in S$ implies
  $k+1 \in {\langle {\mathcal B} \cap {\mathbb Z}_{\leq k+1} \rangle }_{\mathcal
    J}$.  This is obvious if $k+1 \in {\mathcal B}$.  Otherwise, we have
  ${ \langle S \cap {\mathbb Z}_{\leq k+1} \rangle}_{\mathcal J} = { \langle S \cap
    {\mathbb Z}_{\leq k} \rangle}_{\mathcal J}$ and it is a consequence of equation
  \eqref{equ:sbj}. We deduce
  \begin{equation*}
    S \cap {\mathbb Z}_{\leq k+1}
    \subset {\langle {\mathcal B} \cap {\mathbb Z}_{\leq k+1} \rangle }_{\mathcal J}
  \end{equation*}
  and hence
  $ {\langle S \cap {\mathbb Z}_{\leq k+1} \rangle }_{\mathcal J} = {\langle
    {\mathcal B} \cap {\mathbb Z}_{\leq k+1} \rangle }_{\mathcal J}$.
\end{proof}  
\begin{defi}
  Let $S$ be a ${\mathcal J}$-collection where
  $\mathcal{J} \in \{ {\mathbb C}[[x]], \mathcal{S}, \mathcal{C}^{w}, \mathcal{C}
  \}$.  We say that the minimal generator set of $S$ is the ${\mathcal J}$-{\it
    basis} of $S$.
\end{defi}
\begin{defi}
  We say that the ${\mathcal J}$-basis of $\Lambda_{\Gamma}$ is the
  ${\mathcal J}$-{\it basis of values for} $\Gamma$.  We say that
  ${\mathcal B} = (\omega_1, \hdots, \omega_m)$ is a ${\mathcal J}$-{\it basis of
    $1$-forms} for $\Gamma$ if
  $(\nu_{\Gamma}(\omega_1), \ldots, \nu_{\Gamma}(\omega_m))$ is the
  ${\mathcal J}$-basis of values.
\end{defi}
We have already studied the properties of the ${\mathbb C}[[x]]$-basis of $1$-forms
for $\Gamma$ that we constructed in section \ref{sec:construction_cx_bases}. Next,
we will see that this leads us to interesting ${\mathcal J}$-basis of $1$-forms for
$\Gamma$ for ${\mathcal J} \in \{ \mathcal{S}, {\mathcal C}^{w}, {\mathcal C} \}$.
\begin{defi}
  Given a $1$-form $\Omega$ with $\overline{\beta}_{1} < \nu_{\Gamma}(\Omega)$, we
  define
  \begin{equation*}
    \kappa_{\Omega} = \max \{ j \in {\mathbb N}_{g} : \overline{\beta}_{j} <
    \nu_{\Gamma} (\Omega) \} .
  \end{equation*}
\end{defi}
\begin{pro}
  \label{pro:gen_s}
  Let $\Gamma$ be a singular branch of plane curve. Then there exists an
  $\mathcal{S}$-basis ${\mathcal B}$ of $1$-forms for $\Gamma$ such that any
  $\Omega \in {\mathcal B} \setminus \{dx, dy\}$ \touches{} $\Gamma$ to some order
  and has a unique formal companion curve $\Gamma_{\Omega}$ equisingular to
  $\Gamma_{< \beta_{\kappa +1}}$, where $\kappa = \kappa_{\Omega}$.  Moreover,
  $\Omega$ is a $1$-form of one of the first two types in Proposition
  \ref{pro:type_forms} with $l = \kappa-1$.
 \end{pro}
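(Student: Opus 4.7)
The plan is to extract the desired $\mathcal{S}$-basis from the $\mathbb{C}[[x]]$-basis constructed in Section \ref{sec:construction_cx_bases} by discarding the $1$-forms of the third type in Proposition \ref{pro:type_forms}, and then verify that the remaining family is minimal among $\mathcal{S}$-generator sets.

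First, I would take $\mathcal{B}_{0}$ to be the $\mathbb{C}[[x]]$-basis of $1$-forms contained in the terminal family $\mathcal{T}_{\Gamma,g}$ provided by Proposition \ref{pro:from-l-to-next}. Define $\mathcal{B}$ as the subfamily obtained from $\mathcal{B}_{0}$ by removing the $1$-forms of type (3) in Proposition \ref{pro:type_forms}. For any non-trivial $\Omega \in \mathcal{B}$ lying in $\mathcal{T}_{\Gamma,l+1} \setminus \mathcal{T}_{\Gamma,l}$, Proposition \ref{pro:type_forms} says $\Omega$ is of type (1) or (2); in both cases $\Omega$ \touch{}s $\Gamma$ to some order and has a unique formal companion curve equisingular to $\Gamma_{<\beta_{l+2}}$. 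Since $\overline{\beta}_{l+1} < \nu_{\Gamma}(\Omega) \leq \overline{\beta}_{l+2}$ for such $\Omega$ (using Remark \ref{rem:generators_construction} together with the terminal bound), we read off $\kappa_{\Omega}=l+1$, whence $l=\kappa_{\Omega}-1$ and $\beta_{l+2}=\beta_{\kappa+1}$, matching the statement.

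Next, I would show that $\nu_{\Gamma}(\mathcal{B})$ is an $\mathcal{S}$-generator set of $\Lambda_{\Gamma}$. Any type (3) form is of the shape $f_{\Gamma,l+1}^{m}\omega$ with $\omega \in \mathcal{T}_{\Gamma,l}$ and $0 < m < n_{l+1}-1$, hence $\nu_{\Gamma}(f_{\Gamma,l+1}^{m}\omega)=\nu_{\Gamma}(\omega)+m\overline{\beta}_{l+1}$ with $m\overline{\beta}_{l+1}\in\mathcal{S}_{\Gamma}$. A descending induction on the level rewrites every type (3) value as an element of $\nu_{\Gamma}(\mathcal{B})+(\mathcal{S}_{\Gamma}\cup\{0\})$, since $\nu_{\Gamma}(\omega)$ is either already in $\nu_{\Gamma}(\mathcal{B})$ or is itself a type (3) value at strictly smaller level. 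As $\nu_{\Gamma}(\mathcal{B}_{0})$ is a $\mathbb{C}[[x]]$-generator set of $\Lambda_{\Gamma}$ and $n\in\mathcal{S}_{\Gamma}$, the family $\nu_{\Gamma}(\mathcal{B})$ $\mathcal{S}$-generates $\Lambda_{\Gamma}$.

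The main obstacle is verifying minimality. By Proposition \ref{pro:min_gen}, I must show that for each $\lambda\in\nu_{\Gamma}(\mathcal{B})$ there is no equation $\lambda=\mu+s$ with $\mu\in\Lambda_{\Gamma}$, $\mu<\lambda$, and $s\in\mathcal{S}_{\Gamma}\setminus\{0\}$. Reducing $\mu$ modulo $n$ against $\nu_{\Gamma}(\mathcal{B}_{0})$, it suffices to take $\mu$ in $\nu_{\Gamma}(\mathcal{B}_{0})\cap\mathbb{Z}_{<\lambda}$. For $\lambda\in\{n,\overline{\beta}_{1}\}$ minimality is clear. For $\lambda$ of type (1), Lemma \ref{lem:cong_nu} places $\lambda$ in a specific class modulo $e_{\kappa}$; the bound $\lambda\leq\overline{\beta}_{\kappa+1}$ combined with Remark \ref{rem:prop_beta} (which forces $s\in(e_{\kappa})$ whenever $s<\overline{\beta}_{\kappa+1}$) together with the uniqueness of $\mathbb{C}[[x]]$-basis class representatives modulo $n$ yields the contradiction. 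For $\lambda$ of type (2), writing $\lambda=\nu_{\Gamma}(\Omega_{j})+\overline{\beta}_{\kappa+1}-\overline{\beta}_{\kappa}$ with $\nu_{\Gamma}(\Omega_{j})\in[\beta_{\kappa}]_{e_{\kappa-1}}$, a short calculation using Remark \ref{rem:prop_beta} gives $\overline{\beta}_{\kappa+1}-\overline{\beta}_{\kappa}\notin\mathcal{S}_{\Gamma}$ (it lies in $(e_{\kappa+1})\setminus(e_{\kappa})$ and is smaller than $\overline{\beta}_{\kappa+1}$), so $\mu\neq\nu_{\Gamma}(\Omega_{j})$. The hard part is then to rule out the remaining candidates $\mu<\lambda$: since $s<\overline{\beta}_{\kappa+1}$ forces $s\in(e_{\kappa})$ and $\lambda\in[\overline{\beta}_{\kappa+1}]_{e_{\kappa}}$, the element $\mu$ must also lie in $[\overline{\beta}_{\kappa+1}]_{e_{\kappa}}$; using Corollary \ref{cor:cong_term} to classify the possible classes modulo $n$ of such $\mu$, and the minimality of $\lambda$ within its class modulo $n$ in $\mathbb{C}[[x]]$-basis, one arrives at a contradiction. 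This case analysis for type (2) is the delicate point of the proof.
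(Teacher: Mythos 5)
Your strategy differs from the paper's in an essential way, and the step you yourself single out as the delicate one --- minimality of the family obtained from the $\mathbb{C}[[x]]$-basis by deleting the type (3) forms --- is not merely delicate: it is false in general. A dicritical (type (1)) form of the $\mathbb{C}[[x]]$-basis can perfectly well have an $\mathcal{S}$-redundant value. Take $\Gamma(t)=(t^{5},t^{6}+t^{7})$, so $n=5$, $\beta_{1}=6$, $g=1$, $\mathcal{S}_{\Gamma}=\langle 5,6\rangle$. A direct computation gives $\Lambda_{\Gamma}=(\mathcal{S}_{\Gamma}\setminus\{0\})\cup(13+\mathcal{S}_{\Gamma})$, so the $\mathbb{C}[[x]]$-basis of values is $\{5,6,12,13,19\}$. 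Every type (3) form is here of the shape $y^{m}dy$ with $0<m<4$, hence has value in $\{12,18,24\}$; consequently the only type (3) member of the basis is the one of value $12$, and your $\mathcal{B}$ has values $\{5,6,13,19\}$. But $19=13+6\in\Lambda_{\Gamma}+\mathcal{S}_{\Gamma}$, while the $\mathcal{S}$-basis of values is $\{5,6,13\}$: the value-$19$ form is of type (1) (it fans $\Gamma$) and yet is redundant. So your $\mathcal{B}$ is an $\mathcal{S}$-generator set but not an $\mathcal{S}$-basis, and no congruence argument of the kind you sketch can rescue it --- for $g=1$ (where $e_{\kappa}=1$) all those congruence constraints are vacuous, exactly as this example shows.

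The paper runs the selection in the opposite direction and thereby avoids any minimality verification. Since the $\mathbb{C}[[x]]$-basis $\mathcal{B}'\subset\mathcal{T}_{\Gamma,g}$ is in particular an $\mathcal{S}$-generator set of $\Lambda_{\Gamma}$, Proposition \ref{pro:min_gen} guarantees that the (unique) $\mathcal{S}$-basis of values is contained in $\nu_{\Gamma}(\mathcal{B}')$; choosing for each such value the corresponding form of $\mathcal{B}'$ yields an $\mathcal{S}$-basis $\mathcal{B}$ of $1$-forms. The only thing that then needs proof is that no $\Omega\in\mathcal{B}\setminus\{dx,dy\}$ is of type (3), which is precisely your (correct) observation that $\nu_{\Gamma}(f_{\Gamma,l+1}^{m}\omega)\in\Lambda_{\Gamma}+\mathcal{S}_{\Gamma}$ for $m\geq 1$, so such a value cannot lie in the minimal generator set; Proposition \ref{pro:type_forms} then gives types (1) or (2) together with the companion-curve statement, and the identification $\kappa_{\Omega}=l+1$ is as you computed. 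In short: once you let the minimal generator set select the forms, your generation step becomes the whole proof, and your minimality step should be discarded, since the de-type-(3)'d family need not be minimal.
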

 \begin{proof}
   Consider the construction of a terminal family
   \begin{equation*}
     {\mathcal T}_{\Gamma,g} = {\mathcal G}_{\Gamma, g}^{s_g} =
     (\Omega_{1}^{s_g}, \ldots, \Omega_{2n}^{s_g})
   \end{equation*}
   of $\Gamma$ in section \ref{sec:construction_cx_bases}.  Let
   ${\mathcal B}' = (\Omega_{j}^{s_g} : j \in \low_{\Gamma, g}^{s_g})$, it is a
   ${\mathbb C}[[x]]$-basis of $\Gamma$ by Proposition \ref{pro:from-l-to-next}.
   Since an ${\mathcal S}$-collection is a ${\mathbb C}[[x]]$-collection,
   ${\langle {\mathcal B}' \rangle}_{\mathcal S} = \Lambda_{\Gamma}$ and hence
   ${\mathcal B}'$ contains an ${\mathcal S}$-basis of $1$-forms ${\mathcal B}$ of
   $\Gamma$.
 
   Let $0 \leq l < g$.  Note that the $1$-forms
   $\Omega \in ({\mathcal B} \cap {\mathcal T}_{\Gamma, l+1}) \setminus {\mathcal T}_{\Gamma, l}$ satisfy
   $\overline{\beta}_{l+1} < \nu_{\Gamma}(\Omega) \leq \overline{\beta}_{l+2}$ and
   thus $\kappa_{\Omega} = l+1$. Thus, $\Omega$ is of one of the types in
   Proposition \ref{pro:type_forms}.  It suffices to show that $\Omega$ can not be
   of type \eqref{type:induced_s}.  Otherwise, since
   $\nu_{\Gamma} (f_{\Gamma, l+1}^{a} \omega) \in {\mathcal S}_{\Gamma} +
   \Lambda_{\Gamma}$ if $a \geq 1$ (which gives also $\nu_{\Gamma}(f_{\Gamma,l+1}^a)>0$), 
   we deduce that $\nu_{\Gamma} (\Omega)$ does not
   belong to the ${\mathcal S}$-basis of values for $\Gamma$, contradicting
   $\Omega \in {\mathcal B}$.
 \end{proof}
 \begin{rem}
   In the case $g=1$, there exists an ${\mathcal S}$-basis of $1$-forms consisting
   of $dx, dy$ and $1$-forms fanning $\Gamma$, since there are no $1$-forms of type
   \eqref{type:induced_c} in Proposition \ref{pro:type_forms}.  This result was
   already proved in \cite{Cano-Corral-Senovilla:semiroots}.
 \end{rem}
 The next result implies Theorem \ref{teo:lambda_on_s_b}.
 \begin{pro}\label{pro:6-5-theorem}
   Let $\Gamma$ be a singular branch of plane curve. Then there exists a
   $\mathcal{C}^{w}$-basis (resp. $\mathcal{C}$-basis) ${\mathcal B}$ of $1$-forms
   for $\Gamma$ such that any $\Omega \in {\mathcal B} \setminus \{dx, dy\}$
   \touches{} $\Gamma$ to some order, fans $\Gamma$ to genus $\kappa_{\Omega}$, and
   has a unique formal companion curve $\Gamma_{\Omega}$ that is equisingular to
   $\Gamma_{< \beta_{\kappa +1}}$ where $\kappa = \kappa_{\Omega}$.
 \end{pro}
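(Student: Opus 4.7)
The plan is to bootstrap from the $\mathcal{S}$-basis already constructed in Proposition \ref{pro:gen_s} by observing that all of its type \eqref{type:induced_c} members (in the classification of Proposition \ref{pro:type_forms}) are redundant in the $\mathcal{C}^{w}$-collection $\Lambda_{\Gamma}$, leaving only forms of type \eqref{type:dicritical} together with $dx,dy$. Since both the $\mathcal{C}$-basis and the $\mathcal{C}^{w}$-basis of values are subsets of the $\mathcal{S}$-basis of values, by Proposition \ref{pro:min_gen} it suffices to identify which values in the $\mathcal{S}$-basis fail to belong to $\langle \Lambda_{\Gamma}\cap\mathbb{Z}_{<k}\rangle_{\mathcal{C}^{w}}$.

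The main step is the following: let $\Omega=\Omega_{k}^{s_{l+1}}$ be a form of type \eqref{type:induced_c} in the $\mathcal{S}$-basis. By Equation \eqref{equ:no_collection} there exists $\Omega_{j}\in \mathcal{T}_{\Gamma,l}$ with
\[
  \nu_{\Gamma}(\Omega)=\nu_{\Gamma}(\Omega_{j})+\overline{\beta}_{l+2}-\overline{\beta}_{l+1},
  \quad \nu_{\Gamma}(\Omega_{j})\in[\beta_{l+1}]_{e_{l}},
\]
and moreover $\nu_{\Gamma}(\Omega_{j})\leq\overline{\beta}_{l+1}$ by Condition \ref{tb} of Definition~\ref{def:level}. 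Since $\nu_{\Gamma}(\Omega_{j})\in\Lambda_{\Gamma}\cap\mathbb{Z}_{<\nu_{\Gamma}(\Omega)}$ and $\Lambda_{\Gamma}$ is a $\mathcal{C}^{w}$-collection (Proposition \ref{pro:L_is_c_coll} plus the implication chain $\mathcal{C}\Rightarrow\mathcal{C}^{w}$), the defining rule of $\mathcal{C}^{w}$-collection applied to $\lambda=\nu_{\Gamma}(\Omega_{j})$ at index $l+1$ shows that $\nu_{\Gamma}(\Omega)\in\langle\Lambda_{\Gamma}\cap\mathbb{Z}_{<\nu_{\Gamma}(\Omega)}\rangle_{\mathcal{C}^{w}}$. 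By Proposition \ref{pro:min_gen}, $\nu_{\Gamma}(\Omega)$ is therefore absent from the $\mathcal{C}^{w}$-basis of values, and a fortiori from the $\mathcal{C}$-basis.

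Consequently, for every value $\lambda$ in the $\mathcal{C}^{w}$-basis (resp. $\mathcal{C}$-basis) of values we may pick a $1$-form from the $\mathcal{S}$-basis $\mathcal{B}_{\mathcal{S}}$ of Proposition \ref{pro:gen_s} attaining $\lambda$; by the preceding paragraph that form lies in $\{dx,dy\}$ or is of type \eqref{type:dicritical}. Each form of type \eqref{type:dicritical} arising at level $l+1$ satisfies $\overline{\beta}_{l+1}<\nu_{\Gamma}(\Omega)\leq\overline{\beta}_{l+2}$, whence $\kappa_{\Omega}=l+1$; by Proposition \ref{pro:type_forms}\eqref{type:dicritical} it \touch{}s $\Gamma$ to order $\Beg_{\Gamma}(\Omega)$, fans $\Gamma$ to genus $l+1=\kappa_{\Omega}$, and has a unique formal companion curve equisingular to $\Gamma_{<\beta_{l+2}}=\Gamma_{<\beta_{\kappa_{\Omega}+1}}$. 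This yields the desired $\mathcal{C}^{w}$- and $\mathcal{C}$-bases simultaneously.

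No serious obstacle is expected: the crux is simply to recognise that the relation built into Equation \eqref{equ:no_collection} is precisely the extra generation rule of a $\mathcal{C}^{w}$-collection, so that the type \eqref{type:induced_c} forms become unnecessary. The only bookkeeping subtlety is the verification that the witness $\nu_{\Gamma}(\Omega_{j})$ fulfils the two hypotheses $\lambda\leq\overline{\beta}_{l+1}$ and $\lambda\in[\beta_{l+1}]_{e_{l}}$ demanded by the $\mathcal{C}^{w}$-rule, both of which are supplied verbatim by Proposition \ref{pro:type_forms}\eqref{type:induced_c} together with Definition \ref{def:level}.
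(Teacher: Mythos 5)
Your proposal is correct and follows essentially the same route as the paper: extract the $\mathcal{C}^{w}$- (and then $\mathcal{C}$-) basis from the $\mathcal{S}$-basis of Proposition \ref{pro:gen_s}, and eliminate the type \eqref{type:induced_c} forms by showing, via Equation \eqref{equ:no_collection}, that their values lie in $\langle \Lambda_{\Gamma}\cap\mathbb{Z}_{<\nu_{\Gamma}(\Omega)}\rangle_{\mathcal{C}^{w}}$ and hence are excluded by Proposition \ref{pro:min_gen}. The only difference is that you spell out the witness $\nu_{\Gamma}(\Omega_{j})$ and the verification of the hypotheses $\lambda\leq\overline{\beta}_{l+1}$, $\lambda\in[\beta_{l+1}]_{e_{l}}$ (and $l+1<g$), which the paper leaves implicit.
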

 \begin{proof}
   Let ${\mathcal B}'$ be the ${\mathcal S}_{\Gamma}$-basis of $1$-forms provided
   by Proposition \ref{pro:gen_s}, it consists of $1$-forms of types
   \eqref{type:dicritical} and \eqref{type:induced_c} (cf. Proposition
   \ref{pro:type_forms}).  Since a ${\mathcal C}^{w}$-collection is an
   ${\mathcal S}$-collection, ${\mathcal B}'$ contains a ${\mathcal C}^{w}$-basis
   ${\mathcal B}$ of $1$-forms for $\Gamma$.  It suffices to show that all forms in
   ${\mathcal B} \setminus \{dx, dy\}$ are of type \eqref{type:dicritical}.
   Notice that since
   \begin{equation*}
     \nu_{\Gamma} (\Omega) \in {\langle \Lambda_{\Gamma} \cap
       {\mathbb Z}_{<\nu_{\Gamma} (\Omega) } \rangle}_{{\mathcal C}^{w}}
   \end{equation*}
   if $\Omega$ is of type \eqref{type:induced_c}, we get
   $\Omega \not \in {\mathcal B}$ by Proposition \ref{pro:min_gen}.  Thus, all
   $1$-forms in ${\mathcal B} \setminus \{dx, dy\}$ are of type
   \eqref{type:dicritical}.  Moreover, since any ${\mathcal C}$-collection is a
   ${\mathcal C}^{w}$-collection, ${\mathcal B}$ contains a ${\mathcal C}$-basis
   $\overline{\mathcal B}$ of $1$-forms such that
   $\overline{\mathcal B} \setminus \{dx, dy\}$ consists of $1$-forms of type
   \eqref{type:dicritical}.
\end{proof} 
Let us recover the main theorem in \cite{deAbreu-Hernandes:value}.
\begin{cor}
   $\Lambda_{\Gamma}$ determines ${\mathcal S}_{\Gamma}$.
\end{cor}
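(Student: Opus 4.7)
My strategy is to recover the minimal generators $n,\overline{\beta}_1,\ldots,\overline{\beta}_g$ of $\mathcal{S}_\Gamma$ from $\Lambda_\Gamma$ by an iterative procedure, so that $\mathcal{S}_\Gamma=\langle n,\overline{\beta}_1,\ldots,\overline{\beta}_g\rangle$ becomes a computable function of $\Lambda_\Gamma$. The multiplicity is recovered immediately as $n=\min \Lambda_\Gamma$, since $\nu_\Gamma(dx)=n$ and no germ of $1$-form attains a smaller value on $\Gamma$.

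For the inductive step, assume $n,\overline{\beta}_1,\ldots,\overline{\beta}_l$ have been recovered, and set $e_l=\gcd(n,\overline{\beta}_1,\ldots,\overline{\beta}_l)$; if $e_l=1$ then $g=l$ and the process stops. Otherwise, from $\Lambda_\Gamma$ I compute the Apery values $A_r:=\min\{m\in\Lambda_\Gamma:m\equiv r\pmod{n}\}$ for $r\in\{0,1,\ldots,n-1\}$, together with $m_0:=\min(\Lambda_\Gamma\setminus (e_l))$, and set $C:=[m_0]_{e_l}\in\mathbb{Z}/e_l\mathbb{Z}$. My proposal is
\[
  \overline{\beta}_{l+1}=\max\bigl\{A_r: r\in[0,n)\ \text{and}\ r\equiv C\pmod{e_l}\bigr\}.
\]
The Corollary preceding Proposition~\ref{pro:L_is_c_coll} asserts $\Lambda_\Gamma\cap\mathbb{Z}_{\leq\overline{\beta}_{l+1}}\subset(e_l)\cup[\beta_{l+1}]_{e_l}$; since $\overline{\beta}_{l+1}\in\Lambda_\Gamma\setminus(e_l)$, this forces $m_0\leq\overline{\beta}_{l+1}$ and $C=[\beta_{l+1}]_{e_l}$, so the class $C$ is in fact determined by $\Lambda_\Gamma$ and the data already recovered.

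That $\overline{\beta}_{l+1}$ appears among the values being maximised follows from Remark~\ref{rem:generators_construction_0}: the $\mathbb{C}[[x]]$-basis built in Section~\ref{sec:construction_cx_bases} contains a form of $\nu_\Gamma$-value equal to $\overline{\beta}_{l+1}$, hence $A_{[\overline{\beta}_{l+1}]_n}=\overline{\beta}_{l+1}$, and the congruence $\overline{\beta}_{l+1}\equiv\beta_{l+1}\pmod{e_l}$ places its class mod $n$ in the set indexing the maximum. The main obstacle is the opposite inequality $A_r\leq\overline{\beta}_{l+1}$ for \emph{every} residue $r\equiv C\pmod{e_l}$ in $[0,n)$, not only for $r=[\overline{\beta}_{l+1}]_n$; I would handle it by combining Corollary~\ref{cor:cong_term}, which guarantees that $[\nu_\Gamma(\mathcal{T}_{\Gamma,l})]_n=[(e_l)]_n\cup[\beta_{l+1}+(e_l)]_n$ so that the terminal family of level $l$ realises every such residue class mod $n$, with condition~\ref{tb} of Definition~\ref{def:level}, which bounds each of those values by $\overline{\beta}_{l+1}$. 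Once all the $\overline{\beta}_j$'s are obtained, $\mathcal{S}_\Gamma=\langle n,\overline{\beta}_1,\ldots,\overline{\beta}_g\rangle$ is recovered, completing the proof.
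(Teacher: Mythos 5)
Your proof is correct, and it differs from the paper's in the final extraction step while resting on the same machinery. Both arguments reduce to recovering $n,\overline{\beta}_1,\ldots,\overline{\beta}_g$ inductively from the Apery set $\{A_r\}$ of $\Lambda_{\Gamma}$ (which is the unique $\mathbb{C}[[x]]$-basis of values by Proposition \ref{pro:min_gen}, and which contains $\overline{\beta}_{l+1}$ because the constructed basis satisfies $\nu_{\Gamma}(\Omega_{2\nu_l}^{s_g})=\overline{\beta}_{l+1}$, Remark \ref{rem:generators_construction}). The paper finishes positionally: since $\nu_{\Gamma}(\Omega)>\overline{\beta}_{l+1}$ for every $\Omega\in\mathcal{T}_{\Gamma,g}\setminus\mathcal{T}_{\Gamma,l}$ while $\overline{\beta}_{l+1}=\max\nu_{\Gamma}(\mathcal{T}_{\Gamma,l})$, the generator $\overline{\beta}_{l+1}$ is simply the element in position $2\nu_l$ of the Apery values ordered increasingly. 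You instead characterize it as the largest Apery value in the class $C=[\beta_{l+1}]_{e_l}$, first pinning down $C$ from $\Lambda_{\Gamma}$ via $m_0=\min(\Lambda_{\Gamma}\setminus(e_l))$ and the unnumbered corollary $\Lambda_{\Gamma}\cap\mathbb{Z}_{\leq\overline{\beta}_{l+1}}\subset(e_l)\cup[\beta_{l+1}]_{e_l}$, and then bounding every $A_r$ with $r\equiv C\pmod{e_l}$ by $\overline{\beta}_{l+1}$ through Corollary \ref{cor:cong_term} combined with condition \ref{tb} of Definition \ref{def:level}: the classes mod $n$ reducing to $C$ are exactly $[\beta_{l+1}+(e_l)]_n$, each is realized by a form of $\mathcal{T}_{\Gamma,l}$, and all those values are at most $\overline{\beta}_{l+1}$. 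This does close the gap you flag, so the plan is complete; the only points left implicit are routine, namely that your $e_l=\gcd(n,\overline{\beta}_1,\ldots,\overline{\beta}_l)$ agrees with the paper's $e_l$ (immediate from equation \eqref{equ:rec_beta} and Remark \ref{rem:prop_beta}, which also justifies the stopping rule $e_l=1\Leftrightarrow l=g$). The trade-off is mild: your congruence-restricted maximum needs the two extra lemmas on residues but makes the role of the class $[\beta_{l+1}]_{e_l}$ explicit, whereas the paper's order-statistic formulation is shorter and needs no determination of $C$; both buy exactly the same conclusion from the same constructed terminal families.
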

\begin{proof}
Consider a terminal family $\mathcal{T}_{\Gamma, g}$ constructed as in section 
 \ref{sec:construction_cx_bases}. Fix $0 \leq l <g$. 
Note that $\nu_{\Gamma} (\Omega) > \overline{\beta}_{l+1}$
for any $\Omega \in \mathcal{T}_{\Gamma, g} \setminus \mathcal{T}_{\Gamma, l}$  
and that $\overline{\beta}_{l+1}$ is the maximum element of 
$\nu_{\Gamma} (\mathcal{T}_{\Gamma, l})$ (Remark \ref{rem:generators_construction}). 
  It follows 
  that $\overline{\beta}_{l+1}$ is the element in position $2 \nu_{l}$ 
  when we order the elements
  of a $\mathbb{C}[[x]]$-basis of values for $\Gamma$ in increasing order. 
  Since $(n, \overline{\beta}_1, \ldots, \overline{\beta}_g)$ is a minimal generator 
  set of $\mathcal{S}_{\Gamma}$, clearly $\Lambda_{\Gamma}$ determines 
  $\mathcal{S}_{\Gamma}$.
\end{proof}
Next, we compare the minimal generator set of the semigroup ${\mathcal S}_{\Gamma}$   
with the ${\mathcal S}_{\Gamma}$-basis of values for $\Gamma$. 
The next result implies Theorem \ref{teo:gen_sg_in_sbasis}.
\begin{pro}
  \label{pro:gen_sg_in_sbasis}
  Let $\Gamma$ be a singular branch of plane curve. Then $n$,
  $\overline{\beta}_1, \ldots, \overline{\beta}_g$ belong to the
  ${\mathcal S}$-basis of values for $\Gamma$.
 \end{pro}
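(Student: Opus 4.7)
By Proposition~\ref{pro:min_gen}, an element $k\in\Lambda_{\Gamma}$ lies in the $\mathcal{S}$-basis precisely when no equality $k=\lambda+s$ holds with $\lambda\in\Lambda_{\Gamma}\cap\mathbb{Z}_{<k}$ and $s\in\mathcal{S}_{\Gamma}\setminus\{0\}$, so the plan is to rule out any such decomposition for $k\in\{n,\overline{\beta}_1,\ldots,\overline{\beta}_g\}$. For $n$ this is immediate, since $n=\min\Lambda_{\Gamma}$ (attained by $dx$) gives $\Lambda_{\Gamma}\cap\mathbb{Z}_{<n}=\emptyset$. For $\overline{\beta}_j$ I would argue by induction on $j\in\mathbb{N}_g$. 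The base case $j=1$ follows because, below $\beta_1+n$, every element of $\Lambda_{\Gamma}$ is of the form $\nu_{\Gamma}(dx)+kn$ or $\nu_{\Gamma}(dy)+kn$ (as $\mathcal{T}_{\Gamma,0}=\{dx,dy\}$ is a $\mathbb{C}[[x]]$-basis of $\mathcal{M}_{2\nu_0}$ and any form splits off a multiple of $f_{\Gamma,1}$ contributing $\nu_{\Gamma}\geq\beta_1+n$), so only $\beta_1$ itself lies in $[\beta_1]_n\cap\mathbb{Z}_{\leq\beta_1}$, ruling out a putative $\lambda<\beta_1$ forced to belong to $[\beta_1]_n$ (since $s<\beta_1$ forces $s\in(n)$).

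For the inductive step with $j\geq 2$, suppose $\overline{\beta}_j=\lambda+s$ with the stated properties. From $s<\overline{\beta}_j$ one has $s\in\langle n,\overline{\beta}_1,\ldots,\overline{\beta}_{j-1}\rangle\subset(e_{j-1})$ by Remark~\ref{rem:prop_beta}, whence $\lambda\in[\beta_j]_{e_{j-1}}$. Because the values in $\nu_{\Gamma}(\mathcal{T}_{\Gamma,l})$ for $l\leq j-2$ lie in $(e_l)\cup[\beta_{l+1}]_{e_l}\subset(e_{j-1})$, the minimum of $\Lambda_{\Gamma}$ in the class $[\lambda]_n$ must be realized by some $\Omega\in\mathcal{T}_{\Gamma,j-1}\setminus\mathcal{T}_{\Gamma,j-2}$ with $\Beg_{\Gamma}(\Omega)=\beta_j$, so that $\lambda=\nu_{\Gamma}(\Omega)+kn$ for some $k\geq 0$. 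Applying Remark~\ref{rem:tracking_leading} at level $l+1=j-1$ writes $\nu_{\Gamma}(\Omega)=\nu_{\Gamma}(\Omega_b)+a\overline{\beta}_{j-1}+(\beta_j-\beta_{j-1})+mn$, with $\Omega_b\in\mathcal{T}_{\Gamma,j-2}$, $a$ constrained as in Definition~\ref{def:lu}, and $m\geq 0$. Substituting the recursion $\overline{\beta}_j=n_{j-1}\overline{\beta}_{j-1}-\beta_{j-1}+\beta_j$ yields
\begin{equation*}
\rho\,\overline{\beta}_{j-1}=\nu_{\Gamma}(\Omega_b)+t,\qquad \rho:=n_{j-1}-a\in[1,n_{j-1}-1],\qquad t:=(m+k)n+s\in\mathcal{S}_{\Gamma}\cap\mathbb{Z}_{>0}.
\end{equation*}

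Decompose $t=\sum_{i=0}^{j-1}c_i\overline{\beta}_i$ with $\overline{\beta}_0:=n$ and $c_i\geq 0$; since $\rho\overline{\beta}_{j-1}<n_{j-1}\overline{\beta}_{j-1}<\overline{\beta}_j$, necessarily $c_i=0$ for $i\geq j$ and $c_{j-1}\leq\rho-1$, so $\rho':=\rho-c_{j-1}\in[1,\rho]$ satisfies $\rho'\overline{\beta}_{j-1}=\nu_{\Gamma}(\Omega_b)+\sum_{i=0}^{j-2}c_i\overline{\beta}_i$. Two cases arise, according to the type of $\Omega_b$ dictated by Definition~\ref{def:lu}. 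If $\nu_{\Gamma}(\Omega_b)\in(e_{j-2})$ (item~\eqref{ml1}), the right-hand side is a multiple of $e_{j-2}$, so $\rho'\overline{\beta}_{j-1}\in(e_{j-2})$; the characterization $n_{j-1}=\min\{r\geq 1:r\overline{\beta}_{j-1}\in(e_{j-2})\}$ in Remark~\ref{rem:prop_beta} then forces $\rho'\geq n_{j-1}$, contradicting $\rho'\leq n_{j-1}-1$. If $\nu_{\Gamma}(\Omega_b)\in[\beta_{j-1}]_{e_{j-2}}$ (item~\eqref{mu1}), which occurs only when $a=n_{j-1}-1$, so $\rho=\rho'=1$ and $c_{j-1}=0$, then $\overline{\beta}_{j-1}=\nu_{\Gamma}(\Omega_b)+\sum_{i\leq j-2}c_i\overline{\beta}_i$; the sub-case $\nu_{\Gamma}(\Omega_b)=\overline{\beta}_{j-1}$ forces $t=0$, contrary to $t>0$, while $\nu_{\Gamma}(\Omega_b)<\overline{\beta}_{j-1}$ paired with the positivity of the residual sum expresses $\overline{\beta}_{j-1}$ as an element of $\langle\Lambda_{\Gamma}\cap\mathbb{Z}_{<\overline{\beta}_{j-1}}\rangle_{\mathcal S}$, contradicting the inductive hypothesis. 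The main technical hurdle will be organizing the bookkeeping of Remark~\ref{rem:tracking_leading} together with Definition~\ref{def:lu}; the mod-$e_{j-2}$ observation that closes the first case is the key trick that lets the induction terminate cleanly.
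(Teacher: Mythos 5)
Your proof is correct and takes essentially the same route as the paper's: you restrict the putative decomposition $\overline{\beta}_j=\lambda+s$ to the constructed terminal families, locate a form $\Omega\in{\mathcal T}_{\Gamma,j-1}$ with $\Beg_{\Gamma}(\Omega)=\beta_j$ realizing the minimum in the class of $\lambda$, express $\nu_{\Gamma}(\Omega)$ through Remark \ref{rem:tracking_leading} and Definition \ref{def:lu}, substitute the recursion \eqref{equ:rec_beta}, and conclude either from the minimality of $n_{j-1}$ modulo $e_{j-2}$ (Remark \ref{rem:prop_beta}) or from the inductive hypothesis on $\overline{\beta}_{j-1}$. The only differences are cosmetic: the paper splits the final cases by the exponent of $f_{\Gamma,j-1}$ ($m=n_{j-1}-1$ versus $m<n_{j-1}-1$) where you split by the congruence type of $\Omega_b$, and you treat $\overline{\beta}_1$ by a direct hands-on argument instead of the one-line observation $\overline{\beta}_1-n\notin{\mathcal S}_{\Gamma}$.
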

 \begin{proof}
 Consider the construction of a ${\mathbb C}[[x]]$-basis ${\mathcal B}$ of $\Gamma$
 in   section \ref{sec:construction_cx_bases}.
 Note that  $n, \overline{\beta}_1, \hdots, \overline{\beta}_g$ belong to 
 $\nu_{\Gamma} ({\mathcal B})$
 %$\nu_{\Gamma} ({\mathcal T}_{\Gamma, g-1})$
 by Remark \ref{rem:generators_construction}. 
 %and hence to $\nu_{\Gamma} ({\mathcal B})$.
 %
 %are in $\nu_{\Gamma} ({\mathcal B})$, i.e. in
% the ${\mathbb C}[[x]]$-basis of values for $\Gamma$.
 %This clear for $n$ and $\overline{\beta}_1$ since they belong to $\nu_{\Gamma} ({\mathcal T}_{\Gamma, 0})$.
 %In order to prove the claim, it suffices to show that if $1 \leq l <g$ and
 %$\overline{\beta}_l \in \nu_{\Gamma} ({\mathcal T}_{\Gamma, l-1})$ then 
 %$\overline{\beta}_{l+1} \in \nu_{\Gamma} ({\mathcal T}_{\Gamma, l})$. This is proved in Remark \ref{rem:chu}.
 %
 Denote by ${\mathfrak B}_{\mathcal S}$ the ${\mathcal S}$-basis of
 $\Lambda_{\Gamma}$.  We have that $d \in \nu_{\Gamma} ({\mathcal B})$ belongs to
 ${\mathfrak B}_{\mathcal S}$ if and only if there is no
 $b \in \nu_{\Gamma} ({\mathcal B})$ such that $d - b \in {\mathcal S}_{\Gamma}$ by
 Remark \ref{rem:gen_c_s}.
% We claim that $d \in \nu_{\Gamma} ({\mathcal B})$ belongs to the ${\mathcal S}$-basis of $\Lambda_{\Gamma}$
 %if and only if there is no $b \in \nu_{\Gamma} ({\mathcal B})$ such that $d - b \in {\mathcal S}_{\Gamma}$.
 %The sufficient condition is a consequence of Proposition \ref{pro:min_gen}.
 %Now, suppose that $d$ does not belong to the ${\mathcal S}$-basis of $\Lambda_{\Gamma}$. 
 %We obtain $d \in (\Lambda_{\Gamma} \cap {\mathbb Z}_{<d}) + {\mathcal S}_{\Gamma}$ by Proposition \ref{pro:min_gen}.
 %Thus, there exists $b \in  \nu_{\Gamma} ({\mathcal B})$ and $m \in {\mathbb Z}_{\geq 0}$ such that 
 %$d - (b + mn) \in {\mathcal S}_{\Gamma}$ and hence $d - b \in {\mathcal S}_{\Gamma}$.
 
 Since $\overline{\beta}_1 - n \not \in {\mathcal S}_{\Gamma}$, it follows that
 $\overline{\beta}_1 \in {\mathfrak B}_{\mathcal S}$.  Now, assume that
 $\overline{\beta}_1, \ldots, \overline{\beta}_l$ belong to the
 ${\mathfrak B}_{\mathcal S}$ for some $1 \leq l < g$. It suffices to show that so
 does $\overline{\beta}_{l+1}$.
 %Consider the form $\Omega' \in {\mathcal T}_{\Gamma, l}$ such that $\nu_{\Gamma} (\Omega') = \overline{\beta}_{l+1}$.
 Assume, aiming at a contradiction, that there exists $\Omega \in {\mathcal B}$
 such that
 $\overline{\beta}_{l+1} - \nu_{\Gamma} (\Omega) \in {\mathcal S}_{\Gamma}$.
 %$\nu_{\Gamma} (\Omega') - \nu_{\Gamma} (\Omega) \in {\mathcal S}_{\Gamma}$.
 Since
 $\nu_{\Gamma} ({\mathcal B} \setminus {\mathcal T}_{\Gamma, l}) \subset {\mathbb
   Z}_{>\overline{\beta}_{l+1}}$, it follows that
 $\Omega \in {\mathcal T}_{\Gamma, l}$.  This implies
 $\nu_{\Gamma} (\Omega) \in (e_{l}) \cup [\beta_{l+1}]_{e_l}$ by Corollary
 \ref{cor:cong_term}.  Indeed, $\nu_{\Gamma} (\Omega) \in [\beta_{l+1}]_{e_l}$
 holds.  Otherwise $\overline{\beta}_{l+1} - \nu_{\Gamma} (\Omega)$ belongs to
 ${\mathcal S}_{\Gamma} \setminus (e_l)$ and it has to be greater than or equal to
 $\overline{\beta}_{l+1}$ by Remark \ref{rem:prop_beta}, contradicting
 $\nu_{\Gamma} (\Omega) >0$.  In particular, we deduce
 $\Beg_{\Gamma}(\Omega) = \beta_{l+1}$ by Condition \ref{good7} of Definition
 \ref{def:good}. As a consequence, given
 \begin{equation*}
   {\mathcal T}_{\Gamma, l-1} = (\Omega_1, \ldots, \Omega_{2\nu_{l-1}}),
 \end{equation*}
 we have $\Omega = \Omega_{k}^{s_l}$ for some $k \in \upper_{\Gamma, l}^{s_l}$,
 where ${\mathfrak n}^{s_l} (\beta_{l}) = \beta_{l+1}$, by the construction of
 ${\mathcal T}_{\Gamma, l}$.  Denote $k' = \zeta_{s_l}^{-1} (k)$ (cf. Definition
 \ref{def:zeta}). Therefore, we obtain
 \begin{equation*}
   \nu_{\Gamma} (\Omega_{k}^{s_{l}}) - \nu_{\Gamma} (\Omega_{k'}^{0}) -
   ( \beta_{l+1}  - \beta_{l})  \in n {\mathbb Z}_{\geq 0}
 \end{equation*}
 by Remark \ref{rem:tracking_leading}.  Since $k' \in \upper_{\Gamma, l}^{1}$, we
 have $\Omega_{k'}^{0} = f_{\Gamma, l}^{m} \Omega_j$, where
 $m \in {\mathbb N}_{n_{l} -1}$, $j \in {\mathbb N}_{2 \nu_{l-1}}$ and either
 $m = n_{l}-1$ or $m<n_{l}-1$ and $\nu_{\Gamma} (\Omega_j) \in (e_{l-1})$ by
 Definition \ref{def:lu} and Lemma \ref{lem:cong_nu}. We obtain
 \begin{equation*}
   \overline{\beta}_{l+1} - \nu_{\Gamma} (\Omega) = 
   \overline{\beta}_{l+1} - m \overline{\beta}_l -
   (\beta_{l+1} - \beta_l) -  \nu_{\Gamma} (\Omega_{j}) - rn
 \end{equation*}
 for some $r \in {\mathbb Z}_{\geq 0}$ and hence
 \begin{equation}
   \label{equ:b_m_nu}
   \overline{\beta}_{l+1} - \nu_{\Gamma} (\Omega) = 
   (n_{l} -  m) \overline{\beta}_l  -
   \nu_{\Gamma} (\Omega_{j}) - rn < \min ((n_{l} -  m) \overline{\beta}_l  ,
   \overline{\beta}_{l+1})  
 \end{equation}
 by equation \eqref{equ:rec_beta}. First, suppose $m = n_{l}-1$. We obtain
 $\overline{\beta}_l - \nu_{\Gamma} (\Omega_{j}) \in {\mathcal S}_{\Gamma}$,
 contradicting $\overline{\beta}_l \in {\mathfrak B}_{\mathcal S}$.
 % is in the ${\mathcal S}$-basis of $\Lambda_{\Gamma}$.

 Finally, assume $m < n_{l}-1$. 
 %We deduce that  
 %\begin{equation*} (n_{l} -  m) \overline{\beta}_l  -  \nu_{\Gamma} (\Omega_{j}) \in {\mathcal S}_{\Gamma} \setminus (e_{l-1})
%\end{equation*} %by Remark \ref{rem:prop_beta}. 
 As a consequence of
 $\overline{\beta}_{l+1} - \nu_{\Gamma} (\Omega) \in {\mathcal S}_{\Gamma}$,
 equation \eqref{equ:b_m_nu} implies
 \begin{equation*}
   \overline{\beta}_{l+1} - \nu_{\Gamma} (\Omega)  =
   m_{0} n + m_1 \overline{\beta}_{1} + \ldots + m_l \overline{\beta}_{l}
 \end{equation*}
 with $m_{0}, \ldots, m_{l} \in {\mathbb Z}_{\geq 0}$ and $m_l < n_{l} - m$. By
 using $\nu_{\Gamma} (\Omega_j) \in (e_{l-1})$ and again equation
 \eqref{equ:b_m_nu}, we obtain that
 \begin{equation*}
   (n_{l} - m - m_{l}) \overline{\beta}_{l} \in (e_{l-1}),
 \end{equation*} contradicting Remark \ref{rem:prop_beta}.
\end{proof} 
\begin{rem}
  The ${\mathcal C}^{w}$-collection and ${\mathcal C}$-collection generated just by
  $n$ and $\beta_1$ contain $\overline{\beta}_2, \ldots, \overline{\beta}_g$, and
  hence ${\mathcal S}_{\Gamma}$, by Propositions \ref{pro:cw_coll_gen} and
  \ref{pro:c_coll_gen}.  In particular $\overline{\beta}_{j}$ does not belong
  neither to the ${\mathcal C}^{w}$-basis nor to the ${\mathcal C}$-basis of values
  for $\Gamma$ if $1 < j \leq g$.
\end{rem}
\begin{rem}
  \label{rem:power2}
  Assume that $\Gamma$ is a curve of genus $g \in {\mathbb Z}_{\geq 1}$ and
  multiplicity $n=2^{g}$.  Hence, we have $n_{j} =2$ for any
  $j \in {\mathbb N}_{g}$. Consider the construction of a ${\mathbb C}[[x]]$-basis
  ${\mathcal B}$ for $\Gamma$ in section \ref{sec:construction_cx_bases}.  Since
  ${\mathbb N}_{2 \nu_{l+1}} = {\mathbb N}_{2 \nu_{l}} \cup \upper_{l+1}$,
  ${\mathcal T}_{\Gamma, l+1}$ is a good family of level $l+1$, and Condition
  \ref{good1} of Definition \ref{def:good} and Remark \ref{rem:chu} hold, it
  follows that
  \begin{equation*}
    \nu_{\Gamma} ({\mathcal T}_{\Gamma, l+1}) =
    \nu_{\Gamma} ({\mathcal T}_{\Gamma, l}) \cup \{ r + \overline{\beta}_{l+2} -
    \overline{\beta}_{l+1} : r \in \nu_{\Gamma} ({\mathcal T}_{\Gamma, l}) \}
  \end{equation*}
  for any $0 \leq l < g-1$. Notice that $\low_{\Gamma, g}^{s} = {\mathbb N}_{n}$
  for any $s \geq 1$ and indeed ${\mathcal B} = {\mathcal T}_{\Gamma, g-1}$.
  Therefore $\Lambda_{\overline{\Gamma}} = \Lambda_{\Gamma}$ for any
  $\overline{\Gamma}$ in the equisingularity class of $\Gamma$ and
  $\Lambda_{\Gamma}$ is the ${\mathcal C}$-collection generated by $n$ and
  $\beta_{1}$.

  Note that since $n_{j}=2$ for any $j \in {\mathbb N}_{g}$, ${\mathcal B}$ does
  not contain $1$-forms of type \eqref{type:induced_s} (cf. Proposition
  \ref{pro:type_forms}).  Such $1$-forms disappear when we consider an
  ${\mathcal S}$-basis of $1$-forms for $\Gamma$ contained in ${\mathcal B}$.  In
  the present example, we will see that no $1$-form disappears. Indeed, we claim
  that ${\mathcal B} = {\mathcal T}_{\Gamma, g-1}$ is an ${\mathcal S}$-basis of
  $1$-forms for $\Gamma$.

  Let us prove the claim by induction. The induction hypothesis is that the set
  $\nu_{\Gamma} ({\mathcal T}_{\Gamma, l})$ is the ${\mathcal S}$-basis of
  ${\langle \nu_{\Gamma} ({\mathcal T}_{\Gamma, l}) \rangle}_{\mathcal S}$ for any
  $0 \leq l <g$.  The claim is just this result for $l=g-1$.  The result is obvious
  for $l=0$, so it suffices to prove that if it holds for $0 \leq l < g-1$ so
  does it for $l+1$.  Consider values
  $\lambda, \mu \in \nu_{\Gamma} ({\mathcal T}_{\Gamma, l+1})$ with
  $\lambda < \mu$.  It suffices to show that
  $\mu - \lambda \not \in {\mathcal S}_{\Gamma}$.
 %(cf. proof of Proposition \ref{pro:gen_sg_in_sbasis}).
  %corresponding to different $1$-forms in  ${\mathcal T}_{\Gamma, l+1}$. 
  Note that
  \begin{equation*}
    \nu_{\Gamma} ({\mathcal T}_{\Gamma, l}) \subset (e_{l+1}) \ \ \mathrm{and}
    \ \ 
    \nu_{\Gamma} ({\mathcal T}_{\Gamma, l+1} \setminus {\mathcal T}_{\Gamma, l})
    \cap (e_{l+1}) = \emptyset.
  \end{equation*}
  Assume, first, that $\lambda, \mu \in (e_{l+1})$. It follows that
  $\lambda, \mu \in \nu_{\Gamma} ({\mathcal T}_{\Gamma, l})$ and hence
  $\mu - \lambda \not \in {\mathcal S}_{\Gamma}$ by the induction hypothesis.  Now,
  suppose that exactly one of elements of the pair $\lambda, \mu$ belongs to
  $(e_{l+1})$. In such a case we also obtain
  $\mu - \lambda \not \in {\mathcal S}_{\Gamma}$.  Otherwise
  $\mu - \lambda \not \in (e_{l+1})$ would imply
  $\mu - \lambda \geq \overline{\beta}_{l+2}$ by Remark \ref{rem:prop_beta} and
  since $\lambda, \mu \in{\mathbb N}_{\overline{\beta}_{l+2}}$ by the properties of
  terminal families, we get a contradiction.
 %So the claim also holds for this case. 
  Finally, assume that neither $\lambda$ nor $\mu$ belongs to $(e_{l+1})$.  In this
  case, we have
  $\lambda = \lambda' + \overline{\beta}_{l+2} - \overline{\beta}_{l+1}$ and
  $\mu = \mu' + \overline{\beta}_{l+2} - \overline{\beta}_{l+1}$ for
  $\lambda', \mu' \in \nu_{\Gamma} ({\mathcal T}_{\Gamma, l})$.
 %corresponding to different $1$-forms in ${\mathcal T}_{\Gamma, l}$. 
  Again, the induction hypothesis implies that
  $\mu - \lambda = \mu' - \lambda' \not \in {\mathcal S}_{\Gamma}$, concluding the
  proof.
 %of the claim. 
 %As a consequence, ${\mathcal B} = {\mathcal T}_{\Gamma, g-1}$ is an ${\mathcal S}$-basis of $1$-forms for $\Gamma$.
\end{rem}
\begin{rem}
  Let $\Gamma$ be a curve of genus $g=2$ and multiplicity $n=4$.  Since
  $\overline{\beta}_2 = \beta_1 + \beta_2$, the ${\mathbb C}[[x]]$-basis of values
  for $\Gamma$ is equal to
  \begin{equation*}
    {\mathcal V}:=(n, \beta_1, n+ \overline{\beta}_2 -
    \overline{\beta}_1, \beta_1 + \overline{\beta}_2 - \overline{\beta}_1) = (n,
    \beta_1, n+\beta_2, \overline{\beta}_2)
  \end{equation*}
  by Remark \ref{rem:power2}.  Moreover, the same remark also provides that
  ${\mathcal V}$ is the ${\mathcal S}$-basis of $\Lambda_{\Gamma}$.  Since the
  ${\mathcal S}$-basis of ${\mathcal S}_{\Gamma}$ is
  $(n, \beta_{1}, \overline{\beta}_{2})$ by Remark \ref{rem:prop_beta}, it follows
  that $n + \beta_{2} \not \in {\mathcal S}_{\Gamma}$ and
  $\Lambda_{\Gamma} \neq {\mathcal S}_{\Gamma}$.  Since the
  ${\mathcal C}^{w}$-collection generated by $(n, \beta_1)$ is
  ${\mathcal S}_{\Gamma}$, it follows that $(n, \beta_1, n+\beta_{2})$ is the
  ${\mathcal C}^{w}$-basis of values for $\Gamma$.  Finally $(n, \beta_1)$ is the
  ${\mathcal C}$-basis of values for $\Gamma$ by Remark \ref{rem:power2}.
\end{rem}
\section{Singular directions}
\label{sec:singular_directions}
Let $\Gamma$ be a singular branch of plane curve.  Our method to find
a ${\mathbb C}[[x]]$-basis for $\Gamma$ evaluates the contributions of
the coefficients $a_{\beta}$, with $\beta \in {\mathcal E}_{\Gamma}$,
one by one (cf. section \ref{sec:construction_cx_bases}).  So, it
makes sense to identify the singular values of the coefficients
$a_{\beta}$, i.e. the values at which the construction is
qualitatively different than for a generic value of $a_{\beta}$.  This
is the goal of this section.  To find the singular values of the
coefficients is equivalent to finding directions or more precisely,
infinitely near points (cf. Definition \ref{def:seq_inf_pt} and Remark
\ref{rem:coef_dir}) at which the construction has a discontinuous
behavior.  First, we will define singular directions. Then, we will
see that this concept is an analytic invariant.
\begin{defi}
  \label{def:sing_dir0}
   We set
  ${\mathcal D}_{\Gamma} (\beta_1) = \ldots = {\mathcal D}_{\Gamma}
  (\beta_g) = \emptyset$ for the Puiseux exponents.
\end{defi} 
Definition 
\ref{def:sing_dir0} reflects the fact that the generic value of $\Lambda_{\overline{\Gamma}}$ for
\[ \{ \overline{\Gamma} \in  \fga{\Gamma} : 
(a_{\beta_1, \overline{\Gamma}}, \hdots, a_{\beta_g, \overline{\Gamma}}) = (c_1, \hdots, c_g) \} \]   
is independent of $(c_1, \hdots, c_g) \in ({\mathbb C}^{*})^{g}$ and indeed is   the generic value 
of $\Lambda_{\overline{\Gamma}}$ for $\overline{\Gamma} \in  \fga{\Gamma}$
(cf. Remark \ref{rem:puiseux-no-supersede}  and Theorem \ref{teo:ana_inv_sing_dir}).
\begin{defi}
  \label{def:sing_dir}
  Consider $0 \leq l < g$, $s \in {\mathbb Z}_{\geq 1}$ with
  ${\mathfrak n}^{s} (\beta_{l+1}) < \beta_{l+2}$.  We denote by
  ${\mathcal Q}_{\Gamma, l+1}^{s}$ the set consisting of
  $k \in \upper_{\Gamma, l+1}^{s}$ such that there is no
  $j \in \low_{\Gamma, l+1}^{s}$ with
  $\nu_{\Gamma, s} (\Omega_{k}^{s}) - \nu_{\Gamma} (\Omega_{j}^{s})
  \in n {\mathbb Z}_{\geq 0}$.
\end{defi}  
\begin{defi}
  We define \begin{equation*}
    {\mathcal D}_{\Gamma} ({\mathfrak n}^{s}(\beta_{l+1})) = \cup_{k \in {\mathcal Q}_{\Gamma,
        l+1}^{s}} {\mathcal D}_{\Gamma} ({\mathfrak
      n}^{s}(\beta_{l+1}), k),
  \end{equation*}
  cf. Definition \ref{def:sing_dir_j}.  Let $\beta\in {\mathcal E}_{\Gamma}$.  The
  elements of ${\mathcal D}_{\Gamma} (\beta)$ are called the {\it singular
    directions} associated to $\Gamma$ and the exponent $\beta$.   
\end{defi}
\begin{rem}
  Even if ${\mathcal D}_{\Gamma} (\beta)$ is a set of values of the
  coefficient $a_{\beta}$, it makes sense to call its elements
  directions because of the correspondence in Remark
  \ref{rem:coef_dir}
\end{rem}
\begin{rem}
  \label{rem:disc_u}
  An alternative definition of $ {\mathcal Q}_{\Gamma, l+1}^{s}$ is
  provided by
  \[
    \upper_{\Gamma, l+1}^{s} \setminus {\mathcal Q}_{\Gamma, l+1}^{s} =  \{ k \in \upper_{\Gamma, l+1}^{s} :
    k \in \upper_{\overline{\Gamma}, l+1}^{s+1} \ \forall \
    \overline{\Gamma} \in
    \fgba{\Gamma}{{\mathfrak n}^{s}(\beta_{l+1})} \}  .
    \]  
  Indeed, consider $k \in \upper_{\Gamma, l+1}^{s}$. There is a unique
  $j \in \low_{\Gamma, l+1}^{s}$ such that
 \begin{equation*}
   \nu_{\Gamma, s} (\Omega_{k}^{s}) - \nu_{\Gamma} (\Omega_{j}^{s}) = dn
 \end{equation*}
 (cf. Definition \ref{def:gen_ord_s}, page \pageref{def:gen_ord_s})
 for some $d \in \mathbb{Z}$ by Definition \ref{def:good}.  There are two cases,
 depending on whether or not $d \geq 0$.
  
  Suppose $d \geq 0$, i.e.
  $k \not \in {\mathcal Q}_{\Gamma, l+1}^{s}$.  Given
  $\overline{\Gamma} \in \fgba{\Gamma}{{\mathfrak n}^{s}(\beta_{l+1})}$, there exists
  $c_{\overline{\Gamma}} \in {\mathbb C}$ such that
  $\nu_{\overline{\Gamma}} (\Omega) > \nu_{\Gamma, s} (\Omega_{k}^{s})
  $ where
  \begin{equation}
    \label{equ:disc_u}
    \Omega = \Omega_{k}^{s} - c_{\overline{\Gamma}} x^{d} \Omega_{j}^{s}.
  \end{equation}
  We have $k \in \upper_{\overline{\Gamma}, l+1}^{s+1}$. It is clear
  if
  $\Beg_{\overline{\Gamma}}  (\Omega_{k}^{s}) > {\mathfrak
    n}^{s}(\beta_{l+1})$ by definition and we get
  $c_{\overline{\Gamma}} = 0$ (cf. Remark \ref{rem:sd_l}).  It holds
  for
  $\Beg_{\overline{\Gamma}} (\Omega_{k}^{s}) = {\mathfrak
    n}^{s}(\beta_{l+1})$ too since
  $\nu_{\overline{\Gamma}} (\Omega_{k}^{s}) = \nu_{\Gamma, s}
  (\Omega_{k}^{s})$; here we obtain $c_{\overline{\Gamma}} \neq 0$.
 
  Assume $d < 0$, i.e. $k \in {\mathcal Q}_{\Gamma, l+1}^{s}$.  Given
  $\overline{\Gamma} \in \fgba{\Gamma}{{\mathfrak n}^{s}(\beta_{l+1})}$, we have
  \begin{equation*}
    k \in \upper_{\overline{\Gamma}, l+1}^{s+1} \Leftrightarrow
    \Beg_{\overline{\Gamma}} (\Omega_{k}^{s}) >
    {\mathfrak n}^{s}(\beta_{l+1})
    \Leftrightarrow
    \nu_{\overline{\Gamma}} (\Omega_{k}^{s}) >
    \nu_{\Gamma, s} (\Omega_{k}^{s})
  \end{equation*}
  by Remark \ref{rem:sd_l}.
  %on the one hand we get  $k \in \upper_{\overline{\Gamma}, l+1}^{s+1}$ 
  %and $\nu_{\overline{\Gamma}} (\Omega_{k}^{s}) > \nu_{\Gamma, s} (\Omega_{k}^{s})$ if 
   %$L_{\overline{\Gamma}} (\Omega_{k}^{s}) > {\mathfrak n}^{s}(\beta_{l+1})$ (cf. Remark \ref{rem:sd_l}). On the other hand, we get
   %$\nu_{\overline{\Gamma}} (\Omega_{k}^{s}) = \nu_{\Gamma, s} (\Omega_{k}^{s})$ if 
   %$L_{\overline{\Gamma}} (\Omega_{k}^{s}) = {\mathfrak n}^{s}(\beta_{l+1})$ and hence 
  %$k \in \low_{\overline{\Gamma}, l+1}^{s+1}$. 
  \end{rem}
  \begin{rem}
    Singular directions in ${\mathcal D}_{\Gamma} ({\mathfrak n}^{s} (\beta_{l+1}))$
    detect changes in the family of $\overline{\Gamma}$-values of K\"{a}hler
    differentials for
    $\overline{\Gamma} \in \fgba{\Gamma}{{\mathfrak n}^{s}(\beta_{l+1})}$, as we explain.  Take
    $k \in \upper_{\Gamma, l+1}^{s}$, and  consider the notations in Remark.
    \ref{rem:disc_u}.

    First, suppose $k \not \in {\mathcal Q}_{\Gamma, l+1}^{s}$.  The contact
    $\nu_{\Gamma, s} (\Omega_{k}^{s})$ can be
    ``superseded" for any
    $\overline{\Gamma} \in \fgba{\Gamma}{{\mathfrak n}^{s}(\beta_{l+1})}$ by replacing $\Omega_{k}^{s}$ with $\Omega$ following
    equation \eqref{equ:disc_u}.  Indeed $\Omega$ belongs to
    ${\mathcal G}_{\overline{\Gamma}, l+1}^{s+1}$.
 %and $k \in \upper_{\overline{\Gamma}, l+1}^{s+1}$.
    So this case does not provide singular directions.
  
    Now, suppose $k \in {\mathcal Q}_{\Gamma, l+1}^{s}$.  Let
    $\overline{\Gamma} \in \fgba{\Gamma}{{\mathfrak n}^{s}(\beta_{l+1})}$.  On the one hand,
    the order
    $\nu_{\overline{\Gamma}} (\Omega_{k}^{s}) = \nu_{\Gamma, s}
    (\Omega_{k}^{s})$ can not be ``superseded" if
    $k \in \low_{\overline{\Gamma}, l+1}^{s+1}$.  On the other hand,
    the $1$-form $\Omega_{k}^{s}$ has a different
    $\overline{\Gamma}$-order if
    $k \in \upper_{\overline{\Gamma}, l+1}^{s+1}$.  Thus, singular
    directions provide a discontinuity in those
    $\overline{\Gamma}$-values of K\"{a}hler differentials that can not be
    ``superseded".
\end{rem}
\begin{rem}\label{rem:puiseux-no-supersede}
  If we apply Definition \ref{def:sing_dir} to the case
  ${\mathfrak n}^{s} (\beta_{l+1}) = \beta_{l+2}$, then,
  $\Omega_{k}^{s} \in \oDD{\beta_{l+2}, <}{0}{ l+2}$ for any
  $k \in \upper_{\Gamma, l+1}^{s}$, and in particular
  $\Omega_{k}^{s} \in \tDd{\beta_{l+2}}{l+2}$ by Proposition
  \ref{pro:unique_companion}.  Thus, Definition \ref{def:sing_dir}
  would provide $\{ 0 \}$ as the set of singular directions associated
  to $\Gamma$ and $\beta_{l+2}$.  Since
  $a_{\beta_{l+2}, \overline{\Gamma}} \neq 0$ for any
  $\overline{\Gamma} \in \fga{\Gamma}$ it is more convenient to
  define ${\mathcal D}_{\Gamma} (\beta_{l+2}) = \emptyset$, as we have
  done in Definition \ref{def:sing_dir0}.
\end{rem}
\begin{rem}
  The set ${\mathcal D}_{\Gamma} ({\mathfrak n}^{s} (\beta_{l+1}))$
  depends just on the coefficients $a_{\beta, \Gamma}$ with
  $\beta_1 \leq \beta < {\mathfrak n}^{s} (\beta_{l+1})$ and
  $\beta \in {\mathcal E}_{\Gamma}$ since
  ${\mathcal G}_{\Gamma, l+1}^{s}$ does so by Proposition
  \ref{pro:term_good_dep}. In other words,
  \begin{equation*}
    {\mathcal D}_{\Gamma} ({\mathfrak n}^{s}
    (\beta_{l+1})) = {\mathcal D}_{\overline{\Gamma}} ({\mathfrak
      n}^{s} (\beta_{l+1}))
  \end{equation*} 
  holds for any
  $\overline{\Gamma} \in \fgba{\Gamma}{{\mathfrak n}^{s} (\beta_{l+1})}$.
\end{rem}
\begin{rem}
  The non-singular directions have a generic behavior. For instance,
  consider a curve $\overline{\Gamma} \in \fga{\Gamma}$ such
  that
  $a_{\beta, \overline{\Gamma}} \not \in {\mathcal
    D}_{\overline{\Gamma}} (\beta)$ for any
  $\beta \in {\mathcal E}_{\Gamma}$.  All such curves share the same
  semi-module $\Lambda = \Lambda_{\overline{\Gamma}}$, that is
  precisely the semi-module of a generic curve in
  $\fga{\Gamma}$.
\end{rem}
\begin{rem}
  Let $\Gamma$ be a curve of multiplicity $n= 2^{g}$ (cf. Remark
  \ref{rem:power2}).  Since ${\mathcal G}_{\Gamma, l+1}^{s}$ is a good
  family, property \ref{good5} of Definition \ref{def:good} implies
  \begin{equation*}
    \low_{\Gamma, l+1}^{s} =
    {\mathbb N}_{2 \nu_{l}} \ \ \mathrm{and} \ \ 
    \upper_{\Gamma, l+1}^{s} = \upper_{l+1}
  \end{equation*}
  for all $0 \leq l < g$ and $s \in {\mathbb Z}_{\geq 1}$ with
  ${\mathfrak n}^{s} (\beta_{l+1}) \leq \beta_{l+2}$.  Thus, the set
  ${\mathcal Q}_{\Gamma, l+1}^{s}$ is empty for all $0 \leq l < g$ and
  $s \in {\mathbb Z}_{\geq 1}$ with
  ${\mathfrak n}^{s} (\beta_{l+1}) < \beta_{l+2}$ by Remark
  \ref{rem:disc_u}. We deduce that
  ${\mathcal D}_{\overline{\Gamma}}(\beta) = \emptyset$ for all
  $\overline{\Gamma} \in \fga{\Gamma}$ and
  $\beta \in {\mathcal E}_{\Gamma}$.  The lack of singular directions
  is natural, since there can be no discontinuities in the values of
  K\"{a}hler differentials in $\fga{\Gamma}$. Indeed, we have
  $\Lambda_{\overline{\Gamma}} = \Lambda_{\Gamma}$ for any
  $\overline{\Gamma} \in \fga{\Gamma}$ by Remark
  \ref{rem:power2}.
\end{rem}
Finally, we see that singular directions are invariant by analytic
conjugacy.
\begin{teo}
  \label{teo:ana_inv_sing_dir}
  Let $\Gamma$ be a branch of plane curve. Let $\phi$ be a germ of
  biholomorphism defined in a neighborhood of $(0,0)$ in
  ${\mathbb C}^{2}$ such that $\overline{\Gamma} := \phi (\Gamma)$
  belongs to $\fga{\Gamma}$.  Given any choice of $0 \leq l < g$
  and $s \in {\mathbb Z}_{\geq 1}$ with
  ${\mathfrak n}^{s} (\beta_{l+1}) < \beta_{l+2}$, the equality
  ${\mathcal Q}_{\Gamma, l+1}^{s}={\mathcal Q}_{\overline{\Gamma},
    l+1}^{s}$ holds. Moreover, given
  $k \in {\mathcal Q}_{\Gamma, l+1}^{s}$, we get
  \begin{equation}
    \label{equ:inv_sing_k}
    a_{{\mathfrak n}^{s}(\beta_{l+1}), {\Gamma}'}
    \in {\mathcal D}_{\Gamma} ({\mathfrak n}^{s}(\beta_{l+1}), k)
    \Leftrightarrow 
    a_{{\mathfrak n}^{s}(\beta_{l+1}), \phi({\Gamma}')} \in
    {\mathcal D}_{\overline{\Gamma}} ({\mathfrak n}^{s}(\beta_{l+1}), k)  
  \end{equation}
  for any
  ${\Gamma}' \in \fgba{\Gamma}{{\mathfrak n}^{s} (\beta_{l+1})}$.  In particular, we obtain
  \begin{equation}
    \label{equ:inv_sing}
    a_{{\mathfrak n}^{s}(\beta_{l+1}), {\Gamma}'} \in
    {\mathcal D}_{\Gamma} ({\mathfrak n}^{s}(\beta_{l+1}))
    \Leftrightarrow 
    a_{{\mathfrak n}^{s}(\beta_{l+1}), \phi({\Gamma}')} \in
    {\mathcal D}_{\overline{\Gamma}} ({\mathfrak n}^{s}(\beta_{l+1}))  
  \end{equation}
  for any
  ${\Gamma}' \in \fgba{\Gamma}{{\mathfrak n}^{s} (\beta_{l+1})}$.
\end{teo}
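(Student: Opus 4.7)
The plan is to establish the invariance by tracking how $\phi$ acts on the geometric objects (companion curves, trace points on characteristic divisors) that ultimately define singular directions.

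First I will set up the basic correspondence induced by $\phi$. The biholomorphism $\phi$ satisfies $\nu_{\Gamma'}(\phi^* \omega) = \nu_{\phi(\Gamma')}(\omega)$ for every formal curve $\Gamma'$ and every $1$-form $\omega$. Moreover, $\phi$ lifts uniquely through the blow-up sequences of Definition \ref{def:seq_inf_pt} to biholomorphisms between successive ambient manifolds, identifying the characteristic divisors $\mathfrak{D}_j$ of $\Gamma$ with those of $\overline{\Gamma}$ and sending trace points to trace points. Via the correspondence of Remark \ref{rem:coef_dir}, this gives an identification of the sets of admissible values $a_{\mathfrak{n}^s(\beta_{l+1}), \cdot}$ that matches $\Gamma'$ with $\phi(\Gamma')$.

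Second, I will argue by simultaneous induction on $(l, s)$ that the combinatorial data of the algorithm are analytic invariants: $\low_{\Gamma, l+1}^{s} = \low_{\overline{\Gamma}, l+1}^{s}$, $\upper_{\Gamma, l+1}^{s} = \upper_{\overline{\Gamma}, l+1}^{s}$, and $\mathcal{Q}_{\Gamma, l+1}^{s} = \mathcal{Q}_{\overline{\Gamma}, l+1}^{s}$ as subsets of $\mathbb{N}_{2\nu_{l+1}}$. Although the $1$-forms $\Omega_j^s$ (for $\Gamma$) and $\tilde{\Omega}_j^s$ (for $\overline{\Gamma}$) are not literally related by $\phi^*$, I will verify that $[\nu_\Gamma(\Omega_j^s)]_n = [\nu_{\overline{\Gamma}}(\tilde{\Omega}_j^s)]_n$ and that both belong to the corresponding strata $\Dd{\beta}{l+1}$ or $\hatDD{\mathfrak{n}(\beta_{l+1}),<}{0}{l+1}$ from Condition \ref{good6}. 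Since the Main Transformation of Definition \ref{def:next}, as well as the pairing scheme of Definition \ref{def:good1}, only uses these data to decide which indices to pair and whether to swap, the transformations in the two algorithms produce combinatorially identical families. Remark \ref{rem:disc_u} then yields the equality of $\mathcal{Q}$'s.

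Third, I will reduce the statement to a comparison of companion curves. For $k \in \mathcal{Q}_{\Gamma, l+1}^s$, the singular direction $\mathcal{D}_\Gamma(\mathfrak{n}^s(\beta_{l+1}), k)$ is, by Proposition \ref{pro:unique_companion} together with Lemma \ref{lem:lot} and Remarks \ref{rem:sd_l} and \ref{rem:coef_dir}, exactly the coefficient $a_{\mathfrak{n}^s(\beta_{l+1}), \cdot}$ selecting the trace point at which the companion curve of $\Omega_k^s$ meets the characteristic divisor $D_{\hat{\theta}^{-1}(\mathfrak{n}^s(\beta_{l+1}))}$, and a parallel statement holds for $\tilde{\Omega}_k^s$ and $\overline{\Gamma}$. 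Since $\phi$ maps invariant curves of $\Omega_k^s = 0$ to invariant curves of $(\phi^{-1})^* \Omega_k^s = 0$ while preserving equisingularity class, the companion curve of $\Omega_k^s$ is sent by $\phi$ to the (unique) companion curve of $(\phi^{-1})^* \Omega_k^s$ for $\overline{\Gamma}$. Combining this with the combinatorial identification of Step 2, which forces the companion curve of $(\phi^{-1})^* \Omega_k^s$ to coincide with that of $\tilde{\Omega}_k^s$, and tracing through the correspondence on trace points from Step 1, yields exactly \eqref{equ:inv_sing_k}; equation \eqref{equ:inv_sing} follows by taking the union over $k \in \mathcal{Q}_{\Gamma, l+1}^s$.

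The main obstacle is Step 2: showing that the algorithm applied to $\Gamma$ versus $\overline{\Gamma}$ produces the same combinatorial data. Morally this is clear, but the algorithm's output depends on specific choices of approximate roots and terminal families, which differ between $\Gamma$ and $\overline{\Gamma}$. The rigorous verification will require a careful bookkeeping through leading variables, $c$-sequences and $\Beg$-values, together with a precise statement that the intrinsic quantities controlling the Main Transformation (namely residue classes modulo $n$ and the strata of Definition \ref{def:good}, Condition \ref{good6}) depend only on the equisingularity class of $\Gamma$ and the ambient analytic structure, both of which are preserved by $\phi$.
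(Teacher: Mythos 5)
Your strategy stalls exactly at the point you flag as ``the main obstacle,'' and the principle you invoke to resolve it is not available. The data that drive the \textsc{Main Transformation} are \emph{not} invariants of the equisingularity class: the exact values $\nu_{\Gamma}(\Omega_j^{s})$ and the generic orders $\nu_{\Gamma,s}(\Omega_k^{s})$ (not merely their classes modulo $n$, which is all you propose to track) determine the sign of $d$ in Definition \ref{def:next}, hence which index goes to $\low_{\Gamma,l+1}^{s+1}$ and which to $\upper_{\Gamma,l+1}^{s+1}$, and these values genuinely vary inside the equisingularity class --- that variation is precisely what the singular directions detect. So the closing appeal to quantities ``depending only on the equisingularity class and the ambient analytic structure'' cannot carry Step 2; what is needed are the exact equalities $\nu_{\Gamma}(\Omega_j^{s})=\nu_{\overline{\Gamma}}(\overline{\Omega}_j^{s})$ for $j\in\low_{\Gamma,l+1}^{s}$ and $\nu_{\Gamma,s}(\Omega_k^{s})=\nu_{\overline{\Gamma},s}(\overline{\Omega}_k^{s})$ for $k\in\upper_{\Gamma,l+1}^{s}$, and these compare two families of $1$-forms built from different analytic data, so they require an analytic argument, not a combinatorial one.

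The same gap resurfaces in Step 3: a purely combinatorial identification of index sets does not force the companion curve of $(\phi^{-1})^{*}\Omega_k^{s}$ to coincide with that of the form $\overline{\Omega}_k^{s}$ produced by the algorithm run on $\overline{\Gamma}$; a priori these are unrelated $1$-forms. The missing ingredient is the comparison carried out in the critical case $\nu_{\Gamma,s}(\Omega_k^{s})<\nu_{\Gamma}(\Omega_j^{s})$ (i.e.\ $k\in{\mathcal Q}_{\Gamma,l+1}^{s}$): set $\tilde{\Omega}_k^{s}=\phi^{*}(\overline{\Omega}_k^{s})$, so that $\nu_{\Gamma'}(\tilde{\Omega}_k^{s})=\nu_{\phi(\Gamma')}(\overline{\Omega}_k^{s})$, expand $\tilde{\Omega}_k^{s}=\sum_{b} h_{b}(x)\Omega_b^{s}+f_{\Gamma,l+2}A\,dx+f_{\Gamma,l+2}B\,dy$ using ${\mathcal M}_{2\nu_{l+1}}+(f_{\Gamma,l+2})dx+(f_{\Gamma,l+2})dy=\hat{\Omega}^{1}\cn{2}$, bound $\nu_{\Gamma,s}(f_{\Gamma,l+2})$ from below so that the ideal part is negligible, and use the fact (Lemma \ref{lem:lot}) that a nonzero constant and a nonconstant linear polynomial in $a_{{\mathfrak n}^{s}(\beta_{l+1})}$ are linearly independent to conclude that the lowest-order term of $t(\Gamma')^{*}\tilde{\Omega}_k^{s}$ equals $h_k(0)\neq 0$ times that of $t(\Gamma')^{*}\Omega_k^{s}$ for every $\Gamma'\in\fgb{\Gamma}{{\mathfrak n}^{s}(\beta_{l+1})}\cap\fg{\Gamma}^{\ast}$. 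It is this identity of leading linear coefficients (up to a nonzero constant) that simultaneously gives ${\mathcal Q}_{\Gamma,l+1}^{s}={\mathcal Q}_{\overline{\Gamma},l+1}^{s}$, the matching of $\low/\upper$ at the next stage, and equation \eqref{equ:inv_sing_k}; without it, neither your induction in Step 2 nor the companion-curve identification in Step 3 is established.
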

\begin{proof}
  Consider the terminal families
  \begin{equation*}
    {\mathcal T}_{\Gamma, l} = (\Omega_1, \ldots, \Omega_{2 \nu_{l}}) \ \
    \mathrm{and} \ \ {\mathcal T}_{\overline{\Gamma}, l} =
    (\overline{\Omega}_1, \ldots, \overline{\Omega}_{2 \nu_{l}})
  \end{equation*}
  of level $l$ for $\Gamma$ and $\overline{\Gamma}$
  respectively.

  Assume for now that
    $\nu_{\Gamma} (\Omega_j) = \nu_{\overline{\Gamma}} (\overline{\Omega}_j)$ for any
    $j \in {\mathbb N}_{2 \nu_{l}}$. This assumption will be resolved in the last
  paragraph.

  Since
  $\nu_{\Gamma} (f_{\Gamma, l+1}) = \nu_{\overline{\Gamma}} (f_{\overline{\Gamma},
    l+1}) = \overline{\beta}_{l+1}$, the initial families
  \begin{equation*}
    {\mathcal G}_{\Gamma, l+1}^{0} = (\Omega_1^{0},
    \ldots, \Omega_{2 \nu_{l+1}}^{0}) \ \ \mathrm{and} \ \ {\mathcal
      G}_{\overline{\Gamma}, l+1}^{0} = (\overline{\Omega}_{1}^{0},
    \ldots, \overline{\Omega}_{2 \nu_{l+1}}^{0})
  \end{equation*}
  of level $l+1$ for $\Gamma$ and $\overline{\Gamma}$ satisfy
  $\nu_{\Gamma} (\Omega_{j}^{0}) = \nu_{\overline{\Gamma}}
  (\overline{\Omega}_{j}^{0})$ for any
  $j \in {\mathbb N}_{2 \nu_{l+1}}$ by construction. We claim that the
  following properties hold for $s \geq 1$ with
  ${\mathfrak n}^{s} (\beta_{l+1}) \leq \beta_{l+2}$:
  \begin{enumerate}[(a)]
  \item $\low_{\Gamma, l+1}^{s} = \low_{\overline{\Gamma}, l+1}^{s}$
    and
    $\upper_{\Gamma, l+1}^{s} = \upper_{\overline{\Gamma}, l+1}^{s}$;
  \item
    $\nu_{\Gamma} (\Omega_{j}^{s}) = \nu_{\overline{\Gamma}}
    (\overline{\Omega}_{j}^{s})$ for any
    $j \in \low_{\Gamma, l+1}^{s}$;
  \item
    $\nu_{\Gamma, s} (\Omega_{k}^{s}) = \nu_{\overline{\Gamma},s}
    (\overline{\Omega}_{k}^{s})$ for any
    $k \in \upper_{\Gamma, l+1}^{s}$ (cf. Definition
    \ref{def:gen_ord_s}).
 \end{enumerate}
 In particular, these properties imply
 ${\mathcal Q}_{\Gamma, l+1}^{s}={\mathcal Q}_{\overline{\Gamma},
   l+1}^{s}$ if ${\mathfrak n}^{s}(\beta_{l+1})<\beta_{l+2}$.  We will
 see that they also imply equations \eqref{equ:inv_sing_k} and
 \eqref{equ:inv_sing}.

 First, let us consider the case $s=1$.  Property (a) is a consequence of Definition
 \ref{def:lu}. Property (b) holds because for $s=1$ the set
 $\low_{\Gamma,l+1}^s$ is defined independently of the values of
 $\nu_{\Gamma}(\Omega_j^s)$.  Property (c) follows from
 $\nu_{\Gamma} (\Omega_{k}^{0}) = \nu_{\overline{\Gamma}}
 (\overline{\Omega}_{k}^{0})$ and
 \begin{equation*}
   \nu_{\Gamma, 1} (\Omega_{k}^{1}) - \nu_{\Gamma} (\Omega_{k}^{0}) =  
   \nu_{\overline{\Gamma},1} (\overline{\Omega}_{k}^{1}) -
   \nu_{\overline{\Gamma}} (\overline{\Omega}_{k}^{0}) =
   {\mathfrak n}(\beta_{l+1}) - \beta_{l+1}
 \end{equation*}
 for any $k \in \upper_{\Gamma, l+1}^{s}$.  Suppose the properties
 hold for $s \in {\mathbb Z}_{\geq 1}$ with
 ${\mathfrak n}^{s}(\beta_{l+1}) < \beta_{l+2}$. Let us prove them for
 $s+1$.  Consider $k \in \upper_{\Gamma, l+1}^{s}$ and let
 $j \in \low_{\Gamma, l+1}^{s}$ with
 $\nu_{\Gamma, s} (\Omega_{k}^{s}) - \nu_{\Gamma} (\Omega_{j}^{s}) \in
 (n)$.  Properties (a), (b) and (c), hold if the following ones do:
 \begin{enumerate}
 \item \label{conj1}
   $j \in \low_{\Gamma, l+1}^{s+1} \Leftrightarrow j \in
   \low_{\overline{\Gamma}, l+1}^{s+1}$;
 \item \label{conj2}
   $\nu_{\Gamma} (\Omega_{r}^{s+1}) = \nu_{\overline{\Gamma}}
   (\overline{\Omega}_{r}^{s+1})$ for any
   $r \in \{j,k\} \cap \low_{\Gamma, l+1}^{s+1}$;
 \item \label{conj3}
   $\nu_{\Gamma, s} (\Omega_{r}^{s+1}) = \nu_{\overline{\Gamma},s}
   (\overline{\Omega}_{r}^{s+1})$ for any
   $r \in \{j,k\} \cap \upper_{\Gamma, l+1}^{s+1}$.
 \end{enumerate}
 There are two cases: 
 
 \strut
 
 $\bullet$ {\it Case
   $ \nu_{\Gamma, s} (\Omega_{k}^{s}) \geq \nu_{\Gamma}
   (\Omega_{j}^{s})$}: 
   
   We get
 $j \in \low_{\Gamma, l+1}^{s+1} \cap \low_{\overline{\Gamma},
   l+1}^{s+1} $,
 $k \in \upper_{\Gamma, l+1}^{s+1} \cap \upper_{\overline{\Gamma},
   l+1}^{s+1}$ and
 \begin{multline*}
   \nu_{\Gamma, s+1} (\Omega_{k}^{s+1}) -
   \nu_{\Gamma,s} (\Omega_{k}^{s+1}) =  \\
   \nu_{\overline{\Gamma},s+1} (\overline{\Omega}_{k}^{s+1}) -
   \nu_{\overline{\Gamma},s} (\overline{\Omega}_{k}^{s}) =
   {\mathfrak n}^{s+1} (\beta_{l+1}) -
   {\mathfrak n}^{s} (\beta_{l+1}).
 \end{multline*}
 Thus, properties \eqref{conj1}, \eqref{conj2} and \eqref{conj3}
 hold. As $k\not\in {\mathcal Q}_{\Gamma, l+1}^{s}$, this case is done.
 
 \strut

 $\bullet$ {\it Case
   $ \nu_{\Gamma, s} (\Omega_{k}^{s}) < \nu_{\Gamma}
   (\Omega_{j}^{s})$}:  
   
   Let $ \fg{\Gamma, {\mathfrak g}}^{s}$ be the set 
 consisting of
 $\Gamma ' \in \fgba{\Gamma}{{\mathfrak n}^s(\beta_{l+1})}$ such
 that
 $\nu_{\Gamma'} (\Omega_{b}^{s}) =\nu_{\Gamma, s} (\Omega_{b}^{s})$
 for any $b \in \upper_{\Gamma, l+1}^{s}$.  We are going to use the fact that
 \begin{equation}
   \label{equ:m+f}
   {\mathcal M}_{2 \nu_{l+1}} + (f_{\Gamma, l+2}) dx +  (f_{\Gamma,l+2}) dy
   = \hat{\Omega}^{1} \cn{2}  
 \end{equation}
 to prove Properties \eqref{conj1}, \eqref{conj2} and \eqref{conj3}.  In order to
 profit from this equality of modules, we need to estimate
 $\nu_{\Gamma, s} (\Omega_{k}^{s})$ and $\nu_{\Gamma, s} (f_{\Gamma, l+2})$. We have
 \begin{equation*}
   \nu_{\Gamma, s} (\Omega_{k}^{s}) \leq \nu_{\Gamma} (\Omega_{k}^{0}) +
   {\mathfrak n}^{s}(\beta_{l+1}) - \beta_{l+1}
   \leq
   n_{l+1} \overline{\beta}_{l+1}  + {\mathfrak n}^{s}(\beta_{l+1}) -
   \beta_{l+1}
 \end{equation*}
 %and hence 
 %\begin{equation*} \nu_{\Gamma, s} (\Omega_{k}^{s}) \leq \overline{\beta}_{l+2} - (\beta_{l+2} - {\mathfrak n}^{s}(\beta_{l+1}))
%\end{equation*} 
by Condition \ref{good8} of Definition \ref{def:good} applied to some  
 $\Gamma' \in \fg{\Gamma, {\mathfrak g}}^{s}$.  We have
 \begin{equation*}
   f_{\Gamma, l+2} = \prod_{k=0}^{\nu_{l+1}-1} 
   \left(
     y - \sum_{\beta < \beta_{l+2}}
     a_{\beta, \Gamma} e^{\frac{2 \pi k \beta i}{n}} x^{\frac{\beta}{n}} 
   \right) .
 \end{equation*}
 All terms in the product, except the one for $k=0$, have the same
 $\Gamma'$-order for any
 $\Gamma' \in \fgba{\Gamma}{{\mathfrak n}^s(\beta_{l+1})} \cup \{ \Gamma_{< \beta_{l+1}}\}$. 
 This property, together with
 \begin{equation*}
   \nu_{0} (f_{\Gamma, l+2} \circ \Gamma_{< \beta_{l+1}} (t)) =
   n_{l+1} \nu_{0} (f_{\Gamma, l+1} \circ \Gamma_{< \beta_{l+1}} (t))=
   n_{l+1} \overline{\beta}_{l+1}
 \end{equation*}
 imply that the the minimal order $\nu_{\Gamma, s} (f_{\Gamma, l+2})$
 of $\nu_{\Gamma'} (f_{\Gamma, l+2})$ for the family
 $\Gamma' \in \fgba{\Gamma}{{\mathfrak n}^s(\beta_{l+1})}$
 satisfies
 \begin{equation}
   \label{equ:aux_c1}
   \nu_{\Gamma, s} (f_{\Gamma, l+2}) = n_{l+1} \overline{\beta}_{l+1}  +
   {\mathfrak n}^{s}(\beta_{l+1}) - \beta_{l+1}
   \geq \nu_{\Gamma, s} (\Omega_{k}^{s})  .
 \end{equation}
 Thus any $1$-form in $(f_{\Gamma, l+2}) dx + (f_{\Gamma,l+2}) dy$
 has $\Gamma'$-order greater than
 $\nu_{\Gamma, s} ({\Omega}_{k}^{s})$ for any
 $\Gamma' \in \fgba{\Gamma}{{\mathfrak n}^s(\beta_{l+1})}$.
 
 Now, we want to understand the behavior of
 $\nu_{\Gamma, s} (\sum_{b=1}^{2 \nu_{l+1}} h_{b}(x) \Omega_{b}^{s})$
 for $h_1, \ldots,h_{2 \nu_{l+1}} \in {\mathbb C}[[x]]$. We have
 \begin{equation*}
   \{ [\nu_{\Gamma}(\Omega_{b}^{s})]_{n} :
   b \in \low_{\Gamma, l+1}^{s} \} = 
   \{ [\nu_{\Gamma, s}(\Omega_{d}^{s})]_{n} :
   d \in \upper_{\Gamma, l+1}^{s} \} = [(e_{l+1})]_{n}
 \end{equation*} 
 by Condition \ref{good7} of Definition \ref{def:good} applied to
 $\Gamma' \in \fg{\Gamma, {\mathfrak g}}^{s}$, and Lemma
 \ref{lem:lot}.  So, for any $[r]_{n} \in [(e_{l+1})]_{n}$
 there exist a single $j' \in \low_{\Gamma, l+1}^{s}$ and a
 $k' \in \upper_{\Gamma, l+1}^{s}$ such that
 $[\nu_{\Gamma}(\Omega_{j'}^{s})]_{n} =
 [\nu_{\Gamma, s}(\Omega_{k'}^{s})]_{n}=[r]_{n}$.  Moreover, given
 $h_{j'}, h_{k'} \in {\mathbb C}[[x]]$, we have
 \begin{equation}
   \label{equ:aux_c2}
   \nu_{\Gamma, s} (h_{j'}(x) \Omega_{j'}^{s} + h_{k'}(x) \Omega_{k'}^{s})
   = \min 
   (\nu_{\Gamma, s} (h_{j'}(x) \Omega_{j'}^{s}),
   \nu_{\Gamma, s} (h_{k'}(x) \Omega_{k'}^{s})) 
 \end{equation}
 since a non-vanishing constant and a non-constant linear polynomial
 are linearly independent (cf. Lemma \ref{lem:lot}).
 Denote by 
 $\nu_{[r]_{n}}$ any of the sides of \eqref{equ:aux_c2}. Obviously, $\nu_{[r]_{n}}\in[r]_{n}$ if it is not $\infty$. We obtain
 \begin{equation}
   \label{equ:aux_c3}
  \nu_{\Gamma, s} 
 \left(\sum_{b=1}^{2 \nu_{l+1}} h_{b}(x) \Omega_{b}^{s} \right) 
 =\min_{[r]_{n} \in [(e_{l+1})]_{n}} \nu_{[r]_{n}} = 
 \min_{1 \leq b \leq 2 \nu_{l+1}}
   \nu_{\Gamma, s} (h_{b}(x) \Omega_{b}^{s}) .
\end{equation} 

 Denote
 $\tilde{\Omega}_{k}^{s} = \phi^{*} (\overline{\Omega}_{k}^{s})$.  By
 construction, we have
 \begin{equation*}
   \nu_{\Gamma, s} (\tilde{\Omega}_{k}^{s}) =
   \nu_{\Gamma, s} (\Omega_{k}^{s})    < \nu_{\Gamma} (\Omega_{j}^{s})
 \end{equation*}% cf. Definition  \ref{def:gen_ord_s},
 and the coefficient of the lowest order of
 $t ({\Gamma}')^{*} (\tilde{\Omega}_{k}^{s})$ is a non-constant linear
 polynomial on $a_{{\mathfrak n}^{s} (\beta_{l+1}), \Gamma'}$ for
 $\Gamma' \in \fgba{\Gamma}{{\mathfrak n}^s(\beta_{l+1})}$ (cf. Lemma \ref{lem:lot}
 again). We have
 \begin{equation*}
   \tilde{\Omega}_{k}^{s} =
   \sum_{b=1}^{2 \nu_{l+1}} h_{b}(x) \Omega_{b}^{s} +
   f_{\Gamma, l+2} A(x,y) dx    
   + f_{\Gamma, l+2} B(x,y) dy
 \end{equation*}
 for some $h_1, \ldots, h_b \in {\mathbb C}[[x]]$ and
 $A, B \in {\mathbb C}[[x,y]]$ by equation
 \eqref{equ:m+f}.
 Since 
 \[ \nu_{\Gamma, s} (\tilde{\Omega}_{k}^{s}) < 
 \min ( \nu_{\Gamma, s} (f_{\Gamma, l+2} A(x,y) dx),
 \nu_{\Gamma, s} (f_{\Gamma, l+2} B(x,y) dy)) \]
 by equation \eqref{equ:aux_c1}, it follows that 
 \begin{equation*}
   \nu_{\Gamma, s} (\tilde{\Omega}_{k}^{s}) = 
   \min_{1 \leq b \leq 2 \nu_{l+1}}
   \nu_{\Gamma, s} (h_{b}(x) \Omega_{b}^{s})
 \end{equation*}
 by equation \eqref{equ:aux_c3}. As
 $\nu_{\Gamma, s} (\tilde{\Omega}_{k}^{s}) < \nu_{\Gamma}
 (\Omega_{j}^{s})$, we obtain
 \begin{equation*}
   \nu_{\Gamma, s} ( \tilde{\Omega}_{k}^{s} - h_{k} (0) {\Omega}_{k}^{s})
   > \nu_{\Gamma, s} ( {\Omega}_{k}^{s}) .
 \end{equation*}
 In particular, non-vanishing terms of the lowest order of
 $t ({\Gamma}')^{*} (\Omega_{k}^{s})$ and
 $t ({\Gamma}')^{*} (\tilde{\Omega}_{k}^{s})$ are equal up to a
 non-vanishing constant (i.e. $h_k(0)$) for
 $\Gamma' \in \fgba{\Gamma}{{\mathfrak n}^s(\beta_{l+1})}$. Thus, we get
 \begin{equation*}
   a_{{\mathfrak n}^{s}(\beta_{l+1}), {\Gamma}'} \in {\mathcal
     D}_{\Gamma} ({\mathfrak n}^{s}(\beta_{l+1}), k) \Leftrightarrow
   a_{{\mathfrak n}^{s}(\beta_{l+1}), \phi({\Gamma}')} \in {\mathcal
     D}_{\overline{\Gamma}} ({\mathfrak n}^{s}(\beta_{l+1}), k)
 \end{equation*}
 for any
 ${\Gamma}' \in \fgba{\Gamma}{{\mathfrak n}^s(\beta_{l+1})}$.
 Let us remark that
 \begin{equation*}
   k \in \upper_{\Gamma, l+1}^{s+1}
   \Leftrightarrow
   a_{{\mathfrak n}^{s}(\beta_{l+1}), {\Gamma}} \in
   {\mathcal D}_{\Gamma} ({\mathfrak n}^{s}(\beta_{l+1}), k)
 \end{equation*}
 by Remarks \ref{rem:sd_l} and \ref{rem:disc_u} and the analogous
 property holds when we replace $\Gamma$ with
 $\overline{\Gamma}$. Therefore, $k$ belongs to
 $\upper_{\Gamma, l+1}^{s+1}$ if and only if it belongs to
 $\upper_{\overline{\Gamma}, l+1}^{s+1}$ and thus property
 \eqref{conj1} is satisfied. Notice that
 $\Omega_{k}^{s+1} = \Omega_{k}^{s}$ and
 $\overline{\Omega}_{k}^{s+1} = \overline{\Omega}_{k}^{s}$.

  First, if $k \in \upper_{\Gamma, l+1}^{s+1}$, then Property
 \eqref{conj2} clearly holds.  Moreover, we have
 \begin{equation*}
   \nu_{\Gamma,s+1} (\Omega_{k}^{s+1}) = \nu_{\Gamma,s}
   (\Omega_{k}^{s}) + ({\mathfrak n}^{s+1} (\beta_{l+1}) - {\mathfrak
     n}^{s} (\beta_{l+1}))
 \end{equation*}
 and the analogous property for $\overline{\Gamma}$ and
 $\overline{\Omega}_{k}^{s}$ holds.  Hence, we obtain property
 \eqref{conj3}.
 % $\nu_{\Gamma,s+1} (\overline{\Omega}_{k}^{s+1}) =
 % \nu_{\overline{\Gamma},s+1} (\overline{\Omega}_{k}^{s+1})$.
 
 If otherwise $k \in \low_{\Gamma, l+1}^{s+1}$, then we have
%$k \in \low_{\overline{\Gamma}, l+1}^{s+1}$, 
%$j \in \upper_{\Gamma, l+1}^{s+1} \cap \upper_{\overline{\Gamma}, l+1}^{s+1}$,
 \begin{equation*}
   \nu_{\Gamma} (\Omega_{k}^{s+1}) =
   \nu_{\Gamma, s} (\Omega_{k}^{s}), \ \ 
   \nu_{\Gamma, s+1} (\Omega_{j}^{s+1}) =  \nu_{\Gamma} (\Omega_{j}^{s}) +
   ({\mathfrak n}^{s+1} (\beta_{l+1}) - {\mathfrak n}^{s} (\beta_{l+1}))
 \end{equation*}
 and analogue properties for $\overline{\Gamma}$,
 $\overline{\Omega}_{k}^{s}$, $\overline{\Omega}_{k}^{s+1}$
 $\overline{\Omega}_{j}^{s}$, $\overline{\Omega}_{j}^{s+1}$ hold. We
 deduce that properties \eqref{conj1} , \eqref{conj2} and
 \eqref{conj3} are satisfied. This concludes the second case:
 $ \nu_{\Gamma, s} (\Omega_{k}^{s}) < \nu_{\Gamma}
   (\Omega_{j}^{s})$.

   We now finish the proof. Suppose $l+1 < g$ and let
   $s_{l+1} \in {\mathbb Z}_{\geq 1}$ with
   ${\mathfrak n}^{s_{l+1}}(\beta_{l+1}) = \beta_{l+2}$.  We have
   $\nu_{\Gamma} (\Omega_{j}^{s_{l+1}}) = \nu_{\overline{\Gamma}}
   (\overline{\Omega}_{j}^{s_{l+1}})$ for any
   $j \in \low_{\Gamma, l+1}^{s_{l+1}}$.  Since
   $\Beg_{\Gamma}(\Omega_{k}^{s_{l+1}}) = \beta_{l+2}$ (for 
   $\fgb{\Gamma}{\beta_{l+2}}$) and
   $\Beg_{\overline{\Gamma}}(\overline{\Omega}_{k}^{s_{l+1}}) =
   \beta_{l+2}$ (for $\fgb{\overline{\Gamma}}{\beta_{l+2}}$), we
   deduce
   \begin{equation*}
     \nu_{\Gamma} (\Omega_{k}^{s_{l+1}}) =
     \nu_{\Gamma, s_{l+1}} (\Omega_{k}^{s_{l+1}}) = 
     \nu_{\overline{\Gamma}, s_{l+1}} (\overline{\Omega}_{k}^{s_{l+1}}) =
     \nu_{\overline{\Gamma}} (\overline{\Omega}_{k}^{s_{l+1}})
   \end{equation*}
   for any $k \in \upper_{\Gamma, l+1}^{s_{l+1}}$.  We obtain
   $\nu_{\Gamma} (\Omega_{j}^{s_{l+1}}) = \nu_{\overline{\Gamma}}
   (\overline{\Omega}_{j}^{s_{l+1}})$ for any
   $j \in {\mathbb N}_{2 \nu_{l+1}}$.

   We need to resolve the assumption we made on
   ${\mathcal T}_{\Gamma, l}$ and ${\mathcal T}_{\overline{\Gamma}, l}$ at the
   beginning: that
   $\nu_{\Gamma} (\Omega_j) = \nu_{\overline{\Gamma}} (\overline{\Omega}_j)$ for any
   $j \in {\mathbb N}_{2 \nu_{l}}$.  The previous paragraph implies that if the
   assumption holds for $0 \leq l < g-1$ then so does it for $l+1$. Since it clearly
   holds for $l=0$ where $\mathcal{T}_{\Gamma,0}=(dx,dy)$), we are
   done.
\end{proof}

\bibliography{sb.bib}

\begin{thebibliography}{10}

\bibitem{abhyankar-moh}
S.S. Abhyankar and T.T. Moh.
\newblock Newton-{P}uiseux expansion and generalized {T}shirnhausen
  transformation {II}.
\newblock {\em J. {R}eine und {A}ngew Math}, (261):29--54, 1973.

\bibitem{Apery:branches}
R.~Ap{\'e}ry.
\newblock Sur les branches superlin{\'e}aires des courbes alg{\'e}briques.
\newblock {\em C. R. Acad. Sci., Paris}, 222:1198--1200, 1946.

\bibitem{brieskorn2012plane}
E.~Brieskorn and H.~Kn{\"o}rrer.
\newblock {\em Plane Algebraic Curves: Translated by John Stillwell}.
\newblock Springer Science \& Business Media, 2012.

\bibitem{Cano-Corral-Senovilla:semiroots}
F.~Cano, N.~Corral, and D.~Senovilla-Sanz.
\newblock Analytic semiroots for plane branches and singular foliations.
\newblock {\em Bull. Braz. Math. Soc. (N.S.)}, 54(2):49, 2023.
\newblock Id/No 27.

\bibitem{Casas}
E.~Casas-Alvero.
\newblock {\em Singularities of Plane Curves}.
\newblock Number 276 in London Math. Soc. Lecture Notes Series. Cambridge Univ.
  Press, 2000.

\bibitem{Casas:class_1_exponent}
E.~Casas-Alvero.
\newblock On the analytic classification of irreducible plane curve
  singularities.
\newblock {\em Asian J. Math.}, 25(5):597--640, 2021.

\bibitem{deAbreu-Hernandes:value}
M.~O.~R. de~Abreu and M.~E. Hernandes.
\newblock On the value set of 1-forms for plane branches.
\newblock {\em Semigroup Forum}, 105(2):385--397, 2022.

\bibitem{Delorme:modules}
C.~Delorme.
\newblock Sur les modules des singularit{\'e}s des courbes planes.
\newblock {\em Bull. Soc. Math. Fr.}, 106:417--446, 1978.

\bibitem{Ebey:moduli}
S.~Ebey.
\newblock The classification of singular points of algebraic curves.
\newblock {\em Trans. Am. Math. Soc.}, 118:454--471, 1965.

\bibitem{Fortuny-Ribon-Canadian}
P.~Fortuny~Ayuso and J.~Rib\'{o}n.
\newblock The action of a plane singular holomorphic flow on a non-invariant
  branch.
\newblock {\em Canad. J. Math.}, 72(4):835--866, 2020.

\bibitem{ayuso2024construction}
P.~Fortuny~Ayuso and J.~Ribón.
\newblock Construction of a basis of {K}\"{a}hler differentials for generic
  plane branches.
\newblock arXiv:2405.09684, 2024.

\bibitem{Hefez-Hernandes-classification}
A.~Hefez and M.~E. Hernandes.
\newblock The analytic classification of plane branches.
\newblock {\em Bull. Lond. Math. Soc.}, 43(2):289--298, 2011.

\bibitem{LRRS:stable}
L.~L{\'o}pez-Hernanz, J.~Raissy, J.~Rib{\'o}n, and F.~Sanz-S{\'a}nchez.
\newblock Stable manifolds of two-dimensional biholomorphisms asymptotic to
  formal curves.
\newblock {\em Int. Math. Res. Not.}, 2021(17):12847--12887, 2021.

\bibitem{Rouille:Merle}
P.~Rouill{\'e}.
\newblock Merle's theorem: case of 1-forms of generalized curve type.
\newblock {\em Bol. Soc. Bras. Mat., Nova S{\'e}r.}, 30(3):293--314, 1999.

\bibitem{Zariski-1932}
O.~Zariski.
\newblock On the topology of algebroid singularities.
\newblock {\em Amer. J. Math.}, 54(3):453--465, 1932.

\bibitem{Zariski:moduli}
O.~Zariski.
\newblock {\em The moduli problem for plane branches. {With} an appendix by
  {Bernard} {Teissier}. {Transl}. from the {French} by {Ben} {Lichtin}},
  volume~39 of {\em Univ. Lect. Ser.}
\newblock Providence, RI: American Mathematical Society (AMS), 2006.

\end{thebibliography}
 \end{document}